\documentclass[letter,10pt]{amsart}
\usepackage{amssymb,amsmath,amsfonts}
\usepackage{mathrsfs}
\usepackage[left=1 in, right=1 in,top=1 in, bottom=1 in]{geometry}
\usepackage{mathenv}

%
%
%
%
%
%




\newtheorem{theorem}{Theorem}[section]
\newtheorem{lemma}[theorem]{Lemma}

\newtheorem{remark}[theorem]{Remark}

\makeatletter
\renewcommand \theequation {%
\ifnum \c@section>\z@ \@arabic\c@section.%
\fi\@arabic\c@equation} \@addtoreset{equation}{section}
\makeatother

\providecommand{\ud}[1]{\mathrm{d}{#1}}
\providecommand{\abs}[1]{\left\vert#1\right\vert}
\providecommand{\nm}[1]{\left\Vert#1\right\Vert}

\providecommand{\br}[1]{\left\langle #1 \right\rangle}

\providecommand{\tm}[1]{\left\Vert#1\right\Vert_{L^2}}
\providecommand{\im}[1]{\left\Vert#1\right\Vert_{L^{\infty}}}
\providecommand{\ts}[1]{\left\vert#1\right\vert_{L^2}}
\providecommand{\is}[1]{\left\vert#1\right\vert_{L^{\infty}}}
\providecommand{\tss}[2]{\left\vert#1\right\vert_{L^2_{#2}}}
\providecommand{\iss}[2]{\left\vert#1\right\vert_{L^{\infty}_{#2}}}
\providecommand{\um}[1]{\left\Vert#1\right\Vert_{L^2_{\nu}}}

\providecommand{\lnm}[2]{\left\Vert#1\right\Vert_{L^{\infty}_{#2}}}
\providecommand{\tnm}[1]{\left\Vert#1\right\Vert_{L^2}}
\providecommand{\lnnm}[2]{{\left\Vert\left\vert#1\right\vert\right\Vert}_{L^{\infty}L^{\infty}_{#2}}}
\providecommand{\tnnm}[1]{{\left\Vert\left\vert#1\right\vert\right\Vert}_{L^2L^2}}
\providecommand{\ltnm}[2]{{\left\Vert#1\right\Vert}_{L^{\infty}L^2_{#2}}}

\def\p{\partial}
\def\ls{\lesssim}

\def\half{\dfrac{1}{2}}
\def\rt{\rightarrow}
\def\r{\mathbb{R}}
\def\no{\nonumber}
\def\ue{\mathrm{e}}

\def\id{{\bf{1}}}

\def\f{\mathcal{F}^{\e}}
\def\fb{\mathscr{F}^{\e}}
\def\fc{\mathcal{F}}
\def\fbc{\mathscr{F}}
\def\w{\mathscr{W}^{\e}}
\def\wc{\mathscr{W}}

\def\e{\epsilon}
\def\s{\mathcal{S}}

\def\vvu{\vec{\mathfrak{u}}}

\def\vx{\vec x}
\def\v{v}
\def\vv{\vec v}
\def\vvv{\vec{\mathfrak{v}}}
\def\u{u}
\def\uh{\vec u}
\def\th{\theta}
\def\rh{\rho}
\def\vo{\vec\omega}
\def\ll{\mathcal{L}}

\def\vn{\vec n}
\def\nx{\nabla_x}
\def\dx{\Delta_x}
\def\b{b}
\def\bb{B}
\def\m{\mu}
\def\rr{r}
\def\ph{\phi}

\def\vr{\vec\v_{r}}
\def\vp{\v_{\phi}}
\def\ve{\v_{\eta}}
\def\et{\eta}
\def\g{g}
\def\q{\mathcal{Q}}
\def\qb{\mathscr{Q}}
\def\lll{\mathscr{L}}
\def\k{\kappa}
\def\ta{\vec\tau}

\def\l{\lambda}
\def\vt{\vartheta}
\def\ze{\zeta}
\def\d{\delta}
\def\pk{{\mathbb{P}}}
\def\ik{{\mathbb{I}}}
\def\bv{\br{\vv}^{\vt}\ue^{\ze\abs{\vv}^2}}
\def\bvv{\br{\vvv}^{\vt}\ue^{\ze\abs{\vvv}^2}}
\def\vb{\vec b}
\def\vh{w}
\def\tvh{\tilde{w}}

\def\gl{g^L}
\def\wl{w^L}
\def\ql{q^L}
\def\nk{\mathcal{N}}
\def\wi{w}
\def\qi{q}
\def\a{\mathcal{A}}
\def\t{\mathcal{T}}
\def\gg{\mathcal{G}}
\def\sn{\sqrt{\nu}}

\begin{document}


\title{Hydrodynamic Limit with Geometric Correction of
Stationary Boltzmann Equation}
\author{Lei Wu}
\address{
Department of Mathematical Sciences\\
Carnegie Mellon University \\
Wean Hall 6113, 5000 Forbes Avenue, Pittsburgh, PA 15213, USA } \email[L.
Wu]{lwu2@andrew,cmu.edu}
\subjclass[2000]{35L65, 82B40, 34E05}

\begin{abstract}
We consider the hydrodynamic limit of a stationary Boltzmann equation in a unit plate with in-flow boundary. We prove the solution can be approximated in $L^{\infty}$ by the sum of interior solution which satisfies steady incompressible Navier-Stokes-Fourier system, and boundary layer with geometric correction. Also, we construct a counterexample to the classical theory which states the behavior of solution near boundary can be described by the Knudsen layer derived from the Milne problem.
\ \\
\textbf{Keywords:} $\e$-Milne problem, Boundary layer, Geometric correction.
\end{abstract}

\maketitle

\tableofcontents

\newpage


\pagestyle{myheadings} \thispagestyle{plain} \markboth{LEI WU}{GEOMETRIC CORRECTION FOR HYDRODYNAMIC LIMIT OF BOLTZMANN
EQUATION}

\section{Introduction}

\subsection{Problem Formulation}

We consider stationary Boltzmann equation for $F^{\e}(\vx,\vv)$ in a two-dimensional unit plate $\Omega=\{\vx=(x_1,x_2):\ \abs{\vx}\leq 1\}$
with velocity $\Sigma=\{\vv=(v_1,v_2)\in\r^2\}$ as
\begin{eqnarray}\label{large system}
\left\{
\begin{array}{rcl}
\e\vv\cdot\nx F^{\e}&=&Q[F^{\e},F^{\e}]\ \ \text{in}\ \
\Omega\times\r^2,\\\rule{0ex}{1.0em} F^{\e}(\vx_0,\vv)&=&\bb^{\e}
(\vx_0,\vv) \ \ \text{for}\ \ \vx_0\in\p\Omega\ \ \text{and}\ \ \vn(\vx_0)\cdot\vv<0,
\end{array}
\right.
\end{eqnarray}
where
$\vn(\vx_0)$ is the outward normal vector at $\vx_0$ and the
Knudsen number $\e$ satisfies $0<\e<<1$. Here we have
\begin{eqnarray}
Q[F,G]&=&\int_{\r^2}\int_{\s^1}q(\vo,\abs{\vvu-\vv})\bigg(F(\vvu_{\ast})G(\vv_{\ast})-F(\vvu)G(\vv)\bigg)\ud{\vo}\ud{\vvu},
\end{eqnarray}
with
\begin{eqnarray}
\vvu_{\ast}=\vvu+\vo\bigg((\vv-\vvu)\cdot\vo\bigg),\qquad \vv_{\ast}=\vv-\vo\bigg((\vv-\vvu)\cdot\vo\bigg),
\end{eqnarray}
and the hard-sphere collision kernel
\begin{eqnarray}
q(\vo,\abs{\vvu-\vv})=q_0\abs{\vvu-\vv}\abs{\cos\phi},
\end{eqnarray}
for positive constant $q_0$ related to the size of ball, $\vo\cdot(\vv-\vvu)=\abs{\vv-\vvu}\cos\phi$ and $0\leq\phi\leq\pi/2$.
We intend to study the behavior of $F^{\e}$ as $\e\rt0$.\\
\ \\
Based on the flow direction, we can divide the boundary
$\gamma=\{(\vx_0,\vv):\ \vx_0\in\p\Omega\}$ into the in-flow boundary
$\gamma_-$, the out-flow boundary $\gamma_+$, and the grazing set
$\gamma_0$ as
\begin{eqnarray}
\gamma_{-}&=&\{(\vx_0,\vv):\ \vx_0\in\p\Omega,\ \vv\cdot\vec n(\vx_0)<0\},\\
\gamma_{+}&=&\{(\vx_0,\vv):\ \vx_0\in\p\Omega,\ \vv\cdot\vec n(\vx_0)>0\},\\
\gamma_{0}&=&\{(\vx_0,\vv):\ \vx_0\in\p\Omega,\ \vv\cdot\vec n(\vx_0)=0\}.
\end{eqnarray}
It is easy to see $\gamma=\gamma_+\cup\gamma_-\cup\gamma_0$. Hence,
the boundary condition is only given on $\gamma_{-}$.\\
\ \\
We assume that the boundary data can be expanded as
\begin{eqnarray}
\bb^{\e}(\vx_0,\vv)&=&\m+\sqrt{\m}\b^{\e}(\vx_0,\vv)=\m+\sqrt{\m}\bigg(\sum_{k=1}^{\infty}\e^k\b_{k}(\vx_0,\vv)\bigg),
\end{eqnarray}
where the standard Maxwellian is defined as
\begin{eqnarray}
\m(\vv)=\frac{1}{2\pi}\exp\left(-\frac{\abs{\vv}^2}{2}\right),
\end{eqnarray}
and $\b_k(\vx_0,\vv)$ is independent of $\e$.
We further require
\begin{eqnarray}\label{smallness assumption}
\is{\bv\frac{B^{\e}-\m}{\sqrt{\m}}}\leq C_0\e,
\end{eqnarray}
for $0\leq\ze\leq1/4$ and $\vt\geq3$, where $C_0>0$ is sufficiently small.
The solution $F^{\e}$ can be expressed as a perturbation of the standard Maxwellian
\begin{eqnarray}
F^{\e}(\vx,\vv)&=&\m+\sqrt{\m}f^{\e}(\vx,\vv).
\end{eqnarray}
Then $f^{\e}$ satisfies the equation
\begin{eqnarray}\label{small system}
\left\{
\begin{array}{rcl}
\e\vv\cdot\nx
f^{\e}+\ll[f^{\e}]&=&\Gamma[f^{\e},f^{\e}],\\\rule{0ex}{1.0em}
f^{\e}(\vx_0,\vv)&=&\b^{\e}(\vx_0,\vv) \ \ \text{for}\ \
\vn\cdot\vv<0\ \ \text{and}\ \ \vx_0\in\p\Omega,
\end{array}
\right.
\end{eqnarray}
where
\begin{eqnarray}
\Gamma[f^{\e},f^{\e}]&=&\frac{1}{\sqrt{\m}}Q[\sqrt{\m}f^{\e},\sqrt{\m}f^{\e}],\\
\ll[f^{\e}]&=&-\frac{2}{\sqrt{\m}}Q[\m,\sqrt{\m}f^{\e}]=\nu(\vv)
f^{\e}-K[f^{\e}],\\\
\nu(\vv)&=&\int_{\r^2}\int_{\s^1}q(\vv-\vvu,\vo)\m(\vvu)\ud{\vo}\ud{\vvu}\\
K[f^{\e}](\vv)&=&K_2[f^{\e}](\vv)-K_1[f^{\e}](\vv)\\
&=&\int_{\r^2}k(\vvu,\vv)f^{\e}(\vvu)\ud{\vvu}=\int_{\r^2}k_2(\vvu,\vv)f^{\e}(\vvu)\ud{\vvu}-\int_{\r^2}k_1(\vvu,\vv)f^{\e}(\vvu)\ud{\vvu},\no\\
K_1[f^{\e}](\vv)&=&\sqrt{\m(\vv)}\int_{\r^2}\int_{\s^1}q(\vv-\vvu,\vo)\sqrt{\m(\vvu)}f^{\e}(\vvu)\ud{\vo}\ud{\vvu},\\
K_2[f^{\e}](\vv)&=&\int_{\r^2}\int_{\s^1}q(\vv-\vvu,\vo)\sqrt{\m(\vvu)}\bigg(\sqrt{\m(\vv_{\ast})}f^{\e}(\vvu_{\ast})
+\sqrt{\m(\vvu_{\ast})}f^{\e}(\vv_{\ast})\bigg)\ud{\vo}\ud{\vvu}.
\end{eqnarray}

\subsection{History}

Hydrodynamic limit is one of the main steps to tackle Hilbert's Six Problems, i.e. to derive macroscopic physical rules from elementary atomic laws, especially, Newton's law in the framework of classical mechanics. In particular, it is well known the fluid-type equations can be formally derived from Boltzmann equation by applying Hilbert expansion to obtain the leading order term. A lot of works of evolutionary Boltzmann equation in the whole space domain have been presented (see \cite{Golse.Saint-Raymond2004}, \cite{Masi.Esposito.Lebowitz1989}, \cite{Bardos.Golse.Levermore1991}, \cite{Bardos.Golse.Levermore1993}, \cite{Bardos.Golse.Levermore1998}, \cite{Bardos.Golse.Levermore2000}) for either smooth solutions or renormalized solutions.

Unfortunately, despite its importance in both theory and practice, much less results are known for steady problem in a bounded domain, as \cite{Golse2005} pointed out. In \cite{Esposito.Guo.Kim.Marra2013} and \cite{Esposito.Guo.Kim.Marra2015}, the hydrodynamic limit of stationary Boltzmann equation with diffusive boundary were studied, both of which do not involve boundary layer approximation. However, for general boundary conditions, the presence of boundary layer effects is inevitable. It is noticeable that a lot of works, even for evolutionary Boltzmann equation in a bounded domain, cite the results from the classical paper \cite{Bensoussan.Lions.Papanicolaou1979} and take boundary layer analysis as being completely solved (see \cite{Sone2002}, \cite{Sone2007}).  Surprisingly, in \cite{AA003}, it was shown the boundary layer approximation based on Milne problem as in \cite{Bensoussan.Lions.Papanicolaou1979} breaks down both in the proof and result, due to intrinsic singularity of normal derivative. Therefore, any results on kinetic equations involving boundary layer effects from \cite{Bensoussan.Lions.Papanicolaou1979} should be reexamined.

In this paper, we will give a complete asymptotic analysis of stationary Boltzmann equations with in-flow boundary and construct a boundary layer with geometric correction, which has been proved to be effective in neutron transport equation by \cite{AA003}.

\subsection{Main Theorem}

\begin{theorem}\label{main 1}
For given $\bb^{\e}>0$ satisfying (\ref{smallness assumption}) and $0<\e<<1$, there exists a unique positive
solution $F^{\e}=\m+\sqrt{\m}f^{\e}$ to the stationary Boltzmann equation (\ref{large system}), where
\begin{eqnarray}
f^{\e}=\e^3 R_N+\bigg(\sum_{k=1}^{N}\e^{k}\f_k\bigg)+\bigg(\sum_{k=1}^{N}\e^{k}\fb_k\bigg),
\end{eqnarray}
for $N\geq3$, $R_N$ satisfies (\ref{remainder}), $\f_k$ and $\fb_k$ satisfy (\ref{at 14}) and (\ref{at 11}). Also, there exists a $C>0$ such that $f^{\e}$ satisfies
\begin{eqnarray}
\im{\bv f^{\e}}\leq C\e,
\end{eqnarray}
for any $\vartheta>2$, $0\leq\zeta\leq1/4$.
\end{theorem}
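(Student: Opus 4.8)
The plan is to construct an explicit approximate solution by matched asymptotics and then solve for the remainder $R_N$ with control that is uniform in $\e$. First I would set $f^{\e}_{app}=\sum_{k=1}^{N}\e^{k}\f_k+\sum_{k=1}^{N}\e^{k}\fb_k$ and define $R_N$ through $f^{\e}=f^{\e}_{app}+\e^3 R_N$. The interior terms $\f_k$ are determined order by order from the Hilbert expansion: $\ll[\f_k]$ is forced by the transport of the previous order, so each $\f_k$ splits into a hydrodynamic part (lying in $\nk=\ker\ll$ and governed by the incompressible Navier--Stokes--Fourier system via the Fredholm solvability condition $\pk(\text{source})=0$) and a microscopic part $\ll^{-1}(I-\pk)(\text{source})$; this is the content of (\ref{at 14}). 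The boundary layers $\fb_k$ of (\ref{at 11}) are built from the $\e$-Milne problem with geometric correction so that, at each order, the sum $\f_k+\fb_k$ matches the prescribed data $\b_k$ on $\gamma_-$ while decaying in the rescaled normal variable. This is precisely where the curvature term enters and where the classical Milne construction of \cite{Bensoussan.Lions.Papanicolaou1979} is replaced.

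Substituting the ansatz into (\ref{small system}) and using that the $\f_k,\fb_k$ cancel the equation order by order leaves the remainder equation (\ref{remainder}), schematically $\e\vv\cdot\nx R_N+\ll[R_N]=\e^{a}\Gamma[R_N,R_N]+(\text{linear coupling with the expansion})+(\text{source of size }O(\e^{N-2}))$ with $a>0$, and boundary value $O(\e)$ on $\gamma_-$. I would solve this by a contraction argument, freezing the quadratic term and treating the linearized problem $\e\vv\cdot\nx R+\ll[R]=S$, $R|_{\gamma_-}=r$. For this problem I would first derive the $L^2$ estimate: pairing with $R$ and using the coercivity $\br{\ll[R],R}\gs\nm{(I-\pk)R}_{\nu}^2$ together with the transport identity $\e\int \vv\cdot\nx R\,R=\frac{\e}{2}\int_{\gamma}(\vv\cdot\vn)\abs{R}^2$ controls the microscopic part $(I-\pk)R$ and the outflow trace, while the inflow contribution is absorbed into $r$.

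The genuine difficulty is estimating the hydrodynamic part $\pk R$, for which I would introduce suitable test functions solving auxiliary elliptic/Stokes-type problems to recover the macroscopic moments from the microscopic remainder. I would then upgrade to $L^{\infty}$ by writing the mild (Duhamel) form of $R$ along the characteristics of $\e\vv\cdot\nx+\nu$, exploiting that $K$ has an integrable kernel with Gaussian tails so that $\bv K$ maps $L^{\infty}$ into itself with small gain at large $\abs{\vv}$; running the standard double-Duhamel bootstrap then closes $\im{\bv R}$ in terms of $\tm{R}$ and the data. Propagation of the weight $\bv$ follows from $\nu(\vv)\sim\br{\vv}$ together with pointwise kernel bounds.

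The hardest part will be the $L^2$ control of $\pk R$ in this stationary, bounded-domain setting with only inflow data, since there is no time integration to exploit and the spectral gap alone controls only $(I-\pk)R$; in parallel, the geometric-correction boundary layer must be shown to have exactly the normal-variable decay and regularity needed for the truncation source $S$ and the boundary mismatch it produces to be of the claimed size uniformly in $\e$. Once the linearized estimates are uniform, the smallness hypothesis (\ref{smallness assumption}) (data of size $O(\e)$ with $C_0$ small) makes the quadratic map a contraction on a small ball, producing a unique $R_N$ with $\im{\bv R_N}\ls 1$. Since the leading expansion terms are $O(\e)$ and $\e^3R_N$ is of lower order, the triangle inequality yields $\im{\bv f^{\e}}\leq C\e$, and positivity of $F^{\e}=\m+\sqrt{\m}f^{\e}$ follows because an $O(\e)$ perturbation cannot overcome the Maxwellian $\m$.
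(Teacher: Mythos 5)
Your proposal follows essentially the same route as the paper: interior Hilbert expansion coupled to boundary layers from the $\e$-Milne problem with geometric correction, a remainder equation for $R_N$ solved via the linearized $L^2$ estimate (test functions recovering $\pk R$) and the double-Duhamel $L^{\infty}$ bootstrap, closed by the smallness assumption. The only quantitative slip is that for $N\geq 3$ the paper's estimates yield $\im{\bv R_N}\leq C\e^{-2}$ rather than $O(1)$, so $\e^3 R_N$ is of the \emph{same} order $O(\e)$ as the leading terms, which still gives $\im{\bv f^{\e}}\leq C\e$.
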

\begin{theorem}\label{main 2}
For given $\bb^{\e}>0$ satisfying (\ref{smallness assumption}) with
\begin{eqnarray}
\frac{\b_1}{\sqrt{\m}}=\bigg(\vp\ue^{-(\vp^2-1)-M\ve^2}\bigg)=h(\ve,\vp),
\end{eqnarray}
where $\ve$ and $\vp$ are defined as in (\ref{substitution 3}) and $M$ is sufficiently large such that
\begin{eqnarray}
h(0,1)&=&1,\\
\tss{h}{-}&<<&1,
\end{eqnarray}
there exists $C>0$ such that
\begin{eqnarray}
\im{f^{\e}-(\fc_1+\fbc_1)}\geq C\e,
\end{eqnarray}
where the interior solution $\fc_1$ is defined in (\ref{at 14.}) and boundary layer $\fbc_1$ is defined in (\ref{at 11.}).
\end{theorem}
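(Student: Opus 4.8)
The plan is to use the validity of the geometrically corrected expansion in Theorem~\ref{main 1} to replace $f^\e$ by its leading term, and then to detect an order-one gap between the geometric boundary layer $\fb_1$ and the classical boundary layer $\fbc_1$ at a single, carefully chosen phase-space point. By Theorem~\ref{main 1}, $f^\e$ admits the decomposition $f^\e=\e^3R_N+\sum_{k=1}^N\e^k\f_k+\sum_{k=1}^N\e^k\fb_k$ with bounded remainder and bounded coefficients in the weighted norm, so that $\im{\bv(f^\e-\e\f_1-\e\fb_1)}\le C\e^2$ and the higher-order terms never contaminate an order-$\e$ lower bound. Writing the classical first-order ansatz as $\e(\fc_1+\fbc_1)$ to match the order-$\e$ normalization of the expansion, for any fixed point $(\vx_0,\vv)$ one has
\begin{equation}
\im{f^\e-\e(\fc_1+\fbc_1)}\ \geq\ \e\,\abs{(\f_1+\fb_1)-(\fc_1+\fbc_1)}(\vx_0,\vv)\ -\ \abs{f^\e-\e\f_1-\e\fb_1}(\vx_0,\vv),
\end{equation}
where the subtracted term is $\le C\e^2$ by Theorem~\ref{main 1} since the weight $\bv$ is bounded below at the fixed velocity $\vv$. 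Hence it suffices to exhibit a constant $c>0$, independent of $\e$, and a point $(\vx_0,\vv)$ at which $\abs{(\f_1+\fb_1)-(\fc_1+\fbc_1)}(\vx_0,\vv)\ge c$; the factor $\e$ then yields the theorem for $\e$ small.

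I would take $\vx_0\in\p\Omega$ and an \emph{outgoing} velocity, $\vn(\vx_0)\cdot\vv>0$, localized near the peak of the prescribed profile $h$, that is near $\ve=0$ and $\vp$ bounded away from the grazing value. Evaluating at the boundary $\eta=0$ is what makes the interior/boundary-layer split cleanest: on outgoing velocities the incoming data is not prescribed, so the value of each approximation is the Milne re-emission of the incoming data $h$, and the interior traces $\f_1(\vx_0,\vv)$, $\fc_1(\vx_0,\vv)$ are themselves fixed through matching by the far-field limits of the respective layers, so the entire discrepancy is carried by the boundary-layer mechanism. The hypothesis $\tss{h}{-}\ll1$ guarantees that the data meets the smallness requirement (\ref{smallness assumption}) so that Theorem~\ref{main 1} applies, while the normalization $h(0,1)=1$ keeps the profile order-one in $L^\infty$, which is precisely what allows an order-one pointwise gap to survive despite the $L^2_{-}$ smallness.

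The heart of the argument, and the main obstacle, is the quantitative comparison of the two Milne problems for this $h$. The classical layer $\fbc_1$ solves the flat Milne equation $\sin\ph\,\p_\eta\fbc_1+\ll[\fbc_1]=0$ on $\eta\in(0,\infty)$, whereas the true layer $\fb_1$ solves the same equation augmented by the curvature term $\dfrac{\e\cos\ph}{1-\e\eta}\,\p_\ph\fb_1$; although this term is nominally $O(\e)$, it acts over the stretched layer and, crucially, near the grazing directions $\sin\ph\to0$ where the flat problem forces $\p_\eta\fbc_1$ to become singular, so its cumulative effect on the outgoing trace at $\eta=0$ is $O(1)$ rather than $O(\e)$. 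I would make this precise by first establishing, from the Milne theory developed earlier in the paper, the regularity and far-field limits of both problems, and then using the explicit profile $h(\ve,\vp)=\vp\,\ue^{-(\vp^2-1)-M\ve^2}$ to lower-bound the outgoing value of the flat problem and to show that the geometric transport in $\ph$ shifts it by a fixed amount. Concentrating $h$ in $\ve$ through the large parameter $M$ localizes the comparison to a narrow cone of velocities, which simultaneously simplifies the lower bound and sharpens the contrast between the singular flat solution and its regularized geometric counterpart; proving this strict, $\e$-independent separation is the delicate step, after which the displayed inequality closes the proof.
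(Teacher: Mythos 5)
Your overall strategy --- use Theorem \ref{main 1} to replace $f^{\e}$ by $\e(\f_1+\fb_1)$ up to $O(\e^2)$ and then exhibit an $\e$-independent pointwise gap between $\f_1+\fb_1$ and $\fc_1+\fbc_1$ --- is exactly the paper's, and the reduction step is fine. The gap is in where and how you propose to detect the order-one discrepancy. You evaluate at the boundary $\et=0$ on \emph{outgoing} velocities (which in the Milne variables means $\ve<0$). But in the Milne half-space the outgoing trace carries no direct boundary-data term: by the mild formulation (Cases II and III of the $L^{\infty}$ analysis in Section 4), $\wc(0,\vvv)$ for $\ve<0$ is a pure integral of $K[\wc]$ along the backward characteristic, and $\lnm{K[\wc](0)}{0,0}\leq C\ltnm{\wc}{0}$, which is made arbitrarily small by the hypothesis $\tss{h}{-}<<1$ --- this smallness is precisely how the paper kills the $K$ contributions. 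The same holds for $\w$. So at your proposed point \emph{both} approximations are $<<1$ and their difference cannot be bounded below by a positive constant; the whole design of the data ($L^{\infty}$ norm equal to $1$, $L^2_{-}$ norm tiny) forces the order-one information to sit on the incoming side $\ve>0$ where $h$ is prescribed.

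The paper instead evaluates at an interior point of the layer, $\et=n\e$, on a nearly grazing \emph{incoming} velocity $\ve=\e$, $\vp=\sqrt{1-\e^2}$. There the dominant term in both mild formulations is the boundary datum $h(0,1)=1$ multiplied by the accumulated collision damping along the backward characteristic, and the key observation is that this damping is explicitly computable and \emph{different} for the two problems: $\ue^{-\nu(1)n}$ for the straight characteristics of the flat Milne problem versus $\ue^{\nu(1)(1-\sqrt{1+2n})}$ for the bent characteristics of the geometric one, because $\ve$ is not conserved along the latter and the backward foot of the characteristic sits at $\ve_0\approx\sqrt{1+2n}\,\e$ rather than $\e$. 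Since these two factors differ by an order-one amount for fixed $n$ while the $K$ terms contribute only $O(\delta)+O(\e)$, the lower bound follows. Your heuristic about the curvature term acting near grazing is the right physical intuition, and your appeal to the singularity of $\p_{\et}\fbc_1$ echoes Theorem \ref{counter theorem 1} --- but in the paper that singularity is used to show the classical construction of $\fbc_2$ breaks down, not to prove this lower bound, and you explicitly defer the quantitative separation (``the delicate step'') rather than prove it. As written, the proposal is missing the actual mechanism (characteristic bending changes the damping exponent at nearly grazing incidence) and places the test point where the effect vanishes.
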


\subsection{Notation and Structure of This Paper}

Throughout this paper, $C>0$ denotes a constant that only depends on
the parameter $\Omega$, but does not depend on the data. It is
referred as universal and can change from one inequality to another.
When we write $C(z)$, it means a certain positive constant depending
on the quantity $z$. We write $a\ls b$ to denote $a\leq Cb$.

Our paper is organized as follows: in Section 2, we establish the $L^{\infty}$ well-posedness of
the linearized Boltzmann equation; in Section 3, we present the asymptotic analysis of the equation (\ref{small system});  in Section 4, we prove the well-posedness and decay of the $\e$-Milne problem with geometric correction; in Section 5, we prove Theorem \ref{main 1}; finally, in
Section 6, we show the classical approach and prove Theorem \ref{main 2}.

\section{Linearized Stationary Boltzmann Equation}

We consider the linearized stationary Boltzmann equation
\begin{eqnarray}\label{linear steady}
\left\{
\begin{array}{rcl}
\e\vv\cdot\nx f+\ll[f]&=&S(\vx,\vv)\ \ \text{in}\ \ \Omega,\\
f(\vx_0,\vv)&=&h(\vx_0,\vv)\ \ \text{for}\ \ \vx_0\in\p\Omega\ \
\text{and}\ \ \vv\cdot\vn<0.
\end{array}
\right.
\end{eqnarray}
Let $\br{\cdot,\cdot}$ be the standard $L^2$ inner product in
$\Omega\times\r^2$. We define the $L^p$ and $L^{\infty}$ norms in
$\Omega\times\r^2$ as usual:
\begin{eqnarray}
\nm{f}_{L^p}&=&\bigg(\int_{\Omega}\int_{\r^2}\abs{f(\vx,\vv)}^p\ud{\vv}\ud{\vx}\bigg)^{1/p},\\
\nm{f}_{L^{\infty}}&=&\sup_{(\vx,\vv)\in\Omega\times\r^2}\abs{f(\vx,\vv)}.
\end{eqnarray}
Define $\ud{\gamma}=\abs{\vv\cdot\vn}\ud{\varpi}\ud{\vv}$ on the
boundary $\p\Omega\times\r^2$ for $\varpi$ as the surface measure. Define the $L^p$ and
$L^{\infty}$ norms on the boundary as follows:
\begin{eqnarray}
\abs{f}_{L^p}&=&\bigg(\iint_{\gamma}\abs{f(\vx,\vv)}^p\ud{\gamma}\bigg)^{1/p},\\
\abs{f}_{L^p_{\pm}}&=&\bigg(\iint_{\gamma_{\pm}}\abs{f(\vx,\vv)}^p\ud{\gamma}\bigg)^{1/p},\\
\abs{f}_{L^{\infty}}&=&\sup_{(\vx,\vv)\in\gamma}\abs{f(\vx,\vv)},\\
\abs{f}_{L^{\infty}_{\pm}}&=&\sup_{(\vx,\vv)\in\gamma_{\pm}}\abs{f(\vx,\vv)}.
\end{eqnarray}
Also, we define
\begin{eqnarray}
\um{f}=\tm{\sqrt{\nu}f}.
\end{eqnarray}
Denote the Japanese bracket as
\begin{eqnarray}
\br{\vv}=\sqrt{1+\abs{\vv}^2}
\end{eqnarray}
Define the kernel operator $\pk$ as
\begin{eqnarray}
\pk[f]=\sqrt{\m}\bigg(a_f(t,\vx)+\vv\cdot
\vb_f(t,\vx)+\frac{\abs{\vv}^2-2}{2}c_f(t,\vx)\bigg),
\end{eqnarray}
and the non-kernel operator $\ik-\pk$ as
\begin{eqnarray}
(\ik-\pk)[f]=f-\pk[f].
\end{eqnarray}
with
\begin{eqnarray}
\int_{\r^2}(\ik-\pk)[f]\left(\begin{array}{c}1\\\vv\\\abs{\vv}^2\end{array}\right)\ud{\vv}=0
\end{eqnarray}
Our analysis is based on the ideas in \cite{Esposito.Guo.Kim.Marra2013, Guo2010}.

\subsection{Preliminaries}

\begin{lemma}(Green's Identity)\label{wellposedness prelim lemma 2}
Assume $f(\vx,\vv),\ g(\vx,\vv)\in L^2(\Omega\times\r^2)$ and
$\vv\cdot\nx f,\ \vv\cdot\nx g\in L^2(\Omega\times\r^2)$ with $f,\
g\in L^2(\gamma)$. Then
\begin{eqnarray}
\iint_{\Omega\times\r^2}\bigg((\vv\cdot\nx f)g+(\vv\cdot\nx
g)f\bigg)\ud{\vx}\ud{\vv}=\int_{\gamma_+}fg\ud{\gamma}-\int_{\gamma_-}fg\ud{\gamma}.
\end{eqnarray}
\end{lemma}
\begin{proof}
See the proof of \cite[Lemma 2.2]{Esposito.Guo.Kim.Marra2013}.
\end{proof}
\begin{lemma}\label{wellposedness prelim lemma 3}
For any $\l>0$, there exists a unique solution $f_{\l}(\vx,\vv)\in
L^{\infty}(\Omega\times\r^2)$ to the penalized transport equation
\begin{eqnarray}\label{wellposedness prelim penalty transport equation}
\left\{
\begin{array}{rcl}
\lambda f_{\l}+\epsilon\vv\cdot\nx f_{\l}&=&S(\vx,\vv)\ \ \text{in}\
\ \Omega,\\\rule{0ex}{1.0em} f_{\l}(\vx_0,\vv)&=&h(\vx_0,\vv)\ \
\text{for}\ \ \vx_0\in\p\Omega\ \ \text{and}\ \ \vv\cdot\vn<0,
\end{array}
\right.
\end{eqnarray}
such that
\begin{eqnarray}\label{wt 01}
\im{\bv f_{\l}}+\iss{\bv f_{\l}}{+}
&\leq& C\left( \frac{1}{\l}\im{\bv S}+\iss{\bv h}{-}\right),
\end{eqnarray}
for all $\vt\geq0$, $\ze\geq0$, and
\begin{eqnarray}\label{wt 06}
\tm{f_{\l}}^2+\frac{\e}{\l}\tss{f_{\l}}{+}^2\leq C\left(
\frac{1}{\l^2}\tm{ S}^2+\frac{\e}{\l}\tss{h}{-}^2\right).
\end{eqnarray}
\end{lemma}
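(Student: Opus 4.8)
The plan is to regard the penalized equation (\ref{wellposedness prelim penalty transport equation}) as a linear first-order transport problem, solve it explicitly by the method of characteristics to obtain existence together with the weighted $L^{\infty}$ bound (\ref{wt 01}), and then derive the $L^2$ bound (\ref{wt 06}) and uniqueness from an energy estimate built on Green's identity (Lemma \ref{wellposedness prelim lemma 2}).

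First I would construct the solution along backward characteristics. Since $\vv$ is constant along the flow of $\e\vv\cdot\nx$, for $(\vx,\vv)\in\Omega\times\r^2$ with $\vv\neq0$ I define the backward exit time $t_b(\vx,\vv)$ as the first $t>0$ with $\vx-t\vv\in\p\Omega$ and the exit position $\vx_b=\vx-t_b\vv$, which lies on $\gamma_-$; as $\Omega$ is the strictly convex unit disk, $t_b$ and $\vx_b$ are well defined and smooth off the grazing set $\gamma_0$, a set of measure zero. Integrating the characteristic ODE $\e\,\ud{f}/\ud{s}+\l f=S$ along $\vx(s)=\vx_b+s\vv$ with integrating factor $\ue^{\l s/\e}$ gives
\[
f_\l(\vx,\vv)=\ue^{-\l t_b/\e}\,h(\vx_b,\vv)+\frac{1}{\e}\int_0^{t_b}\ue^{-\l(t_b-s)/\e}\,S(\vx_b+s\vv,\vv)\,\ud{s}.
\]
Multiplying by the weight $\bv$, which depends only on $\vv$ and hence is constant along characteristics, and taking absolute values, the boundary term is dominated by $\iss{\bv h}{-}$, while the elementary bound $\frac{1}{\e}\int_0^{t_b}\ue^{-\l(t_b-s)/\e}\,\ud{s}\leq\frac{1}{\l}$ turns the source term into $\frac{1}{\l}\im{\bv S}$; this yields the interior part of (\ref{wt 01}), and evaluating the same representation on $\gamma_+$ gives the outflow trace bound $\iss{\bv f_\l}{+}$.

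For (\ref{wt 06}) I would test the equation against $f_\l$ and integrate over $\Omega\times\r^2$. The transport term is handled by Green's identity (Lemma \ref{wellposedness prelim lemma 2}), which applies because $\e\vv\cdot\nx f_\l=S-\l f_\l\in L^2$ once $f_\l\in L^2$ is known; it produces the boundary contribution $\frac{1}{2}\big(\tss{f_\l}{+}^2-\tss{f_\l}{-}^2\big)$. Replacing $\tss{f_\l}{-}$ by $\tss{h}{-}$ through the in-flow boundary condition, absorbing the source via Young's inequality $\br{S,f_\l}\leq\frac{1}{2\l}\tm{S}^2+\frac{\l}{2}\tm{f_\l}^2$, and finally multiplying through by $2/\l$ gives (\ref{wt 06}). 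Uniqueness is then immediate: applying (\ref{wt 06}) to the difference of two solutions with $S=0$ and $h=0$ forces $f_\l\equiv0$.

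The main obstacle I anticipate is not the algebra but the analytic justification at the grazing set $\gamma_0$: one must confirm that the Duhamel formula defines a bona fide (distributional) solution whose traces on $\gamma_\pm$ are well defined and square integrable, so that Green's identity is legitimately applicable. I would dispatch this in the standard way---$t_b$ and $\vx_b$ are Lipschitz away from $\gamma_0$, the singular set has measure zero, and the decay furnished by the weight $\bv$ controls grazing velocities---together with the transport trace theorem and a density argument for general $L^2$ data, exactly along the lines of the constructions in \cite{Esposito.Guo.Kim.Marra2013, Guo2010} on which our analysis is based.
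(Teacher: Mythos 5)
Your proposal follows essentially the same route as the paper: existence and the weighted $L^\infty$ bound (\ref{wt 01}) come from the explicit Duhamel representation along backward characteristics (your parametrization differs from the paper's only by the harmless rescaling $s\mapsto s/\e$, so the factor $\ue^{-\l t_b/\e}$ matches the paper's $\ue^{-\l t_\b}$), and the $L^2$ bound (\ref{wt 06}) and uniqueness come from testing against $f_\l$ and invoking Green's identity, exactly as the paper indicates. Your write-up is in fact more complete than the paper's, which leaves the energy estimate and the trace/grazing-set justification implicit; your Young-inequality computation reproduces (\ref{wt 06}) with the correct powers of $\l$ and $\e$.
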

\begin{proof}
The characteristics $(X(s), V(s))$ of the equation
(\ref{wellposedness prelim penalty transport equation}) which goes
through $(\vx,\vv)$ is defined by
\begin{eqnarray}\label{character}
\left\{
\begin{array}{rcl}
(X(0), V(0))&=&(\vx,\vv)\\\rule{0ex}{2.0em}
\dfrac{\ud{X(s)}}{\ud{s}}&=&\e V(s),\\\rule{0ex}{2.0em}
\dfrac{\ud{V(s)}}{\ud{s}}&=&0.
\end{array}
\right.
\end{eqnarray}
which implies
\begin{eqnarray}
\left\{
\begin{array}{rcl}
X(s)&=&\vx+\e s\vv\\
V(s)&=&\vv
\end{array}
\right.
\end{eqnarray}
Define the backward exit time $t_{\b}(\vx,\vv)$ and backward exit
position $\vx_{\b}(\vx,\vv)$ as
\begin{eqnarray}
t_{\b}(\vx,\vv)&=&\inf\{t>0: \vx-\e t\vv\notin\Omega\},\label{exit time}\\
\vx_{\b}(\vx,\vv)&=&\vx-\e t_{\b}(\vx,\vv)\vv\notin\Omega.
\end{eqnarray}
Hence, we can rewrite the equation (\ref{wellposedness prelim
penalty transport equation}) along the characteristics as
\begin{eqnarray}
\bv f_{\l}(\vx,\vv)
&=& \bv h(\vx_{\b},\vv)e^{-\l
t_{\b}}+\int_{0}^{t_{\b}}\bv S(\vx_{\b}+\e s\vv,\vv)e^{-\l
(t_{\b}-s)}\ud{s}.
\end{eqnarray}
Then we can naturally estimate
\begin{eqnarray}
\im{\bv f_{\l}}&\leq&e^{-\l t_{\b}}\iss{\bv h}{-}+\frac{1-e^{-\l t_{\b}}}{\l}\im{\bv S}\\
&\leq&\iss{\bv h}{-}+\frac{1}{\l}\im{\bv S},\no
\end{eqnarray}
which further implies
\begin{eqnarray}
\iss{\bv f_{\l}}{+}&\leq&\iss{\bv h}{-}+\frac{1}{\l}\im{\bv S}.
\end{eqnarray}
Since $f_{\l}$ can be explicitly traced back to the boundary data,
the existence naturally follows from above estimate. The uniqueness
and $L^2$ estimates follow from Green's identity and
$\tm{f_{\l}}\leq C\im{\bv f_{\l}}$.
\end{proof}

\subsection{$L^2$ Estimates of Linearized Stationary Boltzmann Equation}

\begin{lemma}\label{wellposedness prelim lemma 4}
For any $\l>0$, $m>0$, there exists a
unique solution $f_{\l,m}(\vx,\vv)\in L^{2}(\Omega\times\r^2)$ to
the equation
\begin{eqnarray}\label{wellposedness prelim penalty equation}
\left\{
\begin{array}{rcl}
\l f_{\l,m}+\e\vv\cdot\nx
f_{\l,m}+\ll_m[f_{\l,m}]&=&S(\vx,\vv)\ \ \text{in}\ \
\Omega,\\\rule{0ex}{1.5em}
f_{\l,m}(\vx_0,\vv)&=&h(\vx_0,\vv)\ \ \text{for}\ \ \vx_0\in\p\Omega\ \ \text{and}\
\vv\cdot\vn<0,
\end{array}
\right.
\end{eqnarray}
with $\ll_m$ the linearized Boltzmann operator corresponding to the
cut-off cross section $q_m=\min\{q,m\}$. Also, the solution
satisfies
\begin{eqnarray}
\e\um{f_{\l,m}}^2+\tss{f_{\l,m}}{+}^2\leq C(\l,m)\left(
\tm{S}^2+\e\tss{h}{-}^2\right).
\end{eqnarray}
\end{lemma}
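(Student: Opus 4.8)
The plan is to construct $f_{\l,m}$ by an iteration built on the transport solver of Lemma \ref{wellposedness prelim lemma 3}, and to read off both convergence and the quantitative bound from energy estimates based on the Green's identity of Lemma \ref{wellposedness prelim lemma 2}. The two structural facts I would isolate first are that the cut-off frequency $\nu_m(\vv)=\int_{\r^2}\int_{\s^1}q_m(\vv-\vvu,\vo)\m(\vvu)\ud{\vo}\ud{\vvu}$ is bounded above and below by positive constants depending on $m$ — this is where $q_m=\min\{q,m\}$ enters decisively — and that $\ll_m=\nu_m-K_m$ is a non-negative self-adjoint operator, so that $\br{K_m[f],f}\leq\tm{\sqrt{\nu_m}f}^2$ for every $f$.

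First I would set up the iteration: let $f^0=0$ and define $f^{l+1}$ as the unique solution, furnished by Lemma \ref{wellposedness prelim lemma 3} applied with the velocity-dependent but strictly positive penalization $\l+\nu_m(\vv)$ and source $K_m[f^l]+S$, of
\begin{eqnarray}
\l f^{l+1}+\e\vv\cdot\nx f^{l+1}+\nu_m f^{l+1}&=&K_m[f^l]+S,\qquad f^{l+1}\big|_{\gamma_-}=h.
\end{eqnarray}
Lemma \ref{wellposedness prelim lemma 3} applies verbatim because $\nu_m(V(s))=\nu_m(\vv)$ is constant along the characteristics of (\ref{character}), so the explicit boundary representation and its bounds survive the replacement $\l\mapsto\l+\nu_m(\vv)$.

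Next I would run the energy method on each iterate. Multiplying the equation by $f^{l+1}$, integrating over $\Omega\times\r^2$, and using Lemma \ref{wellposedness prelim lemma 2} to convert $\e\br{\vv\cdot\nx f^{l+1},f^{l+1}}$ into $\frac{\e}{2}(\tss{f^{l+1}}{+}^2-\tss{h}{-}^2)$, I obtain
\begin{eqnarray}
\l\tm{f^{l+1}}^2+\frac{\e}{2}\tss{f^{l+1}}{+}^2+\tm{\sqrt{\nu_m}f^{l+1}}^2&=&\br{K_m[f^l],f^{l+1}}+\br{S,f^{l+1}}+\frac{\e}{2}\tss{h}{-}^2.
\end{eqnarray}
The cross term is controlled sharply through the non-negativity of $\ll_m$ by $\br{K_m[f^l],f^{l+1}}\leq\frac12\tm{\sqrt{\nu_m}f^l}^2+\frac12\tm{\sqrt{\nu_m}f^{l+1}}^2$, while the boundedness of $\nu_m$ gives $\l\tm{f^{l+1}}^2\geq(\l/C(m))\tm{\sqrt{\nu_m}f^{l+1}}^2$. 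Feeding both in and absorbing $\br{S,f^{l+1}}$ by Cauchy--Schwarz yields, for the weighted energies $E^l=\tm{\sqrt{\nu_m}f^l}^2$, a recursion $E^{l+1}\leq rE^l+C(\l,m)(\tm{S}^2+\e\tss{h}{-}^2)$ with ratio $r=(1+2\l/C(m))^{-1}<1$; this already produces a uniform-in-$l$ bound of the claimed shape. Applying the identical computation to the differences $g^l=f^{l+1}-f^l$, which solve the equation with $S=0$, $h=0$ and right-hand side $K_m[g^{l-1}]$, gives $\tm{\sqrt{\nu_m}g^l}^2\leq r\,\tm{\sqrt{\nu_m}g^{l-1}}^2$, so $\{f^l\}$ is Cauchy in $L^2$ (the $\nu_m$-weighted and plain $L^2$ norms being equivalent for fixed $m$) and converges to a solution $f_{\l,m}$.

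Finally, uniqueness follows by applying the same energy identity to the difference of two solutions, where $S=0$, $h=0$ and the non-negativity of $\ll_m$ force the difference to vanish; the quantitative estimate is obtained by passing the uniform bound to the limit, and, should the full weight $\nu$ rather than $\nu_m$ be required, by controlling high velocities through the $L^{\infty}$ bound of Lemma \ref{wellposedness prelim lemma 3} (all constants being allowed to depend on $\l$ and $m$). I expect the main obstacle to be precisely the sharp treatment of the collision cross term: a crude estimate $\tm{K_m[f]}\leq C(m)\tm{f}$ only closes the iteration for large $\l$, and it is essential instead to use the self-adjoint, non-negative structure of $\ll_m$ together with the two-sided bound on $\nu_m$ from the cut-off to secure a contraction ratio $r<1$ valid for every $\l>0$.
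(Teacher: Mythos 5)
Your overall architecture --- iterate the penalized transport solver of Lemma \ref{wellposedness prelim lemma 3}, close the loop with the Green's-identity energy estimate of Lemma \ref{wellposedness prelim lemma 2}, and extract a contraction ratio $r=(1+\l/C(m))^{-1}<1$ from the upper bound on $\nu_m$ --- is exactly the paper's. The gap sits in the one step you yourself single out as the crux: the bilinear bound
\begin{eqnarray*}
\br{K_m[f^l],f^{l+1}}\leq\half\tm{\sqrt{\nu_m}f^l}^2+\half\tm{\sqrt{\nu_m}f^{l+1}}^2.
\end{eqnarray*}
The non-negativity of $\ll_m=\nu_m-K_m$ gives only the one-sided quadratic bound $\br{K_m[f],f}\leq\br{\nu_m f,f}$. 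To pass from this to the bilinear estimate you need the generalized Cauchy--Schwarz inequality $\br{K_m[f],g}\leq\br{K_m[f],f}^{1/2}\br{K_m[g],g}^{1/2}$, which is valid only when $K_m$ is itself a non-negative symmetric operator. It is not: $K_m=K_{2,m}-K_{1,m}$ has no definite sign, and $\ll_m\geq0$ provides no lower bound for $\br{K_m[f],f}$. Equivalently, your estimate amounts to $\|\nu_m^{-1/2}K_m\nu_m^{-1/2}\|\leq 1$, i.e. the two-sided bound $-\nu_m\leq K_m\leq\nu_m$, of which only the upper half is available. Without it, you are thrown back on the crude bound $\tm{K_m[f]}\leq C(m)\tm{f}$, which, as you correctly observe, only closes the iteration for large $\l$.

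The paper's missing ingredient is a shift: it iterates $\l f^{l+1}+\e\vv\cdot\nx f^{l+1}+(1+M)\nu_m f^{l+1}=S-(K_m+M\nu_m)[f^l]$, with $M$ fixed large enough that $K_m+M\nu_m$ is a non-negative operator (possible for fixed $m$ because $K_m$ is bounded on $L^2$ and $\nu_m$ is bounded below by a positive constant). The generalized Cauchy--Schwarz inequality then applies to the form $\br{(K_m+M\nu_m)[\cdot],\cdot}$, which is in turn dominated by $\br{(1+M)\nu_m\cdot,\cdot}$ precisely because $\ll_m\geq0$; from there your computation proceeds verbatim and produces the ratio $C_1(\l,m)=(1+\l/((1+M)m))^{-1}<1$. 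With this single modification the rest of your argument --- the handling of source and boundary terms, the Cauchy-sequence argument on the differences, the passage to the limit, and your closing remark that the full-$\nu$ weight in the stated estimate requires additional control of high velocities --- matches the paper.
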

\begin{proof}
We divide the proof into several steps:\\
\ \\
Step 1: Definition of iteration.\\
Denote $\ll_m=\nu_m-K_m$. We define the iteration in
$l$: $f_{\l,m}^0=0$ and for $l\geq0$,
\begin{eqnarray}\label{wt 03}
\left\{
\begin{array}{rcl}
\l f_{\l,m}^{l+1}+\e\vv\cdot\nx
f_{\l,m}^{l+1}+(1+M)\nu_mf_{\l,m}^{l+1}&=&S(\vx,\vv)-(K_{m}+M\nu_m)[f_{\l,m}^l],\\\rule{0ex}{1.5em}
f_{\l,m}^{l+1}(\vx_0,\vv)&=&h(\vx_0,\vv)\ \ \text{for}\ \ \vx_0\in\p\Omega\ \ \text{and}\
\vv\cdot\vn<0,
\end{array}
\right.
\end{eqnarray}
where $M>0$ is a fixed real number to be determined later. Since
\begin{eqnarray}
\im{(K_{m}+M\nu_m)[f_{\l,m}^l]}\leq C(m,M)\im{f_{\l,m}^l},
\end{eqnarray}
Lemma \ref{wellposedness prelim lemma 3} implies $f_{\l,m}^l\in
L^{\infty}(\Omega\times\r^2)$ are well-defined for $l\geq0$. However, we
cannot directly obtain the existence of limit
$f_{\l,m}^l$ as $l\rt\infty$.\\
\ \\
Step 2: The limit $l\rt\infty$.\\
Based on Green's identity in Lemma \ref{wellposedness prelim lemma
2}, multiplying $f_{\l,m}^{l+1}$ on both sides of (\ref{wt 03}), we have
\begin{eqnarray}
&&\l\tm{f_{\l,m}^{l+1}}^2+\frac{\e}{2}\tss{f_{\l,m}^{l+1}}{+}^2+\br{(1+M)\nu_mf_{\l,m}^{l+1},f_{\l,m}^{l+1}}\\
&=&\br{(K_{m}+M\nu_m)[f_{\l,m}^{l}],f_{\l,m}^{l+1}}+
\frac{\e}{2}\tss{h}{-}^2+\br{f_{\l,m}^{l+1},S}.\no
\end{eqnarray}
Since $\ll_m=\nu_m-K_{m}$ is a non-negative symmetric operator, we
can always find $M$ sufficiently large such that $K_m+M\nu_m$ is also a
non-negative operator. Then we deduce
\begin{eqnarray}
&&\br{(K_{m}+M\nu_m)[f_{\l,m}^{l}],f_{\l,m}^{l+1}}\\
&\leq&\sqrt{\br{(K_{m}+M\nu_m)[f_{\l,m}^{l}],f_{\l,m}^{l}}}\sqrt{\br{(K_{m}+M\nu_m)[f_{\l,m+1}^{l+1}],f_{\l,m}^{l+1}}}\no\\
&\leq&\half\bigg(\br{(K_{m}+M\nu_m)[f_{\l,m}^{l}],f_{\l,m}^{l}}+\br{(K_{m}+M\nu_m)[f_{\l,m+1}^{l+1}],f_{\l,m}^{l+1}}\bigg)\no\\
&\leq&\half\bigg(\br{(1+M)\nu_m[f_{\l,m}^{l}],f_{\l,m}^{l}}+\br{(1+M)\nu_m[f_{\l,m}^{l+1}],f_{\l,m}^{l+1}}\bigg).\no
\end{eqnarray}
Considering the fact
\begin{eqnarray}
\br{(1+M)\nu_m[f_{\l,m}^{l}],f_{\l,m}^{l}}&\leq&
(1+M)m\tm{f_{\l,m}^{l}},
\end{eqnarray}
we obtain
\begin{eqnarray}
&&\bigg(\frac{\l}{(1+M)m}+1\bigg)\br{(1+M)\nu_m[f_{\l,m}^{l+1}],f_{\l,m}^{l+1}}+\frac{\e}{2}\tss{f_{\l,m}^{l+1}}{+}^2\\
&\leq&\half\bigg(\br{(1+M)\nu_m[f_{\l,m}^{l}],f_{\l,m}^{l}}+\br{(1+M)\nu_m[f_{\l,m}^{l}],f_{\l,m}^{l+1}}\bigg)\no\\
&&+\frac{\e}{2}\tss{h}{-}^2+\frac{\l}{2(1+M)m}\br{(1+M)\nu_m[f_{\l,m}^{l+1}],f_{\l,m}^{l+1}}+\frac{(1+M)m}{2\l}\tm{S}^2.\no
\end{eqnarray}
Since
\begin{eqnarray}
\frac{\l}{(1+M)m}+1-\half-\frac{\l}{2(1+M)m}&>&\half,
\end{eqnarray}
by iteration over $l$, for
\begin{eqnarray}
C_1(\l,m)&=&\frac{1}{1+\frac{\l}{(1+M)m}}<1,\\
C_2(\l,m)&=&\frac{1}{1+\frac{\l}{(1+M)m}}\frac{(1+M)m}{\l}>0,
\end{eqnarray}
we have
\begin{eqnarray}
&&\e\tss{f_{\l,m}^{l+1}}{+}^2+\left(1+\frac{\l}{(1+M)m}\right)\br{(1+M)\nu_m[f_{\l,m}^{l+1}],f_{\l,m}^{l+1}}\\
&\leq&
C_1(\l,m)\left(\e\tss{f_{\l,m}^{l}}{+}^2+\left(1+\frac{\l}{(1+M)m}\right)\br{(1+M)\nu_m[f_{\l,m}^{l}],f_{\l,m}^{l}}\right)\no\\
&&+C_2(\l,m)\bigg(\tm{S}^2+\e\tss{h}{-}^2\bigg)\no.
\end{eqnarray}
Taking the difference of $f_{\l,m}^{l+1}-f_{\l,m}^{l}$, we
conclude that $f_{\l,m}^{l}$ is a Cauchy sequence. We take
$l\rt\infty$ to obtain $f_{\l,m}$ as a solution to the equation
(\ref{wellposedness prelim penalty equation}) satisfying
\begin{eqnarray}
&&\e\tss{f_{\l,m}}{+}^2+\left(1+\frac{\l}{(1+M)m}\right)\br{(1+M)\nu_m[f_{\l,m}],f_{\l,m}}\leq
\frac{C_2(\l,m)}{1-C_1(\l,m)}\bigg(\tm{S}^2+\e\tss{h}{-}^2\bigg).
\end{eqnarray}
Then our results naturally follows.
\end{proof}
Note that the estimate is not uniform in $\l$ as $\l\rt0$. We need to find a stronger estimate of $f_{\l,m}$.
\begin{lemma}\label{wellposedness prelim lemma 5}
The solution $f_{\l,m}$ to the equation (\ref{wellposedness prelim
penalty equation}) satisfies the estimate
\begin{eqnarray}\label{wt 08}
\e\tm{\pk[f_{\l,m}]}&\leq& C\bigg(
\e\tss{f_{\l,m}}{+}+\tm{(\ik-\pk)[f_{\l,m}]}+\tm{S}+\e\tss{h}{-}\bigg),
\end{eqnarray}
for $0\leq\l\leq\e<<1$.
\end{lemma}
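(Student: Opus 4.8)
The estimate (\ref{wt 08}) is a macroscopic coercivity bound, so the plan is to test the penalized equation against carefully chosen velocity moments and recover $\e\tm{\pk[f_{\l,m}]}$ from the transport term. Write $g=(\ik-\pk)[f_{\l,m}]$ and let $a=a_{f_{\l,m}}$, $\vb=\vb_{f_{\l,m}}$ (with components $b_k$), $c=c_{f_{\l,m}}$ be the hydrodynamic fields, so that $\tm{\pk[f_{\l,m}]}\simeq\tm{a}+\tm{\vb}+\tm{c}$. For any test function $\psi$ with rapid velocity decay and $\vv\cdot\nx\psi\in L^2$, pairing (\ref{wellposedness prelim penalty equation}) with $\psi$ and applying Green's identity (Lemma \ref{wellposedness prelim lemma 2}) yields
\begin{eqnarray}
&&\l\br{f_{\l,m},\psi}-\e\br{f_{\l,m},\vv\cdot\nx\psi}+\e\int_{\gamma_+}f_{\l,m}\psi\ud{\gamma}\no\\
&&\qquad\qquad-\e\int_{\gamma_-}h\psi\ud{\gamma}+\br{g,\ll_m\psi}=\br{S,\psi},\no
\end{eqnarray}
where I used that the collision invariants span $\ker\ll_m$, so $\ll_m[\pk[f_{\l,m}]]=0$ and hence $\br{\ll_mf_{\l,m},\psi}=\br{g,\ll_m\psi}$; here $\ll_m\psi$ is bounded in $L^2$ uniformly in $m$ because $\nu_m\leq\nu$ and $K_m$ is a bounded operator, while $\psi$ carries a $\sqrt{\m}$ weight.

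The key is to choose $\psi$ so that the transport pairing reproduces one squared field. First I would solve the scalar Poisson problems $-\dx\phi_a=a$ and $-\dx\phi_c=c$ with $\phi_a=\phi_c=0$ on $\p\Omega$, and set $\psi_a=\sqrt{\m}(\abs{\vv}^2-8)(\vv\cdot\nx\phi_a)$, $\psi_c=\sqrt{\m}(\abs{\vv}^2-4)(\vv\cdot\nx\phi_c)$. These are odd in $\vv$, so the $\vb$-part of $\pk[f_{\l,m}]$ drops out by parity, and the Gaussian moment identities show that the constants $8$ and $4$ are precisely those annihilating the $a$--$c$ cross terms; after using $-\dx\phi=\text{field}$, the pairing $-\e\br{\pk[f_{\l,m}],\vv\cdot\nx\psi}$ reproduces (up to sign) $4\e\tm{a}^2$ and $4\e\tm{c}^2$. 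For $\vb$ I would take the vector potential $-\dx\phi_{\vb,k}=b_k$, $\phi_{\vb,k}|_{\p\Omega}=0$, and the even test function $\psi_{\vb}=\sqrt{\m}\sum_{k,i}v_kv_i\p_i\phi_{\vb,k}$; now the odd-parity $\vv\cdot\nx\psi_{\vb}$ selects only the $\vb$-part, and the fourth-moment contraction gives
\begin{eqnarray}
-\e\br{\pk[f_{\l,m}],\vv\cdot\nx\psi_{\vb}}&=&\e\tm{\vb}^2-2\e\int_{\Omega}\vb\cdot\nx(\nx\cdot\phi_{\vb})\ud{\vx}.\no
\end{eqnarray}

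For the remaining terms I would invoke elliptic $H^2$-regularity and the trace theorem on the unit disk to bound $\tm{\psi}$, $\tm{\vv\cdot\nx\psi}$ and $\ts{\psi}$ on $\gamma$ by the corresponding field norm. This controls $\br{S,\psi}$ by $\tm{S}$, the boundary integrals by $\e\tss{f_{\l,m}}{+}$ and $\e\tss{h}{-}$ (using $f_{\l,m}=h$ on $\gamma_-$), the microscopic term $\br{g,\ll_m\psi}$ by $\tm{g}$, and the penalization term $\l\br{f_{\l,m},\psi}$ by $\e\tm{f_{\l,m}}$ since $\l\leq\e$. Dividing the three resulting identities by $\tm{a}$, $\tm{\vb}$, $\tm{c}$ and summing yields
\begin{eqnarray}
\e\tm{\pk[f_{\l,m}]}&\leq& C\e\tm{f_{\l,m}}+C\Big(\tm{g}+\tm{S}+\e\tss{f_{\l,m}}{+}+\e\tss{h}{-}\Big).\no
\end{eqnarray}
Writing $\tm{f_{\l,m}}\leq\tm{\pk[f_{\l,m}]}+\tm{g}$ and using $\e<<1$ to absorb $C\e\tm{\pk[f_{\l,m}]}$ into the left-hand side then gives (\ref{wt 08}).

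The main obstacle is the stray divergence term $2\e\int_{\Omega}\vb\cdot\nx(\nx\cdot\phi_{\vb})$ in the $\vb$-identity: integrating by parts it becomes $-2\e\int_{\Omega}(\nx\cdot\vb)(\nx\cdot\phi_{\vb})$ plus a boundary integral, and $\nx\cdot\vb$ is only $O(1/\e)$ a priori, so the naive bound fails. The resolution is the local mass conservation law obtained by integrating (\ref{wellposedness prelim penalty equation}) against $\sqrt{\m}$ over $\vv$ (legitimate since $\sqrt{\m}\in\ker\ll_m$), namely $\e\,\nx\cdot\vb=\int_{\r^2}S\sqrt{\m}\ud{\vv}-\l a$, so the factor $\e$ cancels the $1/\e$ and the term is bounded by $C(\tm{S}+\e\tm{a})\tm{\vb}$, while the boundary contribution $2\e\int_{\p\Omega}(\vb\cdot\vn)(\nx\cdot\phi_{\vb})$ is controlled by $\e\tss{f_{\l,m}}{+}+\e\tss{h}{-}$ after splitting $\vb\cdot\vn$ into its incoming and outgoing velocity parts. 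Once this term is tamed, the scheme closes as above.
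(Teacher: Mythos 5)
Your overall strategy is the same as the paper's: test the penalized equation against velocity moments multiplied by gradients of Poisson solves for the fields, use Green's identity, and recover $\e\tm{\pk[f_{\l,m}]}$ from the transport term. Your treatment of $\vb$ (a single contracted test function plus the local conservation law $\e\,\nx\cdot\vb=\int_{\r^2}S\sqrt{\m}\ud{\vv}-\l a$ to tame the divergence term) is a legitimate variant of the paper's two-phase, componentwise argument, and your explicit constants $4$ and $8$ agree with the paper's $\beta_c$ and $\beta_a$.

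However, there is a genuine gap in how you close the estimate. You bound the penalization term by $\l\abs{\br{f_{\l,m},\psi}}\leq C\e\tm{f_{\l,m}}\tm{\pk[f_{\l,m}]}$ and, after dividing and summing, arrive at $\e\tm{\pk[f_{\l,m}]}\leq C\e\tm{\pk[f_{\l,m}]}+C\e\tm{(\ik-\pk)[f_{\l,m}]}+\cdots$, proposing to ``use $\e\ll1$ to absorb $C\e\tm{\pk[f_{\l,m}]}$ into the left-hand side.'' This absorption fails: both sides carry exactly one factor of $\e$, so it would require $C<1$, and $C$ here is a product of Cauchy--Schwarz, Gaussian-moment and elliptic-regularity constants with no reason to be small. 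The same defect reappears in a subtler form as a circularity between your $a$- and $\vb$-identities: your even test function $\psi_{\vb}$ leaves a residual $\l\int_{\Omega}a\,(\nx\cdot\phi_{\vb})$ in the $\l$-term (and the conservation-law trick contributes another $\l\tm{a}\tm{\vb}$), while the $a$-identity retains $\l\tm{\vb}\tm{a}$; eliminating one from the other needs $(C\l/\e)^2<1$, which $0\leq\l\leq\e$ does not guarantee.

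The paper avoids this by making the $\l$-term see only $(\ik-\pk)[f_{\l,m}]$ and \emph{previously estimated} fields. The constants $\beta_c,\beta_b$ are chosen not merely to kill cross terms in the transport pairing but to annihilate the $\pk[f_{\l,m}]$ contribution of the penalization term itself: e.g.\ $\int_{\r^2}\m(\abs{\vv}^2-\beta_c)\v_i^2\ud{\vv}=0$ removes the $\vb$-part from the $\l$-term of the $c$-estimate, and $\int_{\r^2}\m(\v_i^2-\beta_b)\ud{\vv}=0$ removes the $a$-part from the $\l$-term of the $\vb$-estimate. The estimates are then performed strictly in the order $c\rightarrow\vb\rightarrow a$, so the only surviving field terms ($\l\tm{c}$ in the $\vb$-step, $\l\tm{\vb}$ in the $a$-step) are controlled by already-proved bounds via $\l/\e\leq1$, with no self-referential term to absorb. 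Your scheme can be repaired in the same spirit by subtracting $\delta_{ki}$ inside your contracted $\psi_{\vb}$ to kill the $a$-contribution and by adopting this sequential ordering; as written, the proof does not close.
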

\begin{proof}
Applying Green's identity in Lemma \ref{wellposedness prelim lemma
2} to the equation (\ref{wellposedness prelim
penalty equation}), for any $\psi\in L^2(\Omega\times\r^2)$
satisfying $\vv\cdot\nx\psi\in L^2(\Omega\times\r^2)$ and $\psi\in
L^2(\gamma)$, we have
\begin{eqnarray}\label{wt 11}
&&\l\iint_{\Omega\times\r^2}f_{\l,m}\psi+\e\int_{\gamma_+}f_{\l,m}\psi\ud{\gamma}-\e\int_{\gamma_-}f_{\l,m}\psi\ud{\gamma}
-\e\iint_{\Omega\times\r^2}(\vv\cdot\nx\psi)f_{\l,m}\\
&=&-\iint_{\Omega\times\r^2}\psi(\ik-\pk)[f_{\l,m}]+\iint_{\Omega\times\r^2}S\psi.\no
\end{eqnarray}
Since
\begin{eqnarray}
\pk[f]=\sqrt{\m}\bigg(a+\vv\cdot\vb+\frac{\abs{\vv}^2-2}{2}c\bigg),
\end{eqnarray}
our goal is to choose a particular test function $\psi$ to estimate
$a$, $\vb$ and $c$.\\
\ \\
Step 1: Estimates of $c$.\\
We choose the test function
\begin{eqnarray}\label{wt 12}
\psi=\psi_c=\sqrt{\m(\vv)}\left(\abs{\vv}^2-\beta_c\right)\left(\vv\cdot\nx\phi_c(\vx)\right),
\end{eqnarray}
where
\begin{eqnarray}
\left\{
\begin{array}{rcl}
-\dx\phi_c(\vx)&=&c(\vx)\ \ \text{in}\ \
\Omega,\\
\phi_c&=&0\ \ \text{on}\ \ \p\Omega,
\end{array}
\right.
\end{eqnarray}
and $\beta_c$ is a real number to be determined later. Based on the
standard elliptic estimates, we have
\begin{eqnarray}
\nm{\phi_c}_{H^2}\leq C\tm{c}.
\end{eqnarray}
With the choice of (\ref{wt 12}), the right-hand side(RHS) of
(\ref{wt 11}) is bounded by
\begin{eqnarray}
\text{RHS}\leq C\tm{c}\bigg(\tm{(\ik-\pk)[f_{\l,m}]}+\tm{S}\bigg).
\end{eqnarray}
We have
\begin{eqnarray}
\vv\cdot\nx\psi_c&=&\sqrt{\m(\vv)}\sum_{i,j=1}^2\left(\abs{\vv}^2-\beta_c\right)\v_i\v_j\p_{ij}\phi_c(\vx),
\end{eqnarray}
so the left-hand side(LHS) of (\ref{wt 11}) takes the form
\begin{eqnarray}\label{wt 13}
\text{LHS}&=&\l\iint_{\Omega\times\r^2}f_{\l,m}\sqrt{\m(\vv)}\left(\abs{\vv}^2-\beta_c\right)\left(\sum_{i=1}^2\v_i\p_i\phi_c\right)\\
&&+\e\int_{\p\Omega\times\r^2}f_{\l,m}\sqrt{\m(\vv)}\left(\abs{\vv}^2-\beta_c\right)\left(\sum_{i=1}^2\v_i\p_i\phi_c\right)(\vn\cdot\vv)\no\\
&&-\e\iint_{\Omega\times\r^2}f_{\l,m}\sqrt{\m(\vv)}\left(\abs{\vv}^2-\beta_c\right)\left(\sum_{i,j=1}^2\v_i\v_j\p_{ij}\phi_c\right).\no
\end{eqnarray}
We decompose
\begin{eqnarray}
(f_{\l,m})_{\gamma}&=&\id_{\gamma_+}f_{\l,m}+\id_{\gamma_-}h\ \
\text{on}\ \ \gamma, \label{wt 14}\\
f_{\l,m}&=&\sqrt{\m}\bigg(a+\vv\cdot
\vb+\frac{\abs{\vv}^2-2}{2}c\bigg)+(\ik-\pk)[f_{\l,m}]\ \ \text{in}\
\ \Omega\times\r^2.\label{wt 15}
\end{eqnarray}
Note that the operator $\pk$ and $\ik-\pk$ are defined independent
of cut-off parameter $m$. We will choose $\beta_c$ such that
\begin{eqnarray}
\int_{\r^2}\sqrt{\m(\vv)}\left(\abs{\vv}^2-\beta_c\right)\v_i^2\ud{\vv}=0\
\ \text{for}\ \ i=1,2.
\end{eqnarray}
Since $\m(\vv)$ takes the form
\begin{eqnarray}
\m(\vv)=C\exp\left(-\frac{\abs{\vv}^2}{2}\right),
\end{eqnarray}
this $\beta_c$ can always be achieved. Now substitute (\ref{wt 14})
and (\ref{wt 15}) into (\ref{wt 13}). Then based on this choice of
$\beta_c$ and oddness in $\vv$, there is no $\pk[f_{\l,m}]$
contribution in the first term and no $a$ contribution in the third term of
(\ref{wt 13}). Since $\vb$ contribution and the off-diagonal $c$
contribution in the third term of (\ref{wt 13}) also vanish due to
the oddness in $\vv$, we can simplify (\ref{wt 13}) into
\begin{eqnarray}
\text{LHS}
&=&\l\iint_{\Omega\times\r^2}(\ik-\pk)[f_{\l,m}]\sqrt{\m(\vv)}\left(\abs{\vv}^2-\beta_c\right)\left(\sum_{i=1}^2\v_i\p_i\phi_c\right)\\
&&+\e\int_{\p\Omega\times\r^2}\id_{\gamma_+}f_{\l,m}
\sqrt{\m(\vv)}\left(\abs{\vv}^2-\beta_c\right)\left(\sum_{i=1}^2\v_i\p_i\phi_c\right)(\vn\cdot\vv)\no\\
&&+\e\int_{\p\Omega\times\r^2}\id_{\gamma_-}h
\sqrt{\m(\vv)}\left(\abs{\vv}^2-\beta_c\right)\left(\sum_{i=1}^2\v_i\p_i\phi_c\right)(\vn\cdot\vv)\no\\
&&-\e\sum_{i=1}^2\int_{\r^2}\m(\vv)\abs{\v_i}^2\left(\abs{\vv}^2-\beta_c\right)\frac{\abs{\vv}^2-2}{2}\ud{\vv}
\int_{\Omega}c(\vx)\p_{ii}\phi_c(\vx)\ud{\vx}\no\\
&&-\e\iint_{\Omega\times\r^2}(\ik-\pk)[f_{\l,m}]\sqrt{\m(\vv)}\left(\abs{\vv}^2-\beta_c\right)\left(\sum_{i,j=1}^2\v_i\v_j\p_{ij}\phi_c\right).\no
\end{eqnarray}
Since
\begin{eqnarray}
\int_{\r^2}\m(\vv)\abs{\v_i}^2\left(\abs{\vv}^2-\beta_c\right)\frac{\abs{\vv}^2-2}{2}\ud{\vv}=C,
\end{eqnarray}
we have
\begin{eqnarray}
\\
\e\abs{\int_{\Omega}\dx\phi_c(\vx)c(\vx)\ud{\vx}}
&\leq&C\tm{c}\bigg(\e\tss{f_{\l,m}}{+}+(1+\e+\l)\tm{(\ik-\pk)[f_{\l,m}]}+\tm{S}+\e\tss{h}{-}\bigg),\no
\end{eqnarray}
where we have used the elliptic estimates and the trace estimate:
$\ts{\nx\phi_c}\leq C\nm{\phi_c}_{H^2}\leq C\tm{c}$. Since $-\dx\phi_c=c$,
we know
\begin{eqnarray}
\e\tm{c}^2
&\leq&C
\tm{c}\bigg(\e\tss{f_{\l,m}}{+}+(1+\e+\l)\tm{(\ik-\pk)[f_{\l,m}]}+\tm{S}+\e\tss{h}{-}\bigg),
\end{eqnarray}
which further implies
\begin{eqnarray}\label{wt 16}
&&\e\tm{c}\leq C
\e\tss{f_{\l,m}}{+}+(1+\e+\l)\tm{(\ik-\pk)[f_{\l,m}]}+\tm{S}+\e\tss{h}{-}.
\end{eqnarray}
\ \\
Step 2: Estimates of $\vb$.\\
\ \\
Step 2 - Phase 1: Estimates of $(\p_{ij}\dx^{-1}b_j)b_i$ for $i,j=1,2$.\\
We choose the test function
\begin{eqnarray}\label{wt 17}
\psi=\psi_b^{i,j}=\sqrt{\m(\vv)}\left(\v_i^2-\beta_b\right)\p_j\phi_b^j,
\end{eqnarray}
where
\begin{eqnarray}
\left\{
\begin{array}{rcl}
-\dx\phi_b^j(\vx)&=&b_j(\vx)\ \ \text{in}\ \
\Omega,\\
\phi_b^j&=&0\ \ \text{on}\ \ \p\Omega,
\end{array}
\right.
\end{eqnarray}
and $\beta_b$ is a real number to be determined later. Based on the
standard elliptic estimates, we have
\begin{eqnarray}
\nm{\phi_b^j}_{H^2}\leq C \tm{\vb}.
\end{eqnarray}
With the choice of (\ref{wt 17}), the right-hand side(RHS) of
(\ref{wt 11}) is bounded by
\begin{eqnarray}
\text{RHS}\leq C \tm{\vb}\bigg(\tm{(\ik-\pk)[f_{\l,m}]}+\tm{S}\bigg).
\end{eqnarray}
Hence, the left-hand side(LHS) of (\ref{wt 11}) takes the form
\begin{eqnarray}\label{wt 18}
\text{LHS}&=&\l\iint_{\Omega\times\r^2}f_{\l,m}\sqrt{\m(\vv)}\left(\v_i^2-\beta_b\right)\p_j\phi_b^j\\
&&+\e\int_{\p\Omega\times\r^2}f_{\l,m}\sqrt{\m(\vv)}\left(\v_i^2-\beta_b\right)\p_j\phi_b^j(\vn\cdot\vv)\no\\
&&-\e\iint_{\Omega\times\r^2}f_{\l,m}\sqrt{\m(\vv)}\left(\v_i^2-\beta_b\right)\left(\sum_{i=1}^2\v_i\p_{ij}\phi_b^j\right).\no
\end{eqnarray}
Now substitute (\ref{wt 14}) and (\ref{wt 15}) into (\ref{wt 18}).
Then based on the oddness in $\vv$, there is no $\vb$ contribution
in the first term and no $a$ and $c$ contribution in the third term of (\ref{wt
18}). We can simplify (\ref{wt 18}) into
\begin{eqnarray}
\text{LHS}
&=&\l\iint_{\Omega\times\r^2}(\ik-\pk)[f_{\l,m}]\sqrt{\m(\vv)}\left(\v_i^2-\beta_b\right)\p_j\phi_b^j\\
&&+\l\iint_{\Omega\times\r^2}a(\vx)\m(\vv)\left(\v_i^2-\beta_b\right)\p_j\phi_b^j\no\\
&&+\l\iint_{\Omega\times\r^2}c(\vx)\m(\vv)\frac{\abs{\vv}^2-2}{2}\left(\v_i^2-\beta_b\right)\p_j\phi_b^j\no\\
&&+\e\int_{\p\Omega\times\r^2}\id_{\gamma_+}f_{\l,m}
\sqrt{\m(\vv)}\left(\v_i^2-\beta_b\right)\p_j\phi_b^j(\vn\cdot\vv)\no\\
&&+\e\int_{\p\Omega\times\r^2}\id_{\gamma_-}h
\sqrt{\m(\vv)}\left(\v_i^2-\beta_b\right)\p_j\phi_b^j(\vn\cdot\vv)\no\\
&&-\e\sum_{l=1}^2\iint_{\Omega\times\r^2}\m(\vv)\v_l^2\left(\v_i^2-\beta_b\right)\p_{lj}\phi_b^j(\vx)b_l\no\\
&&-\e\sum_{l=1}^2\iint_{\Omega\times\r^2}(\ik-\pk)[f_{\l,m}]\sqrt{\m(\vv)}\left(\v_i^2-\beta_b\right)\v_l\p_{lj}\phi_b^j.\no
\end{eqnarray}
We will choose $\beta_b$ such that
\begin{eqnarray}
\int_{\r^2}\m(\vv)\left(\abs{\v_i}^2-\beta_b\right)\ud{\vv}=0\ \
\text{for}\ \ i=1,2.
\end{eqnarray}
Since $\m(\vv)$ takes the form
\begin{eqnarray}
\m(\vv)=C\exp\left(-\frac{\abs{\vv}^2}{2}\right),
\end{eqnarray}
this $\beta_b$ can always be achieved. Based on this choice of
$\beta_b$,we have
\begin{eqnarray}
\l\iint_{\Omega\times\r^2}a_{f_{\l,m}}\m(\vv)\left(\v_i^2-\beta_b\right)\p_j\phi_b^j=0
\end{eqnarray}
For such $\beta_b$ and any $i\neq l$, we can directly compute
\begin{eqnarray}
\int_{\r^2}\m(\vv)\left(\abs{\v_i}^2-\beta_b\right)\v_l^2\ud{\vv}&=&0,\\
\int_{\r^2}\m(\vv)\left(\abs{\v_i}^2-\beta_b\right)\v_i^2\ud{\vv}&=&C\neq0.
\end{eqnarray}
Then we deduce
\begin{eqnarray}
&&-\e\sum_{l=1}^2\iint_{\Omega\times\r^2}\m(\vv)\v_l^2\left(\v_i^2-\beta_b\right)\p_{lj}\phi_b^j(\vx)b_i\\
&=&-\e\iint_{\Omega\times\r^2}\m(\vv)\v_i^2\left(\v_i^2-\beta_b\right)\p_{ij}\phi_b^j(\vx)b_l-\e\sum_{l\neq
i}\iint_{\Omega\times\r^2}\m(\vv)\v_l^2\left(\v_i^2-\beta_b\right)\p_{lj}\phi_b^j(\vx)b_l\no\\
&=&C\int_{\Omega}(\p_{ij}\dx^{-1}b_j)b_i,\no
\end{eqnarray}
and
\begin{eqnarray}
\l\iint_{\Omega\times\r^2}c(\vx)\m(\vv)\frac{\abs{\vv}^2-2}{2}\left(\v_i^2-\beta_b\right)\p_j\phi_b^j
&=&\l\iint_{\Omega\times\r^2}c(\vx)\m(\vv)\frac{\v_i^2-2}{2}\left(\v_i^2-\beta_b\right)\p_j\phi_b^j.
\end{eqnarray}
Hence, by (\ref{wt 16}), we may estimate
\begin{eqnarray}\label{wt 19}
&&\e\abs{\int_{\Omega}(\p_{ij}\dx^{-1}b_j)b_i}\\
&\leq&
C\tm{\vb}\bigg(\e\tss{f_{\l,m}}{+}+(1+\e+\l)\tm{(\ik-\pk)[f_{\l,m}]}+\tm{S}+\e\tss{h}{-}+\l\tm{c}\bigg)\no\\
&\leq&
C\tm{\vb}\bigg((\e+\l)\tss{f_{\l,m}}{+}+(1+\e+\l)\tm{(\ik-\pk)[f_{\l,m}]}+(1+\l)\tm{S}+(\e+\l)\tss{h}{-}\bigg)\no.
\end{eqnarray}
\ \\
Step 2 - Phase 2: Estimates of $(\p_{jj}\dx^{-1}b_i)b_i$ for $i\neq
j$.\\
We choose the test function
\begin{eqnarray}
\psi=\sqrt{\m(\vv)}\abs{\vv}^2\v_i\v_j\p_j\phi_b^i\ \ i\neq j.
\end{eqnarray}
The right-hand side(RHS) of (\ref{wt 11}) is still bounded by
\begin{eqnarray}
\text{RHS}\leq C\tm{\vb}\bigg(\tm{(\ik-\pk)[f_{\l,m}]}+\tm{S}\bigg).
\end{eqnarray}
Hence, the left-hand side(LHS) of (\ref{wt 11}) takes the form
\begin{eqnarray}\label{wt 20}
\text{LHS}&=&\l\iint_{\Omega\times\r^2}f_{\l,m}\sqrt{\m(\vv)}\abs{\vv}^2\v_i\v_j\p_j\phi_b^i\\
&&+\e\int_{\p\Omega\times\r^2}f_{\l,m}\sqrt{\m(\vv)}\abs{\vv}^2\v_i\v_j\p_j\phi_b^i(\vn\cdot\vv)\no\\
&&-\e\iint_{\Omega\times\r^2}f_{\l,m}\sqrt{\m(\vv)}\abs{\vv}^2\v_i\v_j\left(\sum_{l=1}^2\v_i\p_{lj}\phi_b^i\right).\no
\end{eqnarray}
Now substitute (\ref{wt 14}) and (\ref{wt 15}) into (\ref{wt 20}).
Then based on the oddness in $\vv$, there is no $\pk[f_{\l,m}]$
contribution in the first term and no $a$ and $c$ contribution in the third term
of (\ref{wt 20}). We can simplify (\ref{wt 20}) into
\begin{eqnarray}
\text{LHS}
&=&\l\iint_{\Omega\times\r^2}(\ik-\pk)[f_{\l,m}]\sqrt{\m(\vv)}\abs{\vv}^2\v_i\v_j\p_j\phi_b^i\\
&&+\e\int_{\p\Omega\times\r^2}\id_{\gamma_+}f_{\l,m}
\sqrt{\m(\vv)}\abs{\vv}^2\v_i\v_j\p_j\phi_b^i(\vn\cdot\vv)\no\\
&&+\e\int_{\p\Omega\times\r^2}\id_{\gamma_-}h
\sqrt{\m(\vv)}\abs{\vv}^2\v_i\v_j\p_j\phi_b^i(\vn\cdot\vv)\no\\
&&-\e\iint_{\Omega\times\r^2}\m(\vv)\abs{\vv}^2\v_i^2\v_j^2(\p_{ij}\phi_b^i(\vx)b_j+\p_{jj}\phi_b^i(\vx)b_i)\no\\
&&-\e\sum_{l=1}^2\iint_{\Omega\times\r^2}(\ik-\pk)[f_{\l,m}]\sqrt{\m(\vv)}\abs{\vv}^2\v_i\v_j\v_l\p_{lj}\phi_b^i.\no
\end{eqnarray}
Then we deduce
\begin{eqnarray}
&&-\e\iint_{\Omega\times\r^2}\m(\vv)\abs{\vv}^2\v_i^2\v_j^2(\p_{ij}\phi_b^i(\vx)b_j+\p_{jj}\phi_b^i(\vx)b_i)=
C\bigg(\int_{\Omega}(\p_{ij}\dx^{-1}b_i)b_j+\int_{\Omega}(\p_{jj}\dx^{-1}b_i)b_i\bigg).
\end{eqnarray}
Hence, we may estimate for $i\neq j$,
\begin{eqnarray}\label{wt 21}
\e\abs{\int_{\Omega}(\p_{jj}\dx^{-1}b_i)b_i}
&\leq&C\tm{\vb}\bigg(\e\tss{f_{\l,m}}{+}+(1+\e+\l)\tm{(\ik-\pk)[f_{\l,m}]}\\
&&+\tm{S}+\e\tss{h}{-}\bigg)+C\e\abs{\int_{\Omega}(\p_{ij}\dx^{-1}b_i)b_j}.\no
\end{eqnarray}
Moreover, by (\ref{wt 19}), for $i=j=1,2$,
\begin{eqnarray}\label{wt 22}
\e\abs{\int_{\Omega}(\p_{jj}\dx^{-1}b_j)b_j}
&\leq&C\tm{\vb}\bigg((\e+\l)\tss{f_{\l,m}}{+}+(1+\e+\l)\tm{(\ik-\pk)[f_{\l,m}]}\\
&&+(1+\l)\tm{S}+(\e+\l)\tss{h}{-}\bigg).\no
\end{eqnarray}
\ \\
Step 2 - Phase 3: Synthesis.\\
Summarizing (\ref{wt 21}) and (\ref{wt 22}), we may sum up over
$j=1,2$ to obtain, for any $i=1,2$,
\begin{eqnarray}\label{wt 23}
\e\tm{b_i}^2
&\leq&C\tm{\vb}\bigg((\e+\l)\tss{f_{\l,m}}{+}+(1+\e+\l)\tm{(\ik-\pk)[f_{\l,m}]}\\
&&+(1+\l)\tm{S}+(\e+\l)\tss{h}{-}\bigg),\no
\end{eqnarray}
which further implies
\begin{eqnarray}\label{wt 24}
\\
\e\tm{\vb}&\leq&C\bigg((\e+\l)\tss{f_{\l,m}}{+}+(1+\e+\l)\tm{(\ik-\pk)[f_{\l,m}]}+(1+\l)\tm{S}+(\e+\l)\tss{h}{-}\bigg).\no
\end{eqnarray}
\ \\
Step 3: Estimates of $a$.\\
We choose the test function
\begin{eqnarray}\label{wt 25}
\psi=\psi_a=\sqrt{\m(\vv)}\left(\abs{\vv}^2-\beta_a\right)\left(\vv\cdot\nx\phi_a(\vx)\right),
\end{eqnarray}
where
\begin{eqnarray}
\left\{
\begin{array}{rcl}
-\dx\phi_a(\vx)&=&a(\vx)\ \ \text{in}\ \ \Omega,\\\rule{0ex}{1.5em}
\phi_a&=&0\ \ \text{on}\ \ \p\Omega,
\end{array}
\right.
\end{eqnarray}
and $\beta_a$ is a real number to be determined later. Based on the
standard elliptic estimates,
we have
\begin{eqnarray}
\nm{\phi_a}_{H^2}\leq C \tm{a}.
\end{eqnarray}
With the choice of (\ref{wt 25}), the right-hand side(RHS) of
(\ref{wt 11}) is bounded by
\begin{eqnarray}
\text{RHS}\leq C \tm{a}\bigg(\tm{(\ik-\pk)[f_{\l,m}]}+\tm{S}\bigg).
\end{eqnarray}
We have
\begin{eqnarray}
\vv\cdot\nx\psi_a&=&\sqrt{\m(\vv)}\sum_{i,j=1}^2\left(\abs{\vv}^2-\beta_a\right)\v_i\v_j\p_{ij}\phi_a(\vx),
\end{eqnarray}
so the left-hand side(LHS) of (\ref{wt 11}) takes the form
\begin{eqnarray}\label{wt 26}
\text{LHS}&=&\l\iint_{\Omega\times\r^2}f_{\l,m}\sqrt{\m(\vv)}\left(\abs{\vv}^2-\beta_a\right)\left(\sum_{i=1}^2\v_i\p_i\phi_a\right)\\
&&+\e\int_{\p\Omega\times\r^2}f_{\l,m}\sqrt{\m(\vv)}\left(\abs{\vv}^2-\beta_a\right)\left(\sum_{i=1}^2\v_i\p_i\phi_a\right)(\vn\cdot\vv)\no\\
&&-\e\iint_{\Omega\times\r^2}f_{\l,m}\sqrt{\m(\vv)}\left(\abs{\vv}^2-\beta_a\right)\left(\sum_{i,j=1}^2\v_i\v_j\p_{ij}\phi_a\right).\no
\end{eqnarray}
We will choose $\beta_a$ such that
\begin{eqnarray}
\int_{\r^2}\sqrt{\m(\vv)}\left(\abs{\vv}^2-\beta_a\right)\frac{\abs{\vv}^2-2}{2}\v_i^2\ud{\vv}=0\
\ \text{for}\ \ i=1,2.
\end{eqnarray}
Since
\begin{eqnarray}
\int_{\r^2}\sqrt{\m(\vv)}\frac{\abs{\vv}^2-2}{2}\v_i^2\ud{\vv}\neq0,
\end{eqnarray}
this $\beta_a$ can always be achieved. Now substitute (\ref{wt 14})
and (\ref{wt 15}) into (\ref{wt 26}). Then based on this choice of
$\beta_a$ and oddness in $\vv$, there is no $a$ and $c$ contribution
in the first term, and no $\vb$ and $c$ contribution in the third
term of (\ref{wt 26}). Since $\vb$ contribution and the off-diagonal
$c$ contribution in the third term of (\ref{wt 26}) also vanish due
to the oddness in $\vv$, we can simplify (\ref{wt 26}) into
\begin{eqnarray}
\text{LHS}
&=&\l\iint_{\Omega\times\r^2}(\ik-\pk)[f_{\l,m}]\sqrt{\m(\vv)}\left(\abs{\vv}^2-\beta_a\right)\left(\sum_{i=1}^2\v_i\p_i\phi_a\right)\\
&&+\l\iint_{\Omega\times\r^2}\m(\vv)\left(\abs{\vv}^2-\beta_a\right)\left(\sum_{i=1}^2b_i\v_i^2\p_i\phi_a\right)\no\\
&&+\e\int_{\p\Omega\times\r^2}\id_{\gamma_+}f_{\l,m}
\sqrt{\m(\vv)}\left(\abs{\vv}^2-\beta_a\right)\left(\sum_{i=1}^2\v_i\p_i\phi_a\right)(\vn\cdot\vv)\no\\
&&+\e\int_{\p\Omega\times\r^2}\id_{\gamma_-}h
\sqrt{\m(\vv)}\left(\abs{\vv}^2-\beta_a\right)\left(\sum_{i=1}^2\v_i\p_i\phi_a\right)(\vn\cdot\vv)\no\\
&&-\sum_{i=1}^2\e\int_{\r^2}\m(\vv)\abs{\v_i}^2\left(\abs{\vv}^2-\beta_a\right)\ud{\vv}
\int_{\Omega}a(\vx)\p_{ii}\phi_a(\vx)\ud{\vx}\no\\
&&-\e\iint_{\Omega\times\r^2}(\ik-\pk)[f_{\l,m}]\sqrt{\m(\vv)}\left(\abs{\vv}^2-\beta_a\right)\left(\sum_{i,j=1}^2\v_i\v_j\p_{ij}\phi_a\right).\no
\end{eqnarray}
Since
\begin{eqnarray}
\int_{\r^2}\sqrt{\m(\vv)}\abs{\v_i}^2\left(\abs{\vv}^2-\beta_a\right)\ud{\vv}=C,
\end{eqnarray}
we have
\begin{eqnarray}
&&-\e\int_{\Omega}\dx\phi_a(\vx)a(\vx)\ud{\vx}\\
&\leq&C
\tm{a}\bigg(\e\tss{f_{\l,m}}{+}+(1+\e+\l)\tm{(\ik-\pk)[f_{\l,m}]}+\tm{S}+\e\tss{h}{-}+\l\tm{\vb}\bigg).\no
\end{eqnarray}
Since $-\dx\phi_a=a$, by (\ref{wt 24}), we know
\begin{eqnarray}
\e\tm{a}^2 &\leq&C
\tm{a}\bigg((\e+\l)\tss{f_{\l,m}}{+}+(1+\e+\l)\tm{(\ik-\pk)[f_{\l,m}]}\\
&&+(1+\l)\tm{S}+(\e+\l)\tss{h}{-}\bigg),\no
\end{eqnarray}
which further implies
\begin{eqnarray}\label{wt 28}
\\
\e\tm{a}&\leq&C\bigg(
(\e+\l)\tss{f_{\l,m}}{+}+(1+\e+\l)\tm{(\ik-\pk)[f_{\l,m}]}+(1+\l)\tm{S}+(\e+\l)\tss{h}{-}\bigg).\no
\end{eqnarray}
\ \\
Step 4: Synthesis.\\
Collecting (\ref{wt 16}), (\ref{wt 24}) and (\ref{wt 28}), we deduce
\begin{eqnarray}
&&\e\tm{\pk[f_{\l,m}]}\\
&\leq& C\bigg((\e+\l)\tss{f_{\l,m}}{+}+(1+\e+\l)\tm{(\ik-\pk)[f_{\l,m}]}+(1+\l)\tm{S}+(\e+\l)\tss{h}{-}\bigg).\no
\end{eqnarray}
This completes our proof.
\end{proof}
\begin{theorem}\label{wellposedness LT estimate}
There exists a unique solution $f\in L^2(\Omega\times\r^2)$ to the
equation (\ref{linear steady}) that satisfies the estimate
\begin{eqnarray}
\tm{f}+\frac{1}{\e^{1/2}}\tss{f}{+}&\leq&C\bigg(\frac{1}{\e^2}\tm{(\ik-\pk)[S]}+\frac{1}{\e}\tm{\pk[S]}+\frac{1}{\e^{1/2}}\tss{h}{-}\bigg).
\end{eqnarray}
\end{theorem}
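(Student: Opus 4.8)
The plan is to obtain the solution as a limit of the penalized, cut-off approximations $f_{\l,m}$ furnished by Lemma~\ref{wellposedness prelim lemma 4}, and to control that limit by a single a priori estimate that is \emph{uniform} in the penalization parameter $\l$ and the cut-off level $m$. Lemma~\ref{wellposedness prelim lemma 4} already supplies a unique $f_{\l,m}\in L^2$, but with a constant $C(\l,m)$ that degenerates as $\l\rt0$ and $m\rt\infty$, so the entire content of the theorem is the derivation of a bound independent of $(\l,m)$. Once that is in hand, existence follows by compactness and uniqueness follows by applying the same bound to the difference of two solutions (for which $S=0$ and $h=0$).

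The a priori estimate splits into a microscopic part and a macroscopic part. First I would test the equation against $f_{\l,m}$ and invoke Green's identity (Lemma~\ref{wellposedness prelim lemma 2}) to obtain the energy identity
\[
\l\tm{f_{\l,m}}^2+\frac{\e}{2}\tss{f_{\l,m}}{+}^2+\br{\ll_m[f_{\l,m}],f_{\l,m}}=\frac{\e}{2}\tss{h}{-}^2+\br{S,f_{\l,m}}.
\]
The key structural input is the coercivity (spectral gap) of the linearized operator, $\br{\ll_m[f],f}\gs\um{(\ik-\pk)[f]}^2$ for $m$ large, which controls the non-kernel part $(\ik-\pk)[f_{\l,m}]$ together with the outflow trace $\tss{f_{\l,m}}{+}$. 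I would then split the source through the orthogonal decomposition $\br{S,f_{\l,m}}=\br{(\ik-\pk)[S],(\ik-\pk)[f_{\l,m}]}+\br{\pk[S],\pk[f_{\l,m}]}$, so that the non-kernel source $\tm{(\ik-\pk)[S]}$ pairs against the coercively controlled $(\ik-\pk)[f_{\l,m}]$ and can be absorbed by Young's inequality.

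What the energy identity does \emph{not} see is the hydrodynamic field $\pk[f_{\l,m}]$, and this is exactly where Lemma~\ref{wellposedness prelim lemma 5} enters: for $0\le\l\le\e\ll1$ it bounds $\e\tm{\pk[f_{\l,m}]}$ by $\tm{(\ik-\pk)[f_{\l,m}]}$, the boundary data, the source, and $\e\tss{f_{\l,m}}{+}$. Substituting this into the residual pairing $\br{\pk[S],\pk[f_{\l,m}]}\le\tm{\pk[S]}\tm{\pk[f_{\l,m}]}$, and again using Young's inequality to reabsorb the $\e\tss{f_{\l,m}}{+}$ and $\tm{(\ik-\pk)[f_{\l,m}]}$ contributions into the coercive and boundary terms already controlled, one closes the estimate. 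Careful bookkeeping of the powers of $\e$ generated by the $\e^{-1}$ weight in Lemma~\ref{wellposedness prelim lemma 5} and by the $\e$ in front of the outflow trace is what produces the stated weights $\e^{-2}$, $\e^{-1}$, $\e^{-1/2}$ on the three data terms; the restriction $\l\le\e$ is precisely what keeps the $(\e+\l)$ and $(1+\l)$ factors from Lemma~\ref{wellposedness prelim lemma 5} harmless and the constant uniform in $\l$.

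With the uniform bound secured, I would pass to the limit $\l\rt0$ and then $m\rt\infty$: the uniform $L^2$ control together with the equation gives weak compactness in $L^2$, the nonlocal operators converge $\ll_m\rt\ll$ and $K_m\rt K$, and lower semicontinuity of the norms transfers the estimate to the limit $f$, which then solves (\ref{linear steady}). I expect the main obstacle to be the a priori estimate rather than the limiting procedure: coercivity alone is blind to the macroscopic component, so the whole argument hinges on marrying it to Lemma~\ref{wellposedness prelim lemma 5} and on the favorable factor of $\e$ that the latter carries; tracking the resulting powers of $\e$ so that everything closes, while simultaneously keeping the constant uniform as $\l\rt0$ (where the naive bound of Lemma~\ref{wellposedness prelim lemma 4} blows up), is the delicate point. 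A secondary technical difficulty is justifying the limit passage for the nonlocal collision kernel $K$, where one typically needs compactness of $K$ (or an averaging argument) to upgrade weak convergence to convergence of the collision term.
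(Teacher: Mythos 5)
Your overall architecture coincides with the paper's: test the penalized equation against $f_{\l,m}$ and use Green's identity plus the spectral gap to control $\tm{(\ik-\pk)[f_{\l,m}]}$ and $\e^{1/2}\tss{f_{\l,m}}{+}$; invoke Lemma~\ref{wellposedness prelim lemma 5} for $\e\tm{\pk[f_{\l,m}]}$; add a small multiple of the squared macroscopic bound to the energy inequality; split the source pairing, apply Young's inequality and absorb; then pass to the limits $\l\rt0$ and $m\rt\infty$. (The paper handles $\l\rt0$ by a strong-convergence argument for $f_{\l,m}-f_m$ and $m\rt\infty$ by a diagonal process with weak lower semicontinuity, so the limiting step is no more delicate than you anticipate.)

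The one point where you diverge from the paper is the decomposition of $\br{S,f_{\l,m}}$, and it changes the conclusion. With your diagonal pairing, $\br{\pk[S],\pk[f_{\l,m}]}$ must be matched against the quantity the scheme controls, namely $\e\tm{\pk[f_{\l,m}]}$, so Young's inequality leaves $\e^{-2}\tm{\pk[S]}^2$ on the right, while $\br{(\ik-\pk)[S],(\ik-\pk)[f_{\l,m}]}$ leaves only $\tm{(\ik-\pk)[S]}^2$; after dividing by $\e^2$ this yields
\begin{eqnarray}
\tm{f}+\frac{1}{\e^{1/2}}\tss{f}{+}&\leq&C\bigg(\frac{1}{\e^2}\tm{\pk[S]}+\frac{1}{\e}\tm{(\ik-\pk)[S]}+\frac{1}{\e^{1/2}}\tss{h}{-}\bigg),\no
\end{eqnarray}
i.e.\ the stated estimate with the weights on $\pk[S]$ and $(\ik-\pk)[S]$ interchanged. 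So the ``careful bookkeeping'' you defer to does not produce the theorem as written; your argument proves the transposed inequality, and neither version implies the other. The paper reaches the written weights only through the crossed splitting $\iint_{\Omega\times\r^2}f_{\l,m}S=\iint_{\Omega\times\r^2}(\ik-\pk)[f_{\l,m}]\pk[S]+\iint_{\Omega\times\r^2}\pk[f_{\l,m}](\ik-\pk)[S]$, which is not an identity (for the orthogonal projection both cross pairings vanish), so the discrepancy originates in the source rather than in your plan --- and indeed the transposed estimate is the one the paper actually uses in Section~5, where the purely non-kernel terms $\Gamma[R_N,\cdot]$ receive the weight $\e^{-1}$ and the full source $S_N$ receives $\e^{-2}$. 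Still, as a proof of the displayed statement your proposal does not close: you must either prove the transposed bound and note that it is what is needed downstream, or justify a pairing that genuinely delivers $\e^{-2}$ on $\tm{(\ik-\pk)[S]}$ and $\e^{-1}$ on $\tm{\pk[S]}$.
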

\begin{proof}
We square on both sides of (\ref{wt 08}) to obtain
\begin{eqnarray}\label{wt 29}
\e^2\tm{\pk[f_{\l,m}]}&\leq&C
\bigg(\e^2\tss{f_{\l,m}}{+}+\tm{(\ik-\pk)[f_{\l,m}]}+\tm{S}+\e^2\tss{h}{-}\bigg).
\end{eqnarray}
On the other hand, by Green's identity, multiplying $f_{l,m}$ on both sides of (\ref{wellposedness prelim
penalty equation}) implies
\begin{eqnarray}\label{wt 34}
\l\tm{f_{\l,m}}^2+\br{\ll_mf_{\l,m},f_{\l,m}}+\half\e\tss{f_{\l,m}}{+}^2
&=&\half\e\tss{h}{-}^2+\iint_{\Omega\times\r^2}f_{\l,m}S.
\end{eqnarray}
We deduce from the spectral gap of $\ll_m$,
\begin{eqnarray}\label{wt 31}
\l\tm{f_{\l,m}}^2+\tm{(\ik-\pk)[f_{\l,m}]}+\frac{\e}{2}\tss{f_{\l,m}}{+}^2
&\leq&\e\tss{h}{-}^2+\iint_{\Omega\times\r^2}f_{\l,m}S.
\end{eqnarray}
Multiplying a small constant on both sides of (\ref{wt 29}) and
adding to (\ref{wt 31}), we obtain
\begin{eqnarray}\label{wt 37}
\e^2\tm{\pk[f_{\l,m}]}^2+\tm{(\ik-\pk)[f_{\l,m}]}^2+\e\tss{f_{\l,m}}{+}^2\leq C\bigg(\e\tss{h}{-}^2+\iint_{\Omega\times\r^2}f_{\l,m}S+\tm{S}\bigg).
\end{eqnarray}
Since
\begin{eqnarray}
\iint_{\Omega\times\r^2}f_{\l,m}S&=&\iint_{\Omega\times\r^2}f_{\l,m}\pk[S]+\iint_{\Omega\times\r^2}f_{\l,m}(\ik-\pk)[S]\\
&=&\iint_{\Omega\times\r^2}(\ik-\pk)[f_{\l,m}]\pk[S]+\iint_{\Omega\times\r^2}\pk[f_{\l,m}](\ik-\pk)[S]\no\\
&\leq&
\bigg(C\tm{(\ik-\pk)[f_{\l,m}]}^2+\frac{1}{4C}\tm{\pk[S]}^2\bigg)+\bigg(C\e^2\tm{\pk[f_{\l,m}]}^2+\frac{1}{4C\e^2}\tm{(\ik-\pk)[S]}^2\bigg),\no
\end{eqnarray}
for $C$ sufficiently small, we have
\begin{eqnarray}
\e^2\tm{\pk[f_{\l,m}]}^2+\tm{(\ik-\pk)[f_{\l,m}]}^2+\e\tss{f_{\l,m}}{+}^2\leq C\bigg(\frac{1}{\e^2}\tm{(\ik-\pk)[S]}^2+\tm{\pk[S]}^2+\e\tss{h}{-}^2\bigg).
\end{eqnarray}
Hence, we deduce
\begin{eqnarray}\label{wt 35}
\tm{f_{\l,m}}\leq C\bigg(\frac{1}{\e^2}\tm{(\ik-\pk)[S]}+\frac{1}{\e}\tm{\pk[S]}+\frac{1}{\e^{1/2}}\tss{h}{-}\bigg).
\end{eqnarray}
Then based on (\ref{wt 37}), we have
\begin{eqnarray}\label{wt 36}
\tss{f_{\l,m}}{+}\leq C\bigg(\frac{1}{\e^{3/2}}\tm{(\ik-\pk)[S]}+\frac{1}{\e^{1/2}}\tm{\pk[S]}+\tss{h}{-}\bigg).
\end{eqnarray}
This is a uniform estimate in $\l$, so we can obtain a weak solution
$f_{\l,m}\rt f_{m}$ with the same estimate (\ref{wt 35}) and (\ref{wt 36}). Moreover,
we have
\begin{eqnarray}
\left\{
\begin{array}{rcl}
\l(f_{\l,m}-f_m)+\e\vv\cdot\nx
(f_{\l,m}-f_m)+\ll_m[f_{\l,m}-f_m]&=&\l f_m,\\\rule{0ex}{1.0em}
(f_{\l,m}-f_m)(\vx_0,\vv)&=&0.
\end{array}
\right.
\end{eqnarray}
Then we have the estimate
\begin{eqnarray}
\tm{f_{\l,m}-f_m}\leq C\bigg(\frac{\l}{\e^2}\tm{(\ik-\pk)[f_m]}+\frac{\l}{\e}\tm{\pk[f_m]}\bigg).
\end{eqnarray}
Hence, $f_{\l,m}\rt f_m$ strongly in $L^2(\Omega\times\r^2)$ as
$\l\rt0$. Then we can take the limit $f_m\rt f$ as $m\rt\infty$. By
a diagonal process, there exists a unique weak solution such that
$f_m\rt f$ weakly in $L^2(\Omega\times\r^2)$. Then the weak lower
semi-continuity implies $f$ satisfies the same estimate (\ref{wt
35}).
\end{proof}

\subsection{$L^{\infty}$ Estimates of Linearized Stationary Boltzmann Equation}

The characteristics $(X(s), V(s))$ of the equation
(\ref{linear steady}) which goes
through $(\vx,\vv)$ is defined by
\begin{eqnarray}\label{character}
\left\{
\begin{array}{rcl}
(X(0), V(0))&=&(\vx,\vv)\\\rule{0ex}{2.0em}
\dfrac{\ud{X(s)}}{\ud{s}}&=&\e V(s),\\\rule{0ex}{2.0em}
\dfrac{\ud{V(s)}}{\ud{s}}&=&0.
\end{array}
\right.
\end{eqnarray}
which implies
\begin{eqnarray}
\left\{
\begin{array}{rcl}
X(s)&=&\vx+\e s\vv\\
V(s)&=&\vv
\end{array}
\right.
\end{eqnarray}
Define the backward exit time and exit position as
\begin{eqnarray}
t_b(\vx,\vv)&=&\inf\{t>0:\vx-\e t\vv\notin\Omega\},\\
x_b(\vx,\vv)&=&\vx-\e t_b(\vx,\vv)\vv\notin\Omega.
\end{eqnarray}
We define a weight function
\begin{eqnarray}
\vh(\vv)=w_{\vt,\zeta}(\vv)=\br{\vv}^{\vt}\ue^{\zeta\abs{\vv}^2},
\end{eqnarray}
and
\begin{eqnarray}
\tvh(\vv)=\frac{1}{\sqrt{\m(\vv)}\vh(\vv)}=\sqrt{2\pi}\frac{\ue^{\left(\frac{1}{4}-\zeta\right)\abs{\vv}^2}}
{\br{\vv}^{\vt}}.
\end{eqnarray}
\begin{lemma}\label{wellposedness prelim lemma 8}
We have
\begin{eqnarray}
\abs{k(\vv,\vv')}\leq
C\bigg(\abs{\vv-\vv'}+\abs{\frac{1}{\vv-\vv'}}\bigg)
\exp\bigg(-\frac{1}{8}\abs{\vv-\vv'}^2-\frac{1}{8}\frac{\abs{\abs{\vv}^2-\abs{\vv'}^2}^2}{\abs{\vv-\vv'}^2}\bigg).
\end{eqnarray}
Let $0\leq\zeta\leq 1/4$. Then there exists $0\leq C_1(\zeta)<1$ and
$C_2(\zeta)>0$ such that for $0\leq \d\leq C_1(\zeta)$,
\begin{eqnarray}
\\
\int_{\r^2}\bigg(\abs{\vv-\vv'}+\abs{\frac{1}{\vv-\vv'}}\bigg)
\exp\bigg(-\frac{1-\d}{8}\abs{\vv-\vv'}^2-\frac{1-\d}{8}\frac{\abs{\abs{\vv}^2-\abs{\vv'}^2}^2}{\abs{\vv-\vv'}^2}\bigg)
\frac{\ue^{\zeta\abs{\vv}^2}}{\ue^{\zeta\abs{\vv'}^2}}\ud{\vv'}
\leq\frac{C_2(\zeta)}{1+\abs{\vv}}.\no
\end{eqnarray}
\end{lemma}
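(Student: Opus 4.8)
The plan is to prove the two assertions in turn: first the pointwise Grad-type bound on $k=k_2-k_1$, and then the weighted integral bound, which will follow from the pointwise bound by completing the square in the exponent and exploiting the two-dimensional integrability of the $\abs{\vv-\vv'}^{-1}$ singularity.

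For the pointwise estimate I would split $k=k_2-k_1$ according to the definitions of $K_1,K_2$. The term $k_1$ is elementary: integrating the hard-sphere kernel $q_0\abs{\vv-\vv'}\abs{\cos\phi}$ over $\vo\in\s^1$ yields a constant multiple of $\abs{\vv-\vv'}$, so that $k_1(\vv,\vv')=C\abs{\vv-\vv'}\sqrt{\m(\vv)\m(\vv')}$. Writing $\sqrt{\m(\vv)\m(\vv')}=\frac{1}{2\pi}\ue^{-(\abs{\vv}^2+\abs{\vv'}^2)/4}$ and using the identity $\abs{\vv}^2+\abs{\vv'}^2=\frac12\abs{\vv-\vv'}^2+\frac12\abs{\vv+\vv'}^2$ together with the Cauchy--Schwarz consequence $\abs{\vv+\vv'}\geq\abs{\abs{\vv}^2-\abs{\vv'}^2}/\abs{\vv-\vv'}$ (which comes from $\abs{\vv}^2-\abs{\vv'}^2=(\vv-\vv')\cdot(\vv+\vv')$) reshapes the Gaussian into exactly the claimed exponent. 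The term $k_2$ is the classical Grad computation: after integrating in $\vo\in\s^1$ one passes to a Carleman-type representation, parametrizing the collision by the relative velocity and the line perpendicular to it; the Jacobian of this change of variables produces the $\abs{\vv-\vv'}^{-1}$ prefactor, while energy conservation $\abs{\vv_{\ast}}^2+\abs{\vvu_{\ast}}^2=\abs{\vv}^2+\abs{\vv'}^2$ recombines the two Maxwellian factors into the stated form. I would carry this out following Grad's lemma as in \cite{Guo2010}; this reduction is the main technical obstacle, although it is entirely standard.

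For the integral bound I insert the pointwise estimate and isolate the weight $\ue^{\ze(\abs{\vv}^2-\abs{\vv'}^2)}$. Setting $r=\abs{\vv-\vv'}$ and $s=\abs{\vv}^2-\abs{\vv'}^2$, the exponent to be integrated is $-\frac{1-\d}{8}r^2-\frac{1-\d}{8}\frac{s^2}{r^2}+\ze s$. Completing the square in $s$ rewrites this as $-\frac{1-\d}{8r^2}\big(s-\frac{4\ze r^2}{1-\d}\big)^2+\frac{16\ze^2-(1-\d)^2}{8(1-\d)}r^2$, so the residual coefficient of $r^2$ is non-positive exactly when $1-\d\geq4\ze$. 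For $\ze\leq1/4$ this dictates taking $C_1(\ze)$ to be any number strictly below $1-4\ze$, the endpoint $\ze=1/4$ forcing $\d=0$, where the coefficient vanishes; in all cases the weight is absorbed into a harmless Gaussian in the shifted variable $s-\frac{4\ze r^2}{1-\d}$.

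Finally, to extract the gain $1/(1+\abs{\vv})$ I would change variables to $\vu=\vv'-\vv$, so that $r=\abs{\vu}$ and $s=-2\vv\cdot\vu-\abs{\vu}^2$, and split $\vu$ into its component along $\vv$ and its orthogonal component. The surviving Gaussian in $s^2/r^2$ concentrates $\vv'$ in a shell about the sphere $\abs{\vv'}\approx\abs{\vv}$ whose effective width in the direction of $\vv$ scales like $r/\abs{\vv}$; integrating first in that parallel direction therefore produces the factor $1/(1+\abs{\vv})$, while the $r^{-1}$ singularity is harmless since $\int_{\abs{\vu}\leq1}\abs{\vu}^{-1}\ud{\vu}<\infty$ in two dimensions and the $r$ prefactor controls large $\abs{\vu}$. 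I expect the bookkeeping at the critical exponent $\ze=1/4$, where no Gaussian decay in $r$ remains and the entire $1/(1+\abs{\vv})$ gain must come from this sphere concentration, to be the most delicate point; away from it the estimate reduces to a routine splitting into the regions $\{\abs{\vv'}\leq\abs{\vv}/2\}$, $\{\abs{\vv'}\geq2\abs{\vv}\}$, and the intermediate annulus.
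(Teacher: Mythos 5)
The paper offers no proof of this lemma at all --- it is a one-line citation to \cite[Lemma 3]{Guo2010} --- and your sketch is essentially a reconstruction of the proof of that cited result: the pointwise bound for $k_1$ via $\abs{\vv}^2+\abs{\vv'}^2=\tfrac12\abs{\vv-\vv'}^2+\tfrac12\abs{\vv+\vv'}^2$ together with $\abs{\vv+\vv'}\geq\abs{\abs{\vv}^2-\abs{\vv'}^2}/\abs{\vv-\vv'}$, the Grad--Carleman computation for $k_2$, and completing the square in $s=\abs{\vv}^2-\abs{\vv'}^2$ for the integral bound. The algebra of the completed square is right, and whenever $4\ze<1-\d$ the residual factor $\ue^{-c\abs{\vv-\vv'}^2}$ confines $\abs{\vv-\vv'}\lesssim 1$, after which the Gaussian in $s^2/\abs{\vv-\vv'}^2$ concentrates $\vv'$ in a shell of radial width $\sim\abs{\vv-\vv'}/\abs{\vv}$ about $\abs{\vv'}=\abs{\vv}$ and produces the gain $1/(1+\abs{\vv})$; on that range your argument closes.

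The genuine gap is the endpoint. At $\ze=1/4$ you are forced to $\d=0$, the coefficient of $\abs{\vv-\vv'}^2$ vanishes identically, and you assert that sphere concentration alone still yields $1/(1+\abs{\vv})$. It does not. Writing $\vu=\vv'-\vv=\rho\hat{\omega}$, the entire surviving exponent is exactly $-(\rho+\vv\cdot\hat{\omega})^2/2$, a Gaussian in $\rho$ centered at $-\vv\cdot\hat{\omega}$ with width one. On the arc $\{\hat{\omega}:\vv\cdot\hat{\omega}\leq-\abs{\vv}/2\}$, which has fixed positive measure in $\s^1$, that center sits at $\rho\sim\abs{\vv}$, so the contribution of the prefactor $\abs{\vv-\vv'}$ is $\int_0^{\infty}\rho^2\ue^{-(\rho+\vv\cdot\hat{\omega})^2/2}\ud{\rho}\gs\abs{\vv}^2$, and even the $\abs{\vv-\vv'}^{-1}$ part gives a contribution bounded below by a constant rather than $O(1/\abs{\vv})$. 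Equivalently, near $\vv'=0$ the weight $\ue^{\ze\abs{\vv}^2}$ exactly cancels the Gaussian and the integrand has size $\abs{\vv}$ on a set of measure one. So the inequality is false at $\ze=1/4$, $\d=0$, and no bookkeeping rescues it; the cited \cite[Lemma 3]{Guo2010} is stated for $0\leq\ze<1/4$ strictly, and your proof should likewise restrict to $\ze<1/4$ with $C_1(\ze)<1-4\ze$ (the defect at the endpoint lies in the statement as transcribed here, not in your strategy). Separately, your remark that ``the $r$ prefactor controls large $\abs{\vu}$'' is backwards: it is the residual Gaussian $\ue^{-cr^2}$, present only when $4\ze<1-\d$, that controls large $\abs{\vu}$, while the prefactor $r$ makes that region worse.
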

\begin{proof}
See \cite[Lemma 3]{Guo2010}.
\end{proof}
\begin{theorem}\label{wellposedness LI estimate}
There exists a unique solution $f\in L^{\infty}(\Omega\times\r^2)$
to the equation (\ref{linear steady}) that satisfies the
estimate for $\vt>2$ and $0\leq\zeta\leq1/4$,
\begin{eqnarray}
&&\im{\bv f}+\iss{\bv f}{+}\\
&\leq&C\bigg(\frac{1}{\e}\tm{f}+\im{\bv S}+\iss{\bv h}{-}\bigg)\no\\
&\leq&C\bigg(\frac{1}{\e^3}\tm{(\ik-\pk)[S]}+\frac{1}{\e^2}\tm{\pk[S]}+\frac{1}{\e^{3/2}}\tss{h}{-}+\im{\bv S}+\iss{\bv h}{-}\bigg).\no
\end{eqnarray}
\end{theorem}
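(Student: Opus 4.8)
The plan is to prove the weighted $L^{\infty}$ a priori estimate first and then read off existence, uniqueness, and the second bound from the already-established $L^{2}$ theory. Observe that the second displayed inequality follows from the first by inserting the $L^{2}$ estimate of Theorem \ref{wellposedness LT estimate} for $\tm{f}$; hence the real content is the first inequality $\im{\bv f}+\iss{\bv f}{+}\le C\left(\e^{-1}\tm{f}+\im{\bv S}+\iss{\bv h}{-}\right)$. I would conjugate (\ref{linear steady}) by the weight, setting $g=\vh f$ (so $\vh(\vv)=\br{\vv}^{\vt}\ue^{\zeta\abs{\vv}^2}$ as defined above), so that $g$ solves $\e\vv\cdot\nx g+\nu g=K_{\vh}[g]+\vh S$ with in-flow data $g=\vh h$, where $K_{\vh}[g](\vv)=\int_{\r^2}k_{\vh}(\vv,\vv')g(\vv')\ud{\vv'}$ and $k_{\vh}(\vv,\vv')=\dfrac{\vh(\vv)}{\vh(\vv')}k(\vv,\vv')$. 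Lemma \ref{wellposedness prelim lemma 8}, together with $\vt>2$, then supplies the recurring bound $\int_{\r^2}\abs{k_{\vh}(\vv,\vv')}\ud{\vv'}\le C/(1+\abs{\vv})\le C$.

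Since $\nu(\vv)\ge\nu_0>0$ for hard spheres, integrating along the backward characteristics as in Lemma \ref{wellposedness prelim lemma 3} (with $\l$ replaced by $\nu(\vv)$) yields the Duhamel representation
\[
g(\vx,\vv)=\vh h(\vx_{\b},\vv)\,\ue^{-\nu(\vv)t_{\b}}+\int_0^{t_{\b}}\ue^{-\nu(\vv)(t_{\b}-s)}\big(K_{\vh}[g]+\vh S\big)(\vx_{\b}+\e s\vv,\vv)\,\ud{s}.
\]
The boundary term is controlled by $\iss{\bv h}{-}$ and, since $\int_0^{t_{\b}}\ue^{-\nu(\vv)(t_{\b}-s)}\ud{s}\le\nu_0^{-1}$, the source term by $C\im{\bv S}$. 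For the remaining $K_{\vh}[g]$ contribution I would substitute the same Duhamel formula once more for the value of $g$ appearing inside $K_{\vh}$. This double iteration produces, besides one more boundary piece bounded by $C\iss{\bv h}{-}$ and one more source piece bounded by $C\im{\bv S}$ (both using $\int\abs{k_{\vh}}\le C$), the genuinely difficult double-kernel term
\[
\int_0^{t_{\b}}\ue^{-\nu(\vv)(t_{\b}-s)}\int_{\r^2}k_{\vh}(\vv,\vv')\int_0^{t_{\b}'}\ue^{-\nu(\vv')(t_{\b}'-s')}\int_{\r^2}k_{\vh}(\vv',\vv'')\,g(\vx_2,\vv'')\,\ud{\vv''}\,\ud{s'}\,\ud{\vv'}\,\ud{s},
\]
where $\vx_2$ is the running point on the second characteristic.

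The heart of the argument is to estimate this double-kernel term by $\e^{-1}\tm{f}$ plus an absorbable remainder. I would split the $(\vv',\vv'')$-integration into the tails $\{\abs{\vv'}\ge N\}\cup\{\abs{\vv''}\ge N\}$ and the near-diagonal sets $\{\abs{\vv-\vv'}\le 1/N\}\cup\{\abs{\vv'-\vv''}\le 1/N\}$, where the mass of $k_{\vh}$ is small, so that these pieces are bounded by $\tfrac{C}{N}\im{g}$ and can be moved to the left-hand side once $N$ is fixed large. On the complementary bounded set $\abs{\vv'},\abs{\vv''}\le N$ the kernel $k_{\vh}$ is bounded, and I would convert the pointwise size of $g$ into its $L^{2}$ norm: after a Cauchy--Schwarz in $s'$, the change of variables $s'\mapsto\vx_2$ along the second characteristic, whose speed is $\e\abs{\vv'}$, turns the trajectory integral into a spatial integral over $\Omega$ and generates the Jacobian factor $\e^{-1}$, leaving $\tfrac{C_N}{\e}\tm{f}$. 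Turning the one-dimensional $s'$-integral into a genuine two-dimensional integral over $\Omega$, using the outer $s$- and $\vv'$-integrations and the disk geometry to bound $t_{\b},t_{\b}'$, is exactly where the clean power $\e^{-1}$ appears, and I expect this change of variables to be the \emph{main obstacle}. The outflow trace $\iss{\bv f}{+}=\iss{g}{+}$ is handled by evaluating the Duhamel formula at outgoing boundary points and running the same estimates; collecting everything and absorbing the $\tfrac{C}{N}\im{g}$ term then gives the first inequality.

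Finally, uniqueness is inherited from Theorem \ref{wellposedness LT estimate}, since an $L^{\infty}$ solution is in particular an $L^{2}$ solution. For existence I would run the same computation on the penalized, cut-off problems of Lemmas \ref{wellposedness prelim lemma 4}--\ref{wellposedness prelim lemma 5}, for which $L^{\infty}$ solvability is immediate because $K_m$ is bounded (Lemma \ref{wellposedness prelim lemma 3} plus a contraction in $l$), obtaining $L^{\infty}$ bounds uniform in $(\l,m)$ with the $\e^{-1}\tm{f}$ term controlled by the uniform $L^{2}$ bound; passing to the limits $\l\to0$ and $m\to\infty$ produces the desired $f\in L^{\infty}(\Omega\times\r^2)$. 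Substituting the $L^{2}$ estimate of Theorem \ref{wellposedness LT estimate} into the first inequality then yields the second, completing the proof.
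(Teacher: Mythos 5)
Your overall architecture coincides with the paper's: conjugate by the weight so that $g=\vh f$ solves the $\nu$-damped transport equation with source $K_{\vh}[g]+\vh S$, write the double Duhamel iteration, control the boundary and source contributions by $\iss{\bv h}{-}$ and $\im{\bv S}$, and reduce everything to the double-kernel term, which must be split into absorbable pieces of size $o(1)\im{g}$ plus a piece of size $\frac{C}{\e}\tm{f}$. The gap sits exactly at the step you yourself flag as the main obstacle. The change of variables you propose, $s'\mapsto\vx_2$ along a single backward characteristic, is one-dimensional: it converts the $s'$-integral into a line integral of $\abs{g}^2$ over a segment of $\Omega$ (with weight $(\e\abs{\vv'})^{-1}$), not an area integral, and a line integral of $\abs{g}^2$ is not controlled by $\tm{g}$. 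Sweeping the family of lines with the $\vv'$-integration to recover an area integral introduces, in polar coordinates about the base point, the singular weight $\abs{\vec y-\vx_1}^{-1}$ (an X-ray-transform weight), which again is not bounded by $\tm{g}^2$ without further work. The paper avoids this entirely: in its Case IV it freezes both time parameters, restricts to the time gap $s-r\geq\d$ (the complementary set $s-r\leq\d$ is a separate case contributing the absorbable $C\d\im{g}$, a case your splitting omits), and performs the two-dimensional change of variables from the \emph{intermediate velocity} $\vv_t$ to the spatial point $\vec y=\vx-\e(t_b-s)\vv-\e(s-r)\vv_t$, whose Jacobian $\e^2(s-r)^2\geq\e^2\d^2$ is nondegenerate; after Cauchy--Schwarz this is what produces the $\frac{C}{\e\d}\tm{f}$ term. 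This is the missing idea, and without it the estimate of the double-kernel term does not close.

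Two smaller imprecisions in the same step: (i) the tail splitting $\{\abs{\vv'}\geq N\}$ does not by itself make $\int k_{\vh}(\vv,\vv')\ud{\vv'}$ small (take $\abs{\vv}$ comparable to $N$); the paper's nested splitting ($\abs{\vv}\geq N$, else $\abs{\vv_t}\geq 2N$, else $\abs{\vv_s}\geq 3N$) is arranged so that on each tail either the total kernel mass is $O(1/N)$ by Lemma \ref{wellposedness prelim lemma 8} or the velocity gap is at least $N$ and the Gaussian factor yields $\ue^{-\d N^2/8}$; (ii) the integrable singularity $1/\abs{\vv-\vv'}$ is handled in the paper by an $L^1$-approximation with a smooth compactly supported $k_N$, which is equivalent in spirit to your near-diagonal excision. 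The remaining parts of your proposal --- deducing the second displayed inequality from Theorem \ref{wellposedness LT estimate}, uniqueness from the $L^2$ theory, and existence via the penalized cut-off problems --- are consistent with the paper.
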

\begin{proof}
The existence and uniqueness follow from Theorem \ref{wellposedness LT estimate}, so we focus on the estimate. We divide the proof into several steps:\\
\ \\
Step 1: Mild formulation.\\
Denote
\begin{eqnarray}
g&=&\vh f,\\
K_{\vh}[g]&=&\vh
K\left[\frac{g}{\vh}\right]=\int_{\r^2}k_{\vh}(\vv,\vv')g(\vv')\ud{\vv'}.
\end{eqnarray}
Since $\ll=\nu-K$, we can rewrite the equation (\ref{linear steady})
along the characteristics by Duhamel's principle as
\begin{eqnarray}
g(\vx,\vv)&=& \vh(\vv)h(\vx-\e t_b\vv,\vv)\ue^{-\nu(\vv)
t_{1}}+\int_{0}^{t_b}\vh S(\vx-\e(t_b-s)\vv,\vv)\ue^{-\nu(\vv)
(t_b-s)}\ud{s}\\
&&+\int_{0}^{t_b}K_{\vh}[g(\vx-\e(t_b-s)\vv,\vv)]\ue^{-\nu(\vv)
(t_b-s)}\ud{s}\no\\
&=& \vh(\vv)h(\vx-\e t_b\vv,\vv)\ue^{-\nu(\vv)
t_b}+\int_{0}^{t_b}\vh S(\vx-\e(t_b-s)\vv,\vv)\ue^{-\nu(\vv)
(t_b-s)}\ud{s}\no\\
&&+\int_{0}^{t_b}\bigg(\int_{\r^2}k_{\vh}(\vv,\vv_t)g(\vx-\e(t_b-s)\vv,\vv_t)\ud{\vv_t}\bigg)\ue^{-\nu(\vv)
(t_b-s)}\ud{s},\no
\end{eqnarray}
where $\vv_t\in\r^2$ is a dummy variable.
We may apply Duhamel's principle again to $g(\vx-\e(t_b-s)\vv,\vv_t)$ to obtain
\begin{eqnarray}\label{duhamel}
\
\end{eqnarray}
\begin{eqnarray}
&&g(\vx,\vv)\no\\
&=& \vh(\vv)h(\vx-\e t_b\vv,\vv)\ue^{-\nu(\vv)
t_b}+\int_{0}^{t_b}\vh S(\vx-\e(t_b-s)\vv,\vv)\ue^{-\nu(\vv)
(t_b-s)}\ud{s}\no\\
&&+\int_{0}^{t_b}\bigg(\int_{\r^2}k_{\vh}(\vv,\vv_t)\vh(\vv_t)h(\vx-\e t_b\vv-\e s_b\vv_t,\vv_t)\ue^{-\nu(\vv_t)
s_b}\ud{\vv_t}\bigg)\ue^{-\nu(\vv)
(t_b-s)}\ud{s}\no\\
&&+\int_{0}^{t_b}\int_{\r^2}k_{\vh}(\vv,\vv_t)\bigg(\int_{0}^{s_b}\vh S(\vx-\e(t_b-s)\vv-\e(s_b-r)\vv_t,\vv_t)\ue^{-\nu(\vv_t)
(s_b-r)}\ud{r}\bigg)\ud{\vv_t}\ue^{-\nu(\vv)
(t_b-s)}\ud{s}\no\\
&&+\int_{0}^{t_b}\int_{\r^2}k_{\vh}(\vv,\vv_t)\bigg(\int_{0}^{s_b}\int_{\r^2}k_{\vh}(\vv_t,\vv_s)g(\vx-\e(t_b-s)\vv-\e(s_b-r)\vv_t,\vv_s)\ud{\vv_s}\ue^{-\nu(\vv_t)
(s_b-r)}\ud{r}\bigg)\ud{\vv_t}\ue^{-\nu(\vv)
(t_b-s)}\ud{s}\no,
\end{eqnarray}
where
\begin{eqnarray}
s_b(\vx,\vv,\vv_t,s)&=&\inf\{r>0:\vx-\e(t_b-s)\vv-\e r\vv_t\notin\Omega\},
\end{eqnarray}
and $\vv_s\in\r^2$ is another dummy variable.
We need to estimate each term in (\ref{duhamel}).\\
\ \\
Step 2: Estimates of source terms and boundary terms.\\
We can directly obtain
\begin{eqnarray}
&&\abs{\vh(\vv)h(\vx-\e t_b\vv,\vv)\ue^{-\nu(\vv)
t_b}}\leq C\iss{\vh h}{-},
\end{eqnarray}
\begin{eqnarray}
\abs{\int_{0}^{t_b}\vh S(\vx-\e(t_b-s)\vv,\vv)\ue^{-\nu(\vv)
(t_b-s)}\ud{s}}\leq C\im{\vh S},
\end{eqnarray}
\begin{eqnarray}
&&\abs{\int_{0}^{t_b}\bigg(\int_{\r^2}k_{\vh}(\vv,\vv_t)\vh(\vv_t)h(\vx-\e t_b\vv-\e s_b\vv_t,\vv_t)\ue^{-\nu(\vv_t)
s_b}\ud{\vv_t}\bigg)\ue^{-\nu(\vv)
(t_b-s)}\ud{s}}\\
&\leq&\iss{\vh h}{-}\abs{\int_{0}^{t_b}\bigg(\int_{\r^2}k_{\vh}(\vv,\vv_t)\ue^{-\nu(\vv_t)
s_b}\ud{\vv_t}\bigg)\ue^{-\nu(\vv)
(t_b-s)}\ud{s}}\leq C\iss{\vh h}{-},\no
\end{eqnarray}
\begin{eqnarray}
\\
&&\abs{\int_{0}^{t_b}\int_{\r^2}k_{\vh}(\vv,\vv_t)\bigg(\int_{0}^{s_b}\vh S(\vx-\e(t_b-s)\vv-\e(s_b-r)\vv_t,\vv_t)\ue^{-\nu(\vv_t)
(s_b-r)}\ud{r}\bigg)\ud{\vv_t}\ue^{-\nu(\vv)
(t_b-s)}\ud{s}}\no\\
&\leq&\im{\vh S}\abs{\int_{0}^{t_b}\int_{\r^2}k_{\vh}(\vv,\vv_t)\bigg(\int_{0}^{s_b}\ue^{-\nu(\vv_t)
(s_b-r)}\ud{r}\bigg)\ud{\vv_t}\ue^{-\nu(\vv)
(t_b-s)}\ud{s}}\leq C\im{\vh S}.\no
\end{eqnarray}
\ \\
Step 3: Estimates of $K_{\vh}$ terms.\\
We consider the last and most complicated term $I$ in (\ref{duhamel}) defined as
\begin{eqnarray}
\\
\int_{0}^{t_b}\int_{\r^2}k_{\vh}(\vv,\vv_t)\bigg(\int_{0}^{s_b}\int_{\r^2}k_{\vh}(\vv_t,\vv_s)g(\vx-\e(t_b-s)\vv-\e(s_b-r)\vv_t,\vv_s)\ud{\vv_s}\ue^{-\nu(\vv_t)
(s_b-r)}\ud{r}\bigg)\ud{\vv_t}\ue^{-\nu(\vv)
(t_b-s)}\ud{s}.\no
\end{eqnarray}
We can divide it into four cases:
\begin{eqnarray}
I=I_1+I_2+I_3+I_4.
\end{eqnarray}
\ \\
Case I: $\abs{\vv}\geq N$.\\
Based on Lemma \ref{wellposedness prelim lemma 8} with $\d=0$, we
have
\begin{eqnarray}
\abs{\int_{\r^2}\int_{\r^2}k_{\vh(\vv)}(\vv,\vv_t)k_{\vh(\vv_t)}(\vv_t,\vv_s)\ud{\vv_s}\ud{\vv_t}}\leq\frac{C}{1+\abs{\vv}}\leq\frac{C}{N}.
\end{eqnarray}
Hence, by Fubini's Theorem, we get
\begin{eqnarray}
I_1\leq\frac{C}{N}\im{g}.
\end{eqnarray}
\ \\
Case II: $\abs{\vv}\leq N$, $\abs{\vv_t}\geq2N$, or $\abs{\vv_t}\leq
2N$, $\abs{\vv_s}\geq3N$.\\
Notice this implies either $\abs{\vv_t-\vv}\geq N$ or
$\abs{\vv_t-\vv_s}\geq N$. Hence, either of the following is valid
correspondingly:
\begin{eqnarray}
\abs{k_{\vh(\vv)}(\vv,\vv_t)}&\leq&\ue^{-\frac{\d}{8}N^2}\abs{k_{\vh(\vv)}(\vv,\vv_t)\ue^{\frac{\d}{8}\abs{\vv-\vv_t}^2}},\\
\abs{k_{\vh(\vv_t)}(\vv_t,\vv_s)}&\leq&\ue^{-\frac{\d}{8}N^2}\abs{k_{\vh(\vv_t)}(\vv_t,\vv_s)\ue^{\frac{\d}{8}\abs{\vv_t-\vv_s}^2}}.
\end{eqnarray}
Then based on Lemma \ref{wellposedness prelim lemma 8},
\begin{eqnarray}
\int_{\r^2}\abs{k_{\vh(\vv)}(\vv,\vv_t)\ue^{\frac{\d}{8}\abs{\vv-\vv_t}^2}}\ud{\vv_t}&<&\infty,\\
\int_{\r^2}\abs{k_{\vh(\vv_t)}(\vv_t,\vv_s)\ue^{\frac{\d}{8}\abs{\vv_t-\vv_s}^2}}\ud{\vv_s}&<&\infty.
\end{eqnarray}
Hence, we have
\begin{eqnarray}
I_2\leq C\ue^{-\frac{\d}{8}N^2}\im{g}.
\end{eqnarray}
\ \\
Case III: $s-r\leq\d$ and $\abs{\vv}\leq N$, $\abs{\vv_t}\leq 2N$, $\abs{\vv_s}\leq 3N$.\\
In this case, when $0<\d<<1$ is sufficiently small, since the integral in $r$ is restricted to this short interval, we have
\begin{eqnarray}
I_3\leq C\d\im{g}.
\end{eqnarray}
\ \\
Case IV: $s-r\geq\d$ and $\abs{\vv}\leq N$, $\abs{\vv_t}\leq 2N$, $\abs{\vv_s}\leq 3N$.\\
Since $k_{\vh(\vv)}(\vv,\vv_t)$ has
possible integrable singularity of $1/\abs{\vv-\vv_t}$, we can
introduce $k_N(\vv,\vv_t)$ smooth with compact support such that
\begin{eqnarray}\label{wt 42}
\sup_{\abs{p}\leq 3N}\int_{\abs{\vv_t}\leq
3N}\abs{k_N(p,\vv_t)-k_{\vh(p)}(p,\vv_t)}\ud{\vv_t}\leq\frac{1}{N}.
\end{eqnarray}
Then we can split
\begin{eqnarray}\label{wt 41}
k_{\vh(\vv)}(\vv,\vv_t)k_{\vh(\vv_t)}(\vv_t,\vv_s)
&=&k_N(\vv,\vv_t)k_N(\vv_t,\vv_s)\\
&&+\bigg(k_{\vh(\vv)}(\vv,\vv_t)-k_N(\vv,\vv_t)\bigg)k_{\vh(\vv_t)}(\vv_t,\vv_s)\no\\
&&+\bigg(k_{\vh(\vv_t)}(\vv_t,\vv_s)-k_N(\vv_t,\vv_s)\bigg)k_N(\vv,\vv_t).\no
\end{eqnarray}
This means we further split $I_4$ into
\begin{eqnarray}
I_4=I_{4,1}+I_{4,2}+I_{4,3}.
\end{eqnarray}
Based on the estimate (\ref{wt 42}), we have
\begin{eqnarray}
I_{4,2}&\leq&\frac{C}{N}\im{g},\\
I_{4,3}&\leq&\frac{C}{N}\im{g}.
\end{eqnarray}
Therefore, the only remaining term is $I_{4,1}$. Note we always have
$\vx-\e(t_b-s)\vv-\e(s-r)\vv_t\in\Omega$. Hence, we define the change of
variable $\vec y=(y_1,y_2)=\vx-\e(t_b-s)\vv-\e(s-r)\vv_t$ such that
\begin{eqnarray}
\abs{\frac{\ud{\vec y}}{\ud{\vv_t}}}=\abs{\left\vert\begin{array}{cc}
\e(s-r)&0\\
0&\e(s-r)
\end{array}\right\vert}=\e^2(s-r)\geq \e^2\d^2.
\end{eqnarray}
Since $k_N$ is bounded and $\abs{\vv_s}\leq 3N$, we estimate
\begin{eqnarray}
\\
I_{4,1}
&\leq&
C\abs{\int_{\abs{\vv_t}\leq2N}\int_{\abs{\vv_s}\leq3N}\int_{0}^{s}
\id_{\{\vx-\e(t_b-s)\vv-\e(s-r)\vv_t\in\Omega\}}g(\vx-\e(t_b-s)\vv-\e(s-r)\vv_t,\vv_s)\ud{r}\ud{\vv_t}\ud{\vv_s}}\no\\
&\leq&\frac{C}{\e\d}\abs{\int_{0}^{s}\int_{\abs{\vv_s}\leq3N}\int_{\Omega}\id_{\{\vec
y\in\Omega\}}g(\vec y,\vv_s)\ud{\vec y}\ud{\vv_s}\ud{r}}\no\\
&\leq&\frac{C\left(1+N^2\right)^{\frac{\vt}{2}}\ue^{\zeta N^2}}{\e\d}\tm{\frac{g(\vec y,\vv_s)}{\vh(\vv_s)}}\no\\
&=&\frac{C\left(1+N^2\right)^{\frac{\vt}{2}}\ue^{\zeta N^2}}{\e\d}\tm{\frac{g}{\vh}}.\no
\end{eqnarray}
Summarize all above in Case IV, we obtain
\begin{eqnarray}
I_4\leq \frac{C}{N}\im{g}+\frac{C\left(1+N^2\right)^{\frac{\vt}{2}}\ue^{\zeta N^2}}{\e\d}\tm{\frac{g}{\vh}}.
\end{eqnarray}
Therefore, we already prove
\begin{eqnarray}
I\leq
\bigg(C\ue^{-\frac{\d}{8}N^2}+\frac{C}{N}+C\d\bigg)\im{g}+\frac{C\left(1+N^2\right)^{\frac{\vt}{2}}\ue^{\zeta N^2}}{\e\d}\tm{\frac{g}{\vh}}
\end{eqnarray}
\ \\
Step 4: Synthesis.\\
Collecting all above in Step 2 and Step 3, we
have shown
\begin{eqnarray}
\im{g}\leq \bigg(C\ue^{-\frac{\d}{8}N^2}+\frac{C}{N}+C\d\bigg)\im{g}+\frac{C\left(1+N^2\right)^{\frac{\vt}{2}}\ue^{\zeta N^2}}{\e\d}\tm{\frac{g}{\vh}}+C\im{\vh
S}+C\iss{\vh h}{-}.
\end{eqnarray}
Choosing $\d$ sufficiently small and then taking $N$ sufficiently
large such that $C\ue^{-\frac{\d}{8}N^2}+\dfrac{C}{N}+C\d\leq \dfrac{1}{2}$, we have
\begin{eqnarray}
\im{g}\leq \frac{C}{\e}\tm{\frac{g}{\vh}}+C\im{\vh
S}+C\iss{\vh h}{-}.
\end{eqnarray}
Based on Theorem \ref{wellposedness LT estimate}, we obtain
\begin{eqnarray}
\im{g}&\leq&C\bigg(\frac{1}{\e}\tm{\frac{g}{\vh}}+\im{\vh S}+\iss{\vh
h}{-}\bigg)\\
&=&C\bigg(\frac{1}{\e}\tm{f}+\im{\vh S}+\iss{\vh
h}{-}\bigg)\no\\
&\leq&C\bigg(\frac{1}{\e^3}\tm{(\ik-\pk)[S]}+\frac{1}{\e^2}\tm{\pk[S]}+\frac{1}{\e^{3/2}}\tss{h}{-}+\im{\vh
S}+\iss{\vh h}{-}\bigg).\no
\end{eqnarray}
This completes the proof.
\end{proof}

\section{Asymptotic Analysis}

In this section, we construct the asymptotic expansion of the equation (\ref{small system}).

\subsection{Interior Expansion}

We define the interior expansion
\begin{eqnarray}\label{interior expansion}
\f\sim\sum_{k=1}^{\infty}\e^{k}\f_k(\vx,\vv).
\end{eqnarray}
Plugging it into the equation (\ref{small system}) and comparing the order of $\e$, we obtain
\begin{eqnarray}
\ll[\f_1]&=&0,\label{interior expansion 1}\\
\ll[\f_2]&=&-\vv\cdot\nx\f_1+\Gamma[\f_1,\f_1],\label{interior expansion 2}\\
\ll[\f_3]&=&-\vv\cdot\nx\f_2+\Gamma[\f_1,\f_2]+\Gamma[\f_2,\f_1],\label{interior expansion 3}\\
\ldots\no\\
\ll[\f_k]&=&-\vv\cdot\nx\f_{k-1}+\sum_{i=1}^{k-1}\Gamma[\f_i,\f_{k-i}].\label{interior
expansion 4}
\end{eqnarray}
The solvability of
\begin{eqnarray}
\ll[\f_k]&=&-\vv\cdot\nx\f_{k-1}+\sum_{i=1}^{k-1}\Gamma[\f_i,\f_{k-i}]
\end{eqnarray}
requires
\begin{eqnarray}
\int_{\r^2}\bigg(-\vv\cdot\nx\f_{k-1}+\sum_{i=1}^{k-1}\Gamma[\f_i,\f_{k-i}]\bigg)\psi(\vv)\ud{\vv}=0
\end{eqnarray}
for any $\psi$ satisfying $\ll[\psi]=0$.
Based on the analysis in \cite{Sone2002, Sone2007}, each $\fc_k$ consists of three parts:
\begin{eqnarray}
\f_k(\vx,\vv)=A_k^{\e}(\vx,\vv)+B_k^{\e}(\vx,\vv)+C_k^{\e}(\vx,\vv),
\end{eqnarray}
where
\begin{eqnarray}
A_k^{\e}(\vx,\vv)=\sqrt{\m}\left(A_{k,0}^{\e}(\vx)+A_{k,1}^{\e}(\vx)\v_1+A_{k,2}^{\e}(\vx)\v_2+A_{k,3}^{\e}(\vx)\left(\frac{\abs{\vv}^2-1}{2}\right)\right),
\end{eqnarray}
is the fluid part,
\begin{eqnarray}
B_k^{\e}(\vx,\vv)=\sqrt{\m}\left(B_{k,0}^{\e}(\vx)+B_{k,1}^{\e}(\vx)\v_1+B_{k,2}^{\e}(\vx)\v_2+B_{k,3}^{\e}(\vx)\left(\frac{\abs{\vv}^2-1}{2}\right)\right),
\end{eqnarray}
with $B_{k}^{\e}$ depending on $A_{s,i}^{\e}$ for $1\leq s\leq k-1$ and $i=0,1,2,3$ as
\begin{eqnarray}\label{at 12}
B_{k,0}^{\e}&=&0,\\
B_{k,1}^{\e}&=&\sum_{i=1}^{k-1}A_{i,0}^{\e}A_{k-i,1}^{\e},\\
B_{k,2}^{\e}&=&\sum_{i=1}^{k-1}A_{i,0}^{\e}A_{k-i,2}^{\e},\\
B_{k,3}^{\e}&=&\sum_{i=1}^{k-1}\bigg(A_{i,0}^{\e}A_{k-i,3}^{\e}+A_{i,1}^{\e}A_{k-i,1}^{\e}+A_{i,2}^{\e}A_{k-i,2}^{\e}
+\sum_{j=1}^{k-1-i}A_{i,0}^{\e}(A_{j,1}^{\e}A_{k-i-j,1}^{\e}+A_{j,2}^{\e}A_{k-i-j,2}^{\e})\bigg),
\end{eqnarray}
and $C_k(\vx,\vv)$ satisfies
\begin{eqnarray}
\int_{\r^2}\sqrt{\m}C_k^{\e}(\vx,\vv)\left(\begin{array}{c}1\\\vv\\\abs{\vv}^2
\end{array}\right)\ud{\vv}=0,
\end{eqnarray}
with
\begin{eqnarray}\label{at 13}
\ll[C_k^{\e}]&=&-\vv\cdot\nx\f_{k-1}+\sum_{i=1}^{k-1}\Gamma[\f_i,\f_{k-i}],
\end{eqnarray}
which can be solved explicitly at any fixed $\vx$. Hence, we only need to determine the
relations satisfied by $A_k^{\e}$. For convenience, we define
\begin{eqnarray}
A_k^{\e}=\sqrt{\m}\left(\rh_k^{\e}+\u_{k,1}^{\e}\v_1+\u_{k,2}^{\e}\v_2+\th_k^{\e}\left(\frac{\abs{\vv}^2-1}{2}\right)\right),
\end{eqnarray}
Then the analysis in \cite{Sone2002, Sone2007} shows that $A_k^{\e}$ satisfies the equations as follows:\\
\ \\
$0^{th}$ order equations:
\begin{eqnarray}
P_1^{\e}-(\rh_1^{\e}+\th_1^{\e})&=&0,\\
\nx P_1^{\e}&=&0,
\end{eqnarray}
$1^{st}$ order equations:
\begin{eqnarray}
P_2^{\e}-(\rh_2^{\e}+\th_2^{\e}+\rh_1^{\e}\th_1^{\e})&=&0,\\
\uh\cdot\nx\uh_1^{\e}-\gamma_1\dx\uh_1^{\e}+\nx P_2^{\e}&=&0,\\
\nx\cdot\uh_1^{\e}&=&0,\\
\uh_1^{\e}\cdot\nx\th_1^{\e}-\gamma_2\dx\th_1^{\e}&=&0,
\end{eqnarray}
$k^{th}$ order equations:
\begin{eqnarray}
P_{k+1}^{\e}-\left(\rh_{k+1}^{\e}+\th_{k+1}^{\e}+\sum_{i=1}^{k+1-i}\rh_i^{\e}\th_{k+1-i}^{\e}\right)&=&0,\\
\sum_{i=1}^{k}\uh_i^{\e}\cdot\nx\uh_{k+1-i}^{\e}-\gamma_1\dx\uh_k^{\e}+\nx P_{k+1}^{\e}&=&H_{k,1}^{\e},\\
\nx\cdot\uh_k^{\e}&=&H_{k,2}^{\e},\\
\sum_{i=1}^{k}\uh_i^{\e}\cdot\nx\th_{k+1-i}^{\e}-\gamma_2\dx\th_k^{\e}&=&H_{k,3}^{\e},
\end{eqnarray}
where
\begin{eqnarray}
\\
H_{k,j}^{\e}=G_{k,j}^{\e}[\vx,\vv; \rh_1^{\e},\ldots,\rh_{k-1}^{\e};
\th_1^{\e},\ldots,\th_{k-1}^{\e}; \uh_1,\ldots,\uh_{k-1}^{\e}]\no,
\end{eqnarray}
is explicit functions depending on lower order terms, and $\gamma_1$ and $\gamma_2$ are two positive constants. Since in most cases, we are only interested in the leading order terms, so we omit the detailed description of $G_{k,j}^{\e}$. In order to determine the boundary condition for $\uh_k^{\e}$, $\th_k^{\e}$ and $\rh_k^{\e}$, we have to define the boundary layer expansion.

\subsection{Boundary Layer Expansion with Geometric Correction}

In order to define the boundary layer, we need several substitutions:\\
\ \\
Substitution 1:\\
Define the substitution into polar coordinate $f^{\e}(x_1,x_2,\vv)\rt f^{\e}(\rr,\ph,\vv)$ with
$(\rr,\ph,\vv)\in [0,1)\times[-\pi,\pi)\times\r^2$ as
\begin{eqnarray}\label{substitution 1}
\left\{
\begin{array}{rcl}
x_1&=&\rr\cos\ph,\\
x_2&=&\rr\sin\ph,\\
\vv&=&\vv.
\end{array}
\right.
\end{eqnarray}
The equation (\ref{small system}) can be rewritten as
\begin{eqnarray}
\left\{
\begin{array}{rcl}
\e(\vv\cdot\vn)\dfrac{\p f^{\e}}{\p\rr}+\dfrac{\e}{\rr}(\vv\cdot\ta)\dfrac{\p
f^{\e}}{\p\ph}+\ll[f^{\e}]=\Gamma[f^{\e},f^{\e}],\\\rule{0ex}{1.0em}
f^{\e}(1,\ph,\vv)=\b^{\e}(1,\ph,\vv) \ \ \text{for}\ \
\vv\cdot\vn<0,
\end{array}
\right.
\end{eqnarray}
for $\vn$ the outer normal vector and $\ta$ the counterclockwise tangential vector on $\p\Omega$.\\
\ \\
Substitution 2:\\
We further perform the scaling substitution $f^{\e}(\rr,\ph,\vv)\rt f^{\e}(\et,\ph,\vv)$ with
$(\et,\ph,\vv)\in [0,1/\e)\times[-\pi,\pi)\times\r^2$ as
\begin{eqnarray}\label{substitution 2}
\left\{
\begin{array}{rcl}
\et&=&(1-\rr)/\e,\\
\ph&=&\ph,\\
\vv&=&\vv,
\end{array}
\right.
\end{eqnarray}
which implies
\begin{eqnarray}
\frac{\p f^{\e}}{\p\rr}&=&-\frac{1}{\e}\frac{\p f^{\e}}{\p\et},
\end{eqnarray}
Then the equation (\ref{small system}) in $(\et,\ph,\vv)$
becomes
\begin{eqnarray}
\left\{
\begin{array}{rcl}
-(\vv\cdot\vn)\dfrac{\p f^{\e}}{\p\et}+\dfrac{\e}{1-\e\et}(\vv\cdot\ta)\dfrac{\p
f^{\e}}{\p\ph}+\ll[f^{\e}]=\Gamma[f^{\e},f^{\e}],\\\rule{0ex}{1.0em}
f^{\e}(0,\ph,\vv)=\b^{\e}(0,\ph,\vv) \ \ \text{for}\ \
\vv\cdot\vn<0.
\end{array}
\right.
\end{eqnarray}
\ \\
Substitution 3:\\
We define the velocity substitution $f^{\e}(\rr,\ph,\vv)\rt f^{\e}(\et,\ph,\vr)$ with
$(\et,\ph,\vr)\in [0,1/\e)\times[-\pi,\pi)\times\r^2$ as
\begin{eqnarray}\label{substitution 3}
\left\{
\begin{array}{rcl}
\et&=&\et,\\
\ph&=&\ph,\\
\vr&=&-\vv.
\end{array}
\right.
\end{eqnarray}
This substitution is for the convenience of Milne problem and specifying the in-flow boundary. Then the equation (\ref{small system}) in $(\et,\ph,\vv)$
becomes
\begin{eqnarray}
\left\{
\begin{array}{rcl}
(\vr\cdot\vn)\dfrac{\p f^{\e}}{\p\et}-\dfrac{\e}{1-\e\et}(\vr\cdot\ta)\dfrac{\p
f^{\e}}{\p\ph}+\ll[f^{\e}]=\Gamma[f^{\e},f^{\e}],\\\rule{0ex}{1.0em}
f^{\e}(0,\ph,\vr)=\b^{\e}(0,\ph,\vr) \ \ \text{for}\ \
\vr\cdot\vn>0.
\end{array}
\right.
\end{eqnarray}
\ \\
Substitution 4:\\
We further define the velocity decomposition with respect to the normal and tangential directions at boundary as $f^{\e}(\et,\ph,\v_{r,1},\v_{r,2})\rt f^{\e}(\et,\ph,\ve,\vp)$ with
$(\et,\ph,\ve,\vp)\in [0,1/\e)\times[-\pi,\pi)\times\r^2$
\begin{eqnarray}\label{substitution 4}
\left\{
\begin{array}{rcl}
\et&=&\et,\\
\ph&=&\ph,\\
\ve&=&\v_{r,1}\cos\phi+\v_{r,2}\sin\phi,\\
\vp&=&-\v_{r,1}\sin\phi+\v_{r,2}\cos\phi.
\end{array}
\right.
\end{eqnarray}
Denote $\vvv=(\ve,\vp)$. Then the equation (\ref{small system}) can be rewritten as
\begin{eqnarray}\label{boundary layer system}
\left\{
\begin{array}{rcl}
\ve\dfrac{\p f^{\e}}{\p\et}-\dfrac{\e}{1-\e\et}\bigg(\vp\dfrac{\p
f^{\e}}{\p\ph}+\vp^2\dfrac{\p f^{\e}}{\p\ve}-\ve\vp\dfrac{\p
f^{\e}}{\p\vp}\bigg)+\ll[f^{\e}]=\Gamma[f^{\e},f^{\e}],\\\rule{0ex}{1.5em}
f^{\e}(0,\ph,\vvv)=\b^{\e}(\ph,\vvv) \ \ \text{for}\ \
\ve>0.
\end{array}
\right.
\end{eqnarray}
We define the boundary layer expansion
\begin{eqnarray}\label{boundary layer expansion}
\fb\sim\sum_{k=1}^{\infty}\e^k\fb_k(\et,\ph,\vvv),
\end{eqnarray}
where $\fb_k$ can be determined by plugging it into the equation (\ref{boundary layer system}) and comparing the order of $\e$.
In a neighborhood of the boundary, we have
\begin{eqnarray}
\label{boundary expansion 1} \ve\frac{\p
\fb_1}{\p\et}-\frac{\e}{1-\e\eta}\bigg(\vp^2\dfrac{\p
\fb_1}{\p\ve}-\ve\vp\dfrac{\p
\fb_1}{\p\vp}\bigg)+\ll[\fb_1]
&=&0,\\
\no\\
\label{boundary expansion 2}\ve\frac{\p
\fb_2}{\p\et}-\frac{\e}{1-\e\eta}\bigg(\vp^2\dfrac{\p
\fb_2}{\p\ve}-\ve\vp\dfrac{\p
\fb_2}{\p\vp}\bigg)+\ll[\fb_2]
&=&\frac{1}{1-\e\eta}\vp\dfrac{\p \fb_1}{\p\ph}\\
&&+\Gamma[\fb_1,\fb_1]+2\Gamma[\f_1,\fb_1],
\no\\
\ldots\no\\
\no\\
\label{boundary expansion 3}\ve\frac{\p
\fb_k}{\p\et}-\frac{\e}{1-\e\eta}\bigg(\vp^2\dfrac{\p
\fb_k}{\p\ve}-\ve\vp\dfrac{\p
\fb_k}{\p\vp}\bigg)+\ll[\fb_k]
&=&\frac{1}{1-\e\eta}\vp\dfrac{\p \fb_{k-1}}{\p\ph}\\
&&+\sum_{i=1}^{k-1}\Gamma[\fb_i,\fb_{k-i}]+2\sum_{i=1}^{k-1}\Gamma[\f_i,\fb_{k-i}].\no
\end{eqnarray}

\subsection{Construction of Asymptotic Expansion}

The bridge between interior solution and boundary layer is the boundary condition
\begin{eqnarray}
f^{\e}(\vx_0,\vv)&=&\b^{\e}(\vx_0,\vv).
\end{eqnarray}
Plugging the combined expansion
\begin{eqnarray}
f^{\e}\sim\sum_{k=1}^{\infty}\e^k(\f_k+\fb_k),
\end{eqnarray}
into the boundary condition and comparing the order of $\e$, we obtain
\begin{eqnarray}
\f_1+\fb_1&=&\b_1,\\
\f_2+\fb_2&=&\b_2,\\
\ldots\no\\
\f_k+\fb_k&=&\b_k.
\end{eqnarray}
This is the boundary conditions $\f_k$ and $\fb_k$ need to satisfy.
We divide the construction of asymptotic expansion into several steps for each $k\geq1$:\\
\ \\
Step 1: $\e$-Milne Problem.\\
We solve the $\e$-Milne problem
\begin{eqnarray}
\left\{
\begin{array}{rcl}\displaystyle
\ve\frac{\p \g_k^{\e}}{\p\et}+G(\e;\et)\bigg(\vp^2\dfrac{\p
\g_k^{\e}}{\p\ve}-\ve\vp\dfrac{\p \g_k^{\e}}{\p\vp}\bigg)+\ll[\g_k^{\e}]
&=&S_k^{\e}(\et,\ph,\vvv),\\
\g_k^{\e}(0,\ph,\vvv)&=&h_k^{\e}(\ph,\vvv)\ \ \text{for}\ \
\ve>0,\\\rule{0ex}{1.0em} \displaystyle\int_{\r^2}
\ve\sqrt{\m}\g_k^{\e}(0,\ph,\vvv)\ud{\vvv}
&=&m_f[\g_k^{\e}](\ph),\\\rule{0ex}{1.0em}
\displaystyle\lim_{\et\rt\infty}\g_k^{\e}(\et,\ph,\vvv)&=&\g^{\e}_{k}(\infty,\ph,\vvv),
\end{array}
\right.
\end{eqnarray}
for $\g_k^{\e}(\et,\ph,\vvv)$ with the in-flow boundary data
\begin{eqnarray}
h^{\e}_k&=&\b_k-(B_k^{\e}+C_k^{\e})
\end{eqnarray}
and source term
\begin{eqnarray}
S^{\e}_k&=&\frac{\Upsilon(\e^{1/2}\et)}{1-\e\et}\vp\dfrac{\p
\fb_{k-1}}{\p\ph}
+\sum_{i=1}^{k-1}\Gamma[\fb_i,\fb_{k-i}]+2\sum_{i=1}^{k-1}\Gamma[\f_i,\fb_{k-i}],
\end{eqnarray}
where
\begin{eqnarray}
G(\e;\et)&=&-\frac{\e\Upsilon(\e^{1/2}\et)}{1-\e\et},
\end{eqnarray}
\begin{eqnarray}\label{cutoff 1}
\Upsilon(z)=\left\{
\begin{array}{ll}
1&0\leq z\leq1/2,\\
0&3/4\leq z\leq\infty.
\end{array}
\right.
\end{eqnarray}
Here the mass-flux $m_f[\g_k^{\e}](\ph)$ will be determined later. Based on Theorem \ref{Milne theorem 3}, there exist
\begin{eqnarray}
\tilde h^{\e}_k=\sqrt{\m}\bigg(\tilde
D_{k,0}^{\e}+\tilde D_{k,1}^{\e}\ve+\tilde D_{k,2}^{\e}\vp+\tilde
D_{k,3}^{\e}\bigg(\frac{\abs{\vvv}^2-2}{2}\bigg)\bigg),
\end{eqnarray}
such that the problem
\begin{eqnarray}\label{at 11}
\left\{
\begin{array}{rcl}\displaystyle
\ve\frac{\p \gg_k^{\e}}{\p\et}+G(\e;\et)\bigg(\vp^2\dfrac{\p
\gg_k^{\e}}{\p\ve}-\ve\vp\dfrac{\p
\gg_k^{\e}}{\p\vp}\bigg)+\ll[\gg_k^{\e}]
&=&S_k^{\e}(\et,\ph,\vvv),\\
\gg_k^{\e}(0,\ph,\vvv)&=&h_k^{\e}(\ph,\vvv)-\tilde h_k^{\e}(\ph,\vvv)\ \
\text{for}\ \ \ve>0,\\\rule{0ex}{1.0em}
\displaystyle\int_{\r^2}\ve\sqrt{\m}
\gg_k^{\e}(0,\ph,\vvv)\ud{\vvv}
&=&m_f[\gg_k^{\e}](\phi),\\\rule{0ex}{1.0em}
\displaystyle\lim_{\et\rt\infty}\gg_k^{\e}(\et,\ph,\vvv)&=&0,
\end{array}
\right.
\end{eqnarray}
is well-posed, where we need to specify
\begin{eqnarray}
m_f[\gg_k^{\e}](\phi)=m_f[\g_k^{\e}](\phi)-\displaystyle\int_{\r^2}\ve\sqrt{\m}\tilde h_k^{\e}(\ph,\vvv)\ud{\vvv}.
\end{eqnarray}
\ \\
Step 2: Definition of Interior Solution and Boundary Layer with Geometric Correction.\\
Define
\begin{eqnarray}
\fb_k=\gg_k^{\e}\cdot\Upsilon_0(\e^{1/2}\et)
\end{eqnarray}
where $\gg_k^{\e}$ the solution of $\e$-Milne problem (\ref{at 11}) and
\begin{eqnarray}\label{cutoff 2}
\Upsilon_0(z)=\left\{
\begin{array}{ll}
1&0\leq z\leq1/4,\\
0&1/2\leq z\leq\infty.
\end{array}
\right.
\end{eqnarray}
Naturally, we have
\begin{eqnarray}
\lim_{\et\rt0}\fb_k(\et,\ph,\vvv)=0.
\end{eqnarray}
The interior solution
\begin{eqnarray}
\f_k(\vx,\vv)=A_k^{\e}(\vx,\vv)+B_k^{\e}(\vx,\vv)+C_k^{\e}(\vx,\vv),
\end{eqnarray}
where $B_k^{\e}$ and $C_k^{\e}$ are defined in (\ref{at 12}) and (\ref{at 13}), and $A_k^{\e}$ satisfies
\begin{eqnarray}
A_k^{\e}=\sqrt{\m}\left(\rh_k^{\e}+\u_{k,1}^{\e}\v_1+\u_{k,2}^{\e}\v_2+\th_k^{\e}\left(\frac{\abs{\vv}^2-1}{2}\right)\right),
\end{eqnarray}
and
\begin{eqnarray}\label{at 14}
P_{k+1}^{\e}-\left(\rh_{k+1}^{\e}+\th_{k+1}^{\e}+\sum_{i=1}^{k+1-i}\rh_i^{\e}\th_{k+1-i}^{\e}\right)&=&0,\\
\sum_{i=1}^{k}\uh_i^{\e}\cdot\nx\uh_{k+1-i}^{\e}-\gamma_1\dx\uh_k^{\e}+\nx P_{k+1}^{\e}&=&H_{k,1}^{\e},\\
\nx\cdot\uh_k^{\e}&=&H_{k,2}^{\e},\\
\sum_{i=1}^{k}\uh_i^{\e}\cdot\nx\th_{k+1-i}^{\e}-\gamma_2\dx\th_k^{\e}&=&H_{k,3}^{\e},
\end{eqnarray}
with boundary condition
\begin{eqnarray}
A_{k,0}^{\e}&=&\tilde D_{k,0}^{\e},\\
A_{k,1}^{\e}&=&-\tilde D_{k,1}^{\e}\cos\ph+\tilde D_{k,2}^{\e}\sin\ph,\\
A_{k,2}^{\e}&=&-\tilde D_{k,1}^{\e}\sin\ph-\tilde D_{k,2}^{\e}\cos\ph,\\
A_{k,3}^{\e}&=&\tilde D_{k,3}^{\e}.
\end{eqnarray}
where $\tilde D_{k,i}^{\e}$ comes from the boundary data of $\e$-Milne problem $\tilde h_k^{\e}$.
This determines $A_{k,0}^{\e}$, $A_{k,1}^{\e}$, $A_{k,2}^{\e}$ and $A_{k,3}^{\e}$. Now it is easy to verify the boundary data are satisfied as
\begin{eqnarray}
\f_k+\fb_k&=&\b_k.
\end{eqnarray}
\ \\
Step 3: Boussinesq relation and Vanishing Mass-Flux.\\
Note that the fluid-type equations satisfied by $A_k^{\e}$ imply Boussinesq relation. In detail,
\begin{eqnarray}
P_{k+1}^{\e}-\left(\rh_{k+1}^{\e}+\th_{k+1}^{\e}+\sum_{i=1}^{k+1-i}\rh_i^{\e}\th_{k+1-i}^{\e}\right)&=&0
\end{eqnarray}
yields
\begin{eqnarray}
\rh_{k}^{\e}+\th_{k}^{\e}+\sum_{i=1}^{k-i}\rh_i^{\e}\th_{k-i}^{\e}=E_k
\end{eqnarray}
which is actually
\begin{eqnarray}\label{Boussinesq}
\rh_{k}^{\e}+\th_{k}^{\e}=E_k-\sum_{i=1}^{k-i}\rh_i^{\e}\th_{k-i}^{\e}
\end{eqnarray}
for some constant $E_k$ which is free to choose. To enforce this relation, we need to adjust the mass-flux in the $\e$-Milne problem (\ref{at 11}). Note that the Boussinesq relation (\ref{Boussinesq}) leads to a given $\tilde D_{k,0}^{\e}(\phi)+\tilde D_{k,3}^{\e}(\phi)$ for any $\ph$ up to a constant in the $\e$-Milne problem. Theorem \ref{Milne adjustment} implies we can always adjust the mass-flux $m_f[\gg_k^{\e}](\phi)$ to guarantee the Boussinesq relation. Based on the proof of Theorem \ref{Milne adjustment}, we know this can determine the mass-flux $m_f[\gg_{k}^{\e}](\ph)$ up to a constant.

On the other hand, $f^{\e}(\vx,\vv)$ satisfies the vanishing mass-flux
\begin{eqnarray}
\int_{\p\Omega}\int_{\r^2}f^{\e}(\vx,\vv)\ud{\vv}\ud{\gamma}=0.
\end{eqnarray}
Then we need
\begin{eqnarray}
\int_{\p\Omega}\int_{\r^2}(\f_k+\fb_k)(\vx,\vv)\ud{\vv}\ud{\gamma}=0.
\end{eqnarray}
for any $k\geq1$. The definition of $\fb_k$ implies
\begin{eqnarray}
\int_{\p\Omega}\int_{\r^2}\fb_k(\vx,\vv)\ud{\vv}\ud{\gamma}=0.
\end{eqnarray}
Then the remaining relation
\begin{eqnarray}
\int_{\p\Omega}\int_{\r^2}\f_k(\vx,\vv)\ud{\vv}\ud{\gamma}=0.
\end{eqnarray}
finally determines the free constant in mass-flux $m_f[\gg_{k}^{\e}](\ph)$.

In summary, the free mass-flux $m_f[\gg_{k}^{\e}](\ph)$ can help to enforce two relations: the Boussinesq relation
\begin{eqnarray}
\rh_{k}^{\e}+\th_{k}^{\e}=E_k-\sum_{i=1}^{k-i}\rh_i^{\e}\th_{k-i}^{\e},
\end{eqnarray}
and vanishing mass-flux relation
\begin{eqnarray}
\int_{\p\Omega}\int_{\r^2}\f_k(\vx,\vv)\ud{\vv}\ud{\gamma}=0.
\end{eqnarray}
Therefore, $m_f[\gg_{k}^{\e}](\ph)$ is completed determined and so are $\f_k$ and $\fb_k$.

In particular, when $k=1$, $\fb_1$ satisfies
\begin{eqnarray}\label{at 03}
\left\{
\begin{array}{rcl}
\fb_1(\et,\ph,\vvv)&=&\gg_1^{\e}(\et,\ph,\vvv)\cdot\Upsilon_0(\sqrt{\e\et})\\
\ve\dfrac{\p \gg^{\e}_1}{\p\et}+G(\e;\et)\bigg(\vp^2\dfrac{\p
\gg^{\e}_1}{\p\ve}-\ve\vp\dfrac{\p
\gg^{\e}_1}{\p\vp}\bigg)+\ll[\gg^{\e}_1]
&=&0,\\
\gg^{\e}_1(0,\ph,\vvv)&=&\b_1(\ph,\vvv)-\tilde h^{\e}(\ph,\vvv)\ \
\text{for}\ \ \ve>0,\\\rule{0ex}{1em}
\displaystyle\int_{\r^2}\ve\sqrt{\m}
\gg^{\e}_1(0,\ph,\vvv)\ud{\vvv}
&=&m_f[\gg^{\e}_1],\\\rule{0ex}{1.0em}
\displaystyle\lim_{\et\rt\infty}\gg^{\e}_1(\et,\ph,\vvv)&=&0,
\end{array}
\right.
\end{eqnarray}
where
\begin{eqnarray}
\tilde h_1^{\e}&=&\sqrt{\m(\vv)}\bigg(\tilde
D_{1,0}^{\e}+\tilde D_{1,1}^{\e}\ve+\tilde D_{1,2}^{\e}\vp+\tilde
D_{1,3}^{\e}\bigg(\frac{\abs{\vv}^2-2}{2}\bigg)\bigg),
\end{eqnarray}
and $\f_1$ satisfies
\begin{eqnarray}\label{at 04}
\f_1=\sqrt{\m}\bigg(\rh_1^{\e}+\u_{1,1}^{\e}\v_1+\u_{1,2}^{\e}\v_2+\th_1^{\e}\bigg(\frac{\abs{\vv}^2-2}{2}\bigg)\bigg),
\end{eqnarray}
with
\begin{eqnarray}
\left\{
\begin{array}{rcl}
\nx(\rh_1^{\e}+\th_1^{\e})&=&0,\\\rule{0ex}{1.0em}
\uh^{\e}_1\cdot\nx\uh_1^{\e}-\gamma_1\dx\uh_1^{\e}+\nx P_2^{\e}&=&0,\\\rule{0ex}{1.0em}
\nx\cdot\uh_1^{\e}&=&0,\\\rule{0ex}{1.0em}
\uh_1^{\e}\cdot\nx\th_1^{\e}-\gamma_2\dx\th_1^{\e}&=&0,\\\rule{0ex}{1.0em}
\rh_{1}^{\e}(\vx_0)&=&\tilde D_{1,0}^{\e}(\vx_0),\\\rule{0ex}{1.0em}
\u_{1,1}^{\e}(\vx_0)&=&-\tilde D_{1,1}^{\e}(\vx_0)\cos\ph+\tilde D_{1,2}^{\e}(\vx_0)\sin\ph,\\\rule{0ex}{1.0em}
\u_{1,2}^{\e}(\vx_0)&=&-\tilde D_{1,1}^{\e}(\vx_0)\sin\ph-\tilde D_{1,2}^{\e}(\vx_0)\cos\ph,\\\rule{0ex}{1.0em}
\th_{1}^{\e}(\vx_0)&=&\tilde D_{1,3}^{\e}(\vx_0),
\end{array}
\right.
\end{eqnarray}
where the free mass-flux $m_f[\gg_{1}^{\e}](\ph)$ is chosen to enforce the Boussinesq relation
\begin{eqnarray}
\rh_{1}^{\e}+\th_{1}^{\e}=E_1,
\end{eqnarray}
and vanishing mass-flux relation
\begin{eqnarray}
\int_{\p\Omega}\int_{\r^2}\f_1(\vx,\vv)\ud{\vv}\ud{\gamma}=0.
\end{eqnarray}
Similarly, we can define any $\f_k$ and $\fb_k$ for $k\geq1$.

\section{$\e$-Milne Problem with Geometric Correction}

We consider the $\e$-Milne problem for $\g^{\e}(\et,\ph,\vvv)$ in
the domain
$(\et,\ph,\vvv)\in[0,\infty)\times[-\pi,\pi)\times\r^2$ as
\begin{eqnarray}\label{Milne}
\left\{
\begin{array}{rcl}\displaystyle
\ve\frac{\p \g^{\e}}{\p\et}+G(\e;\et)\bigg(\vp^2\dfrac{\p
\g^{\e}}{\p\ve}-\ve\vp\dfrac{\p \g^{\e}}{\p\vp}\bigg)+\ll[\g^{\e}]
&=&S^{\e}(\et,\ph,\vvv),\\
\g^{\e}(0,\ph,\vvv)&=&h^{\e}(\ph,\vvv)\ \ \text{for}\ \
\ve>0,\\\rule{0ex}{1.0em} \displaystyle\int_{\r^2}
\ve\sqrt{\m}\g^{\e}(0,\ph,\vvv)\ud{\vvv}
&=&m_f[\g^{\e}](\ph),\\\rule{0ex}{1.0em}
\displaystyle\lim_{\et\rt\infty}\g^{\e}(\et,\ph,\vvv)&=&\g^{\e}_{\infty}(\ph,\vvv),
\end{array}
\right.
\end{eqnarray}
where the velocity variables
\begin{eqnarray}
\vvv=(\ve,\vp),
\end{eqnarray}
the standard Maxwellian
\begin{eqnarray}
\m(\vvv)=\frac{1}{2\pi}\exp\bigg(-\frac{\abs{\vvv}^2}{2}\bigg),
\end{eqnarray}
$m_f[\g^{\e}]$ is the mass-flux given a priori, the limit function
\begin{eqnarray}
\g^{\e}_{\infty}(\ph,\vvv)=\sqrt{\m}\bigg(D_0^{\e}(\ph)+D_1^{\e}(\ph)\ve+D_2^{\e}(\ph)\vp+D_3^{\e}(\ph)\frac{\abs{\vvv}^2-2}{2}\bigg),
\end{eqnarray}
the forcing term
\begin{eqnarray}
G(\e;\et)&=&-\frac{\e\Upsilon(\e^{1/2}\et)}{1-\e\et},
\end{eqnarray}
and the cut-off function $\Upsilon(z)\in C^{\infty}$ is defined as
\begin{eqnarray}
\Upsilon(z)=\left\{
\begin{array}{ll}
1&0\leq z\leq1/2,\\
0&3/4\leq z\leq\infty.
\end{array}
\right.
\end{eqnarray}
We assume the boundary data and source term satisfy for $0\leq\zeta\leq 1/4$ and $\vartheta\geq0$,
\begin{eqnarray}\label{Milne bounded}
\abs{\bv h^{\e}(\ph,\vvv)}\leq M,
\end{eqnarray}
and
\begin{eqnarray}\label{Milne decay}
\abs{\bv S^{\e}(\eta,\ph,\vvv)}\leq M\ue^{-K\et},
\end{eqnarray}
for $M(\zeta,\vartheta)\geq0$ and $K(\zeta,\vartheta)>0$ uniform in $\e$ and $\ph$, where the Japanese bracket is defined as
\begin{eqnarray}
\br{\vvv}=(1+\abs{\vvv}^2)^{1/2}.
\end{eqnarray}
We define a potential function $W(\e;\et)$ as
$G(\e;\et)=-\p_{\et}W(\e;\et)$ with $W(\e;0)=0$.
Our main goal is to find
\begin{eqnarray}\label{Milne transform compatibility}
\tilde h^{\e}(\ph,\vvv)=\sqrt{\m}\bigg(\tilde D_0^{\e}(\ph)+\tilde D_1^{\e}(\ph)\ve+\tilde
D_2^{\e}(\ph)\vp+\tilde D_3^{\e}(\ph)\frac{\abs{\vvv}^2-2}{2}\bigg),
\end{eqnarray}
such that the $\e$-Milne problem for
$\gg^{\e}(\et,\ph,\vvv)$ in the domain
$(\et,\ph,\vvv)\in[0,\infty)\times[-\pi,\pi)\times\r^2$ as
\begin{eqnarray}\label{Milne transform}
\left\{
\begin{array}{rcl}\displaystyle
\ve\frac{\p \gg^{\e}}{\p\et}+G(\e;\et)\bigg(\vp^2\dfrac{\p
\gg^{\e}}{\p\ve}-\ve\vp\dfrac{\p
\gg^{\e}}{\p\vp}\bigg)+\ll[\gg^{\e}]
&=&S^{\e}(\et,\ph,\vvv),\\
\gg^{\e}(0,\ph,\vvv)&=&h^{\e}(\ph,\vvv)-\tilde h^{\e}(\ph,\vvv)\ \
\text{for}\ \ \ve>0,\\\rule{0ex}{1.0em}
\displaystyle\int_{\r^2}\ve\sqrt{\m}
\gg^{\e}(0,\ph,\vvv)\ud{\vvv}
&=&m_f[\gg^{\e}](\ph),\\\rule{0ex}{1.0em}
\displaystyle\lim_{\et\rt\infty}\gg^{\e}(\et,\ph,\vvv)&=&0,
\end{array}
\right.
\end{eqnarray}
is well-posed, where
\begin{eqnarray}
m_f[\gg^{\e}](\ph)=m_f[\g^{\e}](\ph)-\displaystyle\int_{\r^2}\ve\sqrt{\m}\tilde h^{\e}(\ph,\vvv)\ud{\vvv}.
\end{eqnarray}
For notational simplicity, we omit superscript $\e$ and $\ph$ dependence in
$\g^{\e}$ and $\gg^{\e}$ in this section. The same convention also applies to
$G(\e;\eta)$, $W(\e;\eta)$, $S^{\e}(\et,\ph,\vvv)$ and
$h^{\e}(\ph,\vvv)$. It is easy to see the estimates are uniform in
$\e$ and $\ph$. Our analysis is based on the ideas in \cite{Arkeryd.Esposito.Marra.Nouri2011, Cercignani.Marra.Esposito1998, Yang2012, AA003}.\\
\ \\
In this section, we introduce some special notations to describe the
norms in the space $(\et,\vvv)\in[0,\infty)\times\r^2$. Define
the $L^2$ norm as follows:
\begin{eqnarray}
\tnm{f(\et)}&=&\bigg(\int_{\r^2}\abs{f(\et,\vvv)}^2\ud{\vvv}\bigg)^{1/2},\\
\tnnm{f}&=&\bigg(\int_0^{\infty}\int_{\r^2}\abs{f(\et,\vvv)}^2\ud{\vvv}\ud{\et}\bigg)^{1/2}.
\end{eqnarray}
Define the inner product in $\vvv$ space
\begin{eqnarray}
\br{f,g}(\et)=\int_{\r^2}
f(\et,\vvv)g(\et,\vvv)\ud{\vvv}.
\end{eqnarray}
Define the weighted $L^{\infty}$ norm as follows:
\begin{eqnarray}
\lnm{f(\et)}{\vt,\ze}&=&\sup_{\vvv\in\r^2}\bigg(\bvv\abs{f(\et,\vvv)}\bigg),\\
\lnnm{f}{\vt,\ze}&=&\sup_{(\et,\vvv)\in[0,\infty)\times\r^2}\bigg(\bvv\abs{f(\et,\vvv)}\bigg),\\
\ltnm{f}{\ze}&=&\sup_{\et\in[0,\infty)}\bigg(\int_{\r^2}\abs{e^{2\ze\abs{\vvv}^2}f(\et,\vvv)}^2\ud{\vvv}\bigg)^{1/2}.
\end{eqnarray}
Since the boundary data $h(\vvv)$ is only defined on $\ve>0$, we
naturally extend above definitions on this half-domain as follows:
\begin{eqnarray}
\tnm{h}&=&\bigg(\int_{\ve>0}\abs{h(\vvv)}^2\ud{\vvv}\bigg)^{1/2},\\
\lnm{h}{\vt,\ze}&=&\sup_{\ve>0}\bigg(\bvv\abs{h(\vvv)}\bigg).
\end{eqnarray}
Since the null of operator $\ll$ is
$\nk=\sqrt{\m}\bigg\{1,\ve,\vp,\dfrac{\abs{\vvv}^2-2}{2}\bigg\}=\{\psi_0,\psi_1,\psi_2,\psi_3\}$,
we can decompose the solution as
\begin{eqnarray}
\g=w_g+q_g,
\end{eqnarray}
where
\begin{eqnarray}
q_g=\sqrt{\m}\bigg(q_{g0}+q_{g1}\ve+q_{g2}\vp+q_{g3}\dfrac{\abs{\vvv}^2-2}{2}\bigg)=q_{g0}\psi_0+q_{g1}\psi_1+q_{g2}\psi_2+q_{g3}\psi_3\in\nk,
\end{eqnarray}
and
\begin{eqnarray}
w_g\in\nk^{\bot}.
\end{eqnarray}
When there is no confusion, we will simply write $\g=w+q$.
\begin{lemma}\label{Milne force}
When $\e\leq 1/2$, the force $G(\et)$ and potential $W(\et)$ satisfy the following properties:\\
\ \\
1. $W(\et)$ is an increasing function satisfying for any $\et>0$,
\begin{eqnarray}\label{force 1}
0\leq W(\et)\leq 1,
\end{eqnarray}
2.
\begin{eqnarray}\label{force 2}
\lim_{\e\rt0}W(\infty)=0.
\end{eqnarray}
3.
\begin{eqnarray}\label{force 3}
\int_0^{\infty}\bigg(\ue^{-W(\et)}-\ue^{-W(\infty)}\bigg)^2\ud{\eta}&\leq&C.
\end{eqnarray}
4.
\begin{eqnarray}\label{force 4}
\int_0^{\infty}G^2(\et)\ud{\et}&\leq&C.
\end{eqnarray}
5.
\begin{eqnarray}\label{force 5}
\int_0^{\infty}\int_{\et}^{\infty}G^2(y)\ud{y}\ud{\et}\leq C.
\end{eqnarray}
\end{lemma}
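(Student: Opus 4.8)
\emph{Proof plan.} The plan is to reduce all five properties to two elementary structural facts about the cutoff and to isolate the one place where a sharp computation is actually needed. First I would observe that, since $\Upsilon(z)=0$ for $z\geq 3/4$, the force $G(\et)=-\e\Upsilon(\e^{1/2}\et)/(1-\e\et)$ is supported in $\et\in[0,\frac{3}{4\sqrt\e}]$, a set of length $O(\e^{-1/2})$. On this support $\e\et\leq\tfrac34\sqrt\e\leq\tfrac{3}{4\sqrt2}<1$ whenever $\e\leq 1/2$, so the denominator is bounded below: $1-\e\et\geq c_0:=1-\tfrac{3}{4\sqrt2}>0$. These two facts—short support and a denominator bounded away from zero—drive every estimate, the governing heuristic being that the positive powers of $\e$ in the numerator of $G$ always beat the factor $\e^{-1/2}$ coming from the length of the support.

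For parts 1 and 2 I would write $W(\et)=\int_0^{\et}(-G)\,\ud{y}=\int_0^{\et}\e\Upsilon(\e^{1/2}y)/(1-\e y)\,\ud{y}$. Since $-G\geq0$, the function $W$ is nondecreasing with $W\geq W(0)=0$, which gives monotonicity and the lower bound at once. For the upper bound I would deliberately avoid the crude ``maximum of the integrand times length of support'' estimate and instead integrate exactly: bounding $\Upsilon\leq1$ and using the antiderivative of $\e/(1-\e y)$ yields $W(\et)\leq W(\infty)\leq-\ln\!\big(1-\tfrac34\sqrt\e\big)$, which is $<1$ for $\e\leq1/2$ and tends to $0$ as $\e\to0$. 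Squeezing this between $0$ and $-\ln(1-\tfrac34\sqrt\e)$ establishes (\ref{force 1}) and (\ref{force 2}).

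For parts 3, 4 and 5 I would exploit that $W(\et)=W(\infty)$ once $\et\geq\frac{3}{4\sqrt\e}$, since the integrand defining $W$ vanishes there; hence the integrand in (\ref{force 3}) is supported in $[0,\frac{3}{4\sqrt\e}]$. Using $\abs{e^{-a}-e^{-b}}\leq\abs{a-b}$ for $a,b\geq0$ together with monotonicity gives $\abs{e^{-W(\et)}-e^{-W(\infty)}}\leq W(\infty)$, and since $W(\infty)=O(\sqrt\e)$ by the bound above, the integrand is $O(\e)$ on a set of length $O(\e^{-1/2})$, so (\ref{force 3}) is of size $O(\sqrt\e)\leq C$. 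Property (\ref{force 4}) is the same computation: $G^2\leq \e^2/c_0^2$ on a set of length $O(\e^{-1/2})$, giving an integral of size $O(\e^{3/2})$. For (\ref{force 5}) I would swap the order of integration by Tonelli, turning the double integral into $\int_0^{\infty}y\,G^2(y)\,\ud{y}$, which is bounded by $\tfrac{\e^2}{c_0^2}\int_0^{3/(4\sqrt\e)}y\,\ud{y}=O(\e)\leq C$.

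The only genuinely delicate point is the upper bound $W\leq1$ in part 1: the naive estimate of the integrand times the support length is of order $\sqrt\e/(1-\tfrac34\sqrt\e)$, which exceeds $1$ near $\e=1/2$, so one is forced to retain the exact logarithm $-\ln(1-\tfrac34\sqrt\e)$ rather than any linearized surrogate. Every remaining estimate is a routine ``bounded integrand on a short interval'' computation, and uniformity in $\e$ and $\ph$ is automatic, since $G$ and $W$ depend on $\ph$ only through the suppressed parameter while all the bounds produced are $\ph$-independent.
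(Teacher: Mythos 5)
Your proposal is correct and follows essentially the same route as the paper: both arguments rest on the support of $G$ being $[0,\tfrac{3}{4\sqrt{\e}}]$, the denominator $1-\e\et$ being bounded below there when $\e\leq 1/2$, and the exact logarithmic antiderivative giving $W(\infty)\leq-\ln\big(1-\tfrac{3}{4}\sqrt{\e}\big)$, which yields parts 1 and 2 exactly as in the paper. Your handling of part 3 via the Lipschitz bound $\abs{\ue^{-a}-\ue^{-b}}\leq\abs{a-b}$ and of part 5 via Tonelli are slightly cleaner than the paper's explicit computations with $\ue^{W(\infty)-W(\et)}-1$ and the iterated integral, but these are tactical streamlinings of the same argument rather than a different approach.
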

\begin{proof}
Since $G(\et)$ is always zero or negative, we know $W(\et)$ is increasing. Then we directly compute
\begin{eqnarray}
\int_0^{\infty}\abs{G(\e;\eta)}\ud{\eta}&=&\int_0^{\infty}\frac{\e\Upsilon(\e^{1/2}\et)}{1-\e\et}\ud{\eta}\leq\int_0^{\frac{3}{4\sqrt{\e}}}\frac{\e}{1-\e\et}\ud{\eta}\\
&=&-\ln(1-\e\eta)\bigg|_{0}^{\frac{3}{4\sqrt{\e}}}=-\ln\bigg(1-\frac{3}{4}\e^{1/2}\bigg).\no
\end{eqnarray}
This naturally implies (\ref{force 1}) and (\ref{force 2}). Also,
we have
\begin{eqnarray}
\int_0^{\infty}\bigg(\ue^{-W(\et)}-\ue^{-W(\infty)}\bigg)^2\ud{\eta}&=&\ue^{-2W(\infty)}\int_0^{\frac{3}{4\sqrt{\e}}}\bigg(\ue^{W(\infty)-W(\et)}-1\bigg)^2\ud{\eta}.
\end{eqnarray}
Then we obtain
\begin{eqnarray}
\int_0^{\frac{3}{4\sqrt{\e}}}\ue^{W(\infty)-W(\et)}\ud{\et}&=&\int_0^{\frac{3}{4\sqrt{\e}}}\exp\bigg(-\int_{\et}^{\frac{3}{4\sqrt{\e}}}G(y)\ud{y}\bigg)\ud{\et}=
\int_0^{\frac{3}{4\sqrt{\e}}}\exp\bigg(\int_{\et}^{\frac{3}{4\sqrt{\e}}}\frac{\e\Upsilon(\e^{1/2}y)}{1-\e y}\ud{y}\bigg)\ud{\et}.\no
\end{eqnarray}
Note that
\begin{eqnarray}
1\leq \exp\bigg(\int_{\et}^{\frac{3}{4\sqrt{\e}}}\frac{\e\Upsilon(\e^{1/2}y)}{1-\e y}\ud{y}\bigg)\leq \exp\bigg(\int_{\et}^{\frac{3}{4\sqrt{\e}}}\frac{\e}{1-\e y}\ud{y}\bigg)
=\exp\bigg(-\ln(1-\e y)\bigg|_{\et}^{\frac{3}{4\sqrt{\e}}}\bigg)=\frac{1-\e\et}{1-\frac{3\sqrt{\e}}{4}}
\end{eqnarray}
Hence, we have
\begin{eqnarray}
\bigg(\ue^{W(\infty)-W(\et)}-1\bigg)^2&\leq&\max\bigg\{\bigg(\frac{1-\e\et}{1-\frac{3\sqrt{\e}}{4}}\bigg)^2-1, 2\bigg(\frac{1-\e\et}{1-\frac{3\sqrt{\e}}{4}}-1\bigg)\bigg\}\leq C\e\eta+C\sqrt{\e},
\end{eqnarray}
for some constant $C$ independent of $\e$. Then we have
\begin{eqnarray}
\int_0^{\frac{3}{4\sqrt{\e}}}\bigg(\ue^{W(\infty)-W(\et)}-1\bigg)^2\ud{\eta}\leq C\int_0^{\frac{3}{4\sqrt{\e}}}\bigg(\e\et+\sqrt{\e}\bigg)\ud{\eta}\leq C.
\end{eqnarray}
This proves (\ref{force 3}). Furthermore,
\begin{eqnarray}
\\
\int_0^{\infty}G^2(\et)\ud{\et}&=&\int_0^{\infty}\bigg(\frac{\e\Upsilon(\e^{1/2}\et)}{1-\e\et}\bigg)^2\ud{\et}\leq \int_0^{\frac{3}{4\sqrt{\e}}}\bigg(\frac{\e}{1-\e\et}\bigg)^2\ud{\et}=-\e\ln(1-\e\eta)\bigg|_0^{\frac{3}{4\sqrt{\e}}}=-\e\ln\bigg(1-\frac{3\sqrt{\e}}{4}\bigg).\no
\end{eqnarray}
Therefore, (\ref{force 4}) is obvious. Moreover, we compute
\begin{eqnarray}
\int_0^{\infty}\int_{\et}^{\infty}G^2(y)\ud{y}\ud{\et}&=&\int_0^{\infty}\int_{\et}^{\infty}\bigg(\frac{\e\Upsilon(\e^{1/2}y)}{1-\e y}\bigg)^2\ud{y}\ud{\et}= \int_0^{\frac{3}{4\sqrt{\e}}}\int_{\et}^{\frac{3}{4\sqrt{\e}}}\bigg(\frac{\e\Upsilon(\e^{1/2}y)}{1-\e y}\bigg)^2\ud{y}\ud{\et}\\
&\leq&\int_0^{\frac{3}{4\sqrt{\e}}}\int_{\et}^{\frac{3}{4\sqrt{\e}}}\bigg(\frac{\e}{1-\e y}\bigg)^2\ud{y}\ud{\et}
=-\int_0^{\frac{3}{4\sqrt{\e}}}\bigg(\frac{\e}{1-\e y}\bigg|_{\et}^{\frac{3}{4\sqrt{\e}}}\bigg)\ud{\et}\no\\
&=&\int_0^{\frac{3}{4\sqrt{\e}}}\bigg(\frac{\e}{1-\e \et}-\frac{\e}{1-\frac{3\sqrt{\e}}{4}}\bigg)\ud{\et}\no\\
&=&-\ln(1-\e\et)\bigg|_0^{\frac{3}{4\sqrt{\e}}}+\frac{\frac{3\sqrt{\e}}{4}}{1-\frac{3\sqrt{\e}}{4}}\leq C.\no
\end{eqnarray}
\end{proof}
\begin{lemma}\label{Milne property}
For the operator $\ll=\nu-K$, we have the estimates
\begin{eqnarray}
&&\nu_0(1+\abs{\vvv})\leq\nu(\vvv)\leq\nu_1(1+\abs{\vvv}),\\
&&\br{\g,\ll[\g]}(\et)=\br{w,\ll[w]}(\et)\geq C\tnm{\sqrt{\nu}w(\et)}^2,
\end{eqnarray}
for $\nu_0$, $\nu_1$ and $C$ positive constants.
\end{lemma}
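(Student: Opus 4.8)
The plan is to handle the two estimates separately, since they are independent in nature.

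For the collision frequency, the bound is a direct computation. Substituting the hard-sphere kernel $q(\vo,\abs{\vvv-\vvu})=q_0\abs{\vvv-\vvu}\abs{\cos\phi}$ into the definition of $\nu$ and carrying out the angular integration over $\s^1$, which contributes a fixed positive constant, one obtains $\nu(\vvv)=C\int_{\r^2}\abs{\vvv-\vvu}\m(\vvu)\ud{\vvu}$. The upper bound then follows from the triangle inequality $\abs{\vvv-\vvu}\leq\abs{\vvv}+\abs{\vvu}$ together with the finiteness of $\int_{\r^2}\abs{\vvu}\m(\vvu)\ud{\vvu}$. For the lower bound, for large $\abs{\vvv}$ I would use $\abs{\vvv-\vvu}\geq\abs{\vvv}-\abs{\vvu}$ to extract a factor comparable to $\abs{\vvv}$, while for bounded $\abs{\vvv}$ the integrand is strictly positive and bounded below uniformly; combining the two regimes gives $\nu(\vvv)\geq\nu_0(1+\abs{\vvv})$.

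For the coercivity, I first note the identity $\br{\g,\ll[\g]}=\br{w,\ll[w]}$, which is immediate: writing $\g=w+q$ with $q\in\nk$ and $w\in\nk^{\bot}$, self-adjointness of $\ll$ on $L^2(\r^2)$ and $\ll[q]=0$ kill all cross terms and the $q$--$q$ contribution. The substance is the lower bound $\br{w,\ll[w]}\geq C\tnm{\sn w}^2$, which I would establish in two stages. First I would prove the plain $L^2$ spectral gap: there is $c_0>0$ with $\br{w,\ll[w]}\geq c_0\tnm{w}^2$ for all $w\perp\nk$. Since $\ll$ is nonnegative and self-adjoint with kernel exactly $\nk$, and $K$ is compact on $L^2(\r^2)$ by the kernel bounds of Lemma~\ref{wellposedness prelim lemma 8}, this follows from a standard compactness--contradiction argument: a normalized minimizing sequence $w_n\perp\nk$ with $\br{w_n,\ll[w_n]}\to0$ would, after extracting a weakly convergent subsequence and invoking compactness of $K$, force the limit into $\nk\cap\nk^{\bot}=\{0\}$, contradicting $\tnm{w_n}=1$.

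The second and harder stage is to upgrade the $L^2$ gap to the weighted norm. Writing $\br{w,\ll[w]}=\tnm{\sn w}^2-\br{w,K[w]}$, I would split $K=K^{\d}+(K-K^{\d})$, where $K^{\d}$ has a smooth, compactly supported kernel and the remainder is subordinate to $\nu$ in the sense that $\abs{\br{w,(K-K^{\d})[w]}}\leq\eta\tnm{\sn w}^2$, with $\eta$ as small as desired upon sending the cutoff parameter appropriately, while $\abs{\br{w,K^{\d}[w]}}\leq C_{\eta}\tnm{w}^2$. This yields $\br{w,\ll[w]}\geq(1-\eta)\tnm{\sn w}^2-C_{\eta}\tnm{w}^2$, and a convex combination of this with the $L^2$ gap from the first stage---taking a small multiple $\theta$ of the latter to absorb the $-C_{\eta}\tnm{w}^2$ term---produces $\br{w,\ll[w]}\geq C\tnm{\sn w}^2$. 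The main obstacle is precisely this upgrade: showing that the large-velocity part of $K$ is genuinely controlled by $\nu(\vvv)\sim(1+\abs{\vvv})$ rather than merely by $\tnm{w}$ relies on the quantitative decay of the kernel $k$, which is where Lemma~\ref{wellposedness prelim lemma 8} (the Grad-type estimate) is essential.
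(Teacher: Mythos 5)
Your argument is correct and is essentially the proof the paper is pointing to: the paper disposes of this lemma with a citation to Glassey (Chapter 3), and what you have written out --- the direct computation of $\nu$ for the hard-sphere kernel, the identity $\br{\g,\ll[\g]}=\br{w,\ll[w]}$ from self-adjointness and $\ll[\nk]=0$, the $L^2$ spectral gap via compactness of $K$, and the interpolation that upgrades it to the $\sqrt{\nu}$-weighted coercivity --- is precisely that standard argument. The only remark worth making is that your final stage is slightly over-engineered: since $K$ is bounded on $L^2$ and $\nu\geq\nu_0>0$, one already has $\abs{\br{w,K[w]}}\leq C\tnm{w}^2$ without any cutoff splitting, and taking a convex combination of $\br{w,\ll[w]}\geq\tnm{\sn w}^2-C\tnm{w}^2$ with the $L^2$ gap finishes the proof.
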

\begin{proof}
See \cite[Chapter 3]{Glassey1996}.
\end{proof}

\subsection{$L^2$ Estimates}

\subsubsection{$L^2$ Estimates in a finite slab}

We first consider the case with zero source term for $\g^L(\et,\vvv)$
in a finite slab $[0,L]\times\r^2$ as
\begin{eqnarray}\label{finite slab LT}
\left\{
\begin{array}{rcl}\displaystyle
\ve\frac{\p \gl}{\p\et}+G(\et)\bigg(\vp^2\dfrac{\p
\gl}{\p\ve}-\ve\vp\dfrac{\p \gl}{\p\vp}\bigg)+\ll[\gl]
&=&0,\\\rule{0ex}{1.0em} \gl(0,\vvv)&=&h(\vvv)\ \ \text{for}\
\ \ve>0,\\\rule{0ex}{1.0em} \gl(L,R[\vvv])&=&\gl(L,\vvv),
\end{array}
\right.
\end{eqnarray}
where
\begin{eqnarray}
R[\ve,\vp]=(-\ve,\vp).
\end{eqnarray}
Similarly, we can decompose $\gl$ as
\begin{eqnarray}
\gl=\wl+\ql.
\end{eqnarray}
\begin{lemma}\label{Milne prelim lemma 1}
There exists a solution of the equation (\ref{finite slab LT})
satisfying the estimates
\begin{eqnarray}
\int_0^L\tnm{\sn\wl(\et)}^2\ud{\et}&\leq&C,\\
\tnm{\ql(\et)}^2&\leq&C\bigg(1+\et+\tnm{\sn\wl(\et)}^2\bigg),
\end{eqnarray}
where $C$ is a constant independent of $L$. Also, the solution satisfies the orthogonal relation
\begin{eqnarray}
\br{\ve\psi_i,\wl}(\et)=0,\ \ \text{for}\ \ i=0,2,3.
\end{eqnarray}
\end{lemma}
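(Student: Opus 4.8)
The plan is to derive existence and both estimates from one weighted energy identity, and to read the orthogonality relations off the macroscopic balance laws.

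First I would test (\ref{finite slab LT}) against $\gl$ and integrate in $\vvv$. Integrating the geometric field by parts (the weights $\vp^2\sqrt\m$ and $\ve\vp\sqrt\m$ differentiate cleanly) turns it into a multiplier, giving
\[
\int_{\r^2}G(\et)\bigg(\vp^2\p_\ve\gl-\ve\vp\p_\vp\gl\bigg)\gl\ud{\vvv}=\frac{G(\et)}{2}\int_{\r^2}\ve\,(\gl)^2\ud{\vvv}.
\]
Writing $I(\et)=\int_{\r^2}\ve(\gl)^2\ud{\vvv}$, using $G=-\p_\et W$ and $\br{\gl,\ll[\gl]}=\br{\wl,\ll[\wl]}$, the identity collapses to
\[
\frac12\p_\et\big(\ue^{-W}I\big)+\ue^{-W}\br{\wl,\ll[\wl]}=0 .
\]
Integrating over $[0,L]$, the term at $\et=L$ vanishes since the specular condition $\gl(L,R[\vvv])=\gl(L,\vvv)$ makes $\ve(\gl(L))^2$ odd in $\ve$; at $\et=0$ the in-flow datum controls $\int_{\ve>0}\ve h^2$ while the out-flow part $\int_{\ve<0}\ve(\gl(0))^2\le0$ has the right sign. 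With $0\le W\le1$ (Lemma \ref{Milne force}) and the spectral gap $\br{\wl,\ll[\wl]}\ge C\tnm{\sn\wl}^2$ (Lemma \ref{Milne property}), this yields $\int_0^L\tnm{\sn\wl}^2\ud{\et}\le C$, with $C$ independent of $L$ and controlled by (\ref{Milne bounded}). Existence of $\gl$ I would obtain as in Section 2, adding a penalization $\l\gl$, solving the coercive problem by iteration, and letting $\l\rt0$ using these $L$-uniform bounds.

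For the orthogonality relations I test (\ref{finite slab LT}) against the null functions $\psi_i$. Since $\ll$ is self-adjoint and kills $\nk$, the collision term drops and each flux $\br{\ve\gl,\psi_i}$ obeys a first-order ODE in $\et$. By velocity parity the geometric contribution maps the triple $\{\br{\ve\gl,\psi_0},\br{\ve\gl,\psi_2},\br{\ve\gl,\psi_3}\}$ into itself (explicitly $\p_\et\br{\ve\gl,\psi_0}+G\br{\ve\gl,\psi_0}=0$ and $\p_\et\br{\ve\gl,\psi_2}+2G\br{\ve\gl,\psi_2}=0$, with the $\psi_3$ relation closing against $\psi_0,\psi_3$), so the triple solves a closed homogeneous linear system. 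At $\et=L$ each of these fluxes is the integral of a function odd in $\ve$, hence zero; by uniqueness the system vanishes identically. Finally a parity check shows the hydrodynamic part $\br{\ve\ql,\psi_i}$ vanishes for $i=2$, and, after the $i=0$ identity forces $q_{g1}\equiv0$, also for $i=0,3$. Hence $\br{\ve\psi_i,\wl}=\br{\ve\gl,\psi_i}\equiv0$ for $i=0,2,3$.

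The delicate point is the bound on $\ql$, which the fluxes alone do not deliver. I would test the equation against the non-null weights $\ve\vp\sqrt\m$, $\ve\sqrt\m$ and $\ve\tfrac{\abs{\vvv}^2-2}{2}\sqrt\m$, which, through the transport term $\ve\p_\et$, isolate $q_{g2}$, $q_{g0}$ and $q_{g3}$ respectively (recall $q_{g1}\equiv0$). Each test yields an identity $\p_\et\big(c_i q_{gi}+m_i[\wl]\big)=r_i$, where $m_i[\wl]$ is a velocity moment of $\wl$ bounded by $\tnm{\sn\wl}$ and $r_i$ is controlled by $\big(1+\abs{G(\et)}\big)\tnm{\sn\wl}$ via $\br{\ll[\wl],\cdot}\le C\tnm{\sn\wl}$. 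Integrating from $0$ to $\et$ and using $\int_0^\et\tnm{\sn\wl}\le\et^{1/2}\big(\int_0^L\tnm{\sn\wl}^2\big)^{1/2}\le C\et^{1/2}$ together with $\int_0^\et\abs{G}\le\big(\int_0^\infty G^2\big)^{1/2}\et^{1/2}\le C\et^{1/2}$ (Lemma \ref{Milne force}), and bounding the $\et=0$ value by the data, gives $\abs{q_{gi}(\et)}\le C\big(1+\et^{1/2}+\tnm{\sn\wl(\et)}\big)$; squaring and summing produces $\tnm{\ql(\et)}^2\le C(1+\et+\tnm{\sn\wl(\et)}^2)$. The main obstacle is keeping $C$ independent of $L$: this forces one to discard the out-flow contribution with the correct sign and to treat the geometric terms $G(\et)$ only through the integrability bounds of Lemma \ref{Milne force}, since $G$ is small merely in the integrated sense.
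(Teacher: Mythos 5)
Your proposal is correct and follows essentially the same route as the paper: the weighted energy identity $\p_\et(\ue^{-W}\br{\ve\gl,\gl})=-2\ue^{-W}\br{\wl,\ll[\wl]}$ with the specular condition killing the endpoint at $\et=L$, the flux ODEs closed by parity for the orthogonality relations, and the moments $\br{\ve^2\psi_j,\gl}$ integrated with Cauchy--Schwarz on $\int_0^\et\tnm{\sn\wl}$ to produce the $1+\et$ growth of $\ql$. The only cosmetic imprecision is that the tests against $\ve\psi_0,\ve\psi_2,\ve\psi_3$ do not literally isolate the individual $q_{gi}$ but yield a coupled system governed by the nonsingular matrix $A_{jk}=\br{\ve^2\psi_j,\psi_k}$, exactly as the paper inverts it.
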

\begin{proof}
The existence follows from a standard argument by adding penalty term $\lambda\gl$ on the left-hand side of the equation for $0<\lambda<<1$ and estimate along the characteristics. Hence, we concentrate on the estimates. We divide the proof into several steps:\\
\ \\
Step 1: Estimate of $\wl$.\\
Multiplying $\gl$ on both sides of (\ref{finite slab LT}) and
integrating over $\vvv\in\r^2$, we have
\begin{eqnarray}
\half\frac{\ud{}}{\ud{\et}}\br{\ve\gl,\gl}+
G(\et)\br{\vp^2\dfrac{\p
\gl}{\p\ve}-\ve\vp\dfrac{\p \gl}{\p\vp},\gl}=-(\g^L,\ll[\g^L]).
\end{eqnarray}
An integration by parts implies
\begin{eqnarray}
\br{\vp^2\dfrac{\p
\gl}{\p\ve}-\ve\vp\dfrac{\p \gl}{\p\vp},\gl}=\half\br{\vp^2,\dfrac{\p
(\gl)^2}{\p\ve}}-\half\br{\ve\vp,\dfrac{\p (\gl)^2}{\p\vp}}=\half\br{\ve\gl,\gl}.
\end{eqnarray}
Therefore, using Lemma \ref{Milne property}, we obtain
\begin{eqnarray}
\half\frac{\ud{}}{\ud{\et}}\br{\ve\gl,\gl}+\half
G(\et)\br{\ve\gl,\gl}=-(\wl,\ll[\wl]).
\end{eqnarray}
Define
\begin{eqnarray}
\alpha(\et)=\half\br{\ve\gl,\gl}(\et),
\end{eqnarray}
which implies
\begin{eqnarray}
\frac{\ud{\alpha}}{\ud{\et}}+G(\et)\alpha=-(\wl,\ll[\wl]).
\end{eqnarray}
Then we have
\begin{eqnarray}
\alpha(\et)&=&\alpha(L)\exp\bigg(\int_{\et}^LG(y)\ud{y}\bigg)+
\int_{\et}^L\exp\bigg(-\int_{\et}^yG(z)\ud{z}\bigg)\bigg(\br{\wl,\ll[\wl]}(y)\bigg)\ud{y},\label{mt 01}\\
\alpha(\et)&=&\alpha(0)\exp\bigg(-\int_{0}^{\et}G(y)\ud{y}\bigg)+
\int_{0}^{\et}\exp\bigg(\int_{y}^{\et}G(z)\ud{z}\bigg)\bigg(-\br{\wl,\ll[\wl]}(y)\bigg)\ud{y}.\label{mt 02}
\end{eqnarray}
Since $\alpha(L)=0$ due to reflexive boundary, (\ref{mt 01}) implies
\begin{eqnarray}
\alpha(\et)\geq0.
\end{eqnarray}
By
\begin{eqnarray}
\alpha(0)=\half\int_{\ve>0}\ve(\gl)^2(0)\ud{\vvv}+\half\int_{\ve<0}\ve(\gl)^2(0)\ud{\vvv}\leq\half\int_{\ve>0}\ve h^2\ud{\vvv}\leq C,
\end{eqnarray}
and (\ref{mt 02}), we obtain
\begin{eqnarray}\label{mt 03}
\alpha(\et)\leq C.
\end{eqnarray}
Hence, (\ref{mt 01}) and (\ref{mt 03}) lead to
\begin{eqnarray}
\int_{0}^{L}\exp\bigg(-\int_{0}^{y}G(z)\ud{z}\bigg)\bigg(\br{\wl,\ll[\wl]}(y)\bigg)\ud{y}\leq
C,
\end{eqnarray}
which, by Lemma \ref{Milne force} and Lemma \ref{Milne property}, further yields
\begin{eqnarray}\label{mt 04}
\int_0^L\tnm{\sn\wl(\et)}^2\ud{\et}\leq C
\end{eqnarray}
\ \\
Step 2: Estimate of $\ql$.\\
Multiplying $\ve\psi_j$ with $j\neq1$ on both sides of (\ref{finite slab LT}) and integrating over
$\vvv\in\r^2$, we obtain
\begin{eqnarray}
\frac{\ud{}}{\ud{\et}}\br{\ve^2\psi_j,\gl}+G(\et)\br{\ve\psi_j,
\vp^2\dfrac{\p \gl}{\p\ve}-\ve\vp\dfrac{\p
\gl}{\p\vp}}=-\br{\ve\psi_j,\ll[\wl]}.
\end{eqnarray}
Define $\tilde q^L=\ql-q_1^L\psi_1$ and
\begin{eqnarray}
\beta_j(\et)&=&\br{\ve^2\psi_j,\tilde q^L}(\et),\\
\beta(\et)&=&\bigg(\beta_0(\et),\beta_1(\et),\beta_2(\et),\beta_3(\et)\bigg)^T\\
\tilde\beta(\et)&=&\bigg(\beta_0(\et),\beta_2(\et),\beta_3(\et)\bigg)^T.
\end{eqnarray}
By definition, $\beta_1=0$. For $j\neq1$, using integration by parts, we have
\begin{eqnarray}
\frac{\ud{}}{\ud{\et}}\br{\ve^2\psi_j,\gl}&=&G(\et)\br{\frac{\p}{\p\ve}(\ve\vp^2\psi_j)-\frac{\p}{\p\vp}(\ve^2\vp\psi_j),
\gl}-\br{\ve\psi_j,\ll[\wl]},
\end{eqnarray}
which further implies
\begin{eqnarray}\label{mt 05}
\\
\frac{\ud{\beta_j}}{\ud{\et}}&=&G(\et)\br{\frac{\p}{\p\ve}(\ve\vp^2\psi_j)-\frac{\p}{\p\vp}(\ve^2\vp\psi_j),
\tilde q^L+q_1^L\psi_1+\wl}-\br{\ve\psi_j,\ll[\wl]}-\frac{\ud{}}{\ud{\et}}\br{\ve^2\psi_j,\wl}.\no
\end{eqnarray}
Put $\tilde q_i^L=q_i^L-\d_{i1}q_1^L$. Then we can write
\begin{eqnarray}
\br{\frac{\p}{\p\ve}(\ve\vp^2\psi_j)-\frac{\p}{\p\vp}(\ve^2\vp\psi_j),
\tilde q^L}(\et)=\sum_{i}B_{ji}\tilde q_i^L(\et),
\end{eqnarray}
for $i,j=0,2,3$, where
\begin{eqnarray}
B_{ji}=\br{\frac{\p}{\p\ve}(\ve\vp^2\psi_j)-\frac{\p}{\p\vp}(\ve^2\vp\psi_j),
\psi_i}.
\end{eqnarray}
Moreover,
\begin{eqnarray}
\beta_j(\et)=\sum_{k}A_{jk}\tilde q_k^L(\et),
\end{eqnarray}
where
\begin{eqnarray}
A_{jk}=\br{\ve^2\psi_j,\psi_k},
\end{eqnarray}
is a non-singular matrix such that we can express back
\begin{eqnarray}
\tilde q_j^L(\et)=\sum_{k}A_{jk}^{-1}\beta_k(\et).
\end{eqnarray}
Hence, (\ref{mt 05}) can be rewritten as
\begin{eqnarray}
\frac{\ud{\tilde\beta}}{\ud{\et}}=G(BA^{-1})\tilde\beta+D,
\end{eqnarray}
where
\begin{eqnarray}
D_j&=&G(\et)\br{\frac{\p}{\p\ve}(\ve\vp^2\psi_j)-\frac{\p}{\p\vp}(\ve^2\vp\psi_j),
q_1^L\psi_1+\wl}-\br{\ve\psi_j,\ll[\wl]}-\frac{\ud{}}{\ud{\et}}\br{\ve^2\psi_j,\wl}.
\end{eqnarray}
We can solve for $\beta$ as
\begin{eqnarray}\label{mt 23}
\tilde\beta(\et)&=&\exp\bigg(-W(\et)BA^{-1}\bigg)\theta-\zeta(\et)+\int_0^{\et}\exp\bigg((W(\et)-W(y))BA^{-1}\bigg)Z(y)\ud{y},
\end{eqnarray}
where
\begin{eqnarray}
\theta_j=\br{\ve^2\psi_j, \gl}(0),\ \ j\neq1,
\end{eqnarray}
\begin{eqnarray}
\zeta_j(\et)=\br{\ve^2\psi_j, \wl}(\et),
\end{eqnarray}
and
\begin{eqnarray}
Z=D+\frac{\ud{\zeta}}{\ud{\et}}+G(BA^{-1})\zeta,
\end{eqnarray}
where we use the fact
\begin{eqnarray}
&&\int_0^{\et}\exp\bigg((W(\et)-W(y))BA^{-1}\bigg)\frac{\ud{\zeta}}{\ud{y}}\ud{y}
=\zeta(\et)-\exp\bigg(-W(\et)BA^{-1}\bigg)\zeta(0)-\int_0^{\et}G(BA^{-1})\zeta(y)\ud{y}.
\end{eqnarray}
Hence, using the boundedness of $W(\et)$ and $BA^{-1}$ due to Lemma \ref{Milne force}, we can directly estimate (\ref{mt 23}) to get
\begin{eqnarray}\label{mt 28}
\abs{\beta_j(\et)}\leq
C\abs{\th_j}+\abs{\zeta_j(\eta)}+C\int_0^{\et}\abs{Z_j(y)}\ud{y}\ \ \text{for}\ \ i=0,2,3.
\end{eqnarray}
By Cauchy's inequality, we obtain
\begin{eqnarray}
\abs{\zeta_j(\eta)}&\leq& \tnm{\sn\wl(\et)}\label{mt 24}\\
\abs{Z_j(\et)}&\leq&
C\bigg(\tnm{\sn\wl(\et)}+\tnm{q_1^L(\et)\psi_1}\bigg)\label{mt 25}.
\end{eqnarray}
Multiplying $\sqrt{\m}$ on both sides of (\ref{finite slab LT}) and
integrating over $\vvv\in\r^2$, we have
\begin{eqnarray}
\frac{\ud{}}{\ud{\et}}\br{\sqrt{\m}\ve,\gl}&=&G(\et)\br{\frac{\p}{\p\ve}(\vp^2)-\frac{\p}{\p\vp}(\ve\vp),
\sqrt{\m}\gl}=-G(\et)\br{\sqrt{\m}\ve,\gl},
\end{eqnarray}
which is actually
\begin{eqnarray}
\frac{\ud{q_1^L}}{\ud{\et}}=-G(\et)q_1^L.
\end{eqnarray}
Since $q_1^L(L)=0$, we have for any $\et\in[0,L]$,
\begin{eqnarray}\label{mt 26}
q_1^L(\et)=0.
\end{eqnarray}
Also,
\begin{eqnarray}
\br{\ve^2\psi_j, \gl}(0)\leq C\br{\abs{\ve}\gl(0),
\gl(0)}^{1/2}\br{\abs{\ve}^3,\psi_j^2}^{1/2}\leq C\br{\abs{\ve}\gl(0),
\gl(0)}^{1/2},
\end{eqnarray}
\begin{eqnarray}
\br{\abs{\ve}\gl(0), \gl(0)}=\int_{\ve>0}\m\ve
h^2-\int_{\ve<0}\m\ve(\gl(0))^2.
\end{eqnarray}
Since
\begin{eqnarray}
\int_{\ve>0}\m\ve h^2+\int_{\ve<0}\m\ve(\gl(0))^2=2\alpha(0)\geq0,
\end{eqnarray}
we have
\begin{eqnarray}\label{mt 27}
\theta_j=\br{\ve^2\psi_j, \gl}(0)\leq 2C\int_{\ve>0}\m\ve h^2\leq C.
\end{eqnarray}
In conclusion, collecting (\ref{mt 28}), (\ref{mt 24}), (\ref{mt 25}), (\ref{mt 26}), and (\ref{mt 27}), we have
\begin{eqnarray}
\abs{\beta_j(\et)}\leq C\bigg(
1+\tnm{\sn\wl(\et)}+\int_0^{\et}\tnm{\sn\wl(y)}\ud{y}\bigg)\ \ \text{for}\ \ j=0,2,3,
\end{eqnarray}
which further implies
\begin{eqnarray}
\abs{\ql_j(\et)}\leq C\bigg(
1+\tnm{\sn\wl(\et)}+\int_0^{\et}\tnm{\sn\wl(y)}\ud{y}\bigg)\ \ \text{for}\ \ j=0,2,3,
\end{eqnarray}
and $\ql_1(\et)=0$. An application of Cauchy's inequality leads to our desired result.\\
\ \\
Step 3: Orthogonal Properties.\\
In the equation (\ref{finite slab LT}), multiplying $\sqrt{\m}$ on both
sides and integrating over $\vvv\in\r^2$, we have
\begin{eqnarray}
\frac{\ud{}}{\ud{\et}}\br{\sqrt{\m}\ve,\gl}&=&G(\et)\br{\frac{\p}{\p\ve}(\vp^2)-\frac{\p}{\p\vp}(\ve\vp),
\sqrt{\m}\gl}=-G\br{\sqrt{\m}\ve,\gl}.
\end{eqnarray}
Since $\br{\sqrt{\m}\ve,\gl}(L)=0$, we have
\begin{eqnarray}
\br{\sqrt{\m}\ve,\gl}(\et)=0.
\end{eqnarray}
It is easy to check that
\begin{eqnarray}
\br{\ve\psi_i,\ql}&=&0\ \ i\neq1,\\
\br{\ve\ql,\ql}&=&0.
\end{eqnarray}
Multiplying $\psi_i$ for $i=2,3$ on both sides of (\ref{finite
slab LT}) and integrating over $\vvv\in\r^2$, we have
\begin{eqnarray}
\frac{\ud{}}{\ud{\et}}\br{\ve\psi_i,\wl}&=&=-CG\br{\ve\psi_i,\wl}.
\end{eqnarray}
Since $\br{\ve\psi_i,\wl(L)}=0$, then we have
\begin{eqnarray}
\br{\ve\psi_i,\wl(\et)}=0.
\end{eqnarray}
In summary, we have
\begin{eqnarray}
\br{\ve\psi_i,\wl(\et)}=0\ \ \text{for}\ \ i=0,2,3.
\end{eqnarray}
\end{proof}

\subsubsection{$L^2$ Estimates in an infinite slab}

We consider the case with zero source term and zero mass flux in an
infinite slab
\begin{eqnarray}\label{infinite slab LT}
\left\{
\begin{array}{rcl}\displaystyle
\ve\frac{\p \g}{\p\et}+G(\et)\bigg(\vp^2\dfrac{\p
\g}{\p\ve}-\ve\vp\dfrac{\p \g}{\p\vp}\bigg)+\ll[\g]
&=&0,\\\rule{0ex}{1.0em} \g(0,\vvv)&=&h(\vvv)\ \ \text{for}\ \
\ve>0,\\\rule{0ex}{1.0em}\displaystyle\int_{\r^2}\ve\sqrt{\m}
\g(0,\vvv)\ud{\vvv}
&=&0\\
\displaystyle\lim_{\et\rt\infty}\g(\et,\vvv)&=&\g_{\infty}(\vvv).
\end{array}
\right.
\end{eqnarray}
\begin{lemma}\label{Milne prelim lemma 2}
There exists a unique solution of the equation (\ref{infinite slab
LT}) satisfying the estimate
\begin{eqnarray}
\tnnm{\sn\wi}&\leq&C,\label{mt 17}\\
\abs{\qi_{i,\infty}}&\leq&C,\label{mt 18}\\
\tnnm{\qi-\qi_{\infty}}&\leq& C\label{mt 19},
\end{eqnarray}
where $q_{\infty}=\sum_{i=0}^3q_{i,\infty}\psi_i$ and the orthogonal properties:
\begin{eqnarray}\label{mt 29}
\br{\ve\psi_i,\wi}=0\ \ \text{for}\ \ i=0,2,3.
\end{eqnarray}
\end{lemma}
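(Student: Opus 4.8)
The plan is to obtain the solution of (\ref{infinite slab LT}) as the limit $L\rt\infty$ of the finite-slab solutions $\gl=\wl+\ql$ constructed in Lemma \ref{Milne prelim lemma 1}, and then to extract the far-field constant $\qi_{\infty}$ from the macroscopic part. The decisive input is that the nonkernel bound $\int_0^L\tnm{\sn\wl(\et)}^2\ud{\et}\leq C$ in Lemma \ref{Milne prelim lemma 1} is \emph{uniform} in $L$. By this bound the family $\{\wl\}$ is bounded in $L^2((0,\infty)\times\r^2;\,\nu\,\ud{\vvv}\,\ud{\et})$, so along a subsequence $\wl\rightharpoonup\wi$ weakly; weak lower semicontinuity then yields (\ref{mt 17}), namely $\tnnm{\sn\wi}\leq C$. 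The relations $\br{\ve\psi_i,\wl}(\et)=0$ for $i=0,2,3$ are linear in $\wl$ and therefore pass to the weak limit, giving the orthogonality (\ref{mt 29}). Moreover, the zero mass-flux condition together with the identity $\frac{\ud{}}{\ud{\et}}\br{\sqrt{\m}\ve,\g}=-G(\et)\br{\sqrt{\m}\ve,\g}$, obtained by testing (\ref{infinite slab LT}) against $\sqrt{\m}$, forces $\br{\sqrt{\m}\ve,\g}(\et)\equiv0$, hence $\qi_1(\et)\equiv0$.

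The delicate point is controlling the macroscopic components $\qi_0,\qi_2,\qi_3$, whose finite-slab bound in Lemma \ref{Milne prelim lemma 1} grows linearly in $\et$ and so does not survive the infinite-slab limit as stated. I would test (\ref{infinite slab LT}) against $\ve\psi_j$ for $j=0,2,3$ and integrate by parts in $\vvv$ to obtain a first-order linear system for the moment vector $\mathbf{m}(\et)=(\br{\ve^2\psi_j,\g})_j$ of the form $\frac{\ud{\mathbf{m}}}{\ud{\et}}=G(\et)M\mathbf{m}+R(\et)$, with $M$ a fixed bounded matrix and $R$ assembled from $G(\et)$ paired against $\wi$ and from the terms $\br{\ve\psi_j,\ll[\wi]}$ (the latter vanishing for $j=0$ since $\ve\psi_0=\psi_1\in\nk$ and $\ll$ is self-adjoint). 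Solving by the integrating factor $\exp(-W(\et)M)$, which is bounded because $0\leq W\leq1$ by (\ref{force 1}), defines $\qi_{\infty}=\lim_{\et\rt\infty}\qi(\et)$ and delivers the uniform bound (\ref{mt 18}).

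For the summable decay (\ref{mt 19}) I would integrate the moment system from $\et$ to $\infty$ and estimate $\tnm{\qi(\et)-\qi_{\infty}}$ by the tails $\int_{\et}^{\infty}\abs{G(y)}\,\tnm{\sn\wi(y)}\ud{y}$ and $\int_{\et}^{\infty}\tnm{\sn\wi(y)}^2\ud{y}$, then integrate in $\et$ over $(0,\infty)$. The genuinely geometric contribution is controlled exactly by (\ref{force 5}), $\int_0^{\infty}\int_{\et}^{\infty}G^2(y)\ud{y}\ud{\et}\leq C$, while the remaining nonkernel contributions are handled by Cauchy--Schwarz against the uniform bound (\ref{mt 17}), the orthogonality relations (\ref{mt 29}), and the potential estimate (\ref{force 3}); the component $j=0$ is free of the worst term by the cancellation noted above. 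I expect this squeezing of a convergent far field and an $L^2$ tail out of the linearly-growing finite-slab estimate to be the main obstacle of the whole lemma, and it is precisely the integrability properties (\ref{force 3})--(\ref{force 5}) of the geometric force that make it succeed, in the absence of any a priori pointwise decay of $\wi$.

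Finally, uniqueness follows from an energy estimate on the difference $\g^d$ of two solutions, which satisfies the homogeneous problem with zero boundary data, zero source, zero mass flux, and zero far field. Testing against $\g^d$ and using Lemma \ref{Milne property} gives $\frac{1}{2}\frac{\ud{}}{\ud{\et}}\br{\ve\g^d,\g^d}+\frac{1}{2}G(\et)\br{\ve\g^d,\g^d}=-\br{\wi^d,\ll[\wi^d]}\leq -C\tnm{\sn\wi^d}^2$; integrating in $\et$ and using that the boundary term at $\et=0$ and the far-field term at $\et=\infty$ both vanish forces $\wi^d\equiv0$, after which the macroscopic moment system with zero data yields $\qi^d\equiv0$. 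This completes the proof.
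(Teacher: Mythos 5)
Your overall architecture (finite-slab limit in $L$, weak lower semicontinuity for (\ref{mt 17}), passage of the linear orthogonality relations to the weak limit, $\qi_1\equiv 0$ from the mass-flux ODE, and an entropy identity for uniqueness) matches the paper. The gap is at the step you yourself flag as the crux: the construction of $\qi_{\infty}$ and the bound (\ref{mt 19}). Testing against $\ve\psi_j$ for $j=2,3$ leaves in your source $R(\et)$ the term $\br{\ve\psi_j,\ll[\wi]}$, which carries \emph{no} factor of $G$ and is \emph{linear} in $\wi$, hence is only bounded by $C\tnm{\sn\wi(\et)}$ to the first power. By (\ref{mt 17}) this is merely $L^2$ in $\et$, not $L^1$, so $\int_0^{\et}\br{\ve\psi_j,\ll[\wi]}(y)\ud{y}$ need not converge as $\et\rt\infty$; Cauchy--Schwarz against (\ref{mt 17}) produces the divergent factor $\left(\int_{\et}^{\infty}1\ud{y}\right)^{1/2}$, and replacing the tail by $\int_{\et}^{\infty}\tnm{\sn\wi(y)}^2\ud{y}$ does not bound a term that is linear in $\wi$. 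This is exactly the source of the $\et$-growth in Lemma \ref{Milne prelim lemma 1}, and the cancellation you correctly note for $j=0$ (where $\ve\psi_0=\psi_1\in\nk$) is precisely what is missing for $j=2,3$. As written, neither the existence of $\lim_{\et\rt\infty}\qi(\et)$ nor (\ref{mt 19}) follows.

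The paper's remedy is to change the test function: it pairs the equation with $\ll^{-1}[\ve\psi_i]$, the pseudo-inverse on $\nk^{\bot}$ (after replacing $\psi_3$ by $\psi_3-\psi_0$ so that $\ve\psi_i\in\nk^{\bot}$). Self-adjointness together with (\ref{mt 29}) gives $\br{\ll^{-1}[\ve\psi_i],\ll[\wi]}=\br{\ve\psi_i,\wi}=0$, so the non-integrable term vanishes identically; what survives is a total $\et$-derivative of the $L^2_{\et}$ quantity $\br{\ve\ll^{-1}[\psi_i\ve],\wi}(\et)$ plus terms carrying a factor of $G$, which are absolutely integrable since $G\in L^2(0,\infty)$ by (\ref{force 4}). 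One also needs invertibility of the matrix $N_{ik}=\br{\ve\ll^{-1}[\psi_i\ve],\psi_k}$ to recover $\qi_2,\qi_3$ from these moments, and the component $\qi_0$ is then extracted from the $\psi_1$-moment, whose source does carry a factor of $G$ and is handled by (\ref{force 5}) as you propose. Finally, for (\ref{mt 19}) the paper first establishes the weighted decay $\int_0^{\infty}\ue^{2K_0\et}\tnm{\sn\wi}^2\ud{\et}\leq C$ and uses it on one of the tail terms, so some form of quantitative decay of $\wi$ enters even in the paper's route; your uniqueness argument is essentially the paper's, modulo the subsequence argument showing that the entropy tends to $\br{\ve\qi'_{\infty},\qi'_{\infty}}=0$ at infinity.
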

\begin{proof}
We divide the proof into several steps:\\
\ \\
Step 1: Weak convergence.\\
We can extend the solution $\gl$ by passing $L\rt\infty$. Hence, we
can always take weakly convergent subsequence
\begin{eqnarray}
\ql_i(\et)&\rt&\qi_i(\et)\ \ \text{in}\ \ L^2_{\text{loc}}([0,\infty)), \\
\wl&\rt&\wi\ \ \text{in}\ \ L^2_{\text{loc}}([0,\infty),L^2(\r^2)).
\end{eqnarray}
Therefore,
\begin{eqnarray}
g=\sum_{i=0}^3\qi_i\psi_i+\wi,
\end{eqnarray}
is a weak solution of the equation (\ref{infinite slab LT}). Also,
by the weak lower semi-continuity, the estimate (\ref{mt 17}) of $\wi$ is
obvious. Also, we can show the orthogonal properties (\ref{mt 29}) when $L\rt\infty$.\\
\ \\
Step 2: Estimate of $\qi_{\infty}$.\\
It is easy to see
\begin{eqnarray}
\qi_1(\et)=m_f[g]=0,
\end{eqnarray}
so we do not need to bother with it. Since $\ll: L^2(\r^2)\rt\nk^{\bot}$ with null space $\nk$ and image $\nk^{\bot}$, we have $\tilde\ll: L^2/\nk\rt\nk^{\bot}$ is bijective, where $L^2/\nk=\nk^{\bot}$ is the quotient space. Then we can define its inverse, i.e. the pseudo-inverse of $\ll$ as
$\ll^{-1}: \nk^{\bot}\rt\nk^{\bot}$ satisfying $\ll\ll^{-1}[f]=f$ for any $f\in\nk^{\bot}$.

We intend to multiply
$\ll^{-1}[\ve\psi_{i}]$ for $i=2,3$ on both sides of
(\ref{infinite slab LT}) and integrating over $\vvv$. Notice that $\ve\psi_{2}\in \nk^{\bot}$, but $\ve\psi_{3}\notin \nk^{\bot}$. Actually, it is easy to verify $\ve(\psi_{3}-\psi_0)\in \nk^{\bot}$. To avoid introducing new notation, we still use $\psi_3$ to denote $\psi_{3}-\psi_0$ in the following proof and it is easy to see there is no confusion. Then we get
\begin{eqnarray}
&&\frac{\ud{}}{\ud{\et}}\br{\ll^{-1}[\psi_i\ve],\ve\g}+G(\et)\br{\ll^{-1}[\psi_i\ve],\bigg(\vp^2\dfrac{\p
\g}{\p\ve}-\ve\vp\dfrac{\p
\g}{\p\vp}\bigg)}=-\br{\ll^{-1}[\psi_i\ve],\ll[\wi]}.
\end{eqnarray}
Since $\ll$ is self-adjoint, combining with the orthogonal properties, we have
\begin{eqnarray}
\br{\ll^{-1}[\psi_i\ve],\ll[\wi]}=\br{\ll\bigg[\ll^{-1}[\psi_i\ve]\bigg],\wi}=\br{\psi_i\ve,\wi}=0.
\end{eqnarray}
Therefore, we have
\begin{eqnarray}\label{mt 12}
&&\frac{\ud{}}{\ud{\et}}\br{\ve\ll^{-1}[\psi_i\ve],\g}+G(\et)\br{\ll^{-1}[\psi_i\ve],\bigg(\vp^2\dfrac{\p
\g}{\p\ve}-\ve\vp\dfrac{\p \g}{\p\vp}\bigg)}=0.
\end{eqnarray}
Since $\psi_1\in\nk$ and $\ll^{-1}[\psi_i\ve]\in\nk^{\bot}$, we
have
\begin{eqnarray}
\br{\psi_1,\ll^{-1}[\psi_i\ve]}=0.
\end{eqnarray}
For $i,k=2,3$, put
\begin{eqnarray}
N_{i,k}&=&\br{\ve\ll^{-1}[\psi_i\ve],\psi_k},\\
P_{i,k}&=&\br{\bigg(\vp^2\dfrac{\p }{\p\ve}-\ve\vp\dfrac{\p
}{\p\vp}+\ve\bigg)\ll^{-1}[\psi_i\ve],\psi_k}.
\end{eqnarray}
Thus,
\begin{eqnarray}
\Omega_i&=&\br{\ve\ll^{-1}[\psi_i\ve],\qi}=\sum_{k=2}^3N_{i,k}\qi_k(\et),
\end{eqnarray}
and
\begin{eqnarray}
\br{\bigg(\vp^2\dfrac{\p }{\p\ve}-\ve\vp\dfrac{\p
}{\p\vp}+\ve\bigg)\ll^{-1}[\psi_i\ve],\qi}&=&\sum_{k=2}^3P_{i,k}\qi_k(\et).
\end{eqnarray}
Since matrix $N$ is invertible (see \cite{Golse.Poupaud1989}), from (\ref{mt 12}) and integration
by parts, we have for $i=2,3$,
\begin{eqnarray}
\frac{\ud{\Omega_i}}{\ud{\et}}&=&-\frac{\ud{}}{\ud{\et}}\br{\ve\ll^{-1}[\psi_i\ve],\wi}\\
&&+\sum_{k=2}^3G(\et)(PN^{-1})_{ik}\Omega_k+G(\et)\br{\bigg(\vp^2\dfrac{\p
}{\p\ve}-\ve\vp\dfrac{\p
}{\p\vp}+\ve\bigg)\ll^{-1}[\hat\psi_i\ve],\wi}.\no
\end{eqnarray}
Denote
\begin{eqnarray}
\Omega'=\exp\bigg(W(\et)PN^{-1}\bigg)\Omega.
\end{eqnarray}
Let $\hat\psi=(\psi_2,\psi_3)^T$, we can solve
\begin{eqnarray}
\Omega'(\et)
&=&\br{\ve\ll^{-1}[\hat\psi\ve],\g}(0)-\exp\bigg(W(\et)PN^{-1}\bigg)\br{\ve\ll^{-1}[\hat\psi\ve],\wi}(\et)\\
&&+\int_0^{\et}\exp\bigg(W(y)PN^{-1}\bigg)G(y)\Bigg(\br{\bigg(\vp^2\dfrac{\p
}{\p\ve}-\ve\vp\dfrac{\p
}{\p\vp}+\ve\bigg)\ll^{-1}[\hat\psi\ve],\wi}(y)\no\\
&&+\sum_{k=2}^3PN^{-1}\br{\ve\ll^{-1}[\hat\psi\ve],\wi}(y)\Bigg)\ud{y}.\no
\end{eqnarray}
By a similar method as in the proof of Lemma \ref{Milne prelim lemma 1} to bound $\theta_i(0)$, we can show
\begin{eqnarray}
\br{\ve\ll^{-1}[\hat\psi\ve],\g}(0)<\infty.
\end{eqnarray}
Since $\wi\in L^2([0,\infty)\times\r^2)$,
considering $W(\et)$ and $PN^{-1}$ are bounded, and $G(\et)\in L^{\infty}$,
we define
\begin{eqnarray}
\\
\Omega'_{\infty}&=&\br{\ve\ll^{-1}[\hat\psi\ve],\g}(0)
+\int_0^{\infty}\exp\bigg(W(y)PN^{-1}\bigg)G(y)\Bigg(\br{\bigg(\vp^2\dfrac{\p
}{\p\ve}-\ve\vp\dfrac{\p
}{\p\vp}+\ve\bigg)\ll^{-1}[\hat\psi\ve],\wi}(y)\no\\
&&+\sum_{k=2}^3PN^{-1}\br{\ve\ll^{-1}[\hat\psi\ve],\wi}(y)\Bigg)\ud{y}.\no
\end{eqnarray}
Let $\hat q=(q_2,q_3)^T$. Then we can define
\begin{eqnarray}
\hat q_{\infty}=N^{-1}\exp\bigg(-W(\infty)PN^{-1}\bigg)\Omega'_{\infty}.
\end{eqnarray}
Finally, we consider $q_0$. Multiplying $\psi_1$ on both sides of
(\ref{infinite slab LT}) and integrating over $\vvv$, we obtain
\begin{eqnarray}
\frac{\ud{}}{\ud{\et}}\br{\psi_1\g,\ve}=-G(\et)\br{\psi_1,\bigg(\vp^2\dfrac{\p
\g}{\p\ve}-\ve\vp\dfrac{\p
\g}{\p\vp}\bigg)}=G(\et)\br{g,(\vp^2-\ve^2)\sqrt{\m}}.
\end{eqnarray}
Then integrating over $[0,\et]$, we obtain
\begin{eqnarray}\label{mt 13}
\br{\psi_1\g,\ve}(\et)&=&\br{\psi_1\g,\ve}(0)+\int_0^{\et}G(y)\br{g,(\vp^2-\ve^2)\sqrt{\m}}\ud{y}\\
&=&\br{\psi_1\g,\ve}(0)+\int_0^{\et}G(y)\br{w,(\vp^2-\ve^2)\sqrt{\m}}(y)\ud{y}.\no
\end{eqnarray}
Since $\wi\in L^2([0,\infty)\times\r^2)$ and we can also bound $\br{\ve\g,\ve}(0)$, we have
\begin{eqnarray}
\lim_{\eta\rt\infty}\br{\psi_1\g,\ve}(\et)=\br{\psi_1\g,\ve}(0)+\int_0^{\infty}G(y)\br{w,(\vp^2-\ve^2)\sqrt{\m}}(y)\ud{y}
\end{eqnarray}
exists. Note that
\begin{eqnarray}
\lim_{\eta\rt\infty}\br{\psi_1\qi_{1,\infty},\ve}(\et)=\lim_{\eta\rt\infty}\br{\psi_1\qi_{2,\infty},\ve}(\et)=0.
\end{eqnarray}
Then we define
\begin{eqnarray}
\qi_{0,\infty}=\frac{\lim_{\eta\rt\infty}\br{\psi_1\g,\ve}(\et)-\qi_{3,\infty}\br{\psi_1\psi_3,\ve}}{\br{\ve\psi_0,\ve}}.
\end{eqnarray}
Then to summarize all above, we have defined $\qi_{\infty}$ which satisfies $\abs{\qi_{i,\infty}}\leq C$ for $i=0,1,2,3$.\\
\ \\
Step 3: $L^2$ Decay of $\wi$.\\
The orthogonal property and zero mass-flux imply
\begin{eqnarray}
\br{\ve\qi,\wi}(\et)=\sum_{k=0}^3\br{\ve\psi_k,\wi}(\et)=0.
\end{eqnarray}
The oddness and zero mass-flux imply
\begin{eqnarray}
\br{\ve\qi,\qi}(\et)=0.
\end{eqnarray}
Therefore, we deduce that
\begin{eqnarray}
\br{\ve\g,\g}(\et)=\br{\ve\wi,\wi}(\et).
\end{eqnarray}
Multiplying $\ue^{2K_0\et}\g$ on both sides of (\ref{infinite slab LT}) and integrating over $\vvv$, we obtain
\begin{eqnarray}
\half\frac{\ud{}}{\ud{\et}}\bigg(\ue^{2K_0\et+W(\et)}\br{\ve\wi,\wi}\bigg)-\ue^{2K_0\et+W(\et)}\bigg(K_0\br{\ve\wi,\wi}
-\br{\wi,\ll\wi}\bigg)=0.
\end{eqnarray}
Since
\begin{eqnarray}
\br{\ll[\wi],\wi}\geq \nu_0\br{(1+\abs{\vvv})\wi,\wi},
\end{eqnarray}
and $W(\et)$ is bounded,
for $K_0$ sufficiently small, we have
\begin{eqnarray}
\br{\ll[\wi],\wi}-\br{K_0\ve\wi,\wi}\geq C\br{\wi,\wi}.
\end{eqnarray}
Then by a similar argument as in Lemma \ref{Milne prelim lemma 1},
we can show
\begin{eqnarray}
\int_0^{\infty}\ue^{2K_0\et}\br{\nu\wi,\wi}(\et)\ud{\et}\leq C.
\end{eqnarray}
\ \\
Step 4: Estimate of $\qi-\qi_{\infty}$.\\
We first consider $\hat\qi=(\qi_2,\qi_3)^T$, which satisfies
\begin{eqnarray}
\hat\qi(\et)=N^{-1}\exp\bigg(-W(\et)PN^{-1}\bigg)\Omega'(\et),
\end{eqnarray}
Let
\begin{eqnarray}
\delta&=&\br{\ve\ll^{-1}[\hat\psi\ve],\g}(0)\\
\Delta&=&G\Bigg(\br{\bigg(\vp^2\dfrac{\p
}{\p\ve}-\ve\vp\dfrac{\p
}{\p\vp}+\ve\bigg)\ll^{-1}[\hat\psi\ve],\wi}
+\sum_{k=2}^3PN^{-1}\br{\ve\ll^{-1}[\hat\psi\ve],\wi}\Bigg)
\end{eqnarray}
Then we have
\begin{eqnarray}
\hat\qi(\et)
&=&N^{-1}\exp\bigg(-W(\et)PN^{-1}\bigg)\delta-N^{-1}\br{\ve\ll^{-1}[\hat\psi\ve],\wi}(\et)\\
&&+\int_0^{\et}\exp\bigg((W(y)-W(\et))PN^{-1}\bigg)\Delta(y)\ud{y}.\no
\end{eqnarray}
Also, we know
\begin{eqnarray}
\hat\qi_{\infty}&=&N^{-1}\exp\bigg(-W(\infty)PN^{-1}\bigg)\delta+\int_0^{\infty}\exp\bigg((W(y)-W(\infty))PN^{-1}\bigg)\Delta(y)\ud{y}.
\end{eqnarray}
Then we have
\begin{eqnarray}
\hat\qi(\et)-\hat\qi_{\infty}&=&N^{-1}\Bigg(\exp\bigg(-W(\et)PN^{-1}\bigg)-\exp\bigg(-W(\infty)PN^{-1}\bigg)\Bigg)\delta-N^{-1}\br{\ve\ll^{-1}[\hat\psi\ve],\wi}(\et)\\
&&+N^{-1}\Bigg(\exp\bigg(-W(\et)PN^{-1}\bigg)-\exp\bigg(-W(\infty)PN^{-1}\bigg)\Bigg)\int_0^{\infty}\exp\bigg(W(y)PN^{-1}\bigg)\Delta(y)\ud{y}\no\\
&&+N^{-1}\int_{\et}^{\infty}\exp\bigg((W(y)-W(\et))PN^{-1}\bigg)\Delta(y)\ud{y}.\no
\end{eqnarray}
Then we have
\begin{eqnarray}\label{mt 31}
&&\tnnm{\hat\qi-\hat\qi_{\infty}}\\
&\leq&\tnnm{N^{-1}\Bigg(\exp\bigg(-W(\et)PN^{-1}\bigg)-\exp\bigg(-W(\infty)PN^{-1}\bigg)\Bigg)\delta}
+\tnnm{N^{-1}\br{\ve\ll^{-1}[\hat\psi\ve],\wi}}\no\\
&&+\tnnm{N^{-1}\Bigg(\exp\bigg(-W(\et)PN^{-1}\bigg)-\exp\bigg(-W(\infty)PN^{-1}\bigg)\Bigg)\int_0^{\infty}\exp\bigg(W(y)PN^{-1}\bigg)\Delta(y)\ud{y}}\no\\
&&+\tnnm{N^{-1}\int_{\et}^{\infty}\exp\bigg((W(y)-W(\et))PN^{-1}\bigg)\Delta(y)\ud{y}}.\no
\end{eqnarray}
We need to estimate each term on the right-hand side of (\ref{mt 31}). By Lemma \ref{Milne force}, we have
\begin{eqnarray}
&&\tnnm{N^{-1}\Bigg(\exp\bigg(-W(\et)PN^{-1}\bigg)-\exp\bigg(-W(\infty)PN^{-1}\bigg)\Bigg)\delta}^2\\
&\leq&C\delta\tnnm{\ue^{-W(\et)}-\ue^{-W(\infty)}}\leq C.
\end{eqnarray}
Since $\wi\in L^2([0,\infty)\times\r^2)$, we have
\begin{eqnarray}
\tnnm{N^{-1}\br{\ve\ll^{-1}[\hat\psi\ve],\wi}}\leq C\tnnm{w}\leq C.
\end{eqnarray}
Similarly, we can show
\begin{eqnarray}
&&\tnnm{N^{-1}\Bigg(\exp\bigg(-W(\et)PN^{-1}\bigg)-\exp\bigg(-W(\infty)PN^{-1}\bigg)\Bigg)\int_0^{\infty}\exp\bigg(W(y)PN^{-1}\bigg)\Delta(y)\ud{y}}\\
&\leq&C\tnnm{\ue^{-W(\et)}-\ue^{-W(\infty)}}\tnnm{r}\leq C.\no
\end{eqnarray}
For the last term, we have to resort to exponential decay of $\wi$. We estimate
\begin{eqnarray}
&&\tnnm{N^{-1}\int_{\et}^{\infty}\exp\bigg((W(y)-W(\et))PN^{-1}\bigg)\Delta(y)\ud{y}}\\
&\leq&C\int_0^{\infty}\bigg(\int_{\et}^{\infty}\Delta(y)\ud{y}\bigg)^2\ud{\et}\leq \int_0^{\infty}\bigg(\int_{\et}^{\infty}\ue^{-2K_0y}\ud{y}\bigg)\bigg(\int_{\et}^{\infty}\wi^2(y)\ue^{2K_0y}\ud{y}\bigg)\ud{\et}\no\\
&\leq&\int_0^{\infty}C\ue^{-2K_0\et}\ud{\et}\leq C.\no
\end{eqnarray}
Collecting all above, we have
\begin{eqnarray}
\tnnm{\hat\qi-\hat\qi_{\infty}}\leq C.
\end{eqnarray}
Then we turn to $\qi_0$. We have
\begin{eqnarray}
\qi_{0}(\et)=\frac{\br{\psi_1\g,\ve}(\et)-\qi_{3}(\et)\br{\psi_1\psi_3,\ve}}{\br{\ve\psi_0,\ve}},
\end{eqnarray}
where
\begin{eqnarray}
\br{\psi_1\g,\ve}(\et)&=&\br{\psi_1\g,\ve}(0)+\int_0^{\et}G(y)\br{w,(\vp^2-\ve^2)\sqrt{\m}}(y)\ud{y}.
\end{eqnarray}
Also, we have
\begin{eqnarray}
\qi_{0,\infty}(\et)=\frac{\lim_{\et\rt\infty}\br{\psi_1\g,\ve}(\et)-\qi_{3,\infty}\br{\psi_1\psi_3,\ve}}{\br{\ve\psi_0,\ve}},
\end{eqnarray}
where
\begin{eqnarray}
\lim_{\et\rt\infty}\br{\psi_1\g,\ve}(\et)&=&\br{\psi_1\g,\ve}(0)+\int_0^{\infty}G(y)\br{w,(\vp^2-\ve^2)\sqrt{\m}}(y)\ud{y}.
\end{eqnarray}
Therefore, we have
\begin{eqnarray}
\qi_0(\et)-\qi_{0,\infty}=\frac{\displaystyle\int_{\et}^{\infty}G(y)\br{w,(\vp^2-\ve^2)\sqrt{\m}}(y)\ud{y}-(\qi_3(\et)-\qi_{3,\infty})\br{\psi_1\psi_3,\ve}}{\br{\ve\psi_0,\ve}}
\end{eqnarray}
Then we can naturally estimate
\begin{eqnarray}
\tnnm{\qi_0-\qi_{0,\infty}}\leq C\tnnm{\int_{\et}^{\infty}G(y)\br{w,(\vp^2-\ve^2)\sqrt{\m}}(y)\ud{y}}+C\tnnm{\qi_3(\et)-\qi_{3,\infty}}
\end{eqnarray}
$\tnnm{\qi_3(\et)-\qi_{3,\infty}}$ is bounded due to the estimate of $\tnnm{\hat\qi(\et)-\hat\qi_{\infty}}$. Then by Cauchy's inequality and Lemma \ref{Milne force}, we obtain
\begin{eqnarray}
\tnnm{\int_{\et}^{\infty}G(y)\br{w,(\vp^2-\ve^2)\sqrt{\m}}(y)\ud{y}}&\leq& \int_0^{\infty}\bigg(\int_{\et}^{\infty}G(y)\tnm{r(y)}\ud{y}\bigg)^2\ud{\et}\\
&\leq&\tnnm{r}\int_0^{\infty}\int_{\et}^{\infty}G^2(y)\ud{y}\ud{\et}\leq C.
\end{eqnarray}
Therefore, we have shown
\begin{eqnarray}
\tnnm{\qi_0(\et)-\qi_{0,\infty}}\leq C.
\end{eqnarray}
In summary, we prove that
\begin{eqnarray}
\tnnm{\qi-\qi_{\infty}}\leq C.
\end{eqnarray}
\ \\
Step 5: Uniqueness.\\
If $\g_1$ and $\g_2$ are two solutions of (\ref{infinite slab LT}), define $\g'=\g_1-\g_2$. Then $\g'$ satisfies the equation
\begin{eqnarray}\label{infinite slab LT difference'}
\left\{
\begin{array}{rcl}\displaystyle
\ve\frac{\p\g'}{\p\et}+G(\et)\bigg(\vp^2\dfrac{\p
\g'}{\p\ve}-\ve\vp\dfrac{\p\g'}{\p\vp}\bigg)+\ll[\g']
&=&0,\\\rule{0ex}{1.0em} \g'(0,\vvv)&=&0\ \
\text{for}\ \ \ve>0,\\\rule{0ex}{1.0em}
\displaystyle\int_{\r^2}\ve\sqrt{\m} \g'(0,\vvv)\ud{\vvv}
&=&0,\\\rule{0ex}{1.0em}
\lim_{\et\rt\infty}\g'(\et,\vvv)&=&\g'_{\infty}(\vvv),
\end{array}
\right.
\end{eqnarray}
Similarly, we can define $\g'=\wi'+\qi'$.
Define the linearized entropy as
\begin{eqnarray}
H[g'](\et)=\br{\ve\g',\g'}(\et).
\end{eqnarray}
Multiplying
$\g'$ on both sides of
(\ref{infinite slab LT difference'}) and integrating over $\vvv$, we get
\begin{eqnarray}
\half\frac{\ud{}}{\ud{\et}}\br{\ve\g',\g'}=\br{\wi',\ll[\wi']}-\half G(\et)\br{\ve\g',\g'}.
\end{eqnarray}
Hence, we have
\begin{eqnarray}\label{mt 81}
\half\frac{\ud{}}{\ud{\et}}\bigg(\ue^{W}\br{\ve\g',\g'}\bigg)=-\ue^{W}\br{\wi',\ll[\wi']},
\end{eqnarray}
which implies $\ue^{W}H[g']$ is decreasing. Furthermore, we have
\begin{eqnarray}\label{mt 20}
\ue^{W(\et)}H[g'](\et)=H[g'](0)-\int_0^{\et}\ue^{W(y)}\br{\wi',\ll[\wi']}(y)\ud{y}<\infty.
\end{eqnarray}
Hence, we can take a subsequence such that $\tnm{\sn\wi'(\et_n)}$ goes to zero. Then we can always assume $\qi'(\et_n)$ goes to $\qi'_{\infty}$.
Therefore, we have
\begin{eqnarray}
\ue^{W(\et_n)}H[g'](\et_n)\rt \br{\ve\qi'_{\infty}, \qi'_{\infty}}.
\end{eqnarray}
Since $m_f[\g']=0$, we naturally obtain
\begin{eqnarray}
\ue^{W(\et_n)}H[g'](\et_n)\rt 0\ \ \text{as}\ \ \et_n\rt\infty.
\end{eqnarray}
Hence, we have
\begin{eqnarray}
\ue^{W(\et)}H[g'](\et)\geq0,
\end{eqnarray}
and
\begin{eqnarray}
\ue^{W(\et)}H[g'](\et)\rt 0\ \ \text{as}\ \ \et\rt\infty.
\end{eqnarray}
In (\ref{mt 81}), integrating over $[0,\infty)$, we achieve
\begin{eqnarray}
-\int_{\ve<0}\ve(\g')^2(0)\ud{\vvv}+\int_0^{\infty}\ue^{W(\et)}\br{\wi',\ll[\wi']}(\et)\ud{\et}=\int_{\ve>0}\ve(\g')^2(0)\ud{\vvv}=0.
\end{eqnarray}
Hence, we have
\begin{eqnarray}
\int_{\ve<0}\ve(\g')^2(0)\ud{\vvv}=\int_0^{\infty}\br{\wi',\ll[\wi']}(\et)\ud{\et}=0,
\end{eqnarray}
which implies $\g'(0)=0$ and $\wi'=0$. Hence, $\g'=\qi'$ and satisfies
\begin{eqnarray}\label{mt 21}
\ve\frac{\p\g'}{\p\et}+G(\et)\bigg(\vp^2\dfrac{\p
\g'}{\p\ve}-\ve\vp\dfrac{\p\g'}{\p\vp}\bigg)=0.
\end{eqnarray}
$m_f[g']=0$ implies $\qi'_{1}=0$. Therefore, multiplying $\ve\psi_i$ for $i\neq1$ on both sides of (\ref{mt 21}) and integrating over $\vvv$,
we obtain a linear system on $\qi'_{k}$ for $k=1,2,3$ with initial data zero, which possesses a unique solution zero. This means $\g'=0$. Hence, the solution is unique.
\end{proof}

\subsubsection{$L^2$ Estimates with general source term and non-vanishing mass-flux}

We consider the Milne problem with general source term and non-vanishing mass-flux.
\begin{eqnarray}\label{infinite slab LT general}
\left\{
\begin{array}{rcl}\displaystyle
\ve\frac{\p \g}{\p\et}+G(\et)\bigg(\vp^2\dfrac{\p
\g}{\p\ve}-\ve\vp\dfrac{\p \g}{\p\vp}\bigg)+\ll[\g]
&=&S,\\\rule{0ex}{1.0em} \g(0,\vvv)&=&h(\vvv)\ \ \text{for}\ \
\ve>0,\\\rule{0ex}{1.0em}\displaystyle\int_{\r^2}\ve
\sqrt{\m}\g(0,\vvv)\ud{\vvv}
&=&m_f[g]\\
\lim_{\et\rt\infty}\g(\et,\vvv)&=&\g_{\infty}(\vvv).
\end{array}
\right.
\end{eqnarray}
\begin{lemma}\label{Milne prelim lemma 3}
There exists a unique solution of the equation (\ref{infinite slab
LT general}) satisfying the estimate
\begin{eqnarray}
\tnnm{\sn\wi}&\leq&C,\\
\abs{\qi_{i,\infty}}{}&\leq&C,\\
\tnnm{\qi-\qi_{\infty}}&\leq& C,
\end{eqnarray}
where $q_{\infty}=\sum_{i=0}^3q_{i,\infty}\psi_i$.
\end{lemma}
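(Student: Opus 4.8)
The plan is to mirror the energy-estimate architecture of Lemma \ref{Milne prelim lemma 1} and Lemma \ref{Milne prelim lemma 2}, the only genuinely new features being the exponentially decaying source $S$ (controlled by (\ref{Milne decay})) and the prescribed non-vanishing mass flux $m_f[g]$. Since (\ref{infinite slab LT general}) is linear, I would first pin down the flux mode directly: multiplying the equation by $\sqrt{\m}$ and integrating in $\vvv$ gives, after the integration by parts used in Lemma \ref{Milne prelim lemma 1}, the scalar ODE $\frac{\ud}{\ud\et}\br{\sqrt{\m}\ve,\g}+G(\et)\br{\sqrt{\m}\ve,\g}=\br{\sqrt{\m},S}$, whose integrating factor is $\ue^{-W(\et)}$ because $G=-\p_{\et}W$ and $W(0)=0$. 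Using the boundedness of $W$ from Lemma \ref{Milne force} together with $\int_0^\infty\abs{\br{\sqrt{\m},S}}\ud\et<\infty$ from the decay (\ref{Milne decay}), this determines $\br{\sqrt{\m}\ve,\g}(\et)$, hence the flux mode $q_1$, explicitly, shows it stays bounded uniformly in $\et$, and exhibits its limit $q_{1,\infty}$ with an exponentially small tail; the remaining macroscopic components and $w$ are then estimated as in the vanishing-flux setting.

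For the estimate of $w$, I would multiply (\ref{infinite slab LT general}) by $\g$ and integrate in $\vvv$. The geometric term integrates by parts to $\half\br{\ve\g,\g}$ exactly as in Lemma \ref{Milne prelim lemma 1}, and Lemma \ref{Milne property} gives $\br{\g,\ll[\g]}=\br{w,\ll[w]}\geq C\tnm{\sn w}^2$. With $\alpha(\et)=\half\br{\ve\g,\g}(\et)$ this yields $\frac{\ud\alpha}{\ud\et}+G(\et)\alpha+\br{w,\ll[w]}=\br{\g,S}$. I would split $\g=w+q$ in the forcing: the piece $\br{w,S}$ is absorbed into the coercive term by Young's inequality, while $\br{q,S}$ is controlled using the decay $\abs{S}\ls\ue^{-K\et}$ against the a priori bound on $q$ from Lemma \ref{Milne prelim lemma 1}, which grows at most polynomially in $\et$, so that $\int_0^\infty\ue^{-K\et}(1+\et)\ud\et<\infty$. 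Integrating with factor $\ue^{-W}$ and invoking Lemma \ref{Milne force} then gives $\tnnm{\sn w}\leq C$.

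The estimates $\abs{q_{i,\infty}}\leq C$ and $\tnnm{q-q_\infty}\leq C$ follow Steps 2 and 4 of Lemma \ref{Milne prelim lemma 2} essentially verbatim, now with $S$-dependent forcing. Testing against $\ll^{-1}[\psi_i\ve]$ for $i=2,3$ and against $\psi_1$, using self-adjointness of $\ll$, the orthogonality relations, and the invertibility of the matrix $N$, reduces the macroscopic components to linear ODEs with bounded coefficients; the extra source contributions enter as integrals of the form $\int_0^\et\ue^{W}G\br{\ll^{-1}[\cdot],w}$ and $\int\ue^W\br{\ll^{-1}[\cdot],S}$, all finite by Lemma \ref{Milne force} and the decay of $S$ and $w$. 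Splitting each solved expression into a convergent limit plus a tail then yields the bound on $q-q_\infty$. The tail estimate needs the weighted decay $\int_0^\infty\ue^{2K_0\et}\br{\nu w,w}\ud\et\leq C$, which I would obtain by repeating Step 3 of Lemma \ref{Milne prelim lemma 2}: multiply by $\ue^{2K_0\et}\g$, choose $K_0<K$ small, and absorb the new term $\ue^{2K_0\et}\br{\g,S}$ by Young's inequality against the coercive term and the source decay.

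Uniqueness is inherited directly from Lemma \ref{Milne prelim lemma 2}, since the difference of two solutions solves the homogeneous, zero-flux, source-free problem, already shown in Step 5 of that lemma to vanish. The main obstacle I anticipate is the coupling $\br{q,S}$, and its weighted analogue $\ue^{2K_0\et}\br{q,S}$, in the energy estimates: the a priori control on the macroscopic part $q$ grows in $\et$, so a crude pairing would diverge, and it is precisely the uniform exponential decay (\ref{Milne decay}) of $S$, balanced against a sufficiently small $K_0$, that renders every integral convergent and lets the constants close.
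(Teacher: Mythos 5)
Your proposal is workable but follows a genuinely different route from the paper at the one point where this lemma is actually harder than Lemma \ref{Milne prelim lemma 2}: the treatment of the part of the source lying in the kernel $\nk$. You keep the full source in the energy identity and must therefore control the pairing $\br{q,S}$, which the dissipation $\br{w,\ll[w]}\geq C\tnm{\sn w}^2$ cannot see; you propose to close this against the polynomial growth of $q$ from Lemma \ref{Milne prelim lemma 1} and the decay (\ref{Milne decay}). The paper avoids this coupling entirely. It first reduces to zero mass flux by subtracting the explicit function $m_f[g]\sqrt{\m}\,\ue^{-\et}\ve$ (absorbing the flux into a new exponentially decaying source and boundary datum), then splits $S=S_Q+S_W$ with $S_Q\in\nk$, $S_W\in\nk^{\bot}$, solves the $S_W$-problem by the unmodified energy method (where $\br{\g,S_W}=\br{w,S_W}$ is absorbed directly by the coercivity), and disposes of $S_Q$ by constructing an explicit corrector $\g_2=\sqrt{\m}\big(A(\et)\ve+B_1(\et)+B_2(\et)\ve\vp+C(\et)\ve\abs{\vvv}^2\big)$ whose free transport cancels $S_Q$ modulo $\nk^{\bot}$, reducing to a finite linear ODE system, plus one more application of the microscopic-source case for the residual. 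What the paper's decomposition buys is exactly the decoupling you identify as your "main obstacle": no a priori information on $q$ is needed to bound $w$.

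The soft spot in your version is the phrase "the a priori bound on $q$ from Lemma \ref{Milne prelim lemma 1}, which grows at most polynomially." That bound, $\tnm{\qi(\et)}^2\leq C(1+\et+\tnm{\sn\wi(\et)}^2)$, is proved only for the homogeneous equation and is not pointwise polynomial: it contains $\tnm{\sn\wi(\et)}$, which is controlled only in the integrated sense. For the inhomogeneous problem you must re-derive it (its proof uses $\alpha(0)\geq0$, which itself fails to be automatic once $\br{\g,S}$ enters the $\alpha$-equation), and then the $w$-estimate, the $q$-estimate, and the bound on $\int\abs{\br{q,S}}$ must all be closed simultaneously by Cauchy--Schwarz and absorption of small multiples of $\tnnm{\sn\wi}^2$. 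This can be done, but it is a nontrivial bootstrap rather than a verbatim repetition, and your write-up should make that circularity and its resolution explicit. The remaining parts of your plan --- the flux ODE with integrating factor $\ue^{-W}$, the $\ll^{-1}[\psi_i\ve]$ tests for $\qi_{\infty}$ and $\qi-\qi_{\infty}$, and the inheritance of uniqueness from the homogeneous problem --- match the paper's Steps 2, 4 and 5 of Lemma \ref{Milne prelim lemma 2} and are fine.
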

\begin{proof}
For the non-vanishing mass flux problem, we can see $\hat\g=\g-m_f[g]\sqrt{\m}\ue^{-\et}\ve$ satisfies the $\e$-Milne problem with zero mass flux
with the source term
\begin{eqnarray}
\hat S=S+m_f[g]\sqrt{\m}\bigg(\ve^2-G(\et)\vp^2\bigg)\ue^{-\et},
\end{eqnarray}
and the boundary data
\begin{eqnarray}
\hat h=h-m_f[g]\sqrt{\m}\ve.
\end{eqnarray}
Therefore, we only need to consider the case with general source term and zero mass-flux. However, if $\displaystyle\int_{\r^2}\sqrt{\m}S(\et,\vvv)\ud{\vvv}\neq0$, the mass-flux is not conserved when $\et$ changes.
The construction of solutions can be divided into several steps:\\
\ \\
Step 1: Decomposition of the source term.\\
We decompose the source term as
\begin{eqnarray}
S=S_Q+S_W,
\end{eqnarray}
where $S_Q\in\nk$ is the kernel part and $S_W=S-S_Q\in\nk^{\bot}$.\\
\ \\
Step 2: Construction of $\g_1$.\\
We first solve the problem with source term $S_W$ as
\begin{eqnarray}
\left\{
\begin{array}{rcl}\displaystyle
\ve\frac{\p \g_1}{\p\et}+G(\et)\bigg(\vp^2\dfrac{\p
\g_1}{\p\ve}-\ve\vp\dfrac{\p \g_1}{\p\vp}\bigg)+\ll[\g_1]
&=&S_W,\\\rule{0ex}{1.0em} \g_1(0,\vvv)&=&h(\vvv)\ \ \text{for}\ \
\ve>0,\\\rule{0ex}{1.0em}\displaystyle\int_{\r^2}\ve
\sqrt{\m}\g_1(0,\vvv)\ud{\vvv}
&=&0\\
\lim_{\et\rt\infty}\g_1(\et,\vvv)&=&\g_{1,\infty}(\vvv).
\end{array}
\right.
\end{eqnarray}
In this case, we apply similar techniques as in the analysis of $S=0$ case. All the results can be generalized in a natural way.
Hence, we know $\g_1$ is well-posed.\\
\ \\
Step 3: Construction of $\g_2$.\\
We try to find a function $\g_2$ such that
\begin{eqnarray}
\ll\left[\ve\frac{\p \g_2}{\p\et}+G(\et)\bigg(\vp^2\dfrac{\p
\g_2}{\p\ve}-\ve\vp\dfrac{\p \g_2}{\p\vp}\bigg)+S_Q\right]=0.
\end{eqnarray}
which further means
\begin{eqnarray}\label{mt 22}
\int_{\r^2}\psi_i\left(\ve\frac{\p \g_2}{\p\et}+G(\et)\bigg(\vp^2\dfrac{\p
\g_2}{\p\ve}-\ve\vp\dfrac{\p \g_2}{\p\vp}\bigg)+S_Q\right)\ud{\vvv}=0.
\end{eqnarray}
for $i=0,1,2,3$. Consider
\begin{eqnarray}
S_Q=\sqrt{\m}\bigg(a(\et)+\vb(\et)\cdot\vvv+c(\et)\abs{\vvv}^2\bigg).
\end{eqnarray}
We make an ansatz that
\begin{eqnarray}
\g_2=\sqrt{\m}\bigg(A(\et)\ve+B_1(\et)+B_2(\et)\ve\vp+C(\et)\ve\abs{\vvv}^2\bigg).
\end{eqnarray}
Plugging this ansatz into the equation (\ref{mt 22}), we obtain a system of linear ordinary differential equations which is well-posed.
Hence, we can naturally obtain $\g_2$. Furthermore, $\g_2$ decays exponentially with respect to $\et$ as long as the boundary data are taken properly.\\
\ \\
Step 4: Construction of $\g_3$.\\
We may directly verify
\begin{eqnarray}
\ll\left[\ve\frac{\p \g_2}{\p\et}+G(\et)\bigg(\vp^2\dfrac{\p
\g_2}{\p\ve}-\ve\vp\dfrac{\p \g_2}{\p\vp}\bigg)+\ll[\g_2]+S_Q\right]=0
\end{eqnarray}
Then we may define $\g_3$ as the solution of the equation
\begin{eqnarray}
\\
\left\{
\begin{array}{rcl}\displaystyle
\ve\frac{\p \g_3}{\p\et}+G(\et)\bigg(\vp^2\dfrac{\p
\g_3}{\p\ve}-\ve\vp\dfrac{\p \g_3}{\p\vp}\bigg)+\ll[\g_3]
&=&\ve\dfrac{\p \g_2}{\p\et}+G(\et)\bigg(\vp^2\dfrac{\p
\g_2}{\p\ve}-\ve\vp\dfrac{\p \g_2}{\p\vp}\bigg)+\ll[\g_2]+S_Q,\\\rule{0ex}{1.5em} \g_3(0,\vvv)&=&-\g_2(0,\vvv)\ \ \text{for}\ \
\ve>0,\\\rule{0ex}{1.0em}\displaystyle\int_{\r^2}\ve
\sqrt{\m}\g_3(0,\vvv)\ud{\vvv}
&=&0\\
\lim_{\et\rt\infty}\g_3(\et,\vvv)&=&\g_{3,\infty}(\vvv).\no
\end{array}
\right.
\end{eqnarray}
We can obtain $\g_3$ is well-posed.\\
\ \\
Step 5: Construction of $\g_4$.\\
We may directly verify $\g_4=\g_2+\g_3$ satisfies the equation
\begin{eqnarray}
\left\{
\begin{array}{rcl}\displaystyle
\ve\frac{\p \g_4}{\p\et}+G(\et)\bigg(\vp^2\dfrac{\p
\g_4}{\p\ve}-\ve\vp\dfrac{\p \g_4}{\p\vp}\bigg)+\ll[\g_4]
&=&S_Q,\\\rule{0ex}{1.0em} \g_4(0,\vvv)&=&h(\vvv)\ \ \text{for}\ \
\ve>0,\\\rule{0ex}{1.0em}\displaystyle\int_{\r^2}\ve
\sqrt{\m}\g_4(0,\vvv)\ud{\vvv}
&=&0\\
\lim_{\et\rt\infty}\g_4(\et,\vvv)&=&\g_{4,\infty}(\vvv).
\end{array}
\right.
\end{eqnarray}
\ \\
In summary, we know $\g=\g_1+\g_4$ satisfies the equation (\ref{infinite slab LT general}) with zero mass-flux and is well-posed.
\end{proof}
\begin{lemma}\label{Milne lemma 1}
Assume (\ref{Milne bounded}) and (\ref{Milne decay}) hold. There
exists a unique solution $\g(\et,\vvv)$ to the $\e$-Milne problem
(\ref{Milne}) satisfying
\begin{eqnarray}
\tnnm{\g-\g_{\infty}}\leq C.
\end{eqnarray}
\end{lemma}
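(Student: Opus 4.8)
The plan is to recognize that the $\e$-Milne problem (\ref{Milne}) is, after suppressing the $\e$ and $\ph$ labels as the section convention allows, exactly the infinite-slab problem (\ref{infinite slab LT general}) analyzed in Lemma \ref{Milne prelim lemma 3}. Consequently the only real work is (i) to verify that the concrete hypotheses (\ref{Milne bounded}) and (\ref{Milne decay}) supply the $L^2$-type control that the proof of Lemma \ref{Milne prelim lemma 3} consumes, and (ii) to repackage the three estimates it outputs into the single bound $\tnnm{\g-\g_\infty}\le C$.

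For step (i) I would convert the weighted $L^\infty$ assumptions into bare $L^2$ bounds. Writing $h=(\bvv h)/\bvv$, the bound $\abs{\bvv h}\le M$ gives $\abs{h}\le M/\bvv$, so the incoming boundary flux
\begin{eqnarray}
\int_{\ve>0}\ve\, h^2\ud{\vvv}&\le& M^2\int_{\ve>0}\frac{\ve}{\br{\vvv}^{2\vt}\ue^{2\ze\abs{\vvv}^2}}\ud{\vvv}\le C
\end{eqnarray}
is finite; this is precisely the quantity that bounds $\alpha(0)$ and the initial data $\theta_j$ throughout Lemmas \ref{Milne prelim lemma 1}--\ref{Milne prelim lemma 3}. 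Similarly $\abs{\bvv S}\le M\ue^{-K\et}$ gives $\tnm{S(\et)}\le C\ue^{-K\et}$, so the source lies in $L^2$ and decays exponentially---exactly what the construction of the kernel-source piece $\g_2$ in Lemma \ref{Milne prelim lemma 3} requires---while the mass flux $m_f[\g^\e]$ is a bounded datum. Thus every hypothesis of Lemma \ref{Milne prelim lemma 3} holds.

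For step (ii) I would invoke Lemma \ref{Milne prelim lemma 3} to obtain a unique solution $\g=\wi+\qi$ together with $\tnnm{\sn\wi}\le C$, $\abs{\qi_{i,\infty}}\le C$, and $\tnnm{\qi-\qi_\infty}\le C$. The limiting Maxwellian prescribed in (\ref{Milne}) is necessarily the profile $\g_\infty=\qi_\infty=\sum_{i=0}^3\qi_{i,\infty}\psi_i$ produced by the construction, so as functions of $(\et,\vvv)$ one has the identity $\g-\g_\infty=\wi+(\qi-\qi_\infty)$. Using $\nu(\vvv)\ge\nu_0(1+\abs{\vvv})\ge\nu_0$ from Lemma \ref{Milne property}, the non-kernel part obeys $\tnnm{\wi}\le\nu_0^{-1/2}\tnnm{\sn\wi}\le C$, whence the triangle inequality yields $\tnnm{\g-\g_\infty}\le\tnnm{\wi}+\tnnm{\qi-\qi_\infty}\le C$; uniqueness is inherited verbatim from Lemma \ref{Milne prelim lemma 3}. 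I anticipate no essential obstacle here, the two points demanding care being the norm conversion in step (i)---where the Gaussian and polynomial growth of the weight $\bvv$ keeps the velocity integrals finite---and the bookkeeping that matches the prescribed far field $\g_\infty$ to the $\qi_\infty$ manufactured by the finite-slab limiting procedure.
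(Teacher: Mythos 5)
Your proposal is correct and follows the same route as the paper: the paper's own proof is the one-line remark that one takes $\g_{\infty}=\qi_{\infty}$ and invokes Lemma \ref{Milne prelim lemma 3}, which is precisely your step (ii), while your step (i) merely makes explicit the (routine) verification that (\ref{Milne bounded}) and (\ref{Milne decay}) supply the $L^2$ data that lemma consumes. No gap; you have simply written out the details the paper leaves implicit.
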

\begin{proof}
Taking $\g_{\infty}=\qi_{\infty}$, we can naturally obtain the
desired result.
\end{proof}
Then we turn to the construction of $\tilde h$ and the well-posedness of the equation (\ref{Milne transform}).
\begin{theorem}\label{Milne theorem 1}
Assume (\ref{Milne bounded}) and (\ref{Milne decay}) hold. There
exists $\tilde h$ satisfying the condition (\ref{Milne transform
compatibility}) such that there exists a unique solution
$\gg(\et,\vvv)$ to the $\e$-Milne problem (\ref{Milne transform})
satisfying
\begin{eqnarray}
\tnnm{\gg}\leq C.
\end{eqnarray}
\end{theorem}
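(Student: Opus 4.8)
The plan is to regard $\tilde h$ as four unknown constants $\tilde D_0,\tilde D_1,\tilde D_2,\tilde D_3$ in (\ref{Milne transform compatibility}) and to pin them down so that the solution furnished by the already-established theory has vanishing limit at infinity. Concretely, for \emph{any} $\tilde h\in\nk$ of the form (\ref{Milne transform compatibility}), Lemma~\ref{Milne lemma 1} applied to the $\e$-Milne problem with source $S$, in-flow data $h-\tilde h$ and mass-flux $m_f[\g]-\int_{\r^2}\ve\sqrt{\m}\,\tilde h\,\ud{\vvv}$ produces a solution converging to some asymptotic Maxwellian $\Phi(\tilde h)\in\nk$ with the uniform bound $\tnnm{\gg-\Phi(\tilde h)}\leq C$. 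The transformed problem (\ref{Milne transform}) is precisely this problem supplemented by the requirement $\lim_{\et\to\infty}\gg=0$, so it suffices to choose $\tilde h$ with $\Phi(\tilde h)=0$; the bound $\tnnm{\gg}\leq C$ is then immediate from Lemma~\ref{Milne lemma 1}.

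First I would exploit linearity to write $\Phi$ as an affine map of the four parameters. Decomposing the solution as the sum of the solution with $\tilde h=0$ — which is exactly the solution $\g$ of (\ref{Milne}) from Lemma~\ref{Milne lemma 1}, with limit $\g_{\infty}$ — and the solution of the homogeneous, zero-source problem with in-flow data $-\tilde h$ and the correspondingly shifted mass-flux, whose limit I denote $\Lambda(\tilde h)$, gives $\Phi(\tilde h)=\g_{\infty}+\Lambda(\tilde h)$ with $\Lambda\colon\nk\to\nk$ linear. Thus the construction reduces to solving the $4\times4$ linear system $\Lambda(\tilde h)=-\g_{\infty}$, and it remains to prove that $\Lambda$ is invertible.

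The invertibility of $\Lambda$ is the heart of the argument, and the geometric correction is exactly what obstructs a one-line proof. When $G\equiv0$ (the classical Milne problem) every element of $\nk$ is an exact steady state, since $\ve\p_{\et}\psi_i=0$ and $\ll[\psi_i]=0$; hence the homogeneous problem with in-flow datum $-\psi_i$ is solved by the constant $-\psi_i$, and uniqueness forces $\Lambda_0=-\mathrm{Id}$. With the correction present this fails for the momentum modes: a direct computation gives $\big(\vp^2\p_{\ve}-\ve\vp\p_{\vp}\big)\psi_0=\big(\vp^2\p_{\ve}-\ve\vp\p_{\vp}\big)\psi_3=0$ while $\big(\vp^2\p_{\ve}-\ve\vp\p_{\vp}\big)(\tilde D_1\psi_1+\tilde D_2\psi_2)=\sqrt{\m}(\tilde D_1\vp^2-\tilde D_2\ve\vp)\neq0$, so subtracting a constant profile in the $\psi_1,\psi_2$ directions leaves a residual forcing $-G(\et)\sqrt{\m}(\tilde D_1\vp^2-\tilde D_2\ve\vp)$. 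The plan is to treat this residual perturbatively: the difference between the $G\neq0$ homogeneous solution and its $G=0$ counterpart solves a Milne problem whose source is this residual, and by the $L^2$ theory of Lemma~\ref{Milne prelim lemma 3} together with the estimates for $G$ and $W$ in Lemma~\ref{Milne force} (in particular $W(\infty)\to0$ and $\int_0^{\infty}G^2\ud{\et}\to0$ as $\e\to0$) the induced discrepancy of the asymptotic limits is $O(\|G\|)$. Consequently $\Lambda=-\mathrm{Id}+O(\|G\|)$, which is invertible for $0<\e\ll1$; I expect that verifying this perturbation is controlled uniformly in the relevant norms will be the main technical difficulty.

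With $\Lambda$ invertible I would set $\tilde h:=-\Lambda^{-1}(\g_{\infty})\in\nk$, which has the form (\ref{Milne transform compatibility}); by construction the associated $\gg$ solves (\ref{Milne transform}), has vanishing limit, and satisfies $\tnnm{\gg}\leq C$. Note that to leading order $\tilde h\approx\g_{\infty}$, the geometric correction entering only through the $O(\|G\|)$ perturbation. Uniqueness of $\gg$ then follows from the uniqueness argument of Lemma~\ref{Milne prelim lemma 2}: the difference of two solutions solves the homogeneous problem with zero in-flow data, zero mass-flux and zero limit, and the entropy identity (\ref{mt 81}) together with (\ref{mt 20}) forces both $\br{\ve\g',\g'}(0)=0$ and the non-kernel part to vanish, whence $\gg$ is unique.
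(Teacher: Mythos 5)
Your proposal is correct and takes essentially the same route as the paper: the paper likewise studies the linear endomorphism of $\nk$ sending the kernel boundary datum $\tilde h$ to the asymptotic limit of the corresponding homogeneous problem (your $\Lambda$ is exactly minus that map $T$), shows it is the identity on $\psi_0,\psi_1,\psi_3$ and a small, $O(\|G\|)$ perturbation in the $\psi_2$ direction (the residual source $G(\et)\sqrt{\m}\ve\vp$, handled via the smallness of $G$ in $L^1\cap L^2$ from Lemma \ref{Milne force} and the $L^2$ theory), and inverts it for $\e$ small before concluding by superposition with Lemma \ref{Milne lemma 1}. The only cosmetic difference is that the paper gets $T[\psi_1]=\psi_1$ exactly from mass-flux conservation rather than perturbatively, which does not affect the argument.
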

\begin{proof}
The key part is the construction of $\tilde h$. Our main idea is to find $\tilde h\in\nk$ such that the equation
\begin{eqnarray}\label{mt 82}
\left\{
\begin{array}{rcl}\displaystyle
\ve\frac{\p \tilde\g}{\p\et}+G(\et)\bigg(\vp^2\dfrac{\p
\tilde\g}{\p\ve}-\ve\vp\dfrac{\p
\tilde\g}{\p\vp}\bigg)+\ll[\tilde\g]
&=&0,\\
\tilde\g(0,\vvv)&=&\tilde h(\vvv)\ \ \text{for}\ \ \ve>0,\\
\displaystyle\int_{\r^2}\ve\sqrt{\m}
\tilde\g(0,\vvv)\ud{\vvv} &=&\displaystyle\int_{\r^2}\ve\sqrt{\m}
\tilde h(\vvv)\ud{\vvv},\\
\displaystyle\lim_{\et\rt\infty}\tilde\g(\et,\vvv)&=&\tilde\g_{\infty}(\vvv),
\end{array}
\right.
\end{eqnarray}
for $\tilde g(\et,\vvv)$ is well-posed, where
\begin{eqnarray}
\tilde\g_{\infty}(\vvv)=\g_{\infty}(\vvv)=\qi_{0,\infty}\psi_0+\qi_{1,\infty}\psi_1+\qi_{2,\infty}\psi_2+\qi_{3,\infty}\psi_3,
\end{eqnarray}
is given by the equation of $\g$. Note that
\begin{eqnarray}
\tilde h^{\e}(\vvv)=\tilde D_0^{\e}\psi_0+\tilde D_1^{\e}\psi_1+\tilde
D_2^{\e}\psi_2+\tilde D_3^{\e}\psi_3.
\end{eqnarray}
We consider the endomorphism $T$ in $\nk$ defined as $T:\tilde h\rt T[\tilde h]=\tilde g_{\infty}$. Therefore, we only need to study the matrix of $T$ at the basis $\{\psi_0,\psi_1,\psi_2,\psi_3\}$.
It is easy to check when $\tilde
h=\psi_0$ and $\tilde h=\psi_3$, $T$ is an identity mapping, i.e.
\begin{eqnarray}
T[\psi_0]&=&\psi_0\\
T[\psi_3]&=&\psi_3
\end{eqnarray}
Multiplying $\sqrt{\m}$ on both sides of (\ref{mt 82}) and integrating over $\vvv\in\r^2$ imply conserved mass-flux, which further leads to
\begin{eqnarray}
T[\psi_1]&=&\psi_1
\end{eqnarray}
The main obstacle is when $\tilde h=\psi_2$. In this case, define $\tilde\g'=\tilde\g-\psi_2$. Then $\tilde\g'$ satisfies the equation
\begin{eqnarray}
\left\{
\begin{array}{rcl}\displaystyle
\ve\frac{\p \tilde\g'}{\p\et}+G(\et)\bigg(\vp^2\dfrac{\p \tilde\g'}{\p\ve}-\ve\vp\dfrac{\p \tilde\g'}{\p\vp}\bigg)+\ll[\tilde\g']
&=&G(\et)\sqrt{\m}\ve\vp,\\
\tilde\g'(0,\vvv)&=&0\ \ \text{for}\ \ \ve>0,\\
\displaystyle\int_{\r^2}\ve\sqrt{\m} \tilde\g'(0,\vvv)\ud{\vvv} &=&0,\\
\displaystyle\lim_{\et\rt\infty}\tilde\g'(\et,\vvv)&=&\tilde\g'_{\infty}(\vvv).
\end{array}
\right.
\end{eqnarray}
Although $G(\et)\sqrt{\m}\ve\vp$ does not decay exponentially, based on Lemma \ref{Milne force}, $L^1$ and $L^2$ norm of
$G$ can be sufficiently small as $\e\rt0$ and $G(\et)\sqrt{\m}\ve\vp\in\nk$. Using a natural extension of Lemma \ref{Milne prelim lemma 2} for $\ll[S]=0$, we know
$\abs{\tilde\qi'_{\infty}}$ is also sufficiently small, where $\tilde\qi'$ is the projection of $\tilde\g'$ on $\nk$. Note that we do not need exponential decay of source term in order to show the bound of $\tilde\qi_{\infty}$. This means
\begin{eqnarray}
T[\psi_0,\psi_1,\psi_2,\psi_3]=[\psi_0,\psi_1,\psi_2,\psi_3]\left(
\begin{array}{cccc}
1&0&\tilde\qi'_{0,\infty}&0\\
0&1&\tilde\qi'_{1,\infty}&0\\
0&0&1+\tilde\qi'_{2,\infty}&0\\
0&0&\tilde\qi'_{3,\infty}&1\\
\end{array}
\right)
\end{eqnarray}
For $\e$ sufficiently small, this matrix is invertible, which means $T$ is bijective. Therefore, we can always find $\tilde h$ such that
$\tilde\g_{\infty}=\g_{\infty}$, which is desired. Then by Lemma
\ref{Milne lemma 1} and superposition property, when define $\gg^{\e}=\g^{\e}-\tilde\g$, the theorem naturally follows.
\end{proof}
In the $\e$-Milne problem (\ref{Milne transform}), even if the boundary data and source term are determined, we still have the freedom to choose mass-flux $m_f[\g]$. Next theorem shows that we can adjust the mass-flux $m_f[\gg]$ to obtain desired properties of $\tilde h$.
\begin{theorem}\label{Milne adjustment}
Assume (\ref{Milne bounded}) and (\ref{Milne decay}) hold. In the $\e$-Milne problem (\ref{Milne transform}), for any constant $C_0$, there
exists an $m_f[\g]$ such that $\tilde\gamma=\tilde D_0^{\e}+\tilde D_3^{\e}=C_0$.
\end{theorem}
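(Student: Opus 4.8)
\emph{Proof plan.}
The plan is to exploit the linearity of the $\e$-Milne problem (\ref{Milne transform}) in the free mass-flux parameter $m_f[\g]$ and to reduce the claim to a single scalar non-degeneracy inequality, which will then be settled by the entropy identity (\ref{mt 81}). First I would record how $\tilde\gamma=\tilde D_0^\e+\tilde D_3^\e$ depends on the data of the underlying problem (\ref{Milne}). Using the explicit matrix of the endomorphism $T$ from the proof of Theorem \ref{Milne theorem 1}, which acts as the identity on $\psi_0,\psi_1,\psi_3$ and nontrivially only along $\psi_2$, the compatibility relation $T[\tilde h]=\qi_\infty$ inverts componentwise to give $\tilde D_0^\e+\tilde D_3^\e=\qi_{0,\infty}+\qi_{3,\infty}-\frac{\tilde\qi'_{0,\infty}+\tilde\qi'_{3,\infty}}{1+\tilde\qi'_{2,\infty}}\,\qi_{2,\infty}$, where the $\tilde\qi'_{i,\infty}$ are the fixed, data-independent entries of the $T$-matrix.

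By linearity, with $h$ and $S$ held fixed the solution of (\ref{Milne}) depends affinely on $m_f[\g]$: writing $\g=\g_0+m_f[\g]\,\Delta$, where $\g_0$ is the solution with vanishing mass-flux and $\Delta$ solves the homogeneous problem ($S=0$) with zero inflow data on $\ve>0$ and unit mass-flux, each $\qi_{i,\infty}$ is affine in $m_f[\g]$ with slope $\Delta_i$, the $\psi_i$-coefficient of $\lim_{\et\to\infty}\Delta$. Hence $\tilde\gamma$ is affine in $m_f[\g]$, and it suffices to show its slope is nonzero. To simplify the slope I would invoke the reduction of Lemma \ref{Milne prelim lemma 3}: $\Delta-\sqrt\m\ue^{-\et}\ve$ solves a zero-mass-flux problem whose source and inflow data are both even in $\vp$, and since the transport term, the geometric term $G(\et)(\vp^2\p_\ve-\ve\vp\p_\vp)$, and $\ll$ all preserve $\vp$-parity, the solution is even in $\vp$, so its $\psi_2=\sqrt\m\vp$-component vanishes in the limit, i.e. $\Delta_2=0$. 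Thus the correction term drops out and the slope of $\tilde\gamma$ equals exactly $\Delta_0+\Delta_3$.

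The main obstacle is proving $\Delta_0+\Delta_3\neq0$. For this I would apply the entropy identity (\ref{mt 81}) to $\Delta$. Integrating over $[0,\infty)$ and using $W(0)=0$ gives $\ue^{2W(\infty)}\big(\Delta_0+\Delta_3\big)=\half\br{\ve\Delta,\Delta}(0)-\int_0^{\infty}\ue^{W(\et)}\br{w_\Delta,\ll[w_\Delta]}(\et)\,\ud{\et}$, where I have used $\br{\ve\Delta,\Delta}(\infty)=2\ue^{W(\infty)}(\Delta_0+\Delta_3)$ (the only surviving cross terms pair $\psi_1$ with $\psi_0$ and $\psi_3$, and the flux identity $\frac{\ud{}}{\ud{\et}}\br{\sqrt\m\ve,\Delta}+G\br{\sqrt\m\ve,\Delta}=0$ yields $\Delta_1=\ue^{W(\infty)}$). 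Both terms on the right are nonpositive: $\br{\ve\Delta,\Delta}(0)=\int_{\ve<0}\ve\Delta^2(0)\le0$ because the inflow data vanish, and $\br{w_\Delta,\ll[w_\Delta]}\ge0$ by Lemma \ref{Milne property}. Hence $\Delta_0+\Delta_3\le0$, and equality would force $\Delta(0)\equiv0$ together with $w_\Delta\equiv0$, whence $\int_{\r^2}\ve\sqrt\m\Delta(0)\ud{\vvv}=0$, contradicting the normalization to $1$. Therefore $\Delta_0+\Delta_3<0$.

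Since $\tilde\gamma$ is then affine in $m_f[\g]$ with strictly negative slope, it is a bijection of $\r$ onto $\r$, so for every constant $C_0$ there is a (unique) choice of $m_f[\g]$ with $\tilde D_0^\e+\tilde D_3^\e=C_0$, which is the assertion. The delicate point throughout is Step three: all the explicit moment evaluations and the limit identities must be justified using the decay and integrability furnished by Lemmas \ref{Milne prelim lemma 2} and \ref{Milne prelim lemma 3}, and the strict sign hinges precisely on the coupling between injected mass-flux and the energy mode $\psi_0+\psi_3=\tfrac{1}{2}\abs{\vvv}^2\sqrt\m$ at infinity.
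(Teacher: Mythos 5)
Your proposal is correct, and its core non-degeneracy argument coincides with the paper's: both rest on the entropy identity (\ref{mt 81})/(\ref{mt 41}) applied to the solution of the homogeneous problem with zero inflow data and non-vanishing mass-flux (your $\Delta$ is the paper's $\bar\g$ normalized to unit flux), integrated over $[0,\infty)$ and combined with the vanishing of the inflow trace to control the coupling between injected mass-flux and the mode $\psi_0+\psi_3$ at infinity. Two differences in packaging are worth recording. First, the paper argues by contradiction (assuming the limiting coefficients satisfy $E_0+E_3=0$, deriving $\bar\g(0)\equiv 0$ and $\bar\wi\equiv 0$, and contradicting $\tnnm{\bar\wi}>0$), whereas you compute the slope of the affine map $m_f[\g]\mapsto\tilde D_0^{\e}+\tilde D_3^{\e}$ directly, show it equals $\Delta_0+\Delta_3$ and is strictly negative, and exclude the equality case more cheaply by the mass-flux normalization itself; this is cleaner and quantitatively sharper. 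Second, your parity observation --- that $\Delta-\sqrt{\m}\ue^{-\et}\ve$ has source and inflow data even in $\vp$, so uniqueness forces $\Delta_2=0$ --- is not in the paper: the paper simply asserts that the endomorphism $T$ ``leads to $\g_{0,\infty}+\g_{3,\infty}=\tilde D_0^{\e}+\tilde D_3^{\e}$'', which as written ignores the $\psi_2$-column correction $\frac{\tilde\qi'_{0,\infty}+\tilde\qi'_{3,\infty}}{1+\tilde\qi'_{2,\infty}}\,\qi_{2,\infty}$ coming from Theorem \ref{Milne theorem 1}. Your argument shows this correction contributes nothing to the slope, which genuinely tightens a step the paper glosses over. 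The only items to discharge in a full write-up are the ones you already flag: justifying $\br{\ve\Delta,\Delta}(\et)\rt 2\Delta_1(\Delta_0+\Delta_3)$ via the subsequence-plus-monotonicity argument of Lemma \ref{Milne prelim lemma 2}, and noting that $\ll$ commutes with the reflection $\vp\mapsto-\vp$ (true for the rotationally invariant hard-sphere kernel), so that uniqueness indeed yields the claimed evenness.
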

\begin{proof}
The key point is to study the equation for $\bar\g$ as
\begin{eqnarray}\label{Milne flux}
\left\{
\begin{array}{rcl}\displaystyle
\ve\frac{\p \bar\g}{\p\et}+G(\e;\et)\bigg(\vp^2\dfrac{\p
\bar\g}{\p\ve}-\ve\vp\dfrac{\p \bar\g}{\p\vp}\bigg)+\ll[\bar\g]
&=&0,\\
\bar\g(0,\vvv)&=&0\ \ \text{for}\ \
\ve>0,\\\rule{0ex}{1em} \displaystyle\int_{\r^2}
\ve\sqrt{\m}\bar\g(0,\vvv)\ud{\vvv}
&=&m_f[\bar\g],\\\rule{0ex}{1.0em}
\displaystyle\lim_{\et\rt\infty}\bar\g(\et,\vvv)&=&\bar\g_{\infty}(\vvv),
\end{array}
\right.
\end{eqnarray}
where
\begin{eqnarray}
\bar\g_{\infty}(\vvv)&=&E_0\psi_0+E_1\psi_1+E_2\psi_2+E_3\psi_3,
\end{eqnarray}
with zero boundary data and source term but non-vanishing mass-flux, i.e. $m_f[\bar\g]\neq0$.
We claim $E_1+E_3\neq 0$. If this claim is true, then by superposition property, in the equation (\ref{Milne}), we can obtain the desired $\gamma=\g_{0,\infty}+\g_{3,\infty}$ by adding a multiple of the equation (\ref{Milne flux}). Then as in the proof of Theorem \ref{Milne theorem 1}, we know the endomorphism $T$ leads to $\g_{0,\infty}+\g_{3,\infty}=\tilde D_0^{\e}+\tilde D_3^{\e}$. Then our work is done.

Next, we prove this claim by contradiction. Let us assume the claim is not true, i.e $E_1+E_3=0$ for some $m_f[\bar\g]\neq0$.
We decompose $\bar\g=\bar\wi+\bar\qi$. By the construction in the proof of Lemma \ref{Milne prelim lemma 3}, we know
\begin{eqnarray}\label{mt 42}
0<\tnnm{\bar\wi}\leq C.
\end{eqnarray}
Note there the first inequality is valid since we can directly verify $\bar\g\in\nk$ cannot be a solution.
Define the linearized entropy as
\begin{eqnarray}
H[\bar\g](\et)=\br{\ve\bar\g,\bar\g}(\et).
\end{eqnarray}
Multiplying
$\bar\g$ on both sides of
(\ref{Milne flux}) and integrating over $\vvv$, we get
\begin{eqnarray}
\half\frac{\ud{}}{\ud{\et}}\br{\ve\bar\g,\bar\g}=\br{\bar\wi,\ll[\bar\wi]}-\half G(\et)\br{\ve\bar\g,\bar\g}.
\end{eqnarray}
Hence, we have
\begin{eqnarray}\label{mt 41}
\half\frac{\ud{}}{\ud{\et}}\bigg(\ue^{W}\br{\ve\bar\g,\bar\g}\bigg)=-\ue^{W}\br{\bar\wi,\ll[\bar\wi]},
\end{eqnarray}
which implies $\ue^{W}H[\bar g]$ is decreasing. Furthermore, we have
\begin{eqnarray}
\ue^{W(\et)}H[\bar\g](\et)=H[\bar\g](0)-\int_0^{\et}\ue^{W(y)}\br{\bar\wi,\ll[\bar\wi]}(y)\ud{y}<\infty.
\end{eqnarray}
Hence, we can take a subsequence such that $\tnm{\sn\bar\wi(\et_n)}$ goes to zero. Then we can always assume $\bar\qi(\et_n)$ goes to $\bar\qi_{\infty}$.
Therefore, we have
\begin{eqnarray}
\ue^{W(\et_n)}H[\bar\g](\et_n)\rt \br{\ve\bar\qi_{\infty}, \bar\qi_{\infty}}=2E_1(E_0+E_3)=0.
\end{eqnarray}
Then we naturally obtain
\begin{eqnarray}
\ue^{W(\et_n)}H[\bar\g](\et_n)\rt 0\ \ \text{as}\ \ \et_n\rt\infty.
\end{eqnarray}
Hence, we have
\begin{eqnarray}
\ue^{W(\et)}H[\bar\g](\et)\geq0,
\end{eqnarray}
and
\begin{eqnarray}
\ue^{W(\et)}H[\bar\g](\et)\rt 0\ \ \text{as}\ \ \et\rt\infty.
\end{eqnarray}
In (\ref{mt 41}), integrating over $[0,\infty)$, we achieve
\begin{eqnarray}
-\int_{\ve<0}\ve\bar\g^2(0)\ud{\vvv}+\int_0^{\infty}\ue^{W(\et)}\br{\bar\wi,\ll[\bar\wi]}(\et)\ud{\et}=\int_{\ve>0}\ve\bar\g^2(0)\ud{\vvv}=0.
\end{eqnarray}
Hence, we have
\begin{eqnarray}
\int_{\ve<0}\ve\bar\g^2(0)\ud{\vvv}=\int_0^{\infty}\br{\bar\wi,\ll[\bar\wi]}(\et)\ud{\et}=0,
\end{eqnarray}
which implies $\bar\g(0)=0$ and $\bar\wi=0$. This contradicts (\ref{mt 42}). Therefore, the claim is valid.
\end{proof}

\subsection{$L^{\infty}$ Estimates}

\subsubsection{Mild formulation in a finite slab}

Consider the $\e$-transport problem for $\gl(\et,\vvv)$ in a
finite slab
\begin{eqnarray}\label{finite slab LI}
\left\{
\begin{array}{rcl}\displaystyle
\ve\frac{\p \gl}{\p\et}+G(\et)\bigg(\vp^2\dfrac{\p
\gl}{\p\ve}-\ve\vp\dfrac{\p \gl}{\p\vp}\bigg)+\nu\gl
&=&Q(\et,\vvv),\\\rule{0ex}{1.0em} \gl(0,\vvv)&=&h(\vvv)\ \
\text{for}\ \ \ve>0,\\\rule{0ex}{1.0em}
\gl(L,R[\vvv])&=&\gl(\vvv),
\end{array}
\right.
\end{eqnarray}
We define the characteristics starting from
$(\et(0),\ve(0),\vp(0))$ as $(\et(s),\ve(s),\vp(s))$ defined
by
\begin{eqnarray}
\frac{\ud{\et}}{\ud{s}}&=&\ve\\
\frac{\ud{\ve}}{\ud{s}}&=&G(\et)\vp^2\\
\frac{\ud{\vp}}{\ud{s}}&=&-G(\et)\ve\vp
\end{eqnarray}
which leads to
\begin{eqnarray}
\ve^2(s)+\vp^2(s)&=&C_1,\\
\vp(s)\ue^{-W(s)}&=&C_2,
\end{eqnarray}
where $C_1$ and $C_2$ are two constants depending on the starting
point. Along the characteristics, the equation (\ref{finite slab LI}) can be
rewritten as
\begin{eqnarray}
\ve\frac{\p \g}{\p\et}+\nu\g&=&Q.
\end{eqnarray}
Define the energy
\begin{eqnarray}
E(\et,\vvv)=\ve^2(\et)+\vp^2(\et).
\end{eqnarray}
and
\begin{eqnarray}
\vp'(\et,\vvv;\et')&=&\vp e^{W(\et')-W(\et)}.
\end{eqnarray}
For $E\geq\vp'^2$, define
\begin{eqnarray}
\ve'(\et,\vvv;\et')&=&\sqrt{E-\vp'^2(\et,\vvv;\et')},\\
\vvv'(\et,\et')&=&(\ve'(\et,\vvv;\et'),\vp'(\et,\vvv;\et')),\\
R[\vvv'(\et,\et')]&=&(-\ve'(\et,\vvv;\et'),\vp'(\et,\vvv;\et')).
\end{eqnarray}
Basically, this means $(\et,\ve,\vp)$ and $(\et',\ve',\vp')$ are on the same characteristics.
Moreover, define an implicit function $\et^{+}(\et,\vvv)$ by the
equation
\begin{eqnarray}
E(\et,\vvv)=\vp'^2(\et,\vvv;\et^+).
\end{eqnarray}
We know $(\et^+,\vvv)$ at the axis $\ve=0$ is on the same characteristics as $(\et,\vvv)$. Finally put
\begin{eqnarray}
G_{\et,\et'}&=&\int_{\et'}^{\et}\frac{\nu(\vvv'(\et,y))}{\ve'(\et,\vvv,y)}\ud{y},\\
R[G_{\et,\et'}]&=&\int_{\et'}^{\et}\frac{\nu(R[\vvv'(\et,y)])}{\ve'(\et,\vvv,y)}\ud{y}.
\end{eqnarray}
We can rewrite the solution to the equation (\ref{finite slab LI})
along the characteristics
as follows:\\
\ \\
Case I:\\
For $\ve>0$,
\begin{eqnarray}\label{mt 06}
\gl(\et,\vvv)&=&h(\vvv'(\et,\vvv; 0))\exp(-G_{\et,0})+\int_0^{\et}\frac{Q(\et',\vvv(\et,\vvv;\et'))}{\ve'(\et,\vvv,\et')}\exp(-G_{\et,\et'})\ud{\et'}.
\end{eqnarray}
\ \\
Case II:\\
For $\ve<0$ and $\abs{E(\et,\vvv)}\geq \vp'(\et,\vvv;L)$,
\begin{eqnarray}\label{mt 07}
\gl(\et,\vvv)&=&h(\vvv'(\et,\vvv; 0))\exp(-G_{L,0}-R[G_{L,\et}])\\
&+&\bigg(\int_0^{L}\frac{Q(\et',\vvv(\et,\vvv;\et'))}{\ve'(\et,\vvv,\et')}
\exp(-G_{L,\et'}-R[G_{L,\et}])\ud{\et'}\no\\
&&+\int_{\et}^{L}\frac{Q(\et',R[\vvv(\et,\vvv;\et')])}{\ve'(\et,\vvv,\et')}\exp(R[G_{\et,\et'}])\ud{\et'}\bigg)\no.
\end{eqnarray}
\ \\
Case III:\\
For $\ve<0$ and $\abs{E(\et,\vvv)}\leq \vp'(\et,\vvv;L)$,
\begin{eqnarray}\label{mt 08}
\gl(\et,\vvv)&=&h(\vvv'(\et,\vvv; 0))\exp(-G_{\et^+,0}-R[G_{\et^+,\et}])\\
&+&\bigg(\int_0^{\et^+}\frac{Q(\et',\vvv(\et,\vvv;\et'))}{\ve'(\et,\vvv,\et')}
\exp(-G_{\et^+,\et'}-R[G_{\et^+,\et}])\ud{\et'}\no\\
&&+\int_{\et}^{\et^+}\frac{Q(\et',R[\vvv(\et,\vvv;\et')])}{\ve'(\et,\vvv,\et')}\exp(R[G_{\et,\et'}])\ud{\et'}\bigg)\no.
\end{eqnarray}

\subsubsection{Mild formulation in an infinite slab}

Consider the $\e$-transport problem for $\g(\et,\vvv)$ in an
infinite slab
\begin{eqnarray}\label{infinite slab LI}
\left\{
\begin{array}{rcl}\displaystyle
\ve\frac{\p \g}{\p\et}+G(\et)\bigg(\vp^2\dfrac{\p
\g}{\p\ve}-\ve\vp\dfrac{\p \g}{\p\vp}\bigg)+\nu\g
&=&Q(\et,\vvv),\\\rule{0ex}{1.0em} \g(0,\vvv)&=&h(\vvv)\ \
\text{for}\ \ \ve>0,\\\rule{0ex}{1.0em}
\lim_{\et\rt\infty}\g(\et,\vvv)&=&\g_{\infty}(\vvv),
\end{array}
\right.
\end{eqnarray}
We can define the solution via taking limit $L\rt\infty$ in (\ref{mt
06}), (\ref{mt 07}) and (\ref{mt 08}) as follows:
\begin{eqnarray}
\g(\et,\vvv)=\a[h(\vvv)]+\t[Q(\et,\vvv)],
\end{eqnarray}
where\\
\ \\
Case I: \\For $\ve>0$,
\begin{eqnarray}\label{mt 09}
\a[h(\vvv)]&=&h(\vvv'(\et,\vvv; 0))\exp(-G_{\et,0}),\\
\t[Q(\et,\vvv)]&=&\int_0^{\et}\frac{Q(\et',\vvv(\et,\vvv;\et'))}{\ve'(\et,\vvv,\et')}\exp(-G_{\et,\et'})\ud{\et'}.
\end{eqnarray}
\ \\
Case II: \\
For $\ve<0$ and $\abs{E(\et,\vvv)}\geq \vp'(\et,\vvv;\infty)$,
\begin{eqnarray}\label{mt 10}
\a[h(\vvv)]&=&0,\\
\t[Q(\et,\vvv)]&=&\int_{\et}^{\infty}\frac{Q(\et',R[\vvv(\et,\vvv;\et')])}{\ve'(\et,\vvv,\et')}\exp(R[G_{\et,\et'}])\ud{\et'}.
\end{eqnarray}
\ \\
Case III: \\
For $\ve<0$ and $\abs{E(\et,\vvv)}\leq \vp'(\et,\vvv;\infty)$,
\begin{eqnarray}\label{mt 11}
\\
\a[h(\vvv)]&=&h(\vvv'(\et,\vvv; 0))\exp(-G_{\et^+,0}-R[G_{\et^+,\et}]),\no\\
\\
\t[Q(\et,\vvv)]&=&\bigg(\int_0^{\et^+}\frac{Q(\et',\vvv(\et,\vvv;\et'))}{\ve'(\et,\vvv,\et')}
\exp(-G_{\et^+,\et'}-R[G_{\et^+,\et}])\ud{\et'}\no\\
&&+
\int_{\et}^{\et^+}\frac{Q(\et',R[\vvv(\et,\vvv;\et')])}{\ve'(\et,\vvv,\et')}\exp(R[G_{\et,\et'}])\ud{\et'}\bigg)\no.
\end{eqnarray}
Notice that
\begin{eqnarray}
\lim_{L\rt\infty}\exp(-G_{L,\et})=0,
\end{eqnarray}
for $\ve<0$ and $\abs{E(\et,\vvv)}\leq
\vp'(\et,\vvv;\infty)$,. Hence, above derivation is valid. In
order to achieve the estimate of $\g$, we need to control $\a[h]$ and $\t[Q]$.

\subsubsection{Preliminaries}

\begin{lemma}\label{Milne prelim lemma 4}
There is a positive $0<\beta<\nu_0$ such that for any $\vt\geq0$ and
$0\leq\zeta\leq1/4$,
\begin{eqnarray}
\lnm{\ue^{\beta\et}\a[h]}{\vt,\ze}\leq C\lnm{h}{\vt,\ze}.
\end{eqnarray}
\end{lemma}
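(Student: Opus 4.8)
The plan is to exploit the fact that the characteristic flow underlying $\a$ preserves the energy $E(\et,\vvv)=\ve^2+\vp^2=\abs{\vvv}^2$, so that the weight $\bvv=\br{\vvv}^{\vt}\ue^{\ze\abs{\vvv}^2}$, which depends on $\vvv$ only through $\abs{\vvv}^2=E$, is \emph{constant} along characteristics. Writing $\vvv'=\vvv'(\et,\vvv;0)$ for the footpoint of the trajectory through $(\et,\vvv)$ on the incoming boundary $\{\ve>0\}$, one has $\abs{\vvv'}=\abs{\vvv}$ and $\br{\vvv'}=\br{\vvv}$, hence $\bvv=\br{\vvv'}^{\vt}\ue^{\ze\abs{\vvv'}^2}$. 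This is the key structural point, and it reduces the whole estimate to controlling the exponential attenuation factors appearing in (\ref{mt 09})--(\ref{mt 11}).

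For those factors I would first record the pointwise inequality $\nu(\vvv'(\et,y))\geq\nu_0(1+\abs{\vvv'(\et,y)})\geq\nu_0\ve'(\et,\vvv,y)$ from Lemma \ref{Milne property}, using $\abs{\vvv'}=\sqrt{\ve'^2+\vp'^2}\geq\ve'$; this gives $\nu(\vvv'(\et,y))/\ve'(\et,\vvv,y)\geq\nu_0$ wherever $\ve'>0$, and the same bound with $R[\vvv']$ in place of $\vvv'$ since $\nu$ is radial. Integrating, in Case I ($\ve>0$) I obtain $G_{\et,0}=\int_0^{\et}\frac{\nu(\vvv'(\et,y))}{\ve'(\et,\vvv,y)}\ud{y}\geq\nu_0\et$, so $\exp(-G_{\et,0})\leq\ue^{-\nu_0\et}$. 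In Case III ($\ve<0$ with a finite turning point $\et^{+}$) the trajectory first rises from the boundary to $\et^{+}$ and then reflects back to $\et$, and I would bound the two segments separately: $G_{\et^{+},0}\geq\nu_0\et^{+}$ and $R[G_{\et^{+},\et}]\geq\nu_0(\et^{+}-\et)$, whence $\exp(-G_{\et^{+},0}-R[G_{\et^{+},\et}])\leq\ue^{-\nu_0(2\et^{+}-\et)}\leq\ue^{-\nu_0\et}$, the last step using $\et^{+}\geq\et$. Case II is immediate since there $\a[h]=0$.

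Combining, in every case $\abs{\a[h](\et,\vvv)}\leq\abs{h(\vvv')}\ue^{-\nu_0\et}$ with $\vvv'$ incoming, so by weight invariance
\[
\bvv\abs{\a[h](\et,\vvv)}\leq\br{\vvv'}^{\vt}\ue^{\ze\abs{\vvv'}^2}\abs{h(\vvv')}\,\ue^{-\nu_0\et}\leq\lnm{h}{\vt,\ze}\,\ue^{-\nu_0\et}.
\]
Multiplying by $\ue^{\beta\et}$ and taking the supremum over $(\et,\vvv)$, any $0<\beta<\nu_0$ yields $\ue^{(\beta-\nu_0)\et}\leq1$ and hence the claim (with $C=1$).

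The two integral lower bounds are routine; the point that needs genuine care is the turning-point geometry in Case III. There I must verify that $\et^{+}$, defined by $E(\et,\vvv)=\vp'^2(\et,\vvv;\et^{+})$, satisfies $\et^{+}\geq\et$ — which follows because $\vp'^2(\et,\vvv;\et')=\vp^2\ue^{2(W(\et')-W(\et))}$ is nondecreasing in $\et'$ (as $W$ is increasing by Lemma \ref{Milne force}) and equals $\vp^2\leq E$ at $\et'=\et$ — and that the footpoint of the reflected branch lands in $\{\ve>0\}$, so that the incoming datum $h$ is evaluated exactly where it is prescribed. The apparent blow-up of $\nu/\ve'$ as $y\to\et^{+}$ is harmless: it only strengthens the lower bounds above, and integrability plays no role in the inequality.
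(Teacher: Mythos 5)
Your proof is correct and follows essentially the same route as the paper's: the paper likewise derives $\nu(\vvv')/\ve'\geq\nu_0$ (and the reflected analogue) from Lemma \ref{Milne property}, concludes $\exp(-G_{\et,0})\leq\ue^{-\beta\et}$ and $\exp(-G_{\et^+,0}-R[G_{\et^+,\et}])\leq\ue^{-\beta\et}$, and then declares the result obvious. You have simply filled in the details the paper leaves implicit — the conservation of $\abs{\vvv}^2$ along characteristics (hence invariance of the weight $\bvv$), the monotonicity argument giving $\et^{+}\geq\et$, and the location of the footpoint on $\{\ve>0\}$ — all of which are accurate.
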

\begin{proof}
Based on Lemma \ref{Milne property},
we know
\begin{eqnarray}
\frac{\nu(\vvv'(\et,y))}{\ve'(\et,\vvv,y)}&\geq&\nu_0\\
\frac{\nu(R[\vvv'(\et,y)])}{\ve'(\et,\vvv,y)}&\geq&\nu_0
\end{eqnarray}
It follows that
\begin{eqnarray}
\exp(-G_{\et,0})&\leq&\ue^{-\beta\et}\\
\exp(-G_{\et^+,0}-R[G_{\et^+,\et}])&\leq&\ue^{-\beta\et}
\end{eqnarray}
Then our results are obvious.
\end{proof}
\begin{lemma}\label{Milne prelim lemma 5}
For any integer $\vt\geq0$, $0\leq\zeta\leq1/4$ and
$\beta\leq\nu_0/2$, there is a constant $C$ such that
\begin{eqnarray}
\lnnm{\t[Q]}{\vt,\ze}\leq C\lnnm{\frac{Q}{\nu}}{\vt,\ze}.
\end{eqnarray}
Moreover, we have
\begin{eqnarray}
\lnnm{\ue^{\beta\et}\t[Q]}{\vt,\ze}\leq
C\lnnm{\frac{\ue^{\beta\et}Q}{\nu}}{\vt,\ze}.
\end{eqnarray}
\end{lemma}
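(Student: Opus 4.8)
The plan is to exploit the fact that the weight $\bvv$ is an invariant of the characteristic flow, and then to reduce each of the three cases to a scalar integral that telescopes. First I would observe that along any characteristic the energy $E=\ve^2+\vp^2=\abs{\vvv}^2$ is conserved, and the reflection $R$ preserves $\abs{\vvv}^2$ as well; hence the weight $\br{\vvv}^{\vt}\ue^{\ze\abs{\vvv}^2}$, being a function of $\abs{\vvv}^2$ alone, takes the same value at $\vvv$, at $\vvv'(\et,\vvv;\et')$, and at $R[\vvv'(\et,\vvv;\et')]$. Consequently, multiplying by $\bvv$ inside the integrals defining $\t[Q]$ is the same as inserting the weight evaluated at the integration velocity, and we may estimate the integrand by
\begin{eqnarray}
\bvv\,\abs{Q(\et',\vvv'(\et,\vvv;\et'))}\leq \nu(\vvv'(\et,\vvv;\et'))\,\lnnm{\tfrac{Q}{\nu}}{\vt,\ze},
\end{eqnarray}
and similarly with $R[\vvv']$ in place of $\vvv'$.

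Next, in Case~I ($\ve>0$) this turns the bound on $\bvv\abs{\t[Q]}$ into $\lnnm{Q/\nu}{\vt,\ze}$ times $\int_0^{\et}\frac{\nu(\vvv'(\et,\vvv;\et'))}{\ve'(\et,\vvv,\et')}\exp(-G_{\et,\et'})\ud{\et'}$. The key identity is that, since $\p_{\et'}G_{\et,\et'}=-\nu(\vvv'(\et,\et'))/\ve'(\et,\vvv,\et')$, the integrand is exactly $\p_{\et'}\exp(-G_{\et,\et'})$, so the integral telescopes to $1-\exp(-G_{\et,0})\leq 1$. The same computation handles Case~II, where the relevant factor is $\exp(R[G_{\et,\et'}])$ and the integral runs to $\infty$, and Case~III, where it must be split at the turning point $\et^+$ (where $\ve=0$) into an incoming and a reflected piece. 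In every case the singular factor $1/\ve'$, which blows up as $\ve'\to0$ near $\et^+$, is precisely the one produced by differentiating the attenuation exponential, so the apparent singularity is integrable and each piece telescopes to a quantity bounded by a universal constant. This proves the first estimate.

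For the exponentially weighted estimate I would use the lower bound $\nu(\vvv')/\ve'(\et,\vvv,\et')\geq\nu_0$ from Lemma~\ref{Milne property} (valid because $\abs{\vvv'}=\sqrt{E}$ and $\ve'\leq\sqrt{E}$), which yields $G_{\et,\et'}\geq\nu_0(\et-\et')$. Writing $\ue^{\beta\et}=\ue^{\beta\et'}\ue^{\beta(\et-\et')}$ and transferring $\ue^{\beta\et'}$ onto $Q$ to form $\lnnm{\ue^{\beta\et}Q/\nu}{\vt,\ze}$, one is left, in Case~I, with $\int_0^{\et}\frac{\nu(\vvv')}{\ve'}\ue^{\beta(\et-\et')}\exp(-G_{\et,\et'})\ud{\et'}$. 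Introducing $\tilde G_{\et,\et'}=\int_{\et'}^{\et}\big(\nu(\vvv'(\et,y))/\ve'(\et,\vvv,y)-\beta\big)\ud{y}$, the integrand equals $\p_{\et'}\exp(-\tilde G_{\et,\et'})+\beta\exp(-\tilde G_{\et,\et'})$; the first term telescopes to at most $1$, and since $\beta\leq\nu_0/2$ gives $\tilde G_{\et,\et'}\geq\tfrac{\nu_0}{2}(\et-\et')$, the second is bounded by $\beta\int_0^{\et}\ue^{-\nu_0(\et-\et')/2}\ud{\et'}\leq 2\beta/\nu_0\leq1$. In Cases~II and III the factor $\ue^{\beta(\et-\et')}$ is already $\leq1$ (there $\et'\geq\et$), so those cases reduce directly to the telescoping estimate above.

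The main obstacle is Case~III, the turning-point geometry: there the characteristic grazes $\ve=0$ at $\et^+$, so one must verify that the splitting of $\t[Q]$ into the incoming segment on $[0,\et^+]$ and the reflected segment on $[\et,\et^+]$, together with the shared attenuation $\exp(-G_{\et^+,\et'}-R[G_{\et^+,\et}])$, still arranges each integrand as a total $\et'$-derivative of an attenuation factor, so that the $1/\ve'$ singularity integrates to a bounded quantity. Once the bookkeeping of the exponents through the reflection at $\et^+$ is carried out correctly, the estimate becomes identical to that in Case~I, and collecting the three cases yields the stated constant $C$.
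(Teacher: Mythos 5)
Your proposal follows essentially the same route as the paper: the weight is constant along characteristics, the kernel $\frac{\nu(\vvv')}{\ve'}\exp(-G_{\et,\et'})$ is an exact $\et'$-derivative of the attenuation factor (the paper phrases this as the substitution $z=G_{\et,\et'}$), and the exponential weight is absorbed via $G_{\et,\et'}\geq\nu_0(\et-\et')$ together with $\beta\leq\nu_0/2$ — the paper likewise only writes out Case I and declares the $\ve<0$ cases similar. One small imprecision: in Case III the incoming segment integrates $\et'$ over $[0,\et^+]$, which includes $\et'<\et$, so $\ue^{\beta(\et-\et')}$ is not automatically $\leq1$ there; however your $\tilde G$ device still applies because the combined exponent satisfies $G_{\et^+,\et'}+R[G_{\et^+,\et}]\geq\nu_0\big((\et^+-\et')+(\et^+-\et)\big)\geq\nu_0(\et-\et')$, so the half-decay absorption goes through unchanged.
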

\begin{proof}
The first inequality is a special case of the second one, so we only need to prove the second inequality.
For $\ve>0$ case, we have
\begin{eqnarray}
\beta(\et-\et')-G_{\et,\et'}\leq\beta(\et-\et')-\frac{\nu_0(\et-\et')}{2}-\frac{G_{\et,\et'}}{2}\leq-\frac{G_{\et,\et'}}{2}.
\end{eqnarray}
It is natural that
\begin{eqnarray}
\int_0^{\et}\frac{\nu(\vvv'(\et,\et'))}{\ve'(\et,\vvv,\et')}
\exp(\beta(\et-\et')-G_{\et,\et'})\ud{\et'}\leq\int_0^{\infty}\exp\bigg(-\frac{z}{2}\bigg)\ud{z}=2.
\end{eqnarray}
Then we estimate
\begin{eqnarray}
\abs{\bvv\ue^{\beta\et}\t[Q]}&\leq& \ue^{\beta\et}\int_0^{\et}\bvv\frac{\abs{Q(\et',\vvv(\et,\vvv;\et'))}}{\ve'(\et,\vvv,\et')}\exp(-G_{\et,\et'})\ud{\et'}\\
&\leq&\lnnm{\frac{\ue^{\beta\et}Q}{\nu}}{\vt,\ze}\int_0^{\et}\frac{\nu(\vvv'(\et,\et'))}{\ve'(\et,\vvv,\et')}
\exp(\beta(\et-\et')-G_{\et,\et'})\ud{\et'}\no\\
&\leq&C\lnnm{\frac{\ue^{\beta\et}Q}{\nu}}{\vt,\ze}.\no
\end{eqnarray}
The $\ve<0$ case can be proved in a similar fashion, so we omit it here.
\end{proof}
\begin{lemma}\label{Milne prelim lemma 6}
For any $\d>0$, $\vt>2$ and $0\leq\zeta\leq1/4$, there is a
constant $C(\d)$ such that
\begin{eqnarray}
\ltnm{\t[Q]}{\ze}\leq C(\d)\tnnm{\nu^{-1/2}Q}+\d\lnnm{Q}{\vt,\ze}.
\end{eqnarray}
\end{lemma}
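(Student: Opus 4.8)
The plan is to estimate $\t[Q]$ directly from the mild formulas (\ref{mt 09})--(\ref{mt 11}), treating the generic Case I ($\ve>0$) in detail; Cases II and III are handled identically after replacing $\vvv'$ by its reflection $R[\vvv']$ and $\et$ by the turning level $\et^+$, using $\abs{R[\vvv']}=\abs{\vvv'}$ and the same exponential damping. The two workhorses are the differentiation identity $\frac{\p}{\p\et'}\ue^{-G_{\et,\et'}}=\frac{\nu(\vvv'(\et,\et'))}{\ve'(\et,\vvv,\et')}\ue^{-G_{\et,\et'}}$, which yields $\int_0^{\et}\frac{\nu(\vvv')}{\ve'}\ue^{-G_{\et,\et'}}\ud{\et'}=1-\ue^{-G_{\et,0}}\le1$, and conservation of energy $\abs{\vvv'(\et,\vvv;\et')}=\abs{\vvv}$ along characteristics (since $\ve'^2+\vp'^2=\ve^2+\vp^2$). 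Applying Cauchy--Schwarz in $\et'$ against the measure $\frac{\nu(\vvv')}{\ve'}\ue^{-G_{\et,\et'}}\ud{\et'}$ gives the pointwise bound
\begin{eqnarray}
\abs{\t[Q](\et,\vvv)}^2\le\int_0^{\et}\frac{\abs{Q(\et',\vvv')}^2}{\nu(\vvv')\ve'}\ue^{-G_{\et,\et'}}\ud{\et'}.
\end{eqnarray}
I then multiply by the velocity weight of $\ltnm{\cdot}{\ze}$, integrate in $\vvv$ at fixed $\et$, and split the domain into $\{\abs{\vvv}\le N\}$ and $\{\abs{\vvv}>N\}$, with $N=N(\d)$ chosen at the end.

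On $\{\abs{\vvv}>N\}$ I bound $Q$ by $\lnnm{Q}{\vt,\ze}$, i.e. $\abs{Q(\et',\vvv')}\le\br{\vvv'}^{-\vt}\ue^{-\ze\abs{\vvv'}^2}\lnnm{Q}{\vt,\ze}$; since $\abs{\vvv'}=\abs{\vvv}$ this weight is constant along the characteristic and factors out, while $\int_0^{\et}\frac{1}{\nu(\vvv')\ve'}\ue^{-G_{\et,\et'}}\ud{\et'}\le\nu_0^{-2}(1+\abs{\vvv})^{-2}$ follows from the identity above together with $\nu(\vvv')\ge\nu_0(1+\abs{\vvv})$ (Lemma \ref{Milne property}). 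Hence $\t[Q]$ inherits a factor $\br{\vvv}^{-\vt}(1+\abs{\vvv})^{-1}$, whose weighted $L^2_{\vvv}$ norm over $\{\abs{\vvv}>N\}$ tends to $0$ as $N\rt\infty$ precisely because $\vt>2$ makes the tail integrable in $\r^2$. Choosing $N$ large then bounds this contribution by $\d\lnnm{Q}{\vt,\ze}$.

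The heart of the proof is the bounded piece $\{\abs{\vvv}\le N\}$, which must be absorbed into $C(\d)\tnnm{\nu^{-1/2}Q}$. Here the velocity weight is harmless, being $\le C(N)$ on $\{\abs{\vvv}\le N\}$ by energy conservation, so after Fubini I am left with $\int_0^{\et}\int_{\abs{\vvv}\le N}\frac{\abs{Q(\et',\vvv')}^2}{\nu(\vvv')\ve'}\ud{\vvv}\ud{\et'}$, and the goal is to dominate it by the full phase-space integral $\int_0^{\infty}\int_{\r^2}\frac{\abs{Q}^2}{\nu}$, uniformly in $\et$. The plan is to change variables $\vvv\mapsto\vvv'$ at each fixed $\et'$ along the Milne characteristics: writing $\vp'=\vp\,\ue^{W(\et')-W(\et)}$ and $\ve'=\sqrt{\abs{\vvv}^2-\vp'^2}$, the Jacobian is $\big|\det\tfrac{\p\vvv'}{\p\vvv}\big|=\tfrac{\ve}{\ve'}\ue^{W(\et')-W(\et)}$, so that $\tfrac{\ud{\vvv}}{\ve'}$ pushes forward to $\tfrac{\ud{\vvv'}}{\ve}\ue^{W(\et)-W(\et')}$, recasting the integral in the physical variables $(\et',\vvv')$ against $\nu^{-1}\abs{Q}^2$.

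The main obstacle is exactly the residual factor $1/\ve$ produced by this change of variables, which is singular on the grazing set $\ve\rt0$. This is the intrinsic normal-derivative singularity flagged in the introduction, and it is controlled rather than avoided: near-tangential characteristics ($\ve$ small) accumulate large optical depth, so the damping $\ue^{-G_{\et,\et'}}$ retained from the mild formula decays fast enough to compensate $1/\ve$ after integration in $\et'$; equivalently, reparametrizing by characteristic time through $\ud{\et'}=\ve'\ud{s}$ turns $\frac{\ud{\et'}}{\ve'}$ into $\ud{s}$ and recasts the estimate as a Schur-type bound for the resulting kernel, whose $\et$-uniform $L^1$ control follows from the integrability properties of $W$ and $G$ in Lemma \ref{Milne force}. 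Combining the two pieces and taking the supremum over $\et\ge0$ yields $\ltnm{\t[Q]}{\ze}\le C(\d)\tnnm{\nu^{-1/2}Q}+\d\lnnm{Q}{\vt,\ze}$, as claimed.
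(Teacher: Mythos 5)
Your setup --- the Cauchy--Schwarz bound against the unit-mass measure $\frac{\nu(\vvv')}{\ve'}\ue^{-G_{\et,\et'}}\ud{\et'}$, conservation of $\abs{\vvv}$ along characteristics, and the large-velocity tail controlled by $\d\lnnm{Q}{\vt,\ze}$ using $\vt>2$ --- is sound and corresponds to the paper's Case I, Types I and II. The gap is in your treatment of the bounded set $\{\abs{\vvv}\le N\}$: it is \emph{not} true that this whole piece can be absorbed into $C(\d)\tnnm{\nu^{-1/2}Q}$. After your change of variables the quantity to control is $\int_0^{\et}\int\frac{\abs{Q(\et',\vvv')}^2}{\nu(\vvv')}\,\frac{\ue^{-G_{\et,\et'}}}{\ve}\,\ue^{W(\et)-W(\et')}\ud{\vvv'}\ud{\et'}$, and since $\abs{Q}^2/\nu$ is an essentially arbitrary nonnegative $L^1$ density, closing this against $\tnnm{\nu^{-1/2}Q}^2$ requires the kernel $\ue^{-G_{\et,\et'}}/\ve$ to be pointwise bounded. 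It is not: when $\et-\et'$ is small there is no accumulated optical depth ($G_{\et,\et'}\rt 0$) and the factor $1/\ve$ is untamed; the damping you invoke only operates when $\et-\et'$ is bounded below. Indeed the pure $L^2$ bound is provably false on bounded velocities: take $Q=\id_{\{\abs{\ve'}\le m,\ \abs{\et'-\et_0}\le m,\ 1\le\vp'\le 2\}}$. Then $\tnnm{\nu^{-1/2}Q}\sim m$, while $\t[Q]\sim 1$ at points $(\et_0+m,\vvv)$ with $\ve\sim m$, a $\vvv$-set of measure $\sim m$, so $\ltnm{\t[Q]}{\ze}\gtrsim m^{1/2}$, and $m^{1/2}/m\rt\infty$ as $m\rt 0$.

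The missing idea --- and the actual heart of the lemma --- is a further three-way split of the bounded-velocity region, which is what the paper does. Only the region $\{\ve'\ge m\}$ goes to the $L^2$ term: there $1/\ve'\le 1/m$ and Cauchy--Schwarz closes with the (large) constant $C\ue^{4\ze M^2}/m$, which is why the $L^2$ coefficient is $C(\d)$ rather than an absolute constant. The near-grazing region $\{\ve'\le m\}$ must be charged to $\lnnm{Q}{\vt,\ze}$ with a small coefficient, in two pieces: for $\et-\et'\ge\sigma$ one has $G_{\et,\et'}\ge \nu_0\sigma/m$, so the exponential damping contributes a factor $\ue^{-\sigma/m}$; for $\et-\et'\le\sigma$ one observes that such characteristics force $\ve\le C(m+\sigma)$, so the $\vvv$-integration at the evaluation point runs over a set of measure $O(m+\sigma)$. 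Choosing $M$ large, then $\sigma$ small, then $m\ll\sigma$ makes all three $L^\infty$ contributions $\le\d\lnnm{Q}{\vt,\ze}$. Without this additional decomposition your argument does not close.
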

\begin{proof}
We divide the proof into several cases:\\
\ \\
Case I: For $\ve>0$,
\begin{eqnarray}
\t[Q(\et,\vvv)]&=&\int_0^{\et}\frac{Q(\et',\vvv(\et,\vvv;\et'))}{\ve'(\et,\vvv,\et')}\exp(-G_{\et,\et'})\ud{\et'}.
\end{eqnarray}
We need to estimate
\begin{eqnarray}
\int_{\r^2}\ue^{2\ze\abs{\vvv}^2}\bigg(\int_0^{\et}\frac{Q(\et',\vvv(\et,\vvv;\et'))}{\ve'(\et,\vvv,\et')}\exp(-G_{\et,\et'})\ud{\et'}
\bigg)^2\ud{\vvv}.
\end{eqnarray}
Assume $m>0$ is sufficiently small, $M>0$ is sufficiently large and $\sigma>0$ is sufficiently small which will be determined in the following. We can split the integral into the following parts
\begin{eqnarray}
I=I_1+I_2+I_3+I_4.
\end{eqnarray}
\ \\
Case I - Type I: $\chi_1$: $M\leq\ve'(\et,\vvv,\et')$ or $M\leq\vp'(\et,\vvv,\et')$.\\
By Lemma \ref{Milne property}, we have
\begin{eqnarray}
\abs{\vvv(\et,\vvv;\et')}+1\leq C\nu(\vvv(\et,\vvv;\et')).
\end{eqnarray}
Then for $\vt>2$, since $\abs{\vvv}$ is conserved along the characteristics,  we have
\begin{eqnarray}
I_1&\leq&C\lnnm{Q}{\vt,\ze}^2\int_{\r^2}\chi_1\bigg(\int_0^{\et}\frac{1}{\br{\vvv'}^{\vt}}\frac{\exp(-G_{\et,\et'})}{\ve'(\et,\vvv,\et')}\ud{\et'}
\bigg)^2\ud{\vvv}\\
&\leq&\frac{C}{M^{\vt}}\lnnm{Q}{\vt,\ze}^2\int_{\r^2}\frac{1}{\br{\vvv}^{\vt}}\bigg(\int_0^{\et}\frac{\exp(-G_{\et,\et'})}{\ve'(\et,\vvv,\et')}\ud{\et'}
\bigg)^2\ud{\vvv}\no\\
&\leq&\frac{C}{M^{\vt}}\lnnm{Q}{\vt,\ze}^2.\no
\end{eqnarray}
since
\begin{eqnarray}
\abs{\int_0^{\et}\frac{\exp(-G_{\et,\et'})}{\ve'(\et,\vvv,\et')}\ud{\et'}}&\leq& \abs{\int_0^{\et}\frac{-\nu(\vvv'(\et,\et'))\exp(G_{\et,\et'})}{\ve'(\et,\vvv,\et')}\ud{\et'}}\\
&\leq&\int_0^{\infty}\ue^{-y}\ud{y}=1.\no
\end{eqnarray}
\ \\
Case I - Type II: $\chi_2$: $m\leq\ve'(\et,\vvv,\et')\leq M$ and $\vp'(\et,\vvv,\et')\leq M$.\\
Since along the characteristics, $\abs{\vvv}^2$ can be bounded by
$2M^2$ and the integral domain for $\vvv$ is finite. Then by Cauchy's inequality, we have
\begin{eqnarray}
I_2&\leq&C\frac{\ue^{4\ze M^2}}{m}\int_0^{\et}\frac{Q^2}{\nu}(\et',\vvv(\et,\vvv;\et'))\ud{\et'}
\int_0^{\et}\frac{\nu(\vvv'(\et,\et'))\exp(-2G_{\et,\et'})}{\ve'(\et,\vvv,\et')}\ud{\et'}
\\
&\leq& C\frac{\ue^{4\ze M^2}}{m}\tnnm{\nu^{-1/2}Q}^2,\no
\end{eqnarray}
where
\begin{eqnarray}
\int_0^{\et}\frac{\nu(\vvv'(\et,\et'))}{\ve'(\et,\vvv,\et')}\exp(-2G_{\et,\et'})\ud{\et'}
\ud{\vvv}&\leq&\int_0^{\infty}\ue^{-2y}\ud{y}=\half.
\end{eqnarray}
\ \\
Case I - Type III: $\chi_3$: $0\leq\ve'(\et,\vvv,\et')\leq m$, $\vp'(\et,\vvv,\et')\leq M$ and $\et-\et'\geq\sigma$.\\
In this case, we know
\begin{eqnarray}
G_{\et,\et'}\geq\frac{\sigma}{m}.
\end{eqnarray}
Then after substitution, the integral is not from zero, but from
$-\sigma/m$. Hence, we have
\begin{eqnarray}
I_3&\leq&C\lnnm{Q}{\vt,\ze}^2\int_{\r^2}\chi_3\bigg(\int_0^{\et}\frac{1}{\br{\vvv'}^{\vt}}\frac{\exp(-G_{\et,\et'})}{\ve'(\et,\vvv,\et')}\ud{\et'}
\bigg)^2\ud{\vvv}\\
&\leq&C\lnnm{Q}{\vt,\ze}^2\int_{\r^2}\frac{\chi_3}{\br{\vvv}^{2\vt}}\bigg(\int_0^{\et}\frac{\nu(\vvv'(\et,\et'))\exp(-G_{\et,\et'})}{\ve'(\et,\vvv,\et')}\ud{\et'}
\bigg)^2\ud{\vvv}\no\\
&\leq&C\lnnm{Q}{\vt,\ze}^2\bigg(\int_{-\sigma/m}^{\infty}\ue^{-y}\ud{y}
\bigg)^2\no\\
&\leq& C\ue^{-\frac{\sigma}{m}}\lnnm{Q}{\vt,\ze}^2.\no
\end{eqnarray}
\ \\
Case I - Type IV: $\chi_4$: $0\leq\ve'(\et,\vvv,\et')\leq m$, $\vp'(\et,\vvv,\et')\leq M$ and $\et-\et'\leq\sigma$.\\
For $\et'\leq\et$ and $\et-\et'\leq\sigma$, we have
\begin{eqnarray}
\ve\leq
C\ve'(\et,\vvv,\et')\leq C(m+\sigma).
\end{eqnarray}
Therefore, the integral domain for $\ve$ is very small. We have the estimate
\begin{eqnarray}
I_4&\leq&C\lnnm{Q}{\vt,\ze}\int_{\r^2}\chi_4\bigg(\int_0^{\et}\frac{1}{\br{\vvv'}^{\vt}}\frac{\exp(-G_{\et,\et'})}{\ve'(\et,\vvv,\et')}\ud{\et'}
\bigg)^2\ud{\vvv}\\
&\leq&C\lnnm{Q}{\vt,\ze}\int_{\r^2}\frac{\chi_4}{\br{\vvv}^{\vt}}\ud{\vvv}\no\\
&\leq&C(m+\sigma)\lnnm{Q}{\vt,\ze}.\no
\end{eqnarray}
\ \\
Collecting all four types, we have
\begin{eqnarray}
I\leq C\frac{\ue^{4\ze
M^2}}{m}\tnnm{\nu^{-1/2}Q}+C\bigg(\frac{1}{M^{\vt}}+m+\sigma+\ue^{-\frac{\sigma}{m}}\bigg)\lnnm{Q}{\vt,\ze}.
\end{eqnarray}
Taking $M$ sufficiently large, $\sigma$ sufficiently small and
$m<<\sigma$, this is
the desired result.\\
\ \\
Case II: \\
For $\ve<0$ and $\abs{E(\et,\vvv)}\geq \vp'(\et,\vvv;\infty)$,
\begin{eqnarray}
\t[Q(\et,\vvv)]&=&\int_{\et}^{\infty}\frac{Q(\et',R[\vvv(\et,\vvv;\et')])}{\ve'(\et,\vvv,\et')}\exp(R[G_{\et,\et'}])\ud{\et'}.
\end{eqnarray}
We need to estimate
\begin{eqnarray}
\int_{\r^2}\ue^{2\ze\abs{\vvv}^2}\bigg(\int_{\et}^{\infty}\frac{Q(\et',R[\vvv(\et,\vvv;\et')])}{\ve'(\et,\vvv,\et')}
\exp(R[G_{\et,\et'}])\ud{\et'}
\bigg)^2\ud{\vvv}.
\end{eqnarray}
We can split the integral into the following types:
\begin{eqnarray}
II=II_1+II_2+II_3.
\end{eqnarray}
\ \\
Case II - Type I: $\chi_1$: $M\leq\ve'(\et,\vvv,\et')$ or $M\leq\vp'(\et,\vvv,\et')$.\\
Similar to Case I - Type I, we have
\begin{eqnarray}
II_1&\leq&C\lnnm{Q}{\vt,\ze}\int_{\r^2}\chi_1\bigg(\int_{\et}^{\infty}\frac{1}{\br{\vvv'}^{\vt}}\frac{\exp(R[G_{\et,\et'}])}{\ve'(\et,\vvv,\et')}\ud{\et'}
\bigg)^2\ud{\vvv}\\
&\leq&C\frac{1}{M^{\vt}}\lnnm{Q}{\vt,\ze}.\no
\end{eqnarray}
\ \\
Case II - Type II: $\chi_2$: $m\leq\ve'(\et,\vvv,\et')\leq M$ and $\vp'(\et,\vvv,\et')\leq M$.\\
Similar to Case I - Type II, by Cauchy's inequality, we have
\begin{eqnarray}
II_2&\leq&C\frac{\ue^{4\ze M^2}}{m}\int_{\et}^{\infty}\frac{Q^2}{\nu}(\et',\vvv(\et,\vvv;\et'))\ud{\et'}
\int_{\et}^{\infty}\frac{\exp(2R[G_{\et,\et'}])}{\ve'(\et,\vvv,\et')}\ud{\et'}
\\
&\leq& C\frac{\ue^{4\ze M^2}}{m}\tnnm{\nu^{-1/2}Q}.\no
\end{eqnarray}
\ \\
Case I - Type III: $\chi_3$: $0\leq\ve'(\et,\vvv,\et')\leq m$ and $\vp'(\et,\vvv,\et')\leq M$.\\
In this case, we can directly verify the fact
\begin{eqnarray}
\ve\leq\ve'(\et,\vvv,\et')
\end{eqnarray}
for $\et\leq\et'$. Then we know the integral of $\ve$ is always in a small domain.
Similar to Case I - Type IV, we have the estimate
\begin{eqnarray}
II_3\leq Cm\lnnm{Q}{\vt,\ze}.
\end{eqnarray}
Hence, collecting all three types, we obtain
\begin{eqnarray}
II\leq C\frac{\ue^{4\ze
M^2}}{m}\tnnm{\nu^{-1/2}Q}+C\bigg(\frac{1}{M^{\vt}}+m\bigg)\lnnm{Q}{\vt,\ze}.
\end{eqnarray}
Taking $M$ sufficiently large and
$m$ sufficiently small, this is
the desired result.\\
\ \\
Case III: \\
For $\ve<0$ and $\abs{E(\et,\vvv)}\leq \vp'(\et,\vvv;\infty)$,
\begin{eqnarray}
\\
\t[Q(\et,\vvv)]&=&\bigg(\int_0^{\et^+}\frac{Q(\et',\vvv(\et,\vvv;\et'))}{\nu(\vvv(\et,\vvv;\et'))}
\exp(-G_{\et^+,\et'}-R[G_{\et^+,\et}])\ud{\et'}\no\\
&&+
\int_{\et}^{\et^+}\frac{Q(\et',R[\vvv(\et,\vvv;\et')])}{\nu(\vvv(\et,\vvv;\et'))}\exp(R[G_{\et,\et'}])\ud{\et'}\bigg)\no.
\end{eqnarray}
This is a combination of Case I and Case II, so it naturally holds.
\end{proof}

\subsubsection{Estimates of $\e$-Milne problem}

\begin{lemma}\label{Milne lemma 2}
Assume (\ref{Milne bounded}) and (\ref{Milne decay}) hold. The
solution $\g(\et,\vvv)$ to the $\e$-Milne problem (\ref{Milne})
satisfies for $\vt>2$ and $0\leq\zeta\leq1/4$,
\begin{eqnarray}
\lnnm{\g-\g_{\infty}}{\vt,\ze}\leq
C+C\tnnm{\g-\g_{\infty}}.
\end{eqnarray}
\end{lemma}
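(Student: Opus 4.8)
The plan is to upgrade the $L^2$ control $\tnnm{\g-\g_{\infty}}$, available from Lemma~\ref{Milne lemma 1}, to the weighted $L^{\infty}$ bound by feeding the mild formulation of the preceding subsection into the transport estimates of Lemmas~\ref{Milne prelim lemma 4}--\ref{Milne prelim lemma 6} together with the kernel bound of Lemma~\ref{wellposedness prelim lemma 8}. First I would set $\bar\g=\g-\g_{\infty}$. Since $\g_{\infty}\in\nk$ is independent of $\et$, we have $\p_{\et}\g_{\infty}=0$ and $\ll[\g_{\infty}]=0$, so $\bar\g$ solves the same $\e$-Milne equation with limit $0$, in-flow data $\bar h=h-\g_{\infty}$ on $\ve>0$, and the modified source
\begin{eqnarray}
\tilde S=S-G(\et)\bigg(\vp^2\frac{\p\g_{\infty}}{\p\ve}-\ve\vp\frac{\p\g_{\infty}}{\p\vp}\bigg).
\end{eqnarray}
Writing $\ll=\nu-K$, the mild formulation reads $\bar\g=\a[\bar h]+\t[\tilde S]+\t[K\bar\g]$.

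The two non-self-referential terms are bounded by constants. By Lemma~\ref{Milne prelim lemma 4}, $\lnnm{\a[\bar h]}{\vt,\ze}\leq C\lnm{\bar h}{\vt,\ze}$; since $\g_{\infty}=\sum_i\qi_{i,\infty}\psi_i$ has bounded coefficients (Lemma~\ref{Milne prelim lemma 3}) and the $\sqrt{\m}$ factor dominates $\ue^{\ze\abs{\vvv}^2}$ for $\ze\leq1/4$, assumption~(\ref{Milne bounded}) gives $\lnm{\bar h}{\vt,\ze}\leq C$. Likewise $\tilde S$ is bounded in weighted $L^{\infty}$: $S$ by~(\ref{Milne decay}), and the $G$-correction because $G$ is bounded (Lemma~\ref{Milne force}) while $(\vp^2\p_{\ve}-\ve\vp\p_{\vp})\g_{\infty}$ carries a Gaussian factor; hence Lemma~\ref{Milne prelim lemma 5} yields $\lnnm{\t[\tilde S]}{\vt,\ze}\leq C\lnnm{\tilde S/\nu}{\vt,\ze}\leq C$.

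The crux is $\t[K\bar\g]$, which I would treat by a second Duhamel substitution. Inserting the mild formula for $\bar\g$ into $K\bar\g$ gives
\begin{eqnarray}
\t[K\bar\g]=\t[K\a[\bar h]]+\t[K\t[\tilde S]]+\t[K\t[K\bar\g]].
\end{eqnarray}
The first two terms are again $O(1)$ by boundedness of $K$ on weighted $L^{\infty}$ (Lemma~\ref{wellposedness prelim lemma 8}) followed by Lemma~\ref{Milne prelim lemma 5}. For the double-$K$ term I apply Lemma~\ref{Milne prelim lemma 5} and then, writing $K[\t[K\bar\g]](\et,\vvv)=\int k(\vvv,\vvv')\t[K\bar\g](\et,\vvv')\ud{\vvv'}$, use Cauchy--Schwarz in $\vvv'$: the kernel estimate of Lemma~\ref{wellposedness prelim lemma 8} makes $\big(\int\abs{k(\vvv,\vvv')}^2\ue^{-4\ze\abs{\vvv'}^2}\ud{\vvv'}\big)^{1/2}$ a decaying function of $\vvv$ compatible with the weight $\bvv/\nu$, so that
\begin{eqnarray}
\lnnm{\t[K\t[K\bar\g]]}{\vt,\ze}\leq C\ltnm{\t[K\bar\g]}{\ze}.
\end{eqnarray}
Now Lemma~\ref{Milne prelim lemma 6} with $Q=K\bar\g$ turns this into $\ltnm{\t[K\bar\g]}{\ze}\leq C(\d)\tnnm{\nu^{-1/2}K\bar\g}+\d\lnnm{K\bar\g}{\vt,\ze}$, and since $K$ is bounded on $L^2$ and on weighted $L^{\infty}$ this is $\leq C(\d)\tnnm{\bar\g}+C\d\lnnm{\bar\g}{\vt,\ze}$.

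Collecting everything yields
\begin{eqnarray}
\lnnm{\bar\g}{\vt,\ze}\leq C+C(\d)\tnnm{\bar\g}+C\d\lnnm{\bar\g}{\vt,\ze},
\end{eqnarray}
and choosing $\d$ so small that $C\d\leq1/2$ absorbs the last term into the left-hand side, producing the claim $\lnnm{\g-\g_{\infty}}{\vt,\ze}\leq C+C\tnnm{\g-\g_{\infty}}$. The main obstacle is precisely this double-$K$ estimate: one must verify that after the two Duhamel iterations the velocity integral against the singular kernel $k$ splits by Cauchy--Schwarz into exactly the norm $\ltnm{\cdot}{\ze}$ appearing in Lemma~\ref{Milne prelim lemma 6}, and that the remaining $\vvv$-weights stay integrable for $\vt>2$ and $0\leq\ze\leq1/4$; this is where the structure of Lemma~\ref{wellposedness prelim lemma 8} and the smallness of $\d$ are essential to close the loop.
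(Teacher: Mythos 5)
Your argument reaches the right bootstrap inequality and closes it the same way the paper does (Lemma~\ref{Milne prelim lemma 6} applied to the $\t[K[\cdot]]$ term, then absorption of $C\d\lnnm{\bar\g}{\vt,\ze}$), but the mechanism you use to convert $K$ acting on $L^{\infty}$ into an $L^2$ quantity is genuinely different from the paper's. You iterate Duhamel a second time and estimate the composed operator $K\circ\t\circ K$ by Cauchy--Schwarz on the kernel --- essentially transplanting the strategy of Theorem~\ref{wellposedness LI estimate} from Section~2 into the Milne setting. The paper instead uses the velocity-weight descent from \cite[Lemma 3.3.1]{Glassey1996}: it bounds $\lnnm{u}{\vt,\ze}\leq C(\lnnm{K[u]}{\vt,\ze}+\cdots)$, applies $\lnnm{K[u]}{\vt,\ze}\leq\lnnm{u}{\vt-1,\ze}$ repeatedly until the polynomial weight is exhausted, lands on $\lnnm{K[u]}{0,\ze}\leq\ltnm{u}{\ze}$, and only then invokes Lemma~\ref{Milne prelim lemma 6}; no second Duhamel layer and no composed kernel are needed. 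What each route buys: the paper's descent keeps every kernel integral at the $L^1_{\vvv'}$ level, where the $\abs{\vvv-\vvv'}^{-1}$ singularity of Lemma~\ref{wellposedness prelim lemma 8} is harmless, whereas your route concentrates all the difficulty in the single claim $\lnnm{K[w]}{\vt,\ze}\lesssim\ltnm{w}{\ze}$, which by naked Cauchy--Schwarz requires $\br{\vvv}^{\vt-1}\big(\int_{\r^2}\abs{k(\vvv,\vvv')}^2\ue^{-4\ze\abs{\vvv'}^2}\ud{\vvv'}\big)^{1/2}$ to be finite --- and the upper bound of Lemma~\ref{wellposedness prelim lemma 8} alone does not deliver this in two dimensions, since $\abs{\vvv-\vvv'}^{-2}$ is not locally integrable in $\r^2$ (this is precisely why Section~2 truncates $k$ to $k_N$ and changes variables rather than squaring the kernel). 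You correctly identify this as the main obstacle; note, however, that the paper's own endpoint estimate $\lnnm{K[u]}{0,\ze}\leq\ltnm{u}{\ze}$ is imported wholesale from Glassey and rests on the same $L^2_{\vvv'}$ integrability of the kernel, so if you cite that estimate (with the routine extra polynomial weight, absorbed by the Gaussian off-diagonal decay) instead of re-deriving it, your proof closes and is a valid alternative to the one in the paper.
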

\begin{proof}
Define $u=\g-\g_{\infty}$. Then $u$ satisfies the equation
\begin{eqnarray}
\left\{
\begin{array}{rcl}\displaystyle
\ve\frac{\p u}{\p\et}+G(\et)\bigg(\vp^2\dfrac{\p
u}{\p\ve}-\ve\vp\dfrac{\p u}{\p\vp}\bigg)+\ll[u]
&=&S(\et,\vvv)+\g_{2,\infty}G(\et)\sqrt{\m}\ve\vp=\tilde
S,\\\rule{0ex}{1.0em} u(0,\vvv)&=&(h-\g_{\infty})(\vvv)=p(\vvv)\ \
\text{for}\ \ \ve>0,\\\rule{0ex}{1.0em}
\displaystyle\int_{\r^2}\ve\sqrt{\m} u(0,\vvv)\ud{\vvv}
&=&m_f[\g]-\g_{1,\infty},\\\rule{0ex}{1.5em}
\lim_{\et\rt\infty}u(\et,\vvv)&=&0,
\end{array}
\right.
\end{eqnarray}
Since $u=\a[p]+\t[K[u]+\tilde S]$,
based on Lemma \ref{Milne prelim lemma 6}, we have
\begin{eqnarray}
\\
\ltnm{u-\a[p]}{\ze}&=&\ltnm{\t[K[u]+\tilde S]}\no\\
&\leq&
C(\d)\bigg(\tnnm{\nu^{-1/2}K[u]}+\tnnm{\nu^{-1/2}\tilde
S}\bigg)+\d\bigg(\lnnm{K[u]}{\vt,\ze}+\lnnm{\tilde
S}{\vt,\ze}\bigg)\no\\
&\leq&
C(\d)\bigg(\tnnm{u}+\tnnm{\tilde
S}\bigg)+\d\bigg(\lnnm{K[u]}{\vt,\ze}+\lnnm{\tilde
S}{\vt,\ze}\bigg),\no
\end{eqnarray}
where we can directly verify
\begin{eqnarray}
\tnnm{\nu^{-1/2}K[u]}&\leq&\tnnm{u},\\
\tnnm{\nu^{-1/2}\tilde S}&\leq&\tnnm{\tilde S}.
\end{eqnarray}
In \cite[Lemma 3.3.1]{Glassey1996}, it is shown that
\begin{eqnarray}
\lnnm{K[u]}{\vt,\ze}&\leq&\lnnm{u}{\vt-1,\ze},\\
\lnnm{K[u]}{0,\ze}&\leq&\ltnm{u}{\ze}.
\end{eqnarray}
Since $u=\a[p]+\t[K[u]+\tilde S]$, for $\e$ and $\d$ sufficiently
small, we can estimate
\begin{eqnarray}
\\
\lnnm{u}{\vt,\ze}&\leq&C\bigg(\lnnm{\t[K[u]]}{\vt,\ze}+\lnnm{\t[S]}{\vt,\ze}+\lnm{\a[p]}{\vt,\ze}\bigg)\no\\
&\leq&C\bigg(\lnnm{K[u]}{\vt,\ze}+\lnnm{S}{\vt,\ze}+\lnm{\a[p]}{\vt,\ze}\bigg)\no\\
&\leq&C\bigg(\lnnm{u}{\vt-1,\ze}+\lnnm{S}{\vt,\ze}+\lnm{\a p}{\vt,\ze}\bigg)\no\\
&\leq&\ldots\no\\
&\leq&C\bigg(\lnnm{K[u]}{0,\ze}+\lnnm{S}{\vt,\ze}+\lnm{\a[p]}{\vt,\ze}\bigg)\no\\
&\leq&C\bigg(\ltnm{u}{\ze}+\lnnm{S}{\vt,\ze}+\lnm{\a[p]}{\vt,\ze}\bigg)\no\\
&\leq&C(\d)\bigg(\tnnm{\nu^{-1/2}K[u]}+\tnnm{\nu^{-1/2}\tilde
S}\bigg)+\d\bigg(\lnnm{K[u]}{\vt,\ze}+\lnnm{\tilde
S}{\vt,\ze}\bigg)\no\\
&&+C\bigg(\lnnm{S}{\vt,\ze}+\lnm{\a[p]}{\vt,\ze}\bigg).\no
\end{eqnarray}
Therefore, absorbing $\d\lnnm{K[u]}{\vt,\ze}$ into the right-hand side of the second inequality implies
\begin{eqnarray}
\lnnm{K[u]}{\vt,\ze}\leq C\bigg(\tnnm{\nu^{-1/2}K[u]}+\lnnm{S}{\vt,\ze}+\lnm{p}{\vt,\ze}\bigg)
\end{eqnarray}
Therefore, we have
\begin{eqnarray}
\lnnm{u}{\vt,\ze}
&\leq&C\bigg(\lnnm{K[u]}{\vt,\ze}+\lnnm{S}{\vt,\ze}+\lnm{\a[p]}{\vt,\ze}\bigg)\no\\
&\leq&C\bigg(\tnnm{u}+\lnnm{S}{\vt,\ze}+\lnm{p}{\vt,\ze}\bigg).\no
\end{eqnarray}
Then our result naturally follows.
\end{proof}
\begin{lemma}\label{Milne lemma 3}
Assume (\ref{Milne bounded}) and (\ref{Milne decay}) hold. There
exists a unique solution $\g(\et,\vvv)$ to the $\e$-Milne problem
(\ref{Milne}) satisfying for $\vt>2$ and $0\leq\zeta\leq1/4$,
\begin{eqnarray}
\lnnm{\g-\g_{\infty}}{\vt,\ze}\leq C.
\end{eqnarray}
\end{lemma}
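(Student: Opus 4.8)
The plan is to obtain this weighted $L^{\infty}$ bound by bootstrapping the $L^2$ estimate already in hand through the interpolation-type inequality proved in the previous lemma; no new analysis is required, only a combination of two established facts.

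First I would invoke Lemma \ref{Milne lemma 1}. Under the standing hypotheses (\ref{Milne bounded}) and (\ref{Milne decay}) it produces the unique solution $\g$ of the $\e$-Milne problem (\ref{Milne}) together with the $L^2$ control $\tnnm{\g-\g_{\infty}}\leq C$. This settles existence and uniqueness outright, so that the only remaining task is the weighted $L^{\infty}$ estimate for the same $\g$.

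Next I would apply Lemma \ref{Milne lemma 2}, which asserts for $\vt>2$ and $0\leq\ze\leq1/4$ that $\lnnm{\g-\g_{\infty}}{\vt,\ze}\leq C+C\tnnm{\g-\g_{\infty}}$. Substituting the $L^2$ bound from the first step directly into the right-hand side yields $\lnnm{\g-\g_{\infty}}{\vt,\ze}\leq C+C\cdot C=C$, which is precisely the claim.

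The essential difficulties have all been discharged beforehand: the $L^2$ well-posedness theory of Lemma \ref{Milne lemma 1}, which itself rests on the slab-by-slab entropy and energy estimates of Lemmas \ref{Milne prelim lemma 1}--\ref{Milne prelim lemma 3}, and the $L^2$-to-$L^{\infty}$ upgrade of Lemma \ref{Milne lemma 2}, built on the mild formulation along characteristics together with the kernel bound of Lemma \ref{Milne prelim lemma 6}. Consequently there is no genuine obstacle left in the present statement; the only point meriting care is that the admissible parameter ranges match, namely that the range $\vt>2$, $0\leq\ze\leq1/4$ required by Lemma \ref{Milne lemma 2} is compatible with the exponents controlled in the $L^2$ input. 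Since these ranges coincide, the proof reduces to the one-line combination above.
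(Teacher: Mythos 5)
Your proposal is correct and matches the paper's own proof exactly: the paper also obtains this lemma by combining the $L^2$ bound and uniqueness from Lemma \ref{Milne lemma 1} with the $L^2$-to-$L^{\infty}$ estimate of Lemma \ref{Milne lemma 2}. Nothing further is needed.
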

\begin{proof}
Based on Lemma \ref{Milne lemma 1} and Lemma \ref{Milne lemma 2}, this is obvious.
\end{proof}
\begin{theorem}\label{Milne theorem 2}
Assume (\ref{Milne bounded}) and (\ref{Milne decay}) hold. There
exists a unique solution $\gg(\et,\vvv)$ to the $\e$-Milne
problem (\ref{Milne transform}) satisfying for $\vt>2$ and
$0\leq\zeta\leq1/4$,
\begin{eqnarray}
\lnnm{\gg}{\vt,\ze}\leq C.
\end{eqnarray}
\end{theorem}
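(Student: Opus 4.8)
The plan is to deduce the weighted $L^{\infty}$ bound for $\gg$ from the two pieces of machinery already in place: the $L^2$ control of $\gg$ furnished by Theorem \ref{Milne theorem 1}, and the weighted $L^{\infty}$ decay estimate for the genuine $\e$-Milne problem provided by Lemma \ref{Milne lemma 3}. The cleanest route exploits the superposition identity from the proof of Theorem \ref{Milne theorem 1}, namely $\gg = \g - \tilde\g$, where $\g$ solves the original $\e$-Milne problem (\ref{Milne}) with $\lim_{\et\rt\infty}\g = \g_\infty$, and $\tilde\g$ solves the source-free problem (\ref{mt 82}) with $\nk$-valued boundary data $\tilde h$ and the same far field $\tilde\g_\infty = \g_\infty$. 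Consequently $\lim_{\et\rt\infty}\gg = 0$, so the quantity we must bound is exactly $\lnnm{\gg}{\vt,\ze} = \lnnm{\gg - 0}{\vt,\ze}$.

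First I would write $\gg = (\g - \g_\infty) - (\tilde\g - \tilde\g_\infty)$, which is legitimate because $\g_\infty = \tilde\g_\infty$ by construction. The first difference is controlled directly: under the standing assumptions (\ref{Milne bounded}) and (\ref{Milne decay}), Lemma \ref{Milne lemma 3} gives $\lnnm{\g - \g_\infty}{\vt,\ze}\leq C$ for $\vt > 2$, $0\leq\ze\leq 1/4$. For the second difference I would apply the same Lemma \ref{Milne lemma 3} to $\tilde\g$: its source term vanishes, so the decay hypothesis (\ref{Milne decay}) holds trivially with $M = 0$, and its boundary datum $\tilde h = \sqrt\m(\tilde D_0 + \tilde D_1\ve + \tilde D_2\vp + \tilde D_3\tfrac{\abs{\vvv}^2-2}{2})$ lies in $\nk$. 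The triangle inequality then yields $\lnnm{\gg}{\vt,\ze}\leq\lnnm{\g - \g_\infty}{\vt,\ze} + \lnnm{\tilde\g - \tilde\g_\infty}{\vt,\ze}\leq C$, and uniqueness is inherited from Theorem \ref{Milne theorem 1}.

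I expect the main obstacle to be verifying that $\tilde h$ genuinely satisfies (\ref{Milne bounded}), i.e. that it is uniformly (in $\e$ and $\ph$) bounded in the weighted norm $\lnm{\tilde h}{\vt,\ze}$. This reduces to bounding the four far-field coefficients $\tilde D_i^{\e}$. These are determined from $\g_\infty = \sum_i \qi_{i,\infty}\psi_i$ by inverting the endomorphism $T$ constructed in Theorem \ref{Milne theorem 1}, which is invertible for $\e$ small; since Lemma \ref{Milne prelim lemma 3} gives $\abs{\qi_{i,\infty}}\leq C$, the coefficients $\tilde D_i^{\e}$ are uniformly bounded, and because $\ue^{\ze\abs{\vvv}^2}\sqrt\m\leq (2\pi)^{-1/2}$ for $\ze\leq 1/4$, the polynomial profile multiplied by this Gaussian is controlled, giving $\lnm{\tilde h}{\vt,\ze}\leq C$.

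Alternatively, and this is the backup if one prefers not to route through the auxiliary profile $\tilde\g$, one can mirror the proof of Lemma \ref{Milne lemma 2} directly for $\gg$: since $\gg_\infty = 0$, the mild formulation reads $\gg = \a[h - \tilde h] + \t[K[\gg] + S]$ with no extra geometric-correction source term, and Lemmas \ref{Milne prelim lemma 4}, \ref{Milne prelim lemma 5} and \ref{Milne prelim lemma 6}, together with the $K$-estimates of \cite{Glassey1996}, give $\lnnm{\gg}{\vt,\ze}\leq C\big(\tnnm{\gg} + \lnnm{S}{\vt,\ze} + \lnm{h - \tilde h}{\vt,\ze}\big)$; feeding in $\tnnm{\gg}\leq C$ from Theorem \ref{Milne theorem 1} and the boundedness of $h$ and $\tilde h$ then closes the estimate. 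Either way, the crux is the uniform weighted-$L^{\infty}$ boundedness of the constructed datum $\tilde h$.
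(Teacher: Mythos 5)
Your proposal is correct and takes essentially the same route as the paper, whose entire proof reads ``Based on Theorem \ref{Milne theorem 1} and Lemma \ref{Milne lemma 3}, this is obvious''; you have simply supplied the details the paper leaves implicit (the splitting $\gg=(\g-\g_{\infty})-(\tilde\g-\tilde\g_{\infty})$, the check that $\tilde h$ satisfies (\ref{Milne bounded}) because the $\tilde D_i^{\e}$ are obtained by inverting the near-identity map $T$ applied to the bounded coefficients $\qi_{i,\infty}$, and the triangle inequality). Your backup route --- running the argument of Lemma \ref{Milne lemma 2} directly on $\gg$ with $\gg_{\infty}=0$ and feeding in $\tnnm{\gg}\leq C$ from Theorem \ref{Milne theorem 1} --- is in fact the reading closest to the paper's intent, and both versions are sound.
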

\begin{proof}
Based on Theorem \ref{Milne theorem 1} and Lemma \ref{Milne lemma
3}, this is obvious.
\end{proof}

\subsection{Exponential Decay}

\begin{theorem}\label{Milne theorem 3}
Assume (\ref{Milne bounded}) and (\ref{Milne decay}) hold. For sufficiently small $K_0$, there
exists a unique solution $\gg(\et,\vvv)$ to the $\e$-Milne
problem (\ref{Milne transform}) satisfying for $\vt>2$ and
$0\leq\zeta\leq1/4$,
\begin{eqnarray}
\lnnm{\ue^{K_0\et}\gg}{\vt,\ze}\leq C.
\end{eqnarray}
\end{theorem}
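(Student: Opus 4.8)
Existence and uniqueness of $\gg$ are already furnished by Theorem \ref{Milne theorem 2}, so the plan is only to upgrade the bound $\lnnm{\gg}{\vt,\ze}\leq C$ to the exponentially weighted one. The idea is to rerun the bootstrap of Lemma \ref{Milne lemma 2} with the weight $\ue^{K_0\et}$ inserted, the one genuinely new ingredient being an exponentially weighted $L^2$ bound $\tnnm{\ue^{K_0\et}\gg}\leq C$ that feeds the velocity-averaging step. Throughout I would fix $K_0$ smaller than $\nu_0/2$, than the decay rate $K$ of the source in (\ref{Milne decay}), and than the threshold needed for the spectral-gap absorption below, so that all three mechanisms operate at once; by Lemma \ref{Milne force} every constant produced by $W$, $G$ and $PN^{-1}$ is bounded uniformly in $\e$, which is what keeps the final $C$ independent of $\e$.

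First I would establish the weighted $L^2$ decay. For the non-kernel part $\wi$ this is essentially Step~3 of Lemma \ref{Milne prelim lemma 2}: after the explicit exponentially decaying mass-flux reduction of Lemma \ref{Milne prelim lemma 3}, multiplying (\ref{Milne transform}) by $\ue^{2K_0\et}\gg$, using $\br{\ve\gg,\gg}=\br{\ve\wi,\wi}$, the spectral gap of Lemma \ref{Milne property}, the boundedness of $W$, and absorbing the source via $K>K_0$, yields $\int_0^\infty\ue^{2K_0\et}\br{\nu\wi,\wi}(\et)\ud{\et}\leq C$. For the kernel part I would use the representation formulas of Step~4 of Lemma \ref{Milne prelim lemma 2}: since here $\gg_\infty=0$, one has $\qi_\infty=0$, the boundary constant $\br{\ve\ll^{-1}[\hat\psi\ve],\gg}(0)$ cancels, and the formula collapses to
\[
\hat\qi(\et)=-N^{-1}\br{\ve\ll^{-1}[\hat\psi\ve],\wi}(\et)-N^{-1}\exp\bigl(-W(\et)PN^{-1}\bigr)\int_{\et}^{\infty}\exp\bigl(W(y)PN^{-1}\bigr)G(y)\bigl(\cdots\wi\cdots\bigr)\ud{y},
\]
and likewise for $\qi_0$ through (\ref{mt 13}). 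The first term is pointwise $O(\tnm{\wi(\et)})$ and so inherits the decay just proved; for the second, Cauchy--Schwarz produces a factor $\bigl(\int_{\et}^\infty\tnm{\wi}^2\bigr)^{1/2}\leq C\ue^{-K_0\et}$, whence $\int_0^\infty\ue^{2K_0\et}\bigl(\int_{\et}^\infty\abs{G}\tnm{\wi}\bigr)^2\ud{\et}\leq C\int_0^\infty\int_{\et}^\infty G^2(y)\ud{y}\ud{\et}\leq C$ by (\ref{force 5}). Summing the components gives $\tnnm{\ue^{K_0\et}\gg}\leq C$ uniformly in $\e$.

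Next I would run the $L^\infty$ bootstrap on the mild form $\gg=\a[h-\tilde h]+\t[K[\gg]+S]$, legitimate since $\gg_\infty=0$. Multiplying by $\ue^{K_0\et}$, using that $K$ acts only in velocity so $\ue^{K_0\et}K[\gg]=K[\ue^{K_0\et}\gg]$, and invoking Lemma \ref{Milne prelim lemma 4} together with the already weighted Lemma \ref{Milne prelim lemma 5} (whose characteristic gain $\exp(-\nu_0(\et-\et'))$ absorbs the factor $\ue^{K_0(\et-\et')}$ because $K_0\leq\nu_0/2$), I obtain
\[
\lnnm{\ue^{K_0\et}\gg}{\vt,\ze}\leq C\lnm{h-\tilde h}{\vt,\ze}+C\lnnm{\tfrac{1}{\nu}K[\ue^{K_0\et}\gg]}{\vt,\ze}+C\lnnm{\tfrac{\ue^{K_0\et}}{\nu}S}{\vt,\ze},
\]
the last term being $O(1)$ since $K_0<K$. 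The collision term is handled exactly as in Lemma \ref{Milne lemma 2}: the kernel estimates $\lnnm{K[u]}{\vt,\ze}\leq\lnnm{u}{\vt-1,\ze}$ and $\lnnm{K[u]}{0,\ze}\leq\ltnm{u}{\ze}$ reduce it, through the exponentially weighted analogue of Lemma \ref{Milne prelim lemma 6} (same proof, as $\ue^{K_0(\et-\et')}\exp(-G_{\et,\et'})$ remains integrable), to $\tnnm{\nu^{-1/2}K[\ue^{K_0\et}\gg]}\leq\tnnm{\ue^{K_0\et}\gg}$. Collecting terms yields $\lnnm{\ue^{K_0\et}\gg}{\vt,\ze}\leq C\bigl(\tnnm{\ue^{K_0\et}\gg}+1\bigr)$, and the weighted $L^2$ bound closes the estimate.

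The main obstacle is precisely the uniform-in-$\e$ decay of the kernel coefficients. Because $G$ does not decay in $\et$ and is supported out to $\et\sim\e^{-1/2}$, one cannot localize to the region where $G\equiv0$; the decay of $\qi$ must be manufactured from that of $\wi$ via the representation formulas, and the constant stays bounded as $\e\to0$ only through the cancellation of the boundary constant forced by $\qi_\infty=0$ together with the uniform bound $\int_0^\infty\int_{\et}^\infty G^2\leq C$ of Lemma \ref{Milne force}. A secondary point is checking that the several smallness requirements on $K_0$ are compatible, which holds by taking $K_0$ a sufficiently small fixed fraction of $\nu_0$ and of the source rate $K$.
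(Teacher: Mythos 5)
Your proposal is correct and follows essentially the same route as the paper: a weighted $L^2$ bound obtained from the energy estimate for the non-kernel part and the representation formulas for the kernel part (with general source and mass flux reduced via the exponentially decaying auxiliary functions of Lemma \ref{Milne prelim lemma 3}), followed by the $L^{\infty}$ bootstrap on the mild formulation exactly as in Lemma \ref{Milne lemma 2}. Your elimination of the boundary constant $\delta$ using $\hat{q}_{\infty}=0$, which turns the kernel representation into a pure tail integral controlled by (\ref{force 5}), is simply a more explicit rendering of what the paper compresses into ``similar to Step 4 of Lemma \ref{Milne prelim lemma 2}.''
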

\begin{proof}
Define $U=\ue^{K_0\et}\gg$. Then $U$ satisfies the equation
\begin{eqnarray}\label{exponential}
\left\{
\begin{array}{rcl}\displaystyle
\ve\frac{\p U}{\p\et}+G(\et)\bigg(\vp^2\dfrac{\p
U}{\p\ve}-\ve\vp\dfrac{\p U}{\p\vp}\bigg)+\ll[U]
&=&\ue^{K_0\et}S(\et,\vvv)+K_0\ve U,\\\rule{0ex}{1.0em}
U(0,\vvv)&=&\ue^{K_0\et}(h-\tilde h)(\vvv)\ \ \text{for}\ \
\ve>0,\\\rule{0ex}{1.0em} \displaystyle\int_{\r^2}\ve\sqrt{\m}
U(0,\vvv)\ud{\vvv}
&=&m_f[\g]-\displaystyle\int_{\r^2}\sqrt{\m}\tilde h(\vvv)\ud{\vvv},\\\rule{0ex}{1.0em}
\lim_{\et\rt\infty}U(\et,\vvv)&=&0,
\end{array}
\right.
\end{eqnarray}
We divide the proof into several steps:\\
\ \\
Step 1: $L^2$ Estimates for $S=0$ and $m_f[U]=0$.\\
In the proof of Lemma \ref{Milne prelim lemma 2},
we already show
\begin{eqnarray}
\int_0^{\infty}\ue^{2K_0\et}\br{\wi_{\gg},\wi_{\gg}}(\et)\ud{\et}\leq C.
\end{eqnarray}
We can decompose $\gg=\wi_{\gg}+\qi_{\gg}$. Since $\lim_{\et\rt\infty}\gg(\et,\vvv)=0$, we naturally have $\qi_{\gg}=0$.
Then using the orthogonal relation and zero mass-flux, we have
\begin{eqnarray}
&&\int_0^{\infty}\ue^{2K_0\et}\int_{\r^2}\gg^2(\et,\vvv)\ud{\vvv}\ud{\et}=
\int_0^{\infty}\ue^{2K_0\et}\br{\qi_{\gg},\qi_{\gg}}(\et)\ud{\et}+\int_0^{\infty}\ue^{2K_0\et}\br{\wi_{\gg},\wi_{\gg}}(\et)\ud{\et}.
\end{eqnarray}
Similar to Step 4 in the proof of Lemma \ref{Milne prelim lemma 2}, using the exponential decay of $\wi_{\gg}$, we have
\begin{eqnarray}
\int_0^{\infty}\ue^{2K_0\et}\br{\qi_{\gg},\qi_{\gg}}(\et)\ud{\et}\leq C\int_0^{\infty}\ue^{2K_0\et}\br{\wi_{\gg},\wi_{\gg}}(\et)\ud{\et}
\end{eqnarray}
This shows
\begin{eqnarray}
\tnnm{U}<C.
\end{eqnarray}
\ \\
Step 2: $L^2$ Estimates for general source term and mass flux.\\
We follow the idea in the proof of Lemma \ref{Milne prelim lemma 3}. Note
that all the auxiliary functions we construct decays exponentially.
Hence, the result naturally follows.\\
\ \\
Step 3: $L^{\infty}$ Estimates.\\
By a similar argument of Lemma \ref{Milne lemma 2},
Since $u=\a[p]+\t[K[u]+\tilde S]$, similar to the proof of Lemma \ref{Milne lemma 2}, we have
\begin{eqnarray}
\lnnm{\ue^{K_0\et}\gg}{\vt,\ze}\leq C\bigg(\tnnm{\ue^{K_0\et}\gg}+\lnnm{\ue^{K_0\et}S}{\vt,\ze}+\lnm{p}{\vt,\ze}\bigg)
\end{eqnarray}
Then we naturally obtain the result.
\end{proof}
Our results can also be applied to the Milne problem without geometric correction.
\begin{remark}\label{Milne remark}
Taking $G=0$, we consider the Milne problem
\begin{eqnarray}\label{classical Milne}
\left\{
\begin{array}{rcl}\displaystyle
\ve\frac{\p \g}{\p\et}+\ll[\g]
&=&S(\et,\ph,\vvv),\\
\g(0,\ph,\vvv)&=&h(\ph,\vvv)\ \ \text{for}\ \
\ve>0,\\\rule{0ex}{1.0em} \displaystyle\int_{\r^2}\ve\sqrt{\m}
\g(0,\ph,\vvv)\ud{\vvv}
&=&m_f[\g],\\\rule{0ex}{1.0em}
\displaystyle\lim_{\et\rt\infty}\g(\et,\ph,\vvv)&=&\g_{\infty}(\ph,\vvv).
\end{array}
\right.
\end{eqnarray}
Then there exists
\begin{eqnarray}
\tilde h(\ph,\vvv)=\tilde D_0(\ph)\psi_0+\tilde D_1(\ph)\psi_1+\tilde
D_2(\ph)\psi_2+\tilde D_3(\ph)\psi_3,
\end{eqnarray}
such that the Milne problem for
$\gg(\et,\ph,\vvv)$ in the domain
$(\et,\ph,\vvv)\in[0,\infty)\times[-\pi,\pi)\times\r^2$
\begin{eqnarray}\label{classical Milne transform}
\left\{
\begin{array}{rcl}\displaystyle
\ve\frac{\p \gg}{\p\et}+\ll[\gg]
&=&S(\et,\ph,\vvv),\\
\gg(0,\ph,\vvv)&=&h(\ph,\vvv)-\tilde h(\ph,\vvv)\ \
\text{for}\ \ \ve>0,\\\rule{0ex}{1.0em}
\displaystyle\int_{\r^2}\ve\sqrt{\m}
\gg(0,\ph,\vvv)\ud{\vvv}
&=&m_f[\g]-\displaystyle\int_{\r^2}\ve\sqrt{\m}\tilde h(\ph,\vvv)\ud{\vvv},\\\rule{0ex}{1.0em}
\displaystyle\lim_{\et\rt\infty}\gg(\et,\ph,\vvv)&=&0.
\end{array}
\right.
\end{eqnarray}
is well-posed in $L^{\infty}$ and decays exponentially.
\end{remark}

\section{Diffusive Limit and Well-Posedness}

We prove the diffusive limit and well-posedness of the Boltzmann equation (\ref{small system}).
\begin{theorem}
For given $\bb^{\e}>0$ satisfying (\ref{smallness assumption}) and $0<\e<<1$, there exists a unique positive
solution $F^{\e}=\m+\sqrt{\m}f^{\e}$ to the stationary Boltzmann equation (\ref{large system}), where
\begin{eqnarray}
f^{\e}=\e^3 R_N+\bigg(\sum_{k=1}^{N}\e^{k}\f_k\bigg)+\bigg(\sum_{k=1}^{N}\e^{k}\fb_k\bigg),
\end{eqnarray}
for $N\geq3$ and $R_N$ satisfies
\begin{eqnarray}\label{remainder}
\left\{
\begin{array}{rcl}
\e\vv\cdot\nx
R_N+\ll[R_N]&=&\e^3\Gamma[R_N,R_N]+2\Gamma[R_N,\q_N+\qb_N]+S_N\ \
\text{in}\ \ \Omega,\\\rule{0ex}{0em}
R_N(\vx_0,\vv)&=&h_N\ \ \text{for}\ \ \vv\cdot\vn<0\ \ \text{and}\ \
\vx_0\in\p\Omega,
\end{array}
\right.
\end{eqnarray}
with
\begin{eqnarray}
S_N&=&-\sum_{1\leq i,j\leq
N}^{i+j\geq N+1}\e^{i+j-3}\Gamma[\f_i,\f_j]-\Upsilon_0\sum_{1\leq i,j\leq N}^{i+j\geq N+1}\e^{i+j-3}\bigg(\Gamma[\fb_i,\fb_j]+2\Gamma[\f_i,\fb_j]\bigg)\no\\
&&-\e^{N-2}\vv\cdot\nx \f_N-\sum_{k=1}^N\e^{k-3}\ve
\frac{\p\Upsilon_0}{\p\et}\gg^{\e}_k+\e^{N-2}\frac{\Upsilon_0}{1-\e\et}\vp\frac{\p
\gg^{\e}_N}{\p\ph},\no
\end{eqnarray}
and
\begin{eqnarray}
h_N&=&\sum_{k=N+1}^{\infty}\e^{k-3}\b_k\no,
\end{eqnarray}
$\f_k$ and $\fb_k$ satisfy (\ref{at 14}) and (\ref{at 11}). Also, there exists a $C>0$ such that $f^{\e}$ satisfies
\begin{eqnarray}
\im{\bv f^{\e}}\leq C\e,
\end{eqnarray}
for any $\vartheta>2$ and $0\leq\zeta\leq1/4$
\end{theorem}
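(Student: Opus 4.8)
The plan is to treat the expansion $f^{\e}=\e^3 R_N+(\q_N+\qb_N)$, where $\q_N+\qb_N=\sum_{k=1}^N\e^k(\f_k+\fb_k)$ is the approximate solution assembled in Sections 3 and 4, as an identity that \emph{defines} $R_N$, and to produce $R_N$ by solving the remainder equation (\ref{remainder}) through a fixed-point argument built on the linear theory of Section 2. First I would substitute the expansion into (\ref{small system}) and subtract the interior and boundary-layer relations (\ref{interior expansion 4}) and (\ref{boundary expansion 3}) order by order. The contributions that fail to cancel are exactly the collision products $\Gamma[\f_i,\f_j]$, $\Gamma[\fb_i,\fb_j]$, $\Gamma[\f_i,\fb_j]$ with $i+j\ge N+1$, the top-order transport $\e^{N-2}\vv\cdot\nx\f_N$, the commutator generated by the cutoff $\Upsilon_0$, and the $\ph$-transport of the last boundary layer; these assemble into $S_N$, while the unmatched boundary data collapse to $h_N=\sum_{k\ge N+1}\e^{k-3}\b_k$. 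This reproduces (\ref{remainder}) after dividing by $\e^3$.

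Next I would bound $S_N$ and $h_N$ in the norms demanded by Theorems \ref{wellposedness LT estimate} and \ref{wellposedness LI estimate}. Every genuine term carries a prefactor $\e^{i+j-3}$ or $\e^{N-2}$ with exponent $\ge N-2\ge 1$, so the $L^{\infty}$ bounds for $\f_k$ from Section 3 and the exponential-decay bound $\lnnm{\ue^{K_0\et}\gg_k^{\e}}{\vt,\ze}\le C$ from Theorem \ref{Milne theorem 3} give $\im{\bv S_N}\ls\e^{N-2}$ and $\tss{h_N}{-}\ls\e^{N-2}$. The only delicate contributions are the cutoff-commutator terms $\e^{k-3}\ve(\p_{\et}\Upsilon_0)\gg_k^{\e}$: since $\p_{\et}\Upsilon_0$ is supported in $\e^{1/2}\et\in[1/4,1/2]$, i.e.\ $\et\sim\e^{-1/2}$, the exponential decay forces them to be $O(\ue^{-K_0\e^{-1/2}})$, smaller than any power of $\e$ and hence harmless even against the $\e^{k-3}$ prefactor. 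Feeding these into Theorem \ref{wellposedness LI estimate} (whose $\e^{-3}$ loss is the dominant one) yields the target scale $\im{\bv R_N}\ls\e^{N-5}$ and $\tm{R_N}\ls\e^{N-4}$, so that $\e^3 R_N=O(\e^{N-2})=O(\e)$, consistent with the claimed $\im{\bv f^{\e}}\le C\e$.

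The genuine difficulty is the linear-in-$R_N$ term $2\Gamma[R_N,\q_N+\qb_N]$. Treating it as a pure source and inverting with Theorem \ref{wellposedness LI estimate} fails: because $\q_N+\qb_N=O(\e)$, because the $L^2$ inversion loses $\e^{-2}$ on the non-kernel part, and because $\Gamma$ lands in $\nk^{\perp}$, the resulting self-map has operator norm $\sim\e^{-2}\cdot\e=\e^{-1}$ and is not a contraction. The remedy is to absorb this term into the operator: set $\ll_{\q}[\cdot]=\ll[\cdot]-2\Gamma[\q_N+\qb_N,\cdot]$ and re-run the $L^2$ machinery of Lemmas \ref{wellposedness prelim lemma 4}--\ref{wellposedness prelim lemma 5} and Theorem \ref{wellposedness LT estimate} for $\e\vv\cdot\nx+\ll_{\q}$. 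Since the perturbation maps into $\nk^{\perp}$ and is $O(\e)$ (indeed $O(C_0\e)$ by (\ref{smallness assumption})), the coercivity $\br{\ll_{\q}[R],R}\ge\tfrac{\delta}{2}\um{(\ik-\pk)R}^2-C\e\,\um{(\ik-\pk)R}\,\um{\pk R}$ survives, and the test-function estimates for $a,\vb,c$ acquire only an extra $\nk^{\perp}$ source of size $O(\e)\tm{\Gamma[\q_N+\qb_N,R]}$, absorbable for $\e$ small. I expect this re-derivation to be the main obstacle, since it is where the $\e^{-3}$ linear loss must be reconciled with the smallness of the perturbation.

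With the cross term moved to the left, only the truly quadratic $\e^3\Gamma[R_N,R_N]$ remains on the right. Its $\e^3$ prefactor, together with the $C_0$-smallness of the data (which makes $R_N$ itself small), renders the Banach map $R\mapsto\bar R$ solving $\e\vv\cdot\nx\bar R+\ll_{\q}[\bar R]=\e^3\Gamma[R,R]+S_N$ with $\bar R|_{\gamma_-}=h_N$ a contraction on the ball $\{\,\im{\bv R}\le C_1\e^{N-5}\,\}$ for $\e$ small, via the bilinear bound $\im{\bv\Gamma[f,g]}\ls\im{\bv f}\,\im{\bv g}$ coupled to the $L^2$ estimate; the precise calibration of powers of $\e$ against the $\e^{-3}$ loss is exactly what fixes the admissible range $N\ge 3$. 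The fixed point is the desired $R_N$, and uniqueness is built into the contraction. Finally, $\im{\bv f^{\e}}\le C\e$ gives $|f^{\e}|\le C\e\br{\vv}^{-\vt}\ue^{-\ze\abs{\vv}^2}$, whence $|\sqrt{\m}f^{\e}|/\m=|f^{\e}|/\sqrt{\m}\le C\e\br{\vv}^{-\vt}\le C\e<1$ for $\e$ small, so $F^{\e}=\m+\sqrt{\m}f^{\e}>0$, establishing positivity and completing the proof.
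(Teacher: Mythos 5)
Your proposal is correct in substance and follows the paper's architecture in Steps 1 and 2 (defining $R_N$ by subtracting the matched orders, assembling $S_N$ from the $i+j\geq N+1$ collision products, the top-order transport, the cutoff commutator, and the $\ph$-derivative of the last layer, and estimating $\im{\bv S_N}+\iss{\bv h_N}{-}\ls\e^{N-2}$ via the exponential decay of Theorem \ref{Milne theorem 3}), but it diverges genuinely on the treatment of the linear cross term $2\Gamma[R_N,\q_N+\qb_N]$. The paper does \emph{not} modify the operator: it keeps this term as a source, applies Theorem \ref{wellposedness LT estimate} with the weight $\e^{-1}$, and uses the bilinear bound $\tm{\Gamma[R_N,\q_N+\qb_N]}\leq C\tm{R_N}\,\im{\bv(\q_N+\qb_N)}$ together with the smallness $\im{\bv(\q_N+\qb_N)}\leq\delta\e$ (a consequence of (\ref{smallness assumption})) to obtain a contribution $\delta\tm{R_N}$ that is absorbed into the left-hand side of (\ref{remainder temp 2}); the quadratic term and the $L^{\infty}$ bootstrap via Theorem \ref{wellposedness LI estimate} then proceed exactly as you describe, yielding $\im{\bv R_N}\ls\e^{N-5}$ and hence $\e^3R_N=O(\e)$ for $N\geq3$. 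Your alternative --- folding the cross term into a perturbed operator $\ll-2\Gamma[\q_N+\qb_N,\cdot]$ and re-running Lemmas \ref{wellposedness prelim lemma 4}--\ref{wellposedness prelim lemma 5} --- is a legitimate and arguably more robust route: since $\pk[\Gamma(f,g)]=0$, a literal application of Theorem \ref{wellposedness LT estimate} would charge the non-kernel weight $\e^{-2}$ to this source, producing $\delta\e^{-1}\tm{R_N}$, which is exactly the non-contractive loss you flag; the paper's $\e^{-1}$ bookkeeping sidesteps this only because the term is linear in $R_N$ and is absorbed inside the a priori estimate rather than inverted as a black-box source. The price of your route is that you must verify the perturbed coercivity carefully: $\br{2\Gamma[\q_N+\qb_N,R],R}$ couples $\um{(\ik-\pk)R}$ to the macroscopic part $\tm{\pk R}$, so the resulting error $C\delta^2\e^2\tm{\pk R}^2$ must be absorbed by the $\e^2\tm{\pk[R]}^2$ term produced by the test-function estimate (\ref{wt 08}), which works precisely because of the factor $\delta$ from (\ref{smallness assumption}); with that check supplied, both arguments close, and your version has the advantage of making the role of the $C_0$-smallness of the data explicit at the operator level. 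One small caveat on your positivity step: the bound $\abs{f^{\e}}/\sqrt{\m}\leq C\e\br{\vv}^{-\vt}$ requires taking $\ze=1/4$ in the weight, which is admissible since the estimate holds for all $0\leq\ze\leq1/4$.
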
.
\begin{proof}
We divide the proof into several steps:\\
\ \\
Step 1: Remainder definitions.\\
We combine the interior solution and boundary layer as follows:
\begin{eqnarray}
f^{\e}&\sim&\sum_{k=1}^{\infty}\e^{k}\f_k+\sum_{k=1}^{\infty}\e^{k}\fb_k.
\end{eqnarray}
Define the remainder as
\begin{eqnarray}\label{pf 1}
R_N&=&\frac{1}{\e^3}\bigg(f^{\e}-\sum_{k=1}^{N}\e^{k}\f_k-\sum_{k=1}^{N}\e^{k}\fb_k\bigg)=\frac{1}{\e^3}\bigg(f^{\e}-\q_N-\qb_N\bigg),
\end{eqnarray}
where
\begin{eqnarray}
\q_N&=&\sum_{k=1}^{N}\e^{k}\f_k,\\
\qb_N&=&\sum_{k=1}^{N}\e^{k}\fb_k.
\end{eqnarray}
Noting the equation (\ref{boundary layer system}) is equivalent to
the equation (\ref{small system}), we write $\lll$ to denote the
linearized Boltzmann operator as follows:
\begin{eqnarray}
\lll[f]&=&\e\vv\cdot\nx u+\ll[f]\\
&=&\ve\dfrac{\p f}{\p\et}-\dfrac{\e}{1-\e\et}\bigg(\vp\dfrac{\p
f}{\p\ph}+\vp^2\dfrac{\p f}{\p\ve}-\ve\vp\dfrac{\p
f}{\p\vp}\bigg)+\ll[f].\nonumber
\end{eqnarray}
\ \\
Step 2: Estimates of $\lll[R_N]$.\\
The interior contribution can be estimated as
\begin{eqnarray}
\lll[\q_N]&=&\e\vv\cdot\nx \q_N+\ll[\q_N]=\sum_{k=1}^N\e^{k}\bigg(\e\vv\cdot\nx \f_k+\ll[\f_k]\bigg)\\
&=&\ll[\f_1]+\sum_{k=2}^N\e^{k}\bigg(\vv\cdot\nx \f_{k-1}+\ll[\f_k]\bigg)+\e^{N+1}\vv\cdot\nx
\f_N\no\\
&=&\e^{N+1}\vv\cdot\nx
\f_N+\sum_{1\leq i,j\leq
N}^{i+j\leq N}\e^{i+j}\Gamma[\f_i,\f_j].\no
\end{eqnarray}
The boundary layer is
$\fb_k=\gg^{\e}_k\cdot\Upsilon_0$ where
$\gg_k^{\e}$ solves the $\e$-Milne problem. Notice $\Upsilon_0\Upsilon=\Upsilon_0$, so the
boundary layer contribution can be estimated as
\begin{eqnarray}\label{remainder temp 1}
\lll[\qb_N]&=&\ve\dfrac{\p
\qb_N}{\p\et}-\dfrac{\e}{1-\e\et}\bigg(\vp\dfrac{\p
\qb_N}{\p\ph}+\vp^2\dfrac{\p \qb_N}{\p\ve}-\ve\vp\dfrac{\p
\qb_N}{\p\vp}\bigg)+\ll[\qb_N]\\
&=&\sum_{k=1}^N\e^{k}\Bigg(\ve\dfrac{\p
\fb_k}{\p\et}-\dfrac{\e}{1-\e\et}\bigg(\vp\dfrac{\p
\fb_k}{\p\ph}+\vp^2\dfrac{\p \fb_k}{\p\ve}-\ve\vp\dfrac{\p
\fb_k}{\p\vp}\bigg)+\ll[\fb_k]\Bigg)\no\\
&=&\sum_{k=1}^N\e^{k}\Bigg(\ve\dfrac{\p
\gg^{\e}_k}{\p\et}\Upsilon_0-\dfrac{\e\Upsilon_0}{1-\e\et}\bigg(\vp\dfrac{\p
\gg^{\e}_k}{\p\ph}+\vp^2\dfrac{\p \gg^{\e}_k}{\p\ve}-\ve\vp\dfrac{\p
\gg^{\e}_k}{\p\vp}\bigg)+\Upsilon_0\ll[\gg^{\e}_k]\Bigg)\no\\
&&+\sum_{k=1}^N\e^{k}\ve\dfrac{\p
\Upsilon_0}{\p\et}\gg^{\e}_k\no\\
&=&\sum_{k=1}^N\e^{k}\Bigg(\ve\dfrac{\p
\gg^{\e}_k}{\p\et}\Upsilon_0-\dfrac{\e\Upsilon_0\Upsilon}{1-\e\et}\bigg(\vp\dfrac{\p
\gg^{\e}_k}{\p\ph}+\vp^2\dfrac{\p \gg^{\e}_k}{\p\ve}-\ve\vp\dfrac{\p
\gg^{\e}_k}{\p\vp}\bigg)+\Upsilon_0\ll[\gg^{\e}_k]\Bigg)\no\\
&&+\sum_{k=1}^N\e^{k}\ve\dfrac{\p
\Upsilon_0}{\p\et}\gg^{\e}_k\no\\
&=&\Upsilon_0\sum_{k=1}^N\e^{k}\Bigg(\ve\dfrac{\p
\gg^{\e}_k}{\p\et}-\dfrac{\e\Upsilon}{1-\e\et}\bigg(\vp^2\dfrac{\p \gg^{\e}_k}{\p\ve}-\ve\vp\dfrac{\p
\gg^{\e}_k}{\p\vp}\bigg)+\ll[\gg^{\e}_k]\Bigg)\no\\
&&+\sum_{k=1}^N\e^{k}\ve\dfrac{\p
\Upsilon_0}{\p\et}\gg^{\e}_k-\sum_{k=1}^N\e^{k+1}\dfrac{\Upsilon_0}{1-\e\et}\vp\dfrac{\p
\gg^{\e}_k}{\p\ph}\no\\
&=&\sum_{k=1}^N\e^{k}\ve\dfrac{\p
\Upsilon_0}{\p\et}\gg^{\e}_k-\e^{N+1}\frac{\Upsilon_0}{1-\e\et}\vp\frac{\p
\gg^{\e}_N}{\p\ph}+\Upsilon_0\sum_{1\leq i,j\leq N}^{i+j\leq
N}\e^{i+j}\bigg(\Gamma[\fb_i,\fb_j]+2\Gamma[\f_i,\fb_j]\bigg).\no
\end{eqnarray}
Note that for any $f,g\in L^2$,
\begin{eqnarray}\label{ft 01}
\pk[\Gamma(f,g)]=0.
\end{eqnarray}
Since
\begin{eqnarray}
\lll[f^{\e}]=\Gamma[f^{\e},f^{\e}],
\end{eqnarray}
then we can naturally obtain
\begin{eqnarray}
\lll[R_N]&=&\frac{1}{\e^3}\lll[f^{\e}-\q_N-\qb_N]=\frac{1}{\e^3}\lll[f^{\e}]-\frac{1}{\e^3}\lll[\q_N]-\frac{1}{\e^3}\lll[\qb_N]\\
&=&\frac{1}{\e^3}\Gamma[\q_N+\qb_N+\e^3R_N,\q_N+\qb_N+\e^3R_N]\no\\
&&-\sum_{1\leq i,j\leq
N}^{i+j\leq N}\e^{i+j-3}\Gamma[\f_i,\f_j]-\psi_0\sum_{1\leq i,j\leq N}^{i+j\leq
N}\e^{i+j-3}\bigg(\Gamma[\fb_i,\fb_j]+2\Gamma[\f_i,\fb_j]\bigg)\no\\
&&-\e^{N-2}\vv\cdot\nx \f_N-\sum_{k=1}^N\e^{k-3}\ve
\frac{\p\Upsilon_0}{\p\et}\gg^{\e}_k+\e^{N-2}\frac{\Upsilon_0}{1-\e\et}\vp\frac{\p
\gg^{\e}_N}{\p\ph}\no\\
&=&\e^3\Gamma[R_N,R_N]+2\Gamma[R_N,\q_N+\qb_N]\no\\
&&-\sum_{1\leq i,j\leq
N}^{i+j\geq N+1}\e^{i+j-3}\Gamma[\f_i,\f_j]-\psi_0\sum_{1\leq i,j\leq N}^{i+j\geq N+1}\e^{i+j-3}\bigg(\Gamma[\fb_i,\fb_j]+2\Gamma[\f_i,\fb_j]\bigg)\no\\
&&-\e^{N-2}\vv\cdot\nx \f_N-\sum_{k=1}^N\e^{k-3}\ve
\frac{\p\Upsilon_0}{\p\et}\gg^{\e}_k+\e^{N-2}\frac{\Upsilon_0}{1-\e\et}\vp\frac{\p
\gg^{\e}_N}{\p\ph}.\no
\end{eqnarray}
\ \\
Step 3: Estimates of $R_N$.\\
$R_N$ satisfies the equation
\begin{eqnarray}\label{remainder.}
\left\{
\begin{array}{rcl}
\e\vv\cdot\nx
R_N+\ll[R_N]&=&\e^3\Gamma[R_N,R_N]+2\Gamma[R_N,\q_N+\qb_N]+S_N\ \
\text{in}\ \ \Omega,\\\rule{0ex}{2em}
R_N(\vx_0,\vv)&=&h_N\ \ \text{for}\ \ \vv\cdot\vn<0\ \ \text{and}\ \
\vx_0\in\p\Omega,
\end{array}
\right.
\end{eqnarray}
where
\begin{eqnarray}
S_N&=&-\sum_{1\leq i,j\leq
N}^{i+j\geq N+1}\e^{i+j-3}\Gamma[\f_i,\f_j]-\psi_0\sum_{1\leq i,j\leq N}^{i+j\geq N+1}\e^{i+j-3}\bigg(\Gamma[\fb_i,\fb_j]+2\Gamma[\f_i,\fb_j]\bigg)\no\\
&&-\e^{N-2}\vv\cdot\nx \f_N-\sum_{k=1}^N\e^{k-3}\ve
\frac{\p\Upsilon_0}{\p\et}\gg^{\e}_k+\e^{N-2}\frac{\Upsilon_0}{1-\e\et}\vp\frac{\p
\gg^{\e}_N}{\p\ph},\no
\end{eqnarray}
and
\begin{eqnarray}
h_N&=&\sum_{k=N+1}^{\infty}\e^{k-3}\b_k\no.
\end{eqnarray}
By the classical estimate of two-dimensional Stokes-Fourier equations, exponential decay of $\fb_k$ and (\ref{ft 01}), we can directly verify
\begin{eqnarray}
\im{\bv S_N}&\leq& C\e^{N-2},\\
\iss{\bv h_N}{-}&\leq& C\e^{N-2}.
\end{eqnarray}
Based on Theorem \ref{wellposedness LT estimate}, we have
\begin{eqnarray}\label{remainder temp 2}
\\
\tm{R_N}+\tss{R_N}{+}
&\leq&C\bigg(\frac{1}{\e^2}\tm{S_N}+\frac{1}{\e}\tm{\e^3\Gamma[R_N,R_N]}+\frac{1}{\e}\tm{2\Gamma[R_N,\q_N+\qb_N]}+\frac{1}{\e^{1/2}}\tss{h_N}{-}\bigg).\no
\end{eqnarray}
Then since
\begin{eqnarray}
\is{\bv(\bb^{\e}-\m)}\leq C_0\e,
\end{eqnarray}
for $C_0$ is sufficiently small, we deduce $\b_1$ is sufficiently small. Hence, small boundary data naturally yields
\begin{eqnarray}
\im{\bv(\q_N+\qb_N)}\leq \delta\e,
\end{eqnarray}
which further implies
\begin{eqnarray}
\frac{1}{\e}\tm{2\Gamma[R_N,\q_N+\qb_N]}&\leq&\frac{C}{\e}\tm{R_N}\im{\bv
(\q_N+\qb_N)}\leq\d\tm{R_N},
\end{eqnarray}
for some small $\d>0$. Hence, absorbing them into the left-hand side of (\ref{remainder temp 2})
yields
\begin{eqnarray}
&&\tm{R_N}+\tss{R_N}{+}\leq C\bigg(\frac{1}{\e^2}\tm{S_N}+\e\tm{\Gamma[R_N,R_N]}+\frac{1}{\e^{1/2}}\tss{h_N}{-}\bigg).
\end{eqnarray}
Based on Theorem \ref{wellposedness LI estimate}, we have
\begin{eqnarray}
&&\im{\bv R_N}+\iss{\bv R_N}{+}\\
&\leq&C\bigg(\frac{1}{\e}\tm{R_N}+\im{\bv S_N}+\iss{\bv h_N}{-}\no\\
&&+\im{\bv \e^2\Gamma[R_N,R_N]}+\im{2\bv \Gamma[R_N,\q_N+\qb_N]}\bigg)\no\\
&\leq&C\bigg(\e\tm{\Gamma[R_N,R_N]}+\frac{1}{\e^3}\tm{S_N}+\frac{1}{\e^{3/2}}\tss{h_N}{-}+\im{\bv S_N}+\iss{\bv h_N}{-}\no\\
&&+\e^3\im{\bv \Gamma[R_N,R_N]}+\im{2\bv \Gamma[R_N,\q_N+\qb_N]}\bigg)\no\\
&\leq&C\bigg(\e\tm{\Gamma[R_N,R_N]}+\e^3\im{\bv \Gamma[R_N,R_N]}+\e\im{\bv
R_N}+\e^{N-5}\bigg).\no
\end{eqnarray}
Moreover, we can directly estimate
\begin{eqnarray}
\tm{\e\Gamma[R_N,R_N]}&\leq& C\e\im{\bv R_N}^2\\
\e^3\im{\bv \Gamma[R_N,R_N]}&\leq&C\e^3\im{\bv R_N}^2.
\end{eqnarray}
Then if $N\geq3$, we obtain
\begin{eqnarray}
\im{\bv R_N}+\iss{\bv R_N}{+}&\leq&C\e^{N-5}+C\e\im{\bv R_N}^2,
\end{eqnarray}
which further implies
\begin{eqnarray}
\im{\bv R_N}+\iss{\bv R_N}{+}&\leq&\frac{C}{\e^2}.
\end{eqnarray}
for $\e$ sufficiently small. This means we have shown
\begin{eqnarray}
\frac{1}{\e^3}\im{\bv\bigg(f^{\e}-\sum_{k=1}^{N}\e^k\f_k-\sum_{k=1}^{N}\e^k\fb_k\bigg)}=O\bigg(\frac{1}{\e^2}\bigg),
\end{eqnarray}
which naturally leads to the desired result.
\end{proof}
Hence, combining the estimate of steady Navier-Stokes-type equations and $\e$-Milne problem, we have
\begin{eqnarray}
f^{\e}=\e^3 R_N+\q_N+\qb_N,
\end{eqnarray}
exists and is well-posed. The uniqueness and positivity follows from a standard argument as in \cite{Esposito.Guo.Kim.Marra2013}.

\section{Counterexample for Classical Approach}

In this section, we present the classical approach with the idea in \cite{Sone2002, Sone2007} to construct asymptotic expansion, especially the boundary layer expansion, and provide counterexamples to show this method is problematic.

\subsection{Interior Expansion}

Basically, the expansion for interior solution is identical to our method, so we omit the details and only present the notation.
We define the interior expansion
\begin{eqnarray}\label{interior expansion.}
\fc\sim\sum_{k=1}^{\infty}\e^{k}\fc_k(\vx,\vv),
\end{eqnarray}
with
\begin{eqnarray}
\fc_k(\vx,\vv)=A_k(\vx,\vv)+B_k(\vx,\vv)+C_k(\vx,\vv),
\end{eqnarray}
where
\begin{eqnarray}
A_k(\vx,\vv)=\sqrt{\m}\bigg(A_{k,0}(\vx)+A_{k,1}(\vx)\v_1+A_{k,2}(\vx)\v_2+A_{k,3}(\vx)\left(\frac{\abs{\vv}^2-2}{2}\right)\bigg),
\end{eqnarray}
\begin{eqnarray}
B_k(\vx,\vv)=\sqrt{\m}\bigg(B_{k,0}(\vx)+B_{k,1}(\vx)\v_1+B_{k,2}(\vx)\v_2+B_{k,3}(\vx)\left(\frac{\abs{\vv}^2-2}{2}\right)\bigg),
\end{eqnarray}
with $B_{k}$ depending on $A_{s,i}$ in $1\leq s\leq k-1$ and $i=0,1,2,3$ as
\begin{eqnarray}
B_{k,0}&=&0,\\
B_{k,1}&=&\sum_{i=1}^{k-1}A_{i,0}A_{k-i,1},\\
B_{k,2}&=&\sum_{i=1}^{k-1}A_{i,0}A_{k-i,2},\\
B_{k,3}&=&\sum_{i=1}^{k-1}\bigg(A_{i,0}A_{k-i,3}+A_{i,1}A_{k-i,1}+A_{i,2}A_{k-i,2}\\
&&+\sum_{j=1}^{k-1-i}A_{i,0}(A_{j,1}A_{k-i-j,1}+A_{j,2}A_{k-i-j,2})\bigg),\no
\end{eqnarray}
and $C_k(\vx,\vv)$ satisfies
\begin{eqnarray}
\int_{\r^2}\sqrt{\m(\vv)}C_k(\vx,\vv)\left(\begin{array}{c}1\\\vv\\\abs{\vv}^2
\end{array}\right)\ud{\vv}=0,
\end{eqnarray}
with
\begin{eqnarray}
\ll[C_k]&=&-\vv\cdot\nx\f_{k-1}+\sum_{i=1}^{k-1}\Gamma[\f_i,\f_{k-i}],
\end{eqnarray}
which can be solved explicitly at any fixed $\vx$. We define
\begin{eqnarray}
A_k=\sqrt{\m}\bigg(\rh_k+\u_{k,1}\v_1+\u_{k,2}\v_2+\th_k\left(\abs{\vv}^2-1\right)\bigg),
\end{eqnarray}
Then $A_k$ satisfies the equations as follows:\\
\ \\
$0^{th}$ order equations:
\begin{eqnarray}
P_1-(\rh_1+\th_1)&=&0,\\
\nx P_1&=&0,
\end{eqnarray}
$1^{st}$ order equations:
\begin{eqnarray}
P_2-(\rh_2+\th_2+\rh_1\th_1)&=&0,\\
\uh\cdot\nx\uh_1-\gamma_1\dx\uh_1+\nx P_2&=&0,\\
\nx\cdot\uh_1&=&0,\\
\uh_1\cdot\nx\th_1-\gamma_2\dx\th_1&=&0,
\end{eqnarray}
$k^{th}$ order equations:
\begin{eqnarray}
P_{k+1}-\left(\rh_{k+1}+\th_{k+1}+\sum_{i=1}^{k+1-i}\rh_i\th_{k+1-i}\right)&=&0,\\
\sum_{i=1}^{k}\uh_i\cdot\nx\uh_{k+1-i}-\gamma_1\dx\uh_k+\nx P_{k+1}&=&G_{k,1},\\
\nx\cdot\uh_k&=&G_{k,2},\\
\sum_{i=1}^{k}\uh_i\cdot\nx\th_{k+1-i}-\gamma_2\dx\th_k&=&G_{k,3},
\end{eqnarray}
where
\begin{eqnarray}
\\
G_{k,j}=G_{k,j}[\vx,\vv; \rh_1,\ldots,\rh_{k-1};
\th_1,\ldots,\th_{k-1}; \uh_1,\ldots,\uh_{k-1}]\no,
\end{eqnarray}
is explicit functions depending on lower order terms, and $\gamma_1$ and $\gamma_2$ are two positive constants.

\subsection{Boundary Layer Expansion}

By the idea in \cite{Sone2002, Sone2007}, the boundary layer expansion can be defined by introducing substitutions (\ref{substitution 1}), (\ref{substitution 2}) and (\ref{substitution 3}). Note that we terminate here and do not further use substitution (\ref{substitution 4}). Hence,  we have transformed
the equation (\ref{small system}) into
\begin{eqnarray}\label{boundary layer system.}
\left\{
\begin{array}{rcl}
(\vr\cdot\vn)\dfrac{\p f^{\e}}{\p\et}-\dfrac{\e}{1-\e\et}(\vr\cdot\ta)\dfrac{\p
f^{\e}}{\p\ph}+\ll[f^{\e}]=\Gamma[f^{\e},f^{\e}],\\\rule{0ex}{1.5em}
f^{\e}(0,\ph,\vr)=\b^{\e}(0,\ph,\vr) \ \ \text{for}\ \
\vr\cdot\vn>0.
\end{array}
\right.
\end{eqnarray}
We define the boundary layer expansion
\begin{eqnarray}\label{boundary layer expansion.}
\fbc\sim\sum_{k=1}^{\infty}\e^{k}\fbc_k(\et,\ph,\vr),
\end{eqnarray}
where $\fbc_k$ can be determined by plugging it into the equation (\ref{boundary layer system.}) and comparing the order of $\e$. Thus in a neighborhood of the boundary, we have
\begin{eqnarray}
\label{boundary expansion 1.} (\vr\cdot\vn)\frac{\p
\fbc_1}{\p\et}+\ll[\fbc_1]
&=&0,\\
\no\\
\label{boundary expansion 2.}(\vr\cdot\vn)\frac{\p
\fbc_2}{\p\et}+\ll[\fbc_2]
&=&\frac{1}{1-\e\eta}(\vr\cdot\ta)\dfrac{\p \fbc_1}{\p\ph}+\Gamma[\fbc_1,\fbc_1]+2\Gamma[\fc_1,\fbc_1],
\\
\ldots\no\\
\no\\
\label{boundary expansion 3.}(\vr\cdot\vn)\frac{\p
\fbc_k}{\p\et}+\ll[\fbc_k]
&=&\frac{1}{1-\e\eta}(\vr\cdot\ta)\dfrac{\p \fbc_{k-1}}{\p\ph}+\sum_{i=1}^{k-1}\Gamma[\fbc_i,\fbc_{k-i}]+2\sum_{i=1}^{k-1}\Gamma[\fc_i,\fbc_{k-i}].
\end{eqnarray}
The bridge between the interior solution and boundary layer
is the boundary condition
\begin{eqnarray}
f^{\e}(\vx_0,\vv)&=&\b^{\e}(\vx_0,\vv).
\end{eqnarray}
Plugging the combined expansion
\begin{eqnarray}
f^{\e}\sim\sum_{k=1}^{\infty}\e^k(\fc_k+\fbc_k),
\end{eqnarray}
into the boundary condition and comparing the order of $\e$, we obtain
\begin{eqnarray}
\fc_1+\fbc_1&=&\b_1,\\
\fc_2+\fbc_2&=&\b_2,\\
\ldots\no\\
\fc_k+\fbc_k&=&\b_k.
\end{eqnarray}
This is the boundary conditions $\fc_k$ and $\fbc_k$ need to satisfy.

\subsection{Classical Approach to Construct Asymptotic Expansion}

We divide the construction of asymptotic expansion into several steps for each $k\geq1$:\\
\ \\
Step 1: $\e$-Milne Problem.\\
We solve the $\e$-Milne problem
\begin{eqnarray}
\left\{
\begin{array}{rcl}\displaystyle
(\vr\cdot\vn)\frac{\p \g_k}{\p\et}+\ll[\g_k]
&=&S_k(\et,\ph,\vr),\\
\g_k(0,\ph,\vr)&=&h_k(\ph,\vr)\ \ \text{for}\ \
\vr\cdot\vn>0,\\\rule{0ex}{1.0em} \displaystyle\int_{\r^2}
(\vr\cdot\vn)\sqrt{\m}\g_k(0,\ph,\vr)\ud{\vr}
&=&m_f[\g_k](\ph),\\\rule{0ex}{1.0em}
\displaystyle\lim_{\et\rt\infty}\g_k(\et,\ph,\vr)&=&\g_{k}(\infty,\ph,\vr),
\end{array}
\right.
\end{eqnarray}
for $\g_k(\et,\ph,\vr)$ with the in-flow boundary data
\begin{eqnarray}
h_k&=&\b_k-(B_k+C_k)
\end{eqnarray}
and source term
\begin{eqnarray}
S_k&=&\frac{\Upsilon(\sqrt{\e}\et)}{1-\e\et}(\vr\cdot\vec\tau)\dfrac{\p
\fbc_{k-1}}{\p\ph}
+\sum_{i=1}^{k-1}\Gamma[\fbc_i,\fbc_{k-i}]+2\sum_{i=1}^{k-1}\Gamma[\fc_i,\fbc_{k-i}],
\end{eqnarray}
where
\begin{eqnarray}\label{cutoff 1.}
\Upsilon(z)=\left\{
\begin{array}{ll}
1&0\leq z\leq1/2,\\
0&3/4\leq z\leq\infty.
\end{array}
\right.
\end{eqnarray}
Here the mass-flux $m_f[\g_k](\ph)$ will be determined later. Based on Remark \ref{Milne remark}, there exist
\begin{eqnarray}
\tilde h_k=\sqrt{\m}\bigg(\tilde
D_{k,0}+\tilde D_{k,1}\v_{r,1}+\tilde D_{k,2}\v_{r,2}+\tilde
D_{k,3}^{\e}\bigg(\frac{\abs{\vr}^2-2}{2}\bigg)\bigg),
\end{eqnarray}
such that the problem
\begin{eqnarray}\label{at 11.}
\left\{
\begin{array}{rcl}\displaystyle
(\vr\cdot\vn)\frac{\p \gg_k}{\p\et}+\ll[\gg_k]
&=&S_k(\et,\ph,\vr),\\
\gg_k(0,\ph,\vr)&=&h_k(\ph,\vr)-\tilde h_k(\ph,\vr)\ \
\text{for}\ \ \vr\cdot\vn>0,\\\rule{0ex}{1.0em}
\displaystyle\int_{\r^2}(\vr\cdot\vn)\sqrt{\m}
\gg_k(0,\ph,\vr)\ud{\vr}
&=&m_f[\gg_k](\phi),\\\rule{0ex}{1.0em}
\displaystyle\lim_{\et\rt\infty}\gg_k(\et,\ph,\vr)&=&0,
\end{array}
\right.
\end{eqnarray}
is well-posed.
\ \\
Step 2: Definition of Interior Solution and Boundary Layer with Geometric Correction.\\
Define
\begin{eqnarray}
\fbc_k=\gg_k\cdot\Upsilon_0(\e^{1/2}\et)
\end{eqnarray}
where $\gg_k$ the solution of $\e$-Milne problem (\ref{at 11.}) and
\begin{eqnarray}\label{cutoff 2.}
\Upsilon_0(z)=\left\{
\begin{array}{ll}
1&0\leq z\leq1/4,\\
0&1/2\leq z\leq\infty.
\end{array}
\right.
\end{eqnarray}
Naturally, we have
\begin{eqnarray}
\lim_{\et\rt0}\fbc_k(\et,\ph,\vr)=0.
\end{eqnarray}
The interior solution
\begin{eqnarray}
\fc_k(\vx,\vv)=A_k(\vx,\vv)+B_k(\vx,\vv)+C_k(\vx,\vv),
\end{eqnarray}
where $A_k$ satisfies
\begin{eqnarray}
A_k=\sqrt{\m}\left(\rh_k+\u_{k,1}\v_1+\u_{k,2}\v_2+\th_k\left(\frac{\abs{\vv}^2-1}{2}\right)\right),
\end{eqnarray}
and
\begin{eqnarray}\label{at 14.}
P_{k+1}-\left(\rh_{k+1}+\th_{k+1}+\sum_{i=1}^{k+1-i}\rh_i\th_{k+1-i}\right)&=&0,\\
\sum_{i=1}^{k}\uh_i\cdot\nx\uh_{k+1-i}-\gamma_1\dx\uh_k+\nx P_{k+1}&=&H_{k,1},\\
\nx\cdot\uh_k&=&H_{k,2},\\
\sum_{i=1}^{k}\uh_i\cdot\nx\th_{k+1-i}-\gamma_2\dx\th_k&=&H_{k,3},
\end{eqnarray}
with boundary condition
\begin{eqnarray}
A_{k,0}&=&\tilde D_{k,0},\\
A_{k,1}&=&-\tilde D_{k,0},\\
A_{k,2}&=&-\tilde D_{k,0},\\
A_{k,3}&=&\tilde D_{k,3}.
\end{eqnarray}
where $\tilde D_{k,i}$ comes from the boundary data of Milne problem $\tilde h_k$.
This determines $A_{k,0}$, $A_{k,1}$, $A_{k,2}$ and $A_{k,3}$. Now it is easy to verify the boundary data are satisfied as
\begin{eqnarray}
\fc_k+\fbc_k&=&\b_k.
\end{eqnarray}
\ \\
Step 3: Boussinesq relation and Vanishing Mass-Flux.\\
Similarly, the free mass-flux $m_f[\gg_{k}](\ph)$ can help to enforce two relations: the Boussinesq relation
\begin{eqnarray}
\rh_{k}+\th_{k}=E_k-\sum_{i=1}^{k-i}\rh_i\th_{k-i},
\end{eqnarray}
and vanishing mass-flux relation
\begin{eqnarray}
\int_{\p\Omega}\int_{\r^2}\fc_k(\vx,\vv)\ud{\vv}\ud{\gamma}=0.
\end{eqnarray}
Therefore, $m_f[\gg_{k}](\ph)$ is completed determined and so are $\fc_k$ and $\fbc_k$.

The analysis in \cite{Sone2002, Sone2007} anticipates this process can be generalized to arbitrary $k$. However, In order to show the hydrodynamic limit, we at least need to expand to $k=2$ at least. Therefore, based on Remark \ref{Milne remark}, we require $S_2\in L^{\infty}$ to obtain a well-posed $\fbc_2$, i.e. we need
\begin{eqnarray}
\dfrac{\p \fbc_1}{\p\ph}\in L^{\infty},
\end{eqnarray}
which further requires
\begin{eqnarray}
\dfrac{\p \fbc_1}{\p\et}\in L^{\infty}.
\end{eqnarray}
Theorem \ref{counter theorem 1} states that for certain boundary data $\bb^{\e}$, this is invalid. Hence, this formulation breaks down.

\subsection{Singularity in Derivative of Milne Problem}

Now we present the singularity of the normal derivative in the Milne problem. For convenience, we use the notation $\vvv=(\ve,\vp)$.
\begin{theorem}\label{counter theorem 1}
For the Milne problem
\begin{eqnarray}\label{counter equation}
\left\{
\begin{array}{rcl}\displaystyle
\ve\frac{\p \g}{\p\et}+\ll[\g]
&=&0,\\
\g(0,\vvv)&=&h(\vvv)\ \ \text{for}\ \
\ve>0,\\ \displaystyle\int_{\r^2}\ve\sqrt{\m}
\g(0,\vvv)\ud{\vvv}
&=&0,\\
\displaystyle\lim_{\et\rt\infty}\g(\et,\vvv)&=&\g_{\infty}(\vvv).
\end{array}
\right.
\end{eqnarray}
with
\begin{eqnarray}
h=\vp\ue^{-(\vp^2-1)-M\ve^2},
\end{eqnarray}
where $\ve$ and $\vp$ are defined as in (\ref{substitution 3}) and $M$ is sufficiently large such that
\begin{eqnarray}
h(0,1)&=&1,\\
\tss{h}{-}&<<&1,
\end{eqnarray}
then we have
\begin{eqnarray}
\im{\frac{\p\g}{\p\et}}\notin L^{\infty}([0,\infty)\times\r^2).
\end{eqnarray}
\end{theorem}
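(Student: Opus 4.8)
The plan is to pin the singularity down at the grazing boundary $\ve=0^{+}$ along $\et=0$ and read it off directly from the transport equation. First I would record well-posedness: with $G=0$ problem (\ref{counter equation}) is exactly the setting of Remark \ref{Milne remark} and Lemma \ref{Milne lemma 3}, so a unique $\g$ exists with $\lnnm{\g-\g_{\infty}}{\vt,\ze}\leq C$ for $\vt>2$, $0\leq\ze\leq1/4$, and in particular $\g(0,\cdot)\in L^{\infty}\cap L^{2}$. Writing $\ll=\nu-K$ and evaluating the equation at $\et=0$, the Case~I mild formulation shows $\g$ is $C^{1}$ in $\et$ for $\ve>0$ and gives the pointwise identity
\begin{eqnarray}\label{counter plan 1}
\frac{\p\g}{\p\et}(0,\ve,\vp)=\frac{K[\g](0,\ve,\vp)-\nu(\ve,\vp)h(\ve,\vp)}{\ve},\qquad \ve>0.
\end{eqnarray}
The whole argument reduces to proving that the numerator in (\ref{counter plan 1}) stays bounded away from zero as $\ve\rt0^{+}$ along the ray $\vp=1$; once this is known, $\abs{\p_{\et}\g(0,\ve,1)}\gtrsim 1/\ve\rt\infty$, so $\im{\p_{\et}\g}=\infty$.

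Next I would justify passing to the limit $\ve\rt0^{+}$ in the numerator. The map $\g(0,\cdot)\mapsto K[\g](0,\cdot)$ is smoothing: using the kernel bound of Lemma \ref{wellposedness prelim lemma 8} together with $\g(0,\cdot)\in L^{\infty}\cap L^{2}$, dominated convergence (the singularity $1/\abs{\vvv-\vvv'}$ being uniformly integrable) makes $\vvv\mapsto K[\g](0,\vvv)$ continuous up to and across $\ve=0$, even though $\g(0,\cdot)$ itself jumps there between the incoming datum $h$ and the outgoing trace. Since $\nu$ and $h$ are continuous and $h(0,1)=1$, identity (\ref{counter plan 1}) then yields
\begin{eqnarray}\label{counter plan 2}
\lim_{\ve\rt0^{+}}\Big(K[\g](0,\ve,1)-\nu(\ve,1)h(\ve,1)\Big)=K[\g](0,0,1)-\nu(0,1).
\end{eqnarray}

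The heart of the matter — and the step I expect to be the main obstacle — is to show $\abs{K[\g](0,0,1)}<\tfrac12\nu(0,1)$, so that the right-hand side of (\ref{counter plan 2}) is negative and bounded away from zero. Here the parameter $M$ is decisive: enlarging $M$ concentrates $h=\vp\ue^{-(\vp^{2}-1)-M\ve^{2}}$ near $\ve=0$, so that $\tss{h}{-}\rt0$ as $M\rt\infty$ while the pointwise value $h(0,1)=1$ is frozen. I would split $K[\g](0,0,1)=\int_{\ve'>0}k+\int_{\ve'<0}k$: on the incoming part $\g(0,\cdot)=h$, and a weighted Cauchy--Schwarz against $k((0,1),\cdot)$ bounds this contribution by $C\tss{h}{-}$; on the outgoing part the energy and trace estimates underlying Lemma \ref{Milne prelim lemma 1} control the outgoing $\ve$-weighted $L^{2}$ trace of $\g(0,\cdot)$ by the incoming data, again by $C\tss{h}{-}$. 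Choosing $M$ large then forces $\abs{K[\g](0,0,1)}\leq C\tss{h}{-}<\tfrac12\nu(0,1)$, using $\nu(0,1)\geq\nu_{0}(1+\abs{(0,1)})\geq2\nu_{0}>0$ from Lemma \ref{Milne property}. The delicate point is precisely these two kernel estimates, where the integrable singularity $1/\abs{\vvv-\vvv'}$ of $k$ meets the weight $1/\abs{\ve'}$ needed to convert the $\ve$-weighted trace bound into pointwise control of $K[\g]$; this amounts to checking that $k((0,1),\vvv')^{2}/\abs{\ve'}$ is integrable near $\vvv'=(0,1)$, which holds in two dimensions. Granting this, (\ref{counter plan 2}) is bounded above by $-\tfrac12\nu(0,1)<0$, whence $\abs{\p_{\et}\g(0,\ve,1)}\geq\tfrac{\nu(0,1)}{4\ve}$ for all small $\ve>0$, and the stated conclusion $\im{\p_{\et}\g}\notin L^{\infty}([0,\infty)\times\r^{2})$ follows.
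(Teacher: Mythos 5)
Your overall strategy is the same as the paper's: both arguments reduce to the identity $\p_{\et}\g(0,\vvv)=\big(K[\g](0,\vvv)-\nu(\vvv)\g(0,\vvv)\big)/\ve$, send $\vvv\rt(0,1)$, and use the largeness of $M$ to make $K[\g](0,\cdot)$ small near $(0,1)$ while $\nu(0,1)h(0,1)$ stays of order one, so the quotient blows up like $1/\ve$. The difference, and the place where your write-up has a genuine gap, is the step you yourself flag as delicate: the claim that $\abs{K[\g](0,0,1)}$ can be bounded by $C\tss{h}{-}$ via a weighted Cauchy--Schwarz against $k((0,1),\vvv')^2/\abs{\ve'}$. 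That quantity is \emph{not} integrable in two dimensions, for two separate reasons. First, by Lemma \ref{wellposedness prelim lemma 8} the kernel behaves like $\abs{\vvv-\vvv'}^{-1}$ near the diagonal, so $k^2\sim\abs{\vvv-\vvv'}^{-2}$ is already logarithmically divergent in $\r^2$ at $\vvv'=(0,1)$, and the extra factor $1/\abs{\ve'}$, which blows up precisely at that point, makes the divergence polynomial. Second, even away from $(0,1)$ the weight $1/\abs{\ve'}$ is non-integrable across the entire grazing axis $\{\ve'=0\}$, so the Cauchy--Schwarz constant is infinite no matter how the near-diagonal region is excised. The same obstruction defeats your treatment of the outgoing half $\{\ve'<0\}$, where you would need to convert the $\ve$-weighted outgoing trace bound into pointwise control of $K$.

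The paper circumvents this by never weighting with $1/\abs{\ve'}$: it uses the velocity-smoothing bound $\lnm{K[\g](0)}{0,0}\leq\lnnm{K[\g]}{0,0}\leq C\ltnm{\g}{0}$ from \cite[Lemma 3.3.1]{Glassey1996}, and then controls the \emph{unweighted} $L^{\infty}_{\et}L^2_{\vvv}$ norm by the interpolation estimate of Lemma \ref{Milne prelim lemma 6} applied to the mild formulation, $\ltnm{\g}{0}\leq C(\d)\tnnm{\g}+\d\lnnm{\g}{\vt,0}$, together with $\tnnm{\g}\leq C\tnm{h}$ and $\lnnm{\g}{\vt,0}\leq C$ from Theorems \ref{Milne theorem 1} and \ref{Milne theorem 2}. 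Taking $\d$ small and then $M$ large makes $\ltnm{\g}{0}$, hence $\abs{K[\g](0,\vvv)}$ uniformly in $\vvv$, as small as desired; this also renders your continuity argument for $K[\g](0,\cdot)$ across $\ve=0$ unnecessary (uniform smallness suffices, no limit of the numerator is needed). If you replace your weighted Cauchy--Schwarz step by this chain of estimates, the rest of your argument goes through and coincides with the paper's proof, which is formulated as a contradiction with an assumed $L^{\infty}$ bound on $\p_{\et}\g$ only in order to justify the a.e.\ identification of the two notions of trace at $\et=0$.
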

\begin{proof}
We divide the proof into several steps:
We first assume $\p_{\et}\g\in
L^{\infty}([0,\infty)\times\r^2)$ and then show it can lead to a contradiction.\\
\ \\
Step 1: Definition of trace.\\
It is easy to see $\p_{\et}\g$ satisfies the Milne problem
\begin{eqnarray}
\ve\frac{\p(\p_{\et}\g)}{\p\eta}+\ll[\p_{\et}\g]&=&0.
\end{eqnarray}
Since $k(\vvu,\vvv)=k_2(\vvu,\vvv)-k_1(\vvu,\vvv)$ is in $L^{1}$ with respect to $\vvu$ uniformly in $\vvv$, then we have $K[\p_{\et}\g]\in L^{\infty}([0,\infty)\times\r^2)$. For fixed $N>0$, $\nu(\vvv)$ is bounded in the domain $S=\{\abs{\vvv}\leq N\}$. Hence,
we have $\nu(\vvv)\p_{\et}\g\in L^{\infty}([0,\infty)\times S)$, which further implies $\ll[\p_{\et}\g]\in L^{\infty}([0,\infty)\times S)$. Therefore,
by a standard cut-off argument and Ukai's trace theorem, we deduce $\p_{\et}\g(0)\in L^{\infty}(S)$ is well-defined.\\
\ \\
However, we can define the trace of $\p_{\et}\g$ in another fashion. For
any $\ve\neq0$, since we have $\nu(\vvv)\g\in
L^{\infty}([0,\infty)\times S)$ as well as
$K[\g]\in L^{\infty}[0,\infty)\times S$, by the Milne
problem (\ref{counter equation}), it is naturally to define for
$\et>0$
\begin{eqnarray}
\p_{\et}\g(\et,\vvv)=\frac{K[\g](\et,\vvv)-\nu\g(\et,\vvv)}{\ve}.
\end{eqnarray}
Since $\p_{\et}\g\in
L^{\infty}([0,\infty)\times S$, we know $\g$
is continuous with respect to $\et$ for a.e. $\vv$. Taking
$\et\rt0$ defines the trace for $\p_{\et}\g$ at$(0,\vvv)$
\begin{eqnarray}
\p_{\et}\g(0,\vvv)=\frac{K[\g](0,\vvv)-\nu\g(0,\vvv)}{\ve}.
\end{eqnarray}
Since the grazing set $\{\vvv: \ve=0\}$ is zero-measured on the boundary $\et=0$, then we
have the trace of $\p_{\et}\g$ is a.e. well-defined.\\
\ \\
By the uniqueness of trace of $\p_{\et}\g$, above two types of traces
must coincide with each other a.e.. Then we may combine them both
and obtain $\p_{\et}\g(0,\vvv)\in
L^{\infty}(S)$ is a.e. well-defined and
satisfies the formula
\begin{eqnarray}
\p_{\et}\g(0,\vvv)=\frac{K[\g](0,\vvv)-\nu\g(0,\vvv)}{\ve}.
\end{eqnarray}
\ \\
Step 2: Limiting Process.\\
Therefore, we may consider the limiting process
\begin{eqnarray}
\lim_{\vvv\rt(0,1)}\frac{\p
\g}{\p\eta}(0,\vvv)=\lim_{\vvv\rt(0,1)}\frac{K[\g](0,\vvv)-\nu\g(0,\vvv)}{\ve}.
\end{eqnarray}
Based on \cite[Lemma 3.3.1]{Glassey1996}, we have
\begin{eqnarray}
\lnm{K[\g](0)}{0,0}\leq\lnnm{K[\g]}{0,0}\leq C\ltnm{\g}{0}.
\end{eqnarray}
By Lemma \ref{Milne prelim lemma 6}, we have
\begin{eqnarray}
\ltnm{\g}{0}\leq C(\d)\tnnm{\g}+\d\lnnm{\g}{\vt,0},
\end{eqnarray}
for $\d>0$ sufficiently small and $\vt>2$.
Combining this with Theorem \ref{Milne theorem 1} and Theorem \ref{Milne theorem 2}, we know
\begin{eqnarray}
\tnnm{\g}&\leq&C\tnm{h}\\
\lnnm{\g}{\vt,0}&\leq& C<\infty.
\end{eqnarray}
Taking $\d$ sufficiently small, and then taking $M$ sufficiently large, we have
\begin{eqnarray}
\ltnm{\g}{0}<<1.
\end{eqnarray}
On the other hand, we can see
\begin{eqnarray}
\nu(0,1)\g(0,0,1)\geq Ch(0,1)\geq C_0>0,
\end{eqnarray}
for some positive constant $C_0$.
Therefore, we have shown
\begin{eqnarray}
\ll[g](0,\vvv)\geq\frac{C_0}{2}>0,
\end{eqnarray}
when $\vvv\rt(0,1)$ which implies $\ve\rt0$. Hence, we can solve the normal derivative as
\begin{eqnarray}
\frac{\p \g}{\p\et}=-\frac{\ll[\g](0)}{\ve}\rt\infty,
\end{eqnarray}
which contradicts our assumption that $\p_{\et}\g(0,\vvv)\in
L^{\infty}(S)$.
\end{proof}

\subsection{Counterexample to Classical Approach}

We present a counterexample to show this classical approach can lead to wrong result.
\begin{theorem}
For given $\bb^{\e}>0$ satisfying (\ref{smallness assumption}) with
\begin{eqnarray}
\frac{\b_1}{\sqrt{\m}}=\bigg(\vp\ue^{-(\vp^2-1)-M\ve^2}\bigg)=h(\ve,\vp),
\end{eqnarray}
where $\ve$ and $\vp$ are defined as in (\ref{substitution 3}) and we take $M$ sufficiently large such that
\begin{eqnarray}
h(0,1)&=&1,\\
\tss{h}{-}&<<&1,
\end{eqnarray}
there exists $C>0$ such that
\begin{eqnarray}
\im{f^{\e}-(\fc_1+\fbc_1)}\geq C\e,
\end{eqnarray}
where the interior solution $\fc_1$ is defined in (\ref{at 14.}) and boundary layer $\fbc_1$ is defined in (\ref{at 11.}).
\end{theorem}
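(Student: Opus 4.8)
The plan is to prove the lower bound by contradiction, using the validated geometric expansion of Theorem~\ref{main 1} to pin down the genuine first--order behavior of $f^{\e}$ and then using the grazing--set singularity of Theorem~\ref{counter theorem 1} to show that the classical first--order boundary layer $\fbc_1$ cannot reproduce it. Recall first what is available. By Theorem~\ref{main 1} the true solution is $f^{\e}=\e^3R_N+\sum_{k}\e^k\f_k+\sum_{k}\e^k\fb_k$, so its leading profile is $\e(\f_1+\fb_1)$, where the geometric boundary layer $\fb_1=\gg^{\e}_1\Upsilon_0$ solves the $\e$--Milne problem (\ref{at 03}) whose \emph{curved} characteristics $\vp\,\ue^{W(\et')-W(\et)}$ sweep grazing velocities into the interior. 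The classical first--order approximation consists of the interior $\fc_1$ of (\ref{at 14.}) and the boundary layer $\fbc_1=\gg_1\Upsilon_0$, where $\gg_1$ solves the \emph{flat} ($G=0$) Milne problem (\ref{at 11.}) with in--flow data governed by $h(\ve,\vp)=\vp\,\ue^{-(\vp^2-1)-M\ve^2}$.

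Theorem~\ref{counter theorem 1} applies to exactly this $h$ and gives $\p_{\et}\gg_1\notin L^{\infty}$: along $\vvv\to(0,1)$ one has $\p_{\et}\gg_1(0,\vvv)=-\ll[\gg_1](0,\vvv)/\ve\to\infty$, since $\ll[\gg_1](0,(0,1))\ge C_0/2>0$ while $\ve\to0^{+}$. The hypotheses $h(0,1)=1$ and $\tss{h}{-}\ll1$ enter precisely here: the first keeps the defect $\ll[\gg_1](0,(0,1))$ bounded away from zero, and the second keeps $\ltnm{\gg_1}{0}$ small so that the singular term genuinely dominates. The core step is to convert this normal--derivative mismatch into a sup--norm gap. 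Suppose $\im{f^{\e}-\e(\fc_1+\fbc_1)}=o(\e)$. Subtracting the geometric leading profile $\e(\f_1+\fb_1)$ and noting that the interior fields $\f_1,\fc_1$ solve the same Navier--Stokes--Fourier system and differ only through their boundary traces $\tilde D^{\e}_{1,i}$ versus $\tilde D_{1,i}$ (so $\im{\f_1-\fc_1}\to0$), this would force $\im{\fb_1-\fbc_1}=o(1)$.

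I would then localize to a grazing sublayer near $\vvv=(0,1)$, $\et=0$: there the flat--Milne profile $\fbc_1$ moves by an $O(1)$ amount off the boundary, because $\p_{\et}\gg_1\sim1/\ve$ forces rapid variation along the straight characteristics ($\ve$ constant), whereas the geometric profile $\fb_1$ stays within $o(1)$ of its boundary value on the same set, its curved characteristics $\ve'=\sqrt{E-\vp'^2}$ carrying the near--tangential directions away from the singular regime. This yields $\im{\fb_1-\fbc_1}\ge C>0$, the desired contradiction, and hence $\im{f^{\e}-\e(\fc_1+\fbc_1)}\ge C\e$.

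The hard part will be making the sublayer comparison quantitative and uniform in $\e$, and in particular extracting a genuine \emph{lower} bound from a linear theory (Theorems~\ref{wellposedness LT estimate}--\ref{wellposedness LI estimate}) that only produces upper bounds. The chief obstruction is that the geometric remainder $\e^3R_N$ is itself of size $\e$, so it cannot be discarded in global $L^{\infty}$ when isolating the $\e$--scale defect. To circumvent this I would avoid comparing in global norm and instead track the normal--derivative structure near $\vvv=(0,1)$: one shows that $f^{\e}$ and $\e(\f_1+\fb_1)$ carry the same geometry--corrected sublayer, so their difference (remainder included) has no $1/\ve$ feature, while $\e\fbc_1$ does, and then tests the difference against a function concentrated on the sublayer to read off the surviving $O(\e)$ discrepancy. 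Isolating this singular contribution, which persists after multiplication by $\e$, is the crux of the argument and the step demanding the most care.
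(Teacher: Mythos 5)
Your overall reduction is sound — by Theorem \ref{main 1} (with $N$ large enough that $\e^3R_N=o(\e)$) it suffices to show $\im{(\f_1+\fb_1)-(\fc_1+\fbc_1)}\geq C>0$, and localizing near the grazing point $\vvv=(0,1)$, $\et=O(\e)$ is indeed where the paper looks. But the mechanism you propose for producing the $O(1)$ gap there is wrong, and this is the crux of the whole theorem. You claim the flat Milne profile $\fbc_1$ moves by $O(1)$ off its boundary value while the geometric profile $\fb_1$ ``stays within $o(1)$ of its boundary value on the same set.'' That dichotomy is false. At the point $(\et,\ve,\vp)=(n\e,\e,\sqrt{1-\e^2})$ the paper's mild formulations (\ref{compare temp 3})--(\ref{compare temp 4}) show that \emph{both} profiles are attenuated by a definite $O(1)$ factor: $\ue^{-\nu(1)n}$ for the straight characteristics and $\ue^{\nu(1)(1-\sqrt{1+2n})}$ for the curved ones (the curved characteristic through that point still spends an $O(1)$ amount of $\nu$-scaled time reaching the boundary, arriving at $\ve_0=\sqrt{1+2n}\,\e$). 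The lower bound comes from the fact that these two attenuation exponents, $n$ versus $\sqrt{1+2n}-1$, are \emph{different} for every $n>0$, together with the hypotheses $h(0,1)=1$ and $\tss{h}{-}\ll1$ which guarantee the boundary term dominates the $K$-terms (both $K[\w](0,0,1)$ and $K[\wc](0,0,1)$ are small). Without this explicit two-sided computation along both families of characteristics you have no lower bound; asserting that one profile varies and the other does not will not survive the estimate.

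A second, related gap: you lean on Theorem \ref{counter theorem 1} (the $1/\ve$ blow-up of $\p_{\et}\gg_1$) as the engine of the counterexample, but an unbounded normal derivative does not by itself force a sup-norm gap between two functions — a function can have a derivative singularity and still be uniformly close to another one. In the paper that theorem plays a different role (it shows the classical construction cannot be continued to order $k=2$ because $S_2\notin L^{\infty}$); the quantitative value gap of Theorem \ref{main 2} is obtained independently, by the Duhamel comparison above, preceded by a continuity argument for $K[\w]$ and $K[\wc]$ at $\et=0$ that justifies freezing the $K$-terms at $(0,0,1)$. Your proposed fix of ``testing the difference against a function concentrated on the sublayer'' is not developed and is not needed once the pointwise evaluation is carried out.
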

\begin{proof}
Define $\w=\f_1+\fb_1$ and $\wc=\fc_1+\fbc_1$. Consider the $\e$-Milne problem
\begin{eqnarray}\label{compare temp 1}
\left\{
\begin{array}{rcl}\displaystyle
\ve\frac{\p \w}{\p\et}+G(\e;\et)\bigg(\vp^2\dfrac{\p
\w}{\p\ve}-\ve\vp\dfrac{\p \w}{\p\vp}\bigg)+\ll[\w]
&=&0,\\
\w(0,\ve,\vp)&=&h(\ve,\vp)\ \ \text{for}\ \
\ve>0,\\\displaystyle\int_{\r^2}\ve\sqrt{\m}
\w(0,\ve,\vp)\ud{\ve}\ud{\vp}
&=&m_f(\ph),\\
\displaystyle\lim_{\et\rt\infty}\w(\et,\ve,\vp)&=&\w_{\infty}(\ve,\vp),
\end{array}
\right.
\end{eqnarray}
and Milne problem
\begin{eqnarray}\label{compare temp 2}
\left\{
\begin{array}{rcl}\displaystyle
\ve\frac{\p \wc}{\p\et}+\ll[\wc]
&=&0,\\
\wc(0,\ve,\vp)&=&h(\ve,\vp)\ \ \text{for}\ \
\ve>0,\\ \displaystyle\int_{\r^2}\ve\sqrt{\m}
\wc(0,\ve,\vp)\ud{\ve}\ud{\vp}
&=&m_f(\ph),\\
\displaystyle\lim_{\et\rt\infty}\wc(\et,\ve,\vp)&=&\wc_{\infty}(\ve,\vp).
\end{array}
\right.
\end{eqnarray}
For convenience, we use the same velocity variables. Note that $\w$ actually satisfies an $\e$-Milne problem with non-trivial source term. However, based on the proof of Theorem \ref{Milne theorem 1}, this source term will add a $O(\e^{1/2})$ perturbation to $\w$, so we can omit it and concentrate on above simpler form.\\
\ \\
We divide the proof into several steps:\\
\ \\
Step 1: Continuity of $K[\w]$ and $K[\wc]$ at $\et=0$.\\
For any $R_0>r_0>0$ and $\vvu=(\mathfrak{u}_{\et},\mathfrak{u}_{\ph})$, we have
\begin{eqnarray}
&&\abs{K[\wc](0,\vvv)-K[\wc](\et,\vvv)}\\
&\leq&\int_{\mathfrak{u}_{\et}\leq r_0}\abs{k(\vvu,\vvv)}\abs{\wc(0,\vvu)-\wc(\et,\vvu)}\ud{\vvu}+\int_{\mathfrak{u}_{\et}\geq R_0}\abs{k(\vvu,\vvv)}\abs{\wc(0,\vvu)-\wc(\et,\vvu)}\ud{\vvu}\no\\
&&+\int_{r_0\leq\mathfrak{u}_{\et}\leq R_0}\abs{k(\vvu,\vvv)}\abs{\wc(0,\vvu)-\wc(\et,\vvu)}\ud{\vvu}\no
\end{eqnarray}
Since we know $\wc\in L^{\infty}([0,\infty)\times\r^2)$, then for any $\delta>0$ we can take $r_0$ sufficiently small such that
\begin{eqnarray}
\int_{\mathfrak{u}_{\et}\leq r_0}\abs{k(\vvu,\vvv)}\abs{\wc(0,\vvu)-\wc(\et,\vvu)}\ud{\vvu}&\leq&C\int_{\mathfrak{u}_{\et}\leq r_0}\abs{k(\vvu,\vvv)}\ud{\vvu}\leq\frac{\delta}{3}.
\end{eqnarray}
Since we know
\begin{eqnarray}
\im{\bv (\wc-\wc_{\infty})}\leq C<\infty,
\end{eqnarray}
then there exists a $R_0>0$, such that for $\mathfrak{u}_{\et}\geq R_0$,
\begin{eqnarray}
\abs{\wc(\et,\vvu)}\leq \tilde\d,
\end{eqnarray}
where $\tilde\d$ is sufficiently small. Therefore, we have
\begin{eqnarray}
\int_{\mathfrak{u}_{\et}\geq R_0}\abs{k(\vvu,\vvv)}\abs{\wc(0,\vvu)-\wc(\et,\vvu)}\ud{\vvu}&\leq&2\tilde\d\int_{\mathfrak{u}_{\et}\geq R_0}\abs{k(\vvu,\vvv)}\ud{\vvu}\leq\frac{\delta}{3}.
\end{eqnarray}
For fixed $r_0$ and $R_0$ satisfying above requirement, we estimate the integral on $r_0\leq\mathfrak{u}_{\et}\leq R_0$. By Ukai's trace theorem, we have $\wc(0,\vvv)$ is well-defined and
\begin{eqnarray}
\p_{\et}\wc(0,\vvu)=\frac{K[\wc](0,\vvv)-\nu(\vvv)\wc(0,\vvv)}{\ve}.
\end{eqnarray}
The in $r_0\leq\mathfrak{u}_{\et}\leq R_0$, $\p_{\et}\wc$ is bounded, which implies $\wc(\et,\vvv)$ is uniformly continuous at $\et=0$. Then there exists a $\et_0$ such that for $0\leq\et\leq\et_0$,
\begin{eqnarray}
\int_{r_0\leq\mathfrak{u}_{\et}\leq R_0}\abs{k(\vvu,\vvv)}\abs{\wc(0,\vvu)-\wc(\et,\vvu)}\ud{\vvu}&\leq&C\tilde\d\int_{r_0\leq\mathfrak{u}_{\et}\leq R_0}\abs{k(\vvu,\vvv)}\ud{\vvu}\leq\frac{\delta}{3}.
\end{eqnarray}
In summary, we have shown for any $\delta>0$, there exists a $\et_0>0$ such that for any $0\leq\et\leq\et_0$ and fixed $\vvv$,
\begin{eqnarray}
\abs{K[\wc](0,\vvv)-K[\wc](\et,\vvv)}&\leq&\delta.
\end{eqnarray}
Therefore, $K[\wc]$ is continuous at $\et=0$. A similar argument can be implemented to $\w$. It is easy to see above estimate is uniform in $\vvv$ since $L^1$ estimate of $k(\vvu,\vvv)$ in $\vvu$ is uniform with respect to $\vvv$. Also, it is obvious to see $K$ is continuous with respect to $\vvv$ at $\et=0$.\\
\ \\
Step 2: Milne formulation.\\
We consider the solution at a specific point $\et=n\e$, $\ve=\e$ and $\vp=\sqrt{1-\e^2}$  for some
fixed $n>0$. The solution along the characteristics can be rewritten
as follows:
\begin{eqnarray}\label{compare temp 3}
\wc(n\e,\e,\sqrt{1-\e^2})=h(\e,\sqrt{1-\e^2})\ue^{-\frac{\nu(1)}{\e}n\e}
+\int_0^{n\e}\ue^{-\frac{\nu(1)}{\e}(n\e-\k)}\frac{1}{\e}K[\wc](\k,\e,\sqrt{1-\e^2})\ud{\k},
\end{eqnarray}
\begin{eqnarray}\label{compare temp 4}
\\
\w(n\e,\e,\sqrt{1-\e^2})=h(\e_0,\sqrt{1-\e^2_0})\ue^{-\int_0^{n\e}\frac{\nu(1)}{\ve(\zeta)}\ud{\zeta}}
+\int_0^{n\e}\ue^{-\int_{\k}^{n\e}\frac{\nu(1)}{\ve(\zeta)}\ud{\zeta}}\frac{1}{\ve(\k)}K[\w](\k,\ve(\k),\vp(\k))\ud{\k},\no
\end{eqnarray}
where $\nu(1)$ denote the value of $\nu(\vvv)$ at $\abs{\vvv}=1$ and we have the conserved energy along the characteristics
\begin{eqnarray}
E(\et,\ve,\vp)=\vp e^{-W(\et)},
\end{eqnarray}
in which $(0,\e_0,\sqrt{1-\e^2_0})$ and $(\zeta,\ve(\zeta),\sqrt{1-\ve^2(\zeta)})$ are in the same
characteristics of $(n\e,\e,\sqrt{1-\e^2})$.\\
\ \\
Step 3: Estimates of (\ref{compare temp 3}).\\
We turn to the Milne problem for $\wc$. We have the natural estimate
\begin{eqnarray}
\int_0^{n\e}\ue^{-\frac{\nu(1)}{\e}(n\e-\k)}\frac{1}{\e}\ud{\k}&=&\ue^{-n\nu(1)}\int_0^{n\e}
\ue^{\frac{\nu(1)\k}{\e}}\frac{1}{\e}\ud{\k}=\ue^{-n\nu(1)}\int_0^ne^{\nu(1)\zeta}\ud{\zeta}=\frac{1}{\nu(1)}\bigg(1-\ue^{-n\nu(1)}\bigg).
\end{eqnarray}
Then for $0<\e\leq\et_0$, we have $\abs{K[\wc](0,0,1)-K[\wc](\k,\e,\sqrt{1-\e^2})}\leq\delta+\e$, which implies
\begin{eqnarray}
\int_0^{n\e}\ue^{-\frac{\nu(1)}{\e}(n\e-\k)}\frac{1}{\e}K[\wc](\k,\e,\sqrt{1-\e^2})\ud{\k}&=&
\int_0^{n\e}\ue^{-\frac{\nu(1)}{\e}(n\e-\k)}\frac{1}{\e}K[\wc](0,0,1)\ud{\k}+O(\delta)+O(\e)\\
&=&\frac{1}{\nu(1)}(1-\ue^{-n\nu(1)})K[\wc](0,0,1)+O(\delta)+O(\e)\nonumber.
\end{eqnarray}
For the boundary data term, it is easy to see
\begin{eqnarray}
h(\e,\sqrt{1-\e^2})\ue^{-\frac{\nu(1)}{\e}n\e}&=&\ue^{-n\nu(1)}h(\e,\sqrt{1-\e^2}).
\end{eqnarray}
In summary, we have
\begin{eqnarray}
\wc(n\e,\e,\sqrt{1-\e^2})=\frac{1}{\nu(1)}(1-\ue^{-n\nu(1)})K[\wc](0,0,1)+\ue^{-n\nu(1)}h(0,1)+O(\delta)+O(\e).
\end{eqnarray}
\ \\
Step 4: Estimates of (\ref{compare temp 4}).\\
We consider the $\e$-Milne problem for $\w$. For $\e<<1$ sufficiently
small, $\psi(\e)=1$. Then we may estimate
\begin{eqnarray}
\vp(\zeta)e^{-W(\zeta)}=\sqrt{1-\e^2} e^{-W(n\e)},
\end{eqnarray}
which implies
\begin{eqnarray}
\vp(\zeta)=\frac{1-n\e^2}{1-\e\zeta}\sqrt{1-\e^2}.
\end{eqnarray}
and hence
\begin{eqnarray}
\ve(\zeta)=\sqrt{1-\vp^2(\zeta)}=\sqrt{\frac{\e(n\e-\zeta)(2-\e\zeta-n\e^2)}{(1-\e\zeta)^2}(1-\e^2)+\e^2}.
\end{eqnarray}
For $\zeta\in[0,\e]$ and $n\e$ sufficiently small, by Taylor's
expansion, we have
\begin{eqnarray}
1-\e\zeta&=&1+o(\e),\\
2-\e\zeta-n\e^2&=&2+o(\e).
\end{eqnarray}
Hence, we have
\begin{eqnarray}
\ve(\zeta)=\sqrt{\e(\e+2n\e-2\zeta)}+o(\e^2).
\end{eqnarray}
Since $\sqrt{\e(\e+2n\e-2\zeta)}=O(\e)$, we can further estimate
\begin{eqnarray}
\frac{1}{\ve(\zeta)}&=&\frac{1}{\sqrt{\e(\e+2n\e-2\zeta)}}+o(1)\\
-\int_{\k}^{n\e}\frac{\nu(1)}{\ve(\zeta)}\ud{\zeta}&=&\nu(1)\sqrt{\frac{\e+2n\e-2\zeta}{\e}}\bigg|_{\k}^{n\e}+o(\e)
=\nu(1)\bigg(1-\sqrt{\frac{\e+2n\e-2\k}{\e}}\bigg)+o(\e).
\end{eqnarray}
Then we can easily derive the integral estimate
\begin{eqnarray}
\int_0^{n\e}\ue^{-\int_{\k}^{n\e}\frac{\nu(1)}{\ve(\zeta)}\ud{\zeta}}\frac{1}{\ve(\k)}\ud{\k}&=&
\ue^{\nu(1)}\int_0^{n\e}\ue^{-\nu(1)\sqrt{\frac{\e+2n\e-2\k}{\e}}}\frac{1}{\sqrt{\e(\e+2n\e-2\k)}}\ud{\k}+o(\e)\\
&=&\half \ue^{\nu(1)}\int_{\e}^{(1+2n)\e}\ue^{-\nu(1)\sqrt{\frac{\sigma}{\e}}}\frac{1}{\sqrt{\e\sigma}}\ud{\sigma}+o(\e)\nonumber\\
&=&\half \ue^{\nu(1)}\int_{1}^{1+2n}\ue^{-\nu(1)\sqrt{\rho}}\frac{1}{\sqrt{\rho}}\ud{\rho}+o(\e)\nonumber\\
&=&\ue^{\nu(1)}\int_{1}^{\sqrt{{1+2n}}}\ue^{-\nu(1)t}\ud{t}+o(\e)\nonumber\\
&=&\frac{1}{\nu(1)}(1-\ue^{{\nu(1)}(1-\sqrt{1+2n})})+o(\e)\nonumber.
\end{eqnarray}
Then for $0<\e\leq\et_0$, we have $\abs{K[\w](0,0,1)-K[\w](\k),\ve(\k),\vp(\e)}\leq\delta$, which implies
\begin{eqnarray}
&&\int_0^{n\e}\ue^{-\int_{\k}^{n\e}\frac{{\nu(1)}}{\ve(\zeta)}\ud{\zeta}}\frac{1}{\ve(\k)}K[\w](\k,\ve(\k),\vp(\e))\ud{\k}\\
&=&
\int_0^{n\e}\ue^{-\int_{\k}^{n\e}\frac{{\nu(1)}}{\ve(\zeta)}\ud{\zeta}}\frac{1}{\ve(\k)}K[\w](0,0,1)\ud{\k}+O(\delta)+O(\e)\no\\
&=&\frac{1}{\nu(1)}(1-\ue^{{\nu(1)}(1-\sqrt{1+2n})})K[\w](0,0,1)+O(\e)+O(\delta)\nonumber.
\end{eqnarray}
For the boundary data term, since $h(\ve,\vp)$ is $C^1$, a similar
argument shows
\begin{eqnarray}
h(\e_0,\sqrt{1-\e_0^2})\ue^{-\int_0^{n\e}\frac{{\nu(1)}}{\ve(\zeta)}\ud{\zeta}}&=&\ue^{{\nu(1)}(1-\sqrt{1+2n})}h(\sqrt{1+2n}\e,\sqrt{1-(1+2n)\e^2})+O(\e).
\end{eqnarray}
Therefore, we have
\begin{eqnarray}
\w(n\e,\e)=\frac{1}{\nu(1)}(1-\ue^{{\nu(1)}(1-\sqrt{1+2n})})K[\w](0,0,1)+\ue^{{\nu(1)}(1-\sqrt{1+2n})}h(0,1)+O(\e)+O(\delta).
\end{eqnarray}
\ \\
Step 5: Estimate of Difference.\\
Collecting all above, we can estimate the value at point $\et=n\e$, $\ve=\e$ and $\vp=\sqrt{1-\e^2}$ as
\begin{eqnarray}
\\
\w(n\e,\e,\sqrt{1-\e^2})&=&\frac{1}{\nu(1)}(1-\ue^{{\nu(1)}(1-\sqrt{1+2n})})K[\w](0,0,1)+\ue^{{\nu(1)}(1-\sqrt{1+2n})}h(0,1)+O(\e)+O(\delta),\no\\
\\
\wc(n\e,\e,\sqrt{1-\e^2})&=&\frac{1}{\nu(1)}(1-\ue^{-{\nu(1)}n})K[\wc](0,0,1)+\ue^{-{\nu(1)}n}h(0,1)+O(\e)+O(\delta).\no
\end{eqnarray}
By our assumptions on $h$ and a similar argument as in the proof of Theorem \ref{counter theorem 1}, we know $\abs{K[\w](0,0,1)}<<1$ and $\abs{K[\wc](0,0,1)}<<1$.
However, $h(0,1)=1$.
Since $n$ is arbitrary and $\ue^{{\nu(1)}(1-\sqrt{1+2n})}\neq\ue^{-{\nu(1)}n}$, we always have
\begin{eqnarray}
\abs{\w(\e,n\e,\sqrt{1-n^2\e^2})-\wc(\e,n\e,\sqrt{1-n^2\e^2})}\geq C>0,
\end{eqnarray}
which further implies
\begin{eqnarray}
\lnnm{\w-\wc}{0,0}\geq C>0.
\end{eqnarray}
\end{proof}

\bibliographystyle{siam}
\bibliography{Reference}

\end{document}